\newtheorem{theorem}{Theorem}[section]
\newtheorem{proposition}[theorem]{Proposition}
\newtheorem{lemma}[theorem]{Lemma}
\newtheorem{corollary}[theorem]{Corollary}
\newtheorem{remark}[theorem]{Remark}
\newtheorem{definition}[theorem]{Definition}
\newcommand{\ep}{\varepsilon}
\newcommand{\N}{\mathbb{N}}
\newcommand{\R}{\mathbb{R}}
\renewcommand{\to}{\rightarrow}
\newcommand{\upp}{u_p}
\newcommand{\rad}{{\text{\upshape rad}}}
\providecommand{\norm}[1]{\lVert#1\rVert}
\def\ra{\rightarrow}
\def\sideremark#1{\ifvmode\leavevmode\fi\vadjust{\vbox to0pt{\vss% the remark
 \hbox to 0pt{\hskip\hsize\hskip1em%                          will appear only
 \vbox{\hsize2.1cm\tiny\raggedright\pretolerance10000%          on the side
  \noindent #1\hfill}\hss}\vbox to15pt{\vfil}\vss}}}%
\begin{document}

\numberwithin{equation}{section}
\parindent=0pt
\hfuzz=2pt
\frenchspacing

\title[Quasi-radial nodal solutions]{
Quasi-radial nodal solutions  \\ for the Lane-Emden problem in the ball
}

\author[]{F. Gladiali, I. Ianni}

\address{Francesca Gladiali, Polcoming, Universit\`a  di Sassari  - Via Piandanna 4, 07100 Sassari, Italy}
\address{Isabella Ianni, Dipartimento di Matematica e Fisica, Universit\`a degli Studi della Campania \emph{Luigi Vanvitelli}, V.le Lincoln 5, 81100 Caserta, Italy}

\thanks{2010 \textit{Mathematics Subject classification:}  }

\thanks{\textit{Keywords}: nodal solutions, non-radial solutions, bifurcation, Morse index, least energy, symmetry, blow-up}

\thanks{Research partially supported by: PRIN $201274$FYK7$\_005$ grant and INDAM - GNAMPA}

\begin{abstract}
We consider 
the semilinear elliptic  problem
\begin{equation}\label{problemAbstract}
\left\{\begin{array}{lr}-\Delta u= |u|^{p-1}u\qquad  \mbox{ in }B\\
u=0\qquad\qquad\qquad\mbox{ on }\partial B
\end{array}\right.\tag{$\mathcal E_p$}
\end{equation}
where $B$ is the unit ball of $\R^2$ centered at the origin and $p\in (1,+\infty)$. We prove the existence of non-radial sign-changing solutions to \eqref{problemAbstract} which are \emph{quasi-radial}, namely solutions whose nodal line is the union of a finite number of disjoint simple closed curves, which are the boundary of nested domains contained in $B$. In particular the nodal line of these solutions doesn't touch $\partial B$.
\\
The result is obtained with two different approaches:  via nonradial bifurcation from the least energy sign-changing radial solution $u_p$ of \eqref{problemAbstract} at certain values of $p$ and by investigating the qualitative properties, for $p$ large, of the least energy nodal solutions in spaces of functions invariant by the action of the dihedral group generated by the reflection with respect to the $x$-axis   and the rotation about the origin of angle $\frac{2\pi}{k}$ for suitable integers $k$.\\
We  also prove that for certain integers $k$ the least energy nodal solutions in these spaces of symmetric functions are  instead radial, showing in particular a breaking of symmetry phenomenon in dependence on the exponent $p$.
\end{abstract}

\maketitle

\section{Introduction}\label{Introduction}
We consider the semilinear Lane-Emden problem
\begin{equation}\label{problem}
\left\{\begin{array}{lr}-\Delta u= |u|^{p-1}u\qquad  \mbox{ in }B\\
u=0\qquad\qquad\qquad\mbox{ on }\partial B\\
%u(0)>0
\end{array}\right.
\end{equation}
where $B\subset\R^2 $ is the unit ball centered at the origin and $p>1$.
\\
It is well known that \eqref{problem} admits a unique positive ground state solution which is radially symmetric. 
Observe that the oddness of the nonlinearity implies that $u$ is a solution of \eqref{problem} if and only if $-u$ is a solution,  so there is also a unique negative  solution to \eqref{problem}.
\\
Moreover, due to the oddness of the nonlinear term, standard variational methods
give the existence of infinitely many sign-changing solutions.

While the ground state solution of \eqref{problem} has been widely investigated, not much is known for nodal ones. 
Among these one can select the least energy nodal solutions, which  can be obtained by minimizing the associated energy functional 
\begin{equation}\label{functional}E_p(u)\ : \ =\ \frac 12 \int_{B}|\nabla u |^2 -\frac 1{p+1}\int_B |u|^{p+1}
\end{equation}
on the nodal Nehari set in the Sobolev space $H^1_0(B)$ (see \cite{CastroCossioNeuberger} for details). We denote a least energy sign-changing solution by $\widetilde u_p$. In \cite{BartschWeth} it has been shown that 
\begin{equation}
\label{leastRegioEMorse} \sharp(\widetilde u_p)=2\quad \mbox{ and }\quad m(\widetilde u_p)=2,
\end{equation}
where $\sharp(u)$ is the number of nodal regions of $u$ and $m(u)$ is the Morse index of the solution $u$ (see Section \ref{section:linearizedOperator} for the definition). Moreover in \cite{BartschWethWillem} 
it has been proved that $\widetilde u_p$ partially inherits the symmetries of the domain, being \emph{foliated Schwarz symmetric}, namely  axially symmetric with respect to an axis passing through the origin and nonincreasing in the polar angle from this axis (see also \cite{PacellaWeth}). 
\

Since the domain $B$ is radially symmetric one  can restrict to the Sobolev space of radial functions $H^1_{0, \rad}(B)$ and prove the existence of infinitely many sign-changing radial solutions for \eqref{problem}. More precisely it can be proved that for every $m\in\mathbb N_0:=\mathbb N\setminus \{0\}$ 
there exists a unique radial solution to \eqref{problem} that satisfies
\begin{equation}\label{segno-in-zero}
u(0)>0
\end{equation}
and such that $\sharp(u)=m$ (see \cite{NN}, \cite{KAJIKIYA}). 
We denote by $u_p$ the unique radial least energy sign changing solution to \eqref{problem}  which satisfies \eqref{segno-in-zero}, clearly
\begin{equation}
\label{radduenod}
\sharp(u_p)=2.
\end{equation}
Morover it has been proved in \cite{AftalionPacella} that 
\begin{equation}
\label{StimaAPMorseR} 
m(u_p)\geq4
\end{equation}
(see also \cite{DeMarchisIanniPacellaMathAnn} where $m(u_p)$ has been explicitly computed for $p$ large and also \cite{DeMarchisIanniPacellaAdv} where the previous estimate on the Morse index  has been generalized to any radial solutions with $m$ nodal regions, with bound given by the number $3m-2$).

\

Comparing the information on the Morse index in  \eqref{leastRegioEMorse} with the one in \eqref{StimaAPMorseR} one gets that the radial solution $u_p$ is not the least energy sign-changing solution in the whole space $H^1_0(B)$, namely that $u_p\neq \widetilde u_p$. As a consequence the monotonicity of $\widetilde u_p$ with respect to the polar angle (as recalled above $\widetilde u_p$ is foliated Schwarz symmetric) must be strict at some region,    and in \cite{PacellaWeth} it is actually proved that, for $p>2$,  the monotonicity is always strict.
 Moreover in \cite{AftalionPacella} it has been also proved that 
the nodal set of $\widetilde u_p$ touches the boundary of $B$.

\

One can also restrict to the Sobolev space $H^1_{0,k}(B)$ of the functions in  $H^1_0(B)$ which are even and $\frac{2\pi}{k}$-periodic in the angular variable, for $k\in\mathbb N_0$ and similarly show the existence of infinitely many sign-changing symmetric solutions in $H^1_{0,k}(B)$, among which we denote by $u_p^k$ the least energy ones.

\

 Anyway \emph{a priori} it is not clear whether this procedure produces \emph{new solutions} or not. 
Indeed, clearly $u_p^1=\widetilde u_p$ (since $\widetilde u_p $ is axially symmetric) and, even though $u_p^k\neq \widetilde u_p$ for $k\geq 2$ (since if they  coincide then $\widetilde u_p$ would be $\frac{2\pi}{k}$-periodic in the angular variable and so necessarily radial by the Schwarz symmetry, getting a contradiction), $u_p^k$ \emph{could be radial}. 

\

In particular it would be interesting to show the existence of sign-changing solutions to \eqref{problem} which belong to $H^1_{0,k}(B)$
 but are \emph{not radially symmetric}, having nevertheless a \emph{quasi-radial shape}, in the sense of the following definition:
\begin{definition} 
\label{def:quasiradial} 
We say that a solution of \eqref{problem} is \emph{quasi-radial} if its nodal set is the union of a finite number of  disjoint simple closed curves which are the boundary of nested domains contained in $B$.
\end{definition} 
Observe that the nodal line of a  \emph{quasi-radial} solution doesn't  touch the boundary of the ball $B$. Clearly any radial solution is quasi-radial.

\

By the asymptotic estimates for the energy of the solutions of \eqref{problem}  in \cite{RenWei}, the obvious inequality $E_p(u_p^k)\leq E_p(u_p)$ and the upper bound 
\[pE_p(u_p)\leq \alpha\cdot 4\pi e, \quad\mbox{ for }p \mbox{ large},\]
proved in \cite{GGP2} for a certain value $\alpha\in (4.5,5)$,  one derives the following upper bound on the number of nodal regions of $u_p^k$:  
\[\sharp(u_p^k)\leq 4 \quad \forall k\in\mathbb N_0, \mbox{ for }p \mbox{ large}.\] 
Combining this bound with the results in \cite{DeMarchisIanniPacellaJDE} (which hold in symmetric and simply connected domains, more general than the ball) it then follows that the least energy symmetric solution 
\begin{equation}\label{primaquadratino}
u_p^k \mbox{ is \emph{quasi-radial} when }k\geq 4\mbox{ and }p\mbox{  is large,}
\end{equation}
from which  in particular one also derives
\begin{equation}
\label{quadratino}
\sharp(u_p^k)=2 \quad \mbox{ and }\quad  m(u_p^k)\geq4, \quad \mbox{ for }k\geq 4, \mbox{ for }p \mbox{ large.}
\end{equation}
Observe that the properties in \eqref{primaquadratino}, \eqref{quadratino} are satisfied also by $u_p$ (see \eqref{radduenod}, \eqref{StimaAPMorseR}), hence the question of the existence of symmetric but non-radial solutions of \eqref{problem}  which are \emph{quasi-radial} is still open. 
 \\  
Moreover, as $p\in (1,+\infty)$ and $k\in\mathbb N_0$ vary,  one would like to investigate whether $u_p^k$  coincides with the radial  least energy nodal solution $u_p$ or not.

\begin{figure}[h]
  \centering\Huge
\resizebox{80pt}{!}{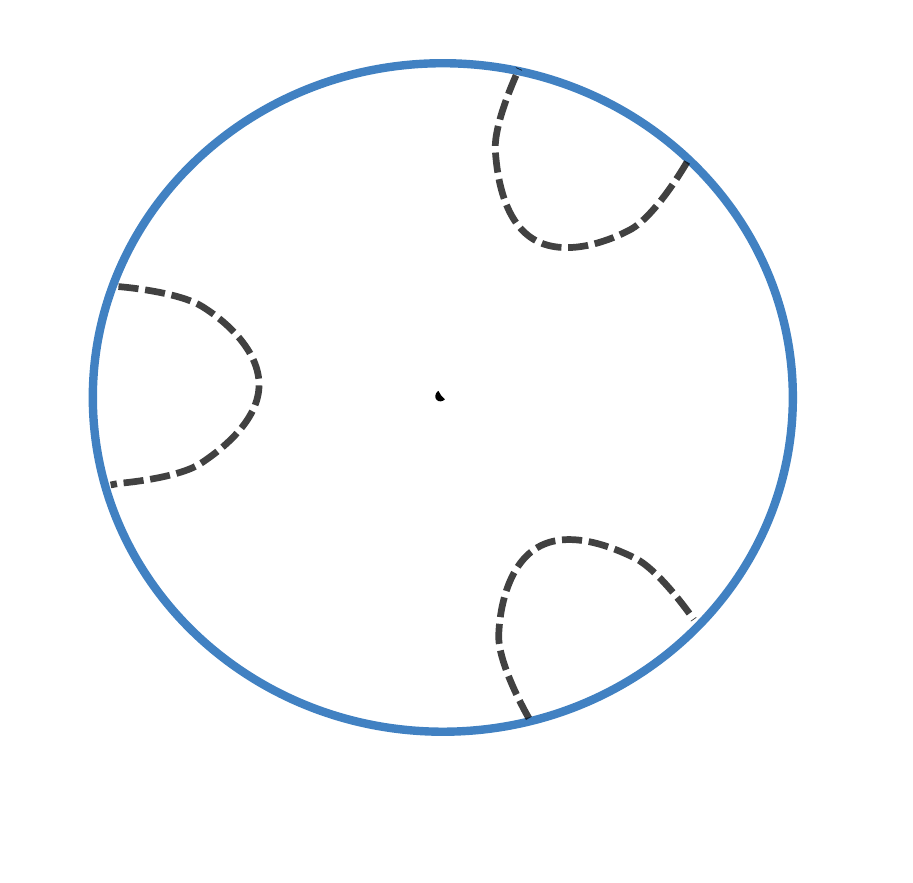}
\caption{$k=3$. Symmetric and not quasi-radial function with $4$ nodal regions}
\end{figure}

\

We start by giving a positive answer to the first question, showing the existence of three distinct solutions to \eqref{problem} which belong respectively to $H^1_{0,k}(B)\setminus H^1_{0,\rad}(B)$,  for $k=3,4,5$. Each solution   bifurcates from the least energy radial nodal solution $u_p$ at certain values of $p$ and close to the bifurcation point it is \emph{quasi-radial}.
The result is the following, where $\mathcal X_k:=H^1_{0,k}(B)\cap C^{1,\alpha}(\bar B)$ (and $C^{1,\alpha}(\bar B)$ denotes the space of $C^1(\bar B)$ functions with H\"older derivatives):
\begin{theorem}\label{teo1}
For any $k=3,4,5$ there exists at least one exponent $p^k\in(1,+\infty)$ such that $(p^k, u_{p^k})$ is a nonradial bifurcation point for problem \eqref{problem}. The bifurcating solutions are sign-changing,  belong to $ \mathcal X_k$ and close to the bifurcation point they have two nodal domains and are  quasi-radial. Moreover the bifurcation is global and, letting $\mathcal{C}_k$ be the continuum that branches out of $( p^k, u_{p^k})$, then either $\mathcal{C}_k$ is unbounded in $(1,+\infty)\times\mathcal X_k$ or it intersects $\{1\}\times\mathcal X_k$. Finally at any point along each  branch $\mathcal C_k$ either the solution belongs to $\mathcal X_k\setminus\mathcal X_j$, $\forall j>k$ or it is radial, in particular the continua bifurcating from different values of $k$ can intersect only at radial solutions. 
\end{theorem}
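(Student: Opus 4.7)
My plan is to apply the Crandall--Rabinowitz local bifurcation theorem in the Banach space $\mathcal{X}_k$ and then upgrade the local branch to a global continuum via Rabinowitz's alternative. The trivial branch is $\{(p,u_p):p\in(1,+\infty)\}\subset (1,+\infty)\times\mathcal{X}_k$. I reformulate \eqref{problem} as $F(p,u)=u-(-\Delta)^{-1}(|u|^{p-1}u)=0$; compactness of $(-\Delta)^{-1}$ makes $D_uF(p,u_p)=\mathrm{Id}-p(-\Delta)^{-1}(|u_p|^{p-1}\,\cdot\,)$ Fredholm of index zero, and its kernel coincides with the zero eigenspace of $L_p:=-\Delta-p|u_p|^{p-1}$ in $\mathcal{X}_k$. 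Since every $v\in\mathcal{X}_k$ has the Fourier expansion $v(r,\theta)=\sum_{j\geq 0}\psi_j(r)\cos(jk\theta)$, the operator $L_p$ decouples into the ODE family
\[
\mathcal{L}_p^{(j)}\psi=-\psi''-\tfrac{1}{r}\psi'+\tfrac{(jk)^2}{r^2}\psi-p|u_p|^{p-1}\psi,\qquad r\in(0,1),
\]
with Dirichlet eigenvalues $\lambda_{j,n}(p)$, $n\geq 1$. The $j=0$ piece is the radial spectrum; the $j=1$ piece carries the genuinely $k$-fold symmetric modes.

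The heart of the argument is to exhibit, for each $k\in\{3,4,5\}$, an exponent $p^k\in(1,+\infty)$ and an index $n_k\geq1$ such that $\lambda_{1,n_k}(p^k)=0$, while zero does not appear in $\sigma(\mathcal{L}_{p^k}^{(j)})$ for any $j\neq 1$, so that $\ker(L_{p^k})\cap\mathcal{X}_k$ is one-dimensional and generated by $\phi_{n_k}(r)\cos(k\theta)$. I would achieve this via an intermediate-value argument combined with strict monotonicity of $p\mapsto\lambda_{1,n}(p)$ (obtained by differentiating the eigenvalue equation in $p$). The sign change is forced by the asymptotics: as $p\to 1^+$ the nonlinear term disappears and all angular eigenvalues are positive, while for $p$ large the Morse index asymptotics of \cite{DeMarchisIanniPacellaMathAnn} guarantee that sufficiently many negative eigenvalues sit in the angular sector, the radial sector contributing only a controlled number (by \eqref{radduenod} and standard radial spectral analysis). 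The restriction $k\in\{3,4,5\}$ arises precisely from the count of these angular eigenvalues and of which modes $jk$ their eigenfunctions populate.

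Transversality for Crandall--Rabinowitz reduces to $\tfrac{d}{dp}\lambda_{1,n_k}(p)|_{p=p^k}\neq 0$, which is exactly the monotonicity used above. This produces a smooth local branch
\[
u(s)=u_{p^k}+s\bigl(\phi_{n_k}(r)\cos(k\theta)+w(s)\bigr),\qquad p=p^k+\sigma(s),\quad s\in(-\delta,\delta),
\]
with $w(0)=0$. Since $u_{p^k}$ is radial with two nodal regions, its nodal set is a single circle $\{r=r_0\}$ on which $u_{p^k}'(r_0)\neq 0$ (otherwise the radial ODE uniqueness forces $u_{p^k}\equiv 0$). The implicit function theorem applied to the level set $\{u(s)=0\}$ in polar coordinates then shows the nodal set of $u(s)$ is a simple closed $C^1$-curve close to $\{r=r_0\}$, so $u(s)$ is quasi-radial with exactly two nodal regions. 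Global bifurcation follows from Rabinowitz's theorem for the compact fixed-point formulation $u=K_p(u):=(-\Delta)^{-1}(|u|^{p-1}u)$ acting on $\mathcal{X}_k$, yielding the stated alternatives.

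The last assertion uses that $\mathcal{X}_j\cap\mathcal{X}_k\subset\mathcal{X}_{\mathrm{lcm}(j,k)}$: if some $u\in\mathcal{C}_k$ lies in $\mathcal{X}_j$ with $j>k$, then its $k$-th angular Fourier coefficient must vanish identically. Near the bifurcation point this coefficient is nontrivial, and by continuity along $\mathcal{C}_k$ its vanishing can occur only where the entire non-radial part of $u$ vanishes, i.e., at a radial solution; this gives the dichotomy and, in particular, shows that two continua $\mathcal{C}_k,\mathcal{C}_{k'}$ can intersect only in the radial family. The hard step is Step~2: determining the sign pattern and multiplicities of $\lambda_{1,n}(p)$ uniformly in $p\in(1,+\infty)$ and apportioning the Morse index of $u_p$ among the angular sectors precisely enough to cover $k=3,4,5$. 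All remaining ingredients (transversality, the nodal-set regularity, and the Rabinowitz alternative) are essentially standard once this spectral picture is established.
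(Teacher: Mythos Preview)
Your overall strategy---locate a simple zero eigenvalue of $L_p$ in $\mathcal{X}_k$, bifurcate locally, then go global---is the same as the paper's, but there are two genuine gaps.

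\textbf{Transversality.} You assert strict monotonicity of $p\mapsto\lambda_{1,n}(p)$ ``obtained by differentiating the eigenvalue equation in $p$'', and use it both to force a simple zero and to verify the Crandall--Rabinowitz transversality condition. Differentiating in $p$ produces a term involving $\partial_p\bigl(p|u_p|^{p-1}\bigr)$, and since $u_p$ itself depends on $p$ this has no evident sign; the paper states explicitly that it is \emph{not clear} whether transversality holds. The authors bypass this by abandoning Crandall--Rabinowitz altogether: they work with Dancer's fixed-point index relative to a cone $\mathcal{K}_k\subset\mathcal{X}_k$ and show (Lemma~\ref{lemma:calcoloIndexCono}) that the index jumps from $1$ to $0$ exactly when $\beta_{1,\rad}(p)+k^2$ changes sign. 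A sign change---not transversality---is what is actually available, and it is obtained by comparing $\beta_{1,\rad}(p)$ for $p$ near $1$ (Corollary~\ref{lemma:autovaloriRadialipvicino1}: $-9<\beta_{1,\rad}<-4$) with $p$ large (Proposition~\ref{lemma:autovaloriRadialipgrande}: $-36<\beta_{1,\rad}<-25$), together with analyticity (Lemma~\ref{l5.1}). Your intermediate-value argument is the right idea, but it does not give monotonicity, only a sign change at an odd number of isolated points; that is enough for a degree argument, not for Crandall--Rabinowitz.

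\textbf{Separation of branches.} Your final paragraph is not correct. If $u\in\mathcal{C}_k$ also lies in $\mathcal{X}_j$ for some $j>k$, then indeed its $k$-th angular coefficient vanishes; but the vanishing of one Fourier coefficient in no way forces the whole non-radial part to vanish (think of a function with only a $2k$-mode). Continuity along $\mathcal{C}_k$ does not help: nothing prevents the $k$-mode from shrinking to zero while higher modes persist. The paper obtains separation by restricting to the cones $\mathcal{K}_k$ of functions with $v_\theta\le 0$ on $[0,\pi/k]$; the monotonicity is preserved by the operator $T$ (Lemma~\ref{lemma:TMandaConoInCono}) and gives the clean identity $\mathcal{K}_k\cap\mathcal{K}_h=\mathcal{X}_{\rad}$ for $h\neq k$, from which $\mathcal{K}_k\cap\mathcal{X}_j=\mathcal{X}_{\rad}$ for $j>k$ follows.

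A smaller point: the Rabinowitz alternative gives ``unbounded, meets $\{1\}\times\mathcal{X}_k$, \emph{or returns to another bifurcation point}''. To conclude that at least one branch satisfies only the first two options the paper uses that $\#\mathcal{P}^k$ is odd while any bounded component meets an even number of bifurcation points; you have not addressed this parity step.
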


\begin{figure}[h]
  \centering\Huge
\resizebox{240pt}{!}{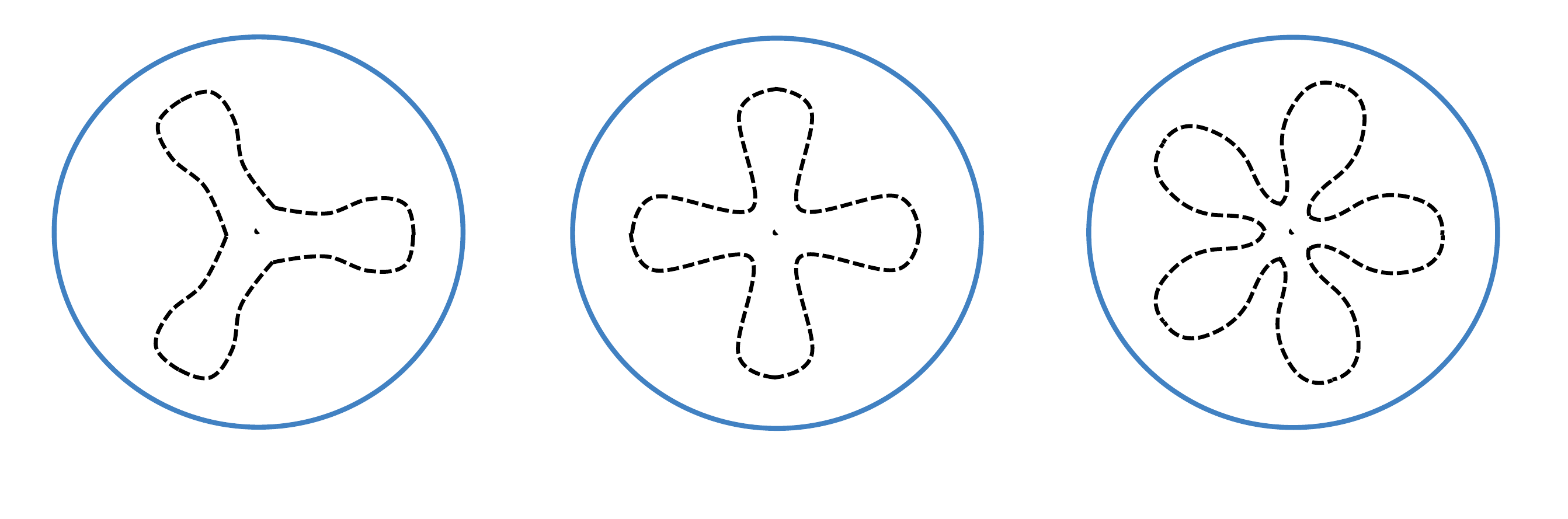}
\caption{}
\end{figure}

Our second goal is to understand  whether the least energy symmetric  solution $u_p^k$, $k\in\mathbb N_0$, $p\in (1,+\infty)$, coincides with the radial  least energy nodal  solution $u_p$ or not, and this is analyzed in next result:
\begin{theorem} \label{prop1.4}
Let  $u_p^k$ be the least energy sign-changing solution of \eqref{problem} in the space $H^1_{0,k}(B)$, $k\in\mathbb N_0$, then there exist $\delta >0$ and $p^{\star} >1$ such that:
\begin{itemize} 
\item[$i)$] for $k=2$: $\quad u_p^k$  is non-radial both for $p\in (1,1+\delta)$ and $p\geq p^{\star}$;
\item[$ii)$] for $k=3,4,5$: $\quad u_p^k$  is radial for $p\in (1,1+\delta)$ and non-radial when $p\geq p^{\star}$;
\item[$iii)$] for $k\geq 6$: $\quad u_p^k$  is radial for $p\in (1,1+\delta)$.
\end{itemize}
Clearly when $u_p^k$ is radial then it coincides with $u_p$ (up to the sign).
\end{theorem}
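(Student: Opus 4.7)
The plan is to split the proof into the asymptotic regimes $p\to 1^+$ and $p$ large, comparing $u_p^k$ with the radial least-energy nodal solution $u_p$ and exploiting the fact, already noted in the statement, that $u_p^k$ is radial if and only if $u_p^k=u_p$ (up to sign).

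For the behavior as $p\to 1^+$, a rescaling argument shows that $u_p^k/\|u_p^k\|_{H^1}$ converges, along a subsequence, strongly in $H^1_0(B)$ to a sign-changing eigenfunction $v_k$ of $-\Delta$ with Dirichlet boundary conditions on $B$ lying in $H^1_{0,k}(B)$. Since $u_p^k$ is a \emph{least} energy sign-changing solution in $H^1_{0,k}(B)$, $v_k$ is associated with the smallest Dirichlet eigenvalue $\mu_k$ in $H^1_{0,k}(B)$ whose eigenspace admits sign-changing elements. The Dirichlet spectrum on $B$ is $\{\lambda_{n,j}=j_{n,j}^2\}$, where $j_{n,j}$ is the $j$-th positive zero of the Bessel function $J_n$, and eigenfunctions in $H^1_{0,k}(B)$ correspond to angular modes $n\in k\mathbb{N}_0$. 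Comparisons of Bessel zeros yield $\lambda_{2,1}<\lambda_{0,2}<\lambda_{k,1}$ for every $k\geq 3$, hence
\[
\mu_2=\lambda_{2,1}\quad (v_2\ \text{non-radial}),\qquad \mu_k=\lambda_{0,2}\quad (v_k\ \text{radial})\ \text{for every }k\geq 3.
\]
The first identity immediately gives the non-radiality of $u_p^2$ for $p\in(1,1+\delta)$; the second produces a radial limit profile to be exploited in the rigidity step.

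To upgrade \virg{$u_p^k$ converges to a radial function} to \virg{$u_p^k$ is itself radial for $p$ close to $1$} when $k\geq 3$, I would use the spectral gap $\lambda_{k,1}-\lambda_{0,2}>0$: the limiting eigenvalue $\mu_k=\lambda_{0,2}$ is simple in $H^1_{0,k}(B)$, so $-\Delta-\mu_k$ is uniformly positive on the nonradial angular sectors $n\in k\mathbb{N}$. Either a Lyapunov-Schmidt reduction applied to the rescaled equation near $(p,v)=(1,v_k)$ produces a unique local solution branch, which must then coincide with the known radial family $u_p$; or a direct energy-expansion argument $E_p(u)=E_p(u_p)+\tfrac{1}{2}\langle L_p w,w\rangle+o(\|w\|^2)$ for $w=u-u_p$, together with the Poincar\'e-type bound $\int|\nabla w_{\mathrm{nr}}|^2\geq \lambda_{k,1}\int w_{\mathrm{nr}}^2$ on the nonradial part $w_{\mathrm{nr}}$, shows that any nonradial competitor in $H^1_{0,k}(B)$ on the nodal Nehari set has strictly larger $E_p$ than $u_p$ for $p$ close to $1$. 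Either way, $u_p^k=u_p$, proving parts $ii)$ and $iii)$ near $p=1$.

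For $p$ large and $k\in\{2,3,4,5\}$, I would show $u_p^k\neq u_p$ by a Morse index argument in $H^1_{0,k}(B)$. As a least-energy sign-changing solution there, $u_p^k$ has Morse index at most $2$ in $H^1_{0,k}(B)$. Since $u_p$ is radial, the linearization $L_p=-\Delta-p|u_p|^{p-1}$ decomposes by angular modes, and its restriction to $H^1_{0,k}(B)$ sees only the modes $n\in k\mathbb{N}_0$. The radial sector $n=0$ contributes at least two negative eigenvalues by \eqref{StimaAPMorseR}. For $k=3,4,5$, Theorem \ref{teo1} provides the bifurcation value $p^k$ at which an eigenvalue of $L_p$ in the sector $n=k$ crosses zero, giving at least one additional negative eigenvalue for $p>p^k$; for $k=2$, the explicit Morse index computation in \cite{DeMarchisIanniPacellaMathAnn} shows that the sector $n=2$ contributes a negative eigenvalue for $p$ large. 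In every case $m_{H^1_{0,k}}(u_p)\geq 3>2\geq m_{H^1_{0,k}}(u_p^k)$, so $u_p^k\neq u_p$ and $u_p^k$ is non-radial.

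The main obstacle is the rigidity step for $k\geq 3$ and $p$ close to $1$: the eigenvalue identification of $v_k$ is clean, but turning the \emph{asymptotic} radial symmetry of $u_p^k$ into the \emph{exact} identity $u_p^k=u_p$ requires quantitative control on the nonradial modes, relying sharply on the gap $\lambda_{k,1}-\lambda_{0,2}>0$ and on the non-degeneracy of the rescaled problem at $p=1$. The large-$p$ regime is comparatively easy, reducing to Morse-index bookkeeping that builds on Theorem \ref{teo1} and on the Morse index information already available for $u_p$.
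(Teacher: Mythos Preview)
Your Morse-index comparison for the non-radial claims is the paper's strategy, though the paper computes $m_k(u_p)$ directly via the auxiliary weighted eigenvalue problem (Propositions \ref{Morse-simmetrico1}--\ref{Morse-simmetrico2}) rather than through Theorem \ref{teo1}; invoking bifurcation at $p^k$ does not by itself guarantee a negative eigenvalue in the $n=k$ sector for \emph{all} large $p$, since an eigenvalue that crosses zero could in principle cross back.

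The real gap is in the radial part for $k\geq 3$ near $p=1$. Your opening sentence ``a rescaling argument shows that $u_p^k/\|u_p^k\|_{H^1}$ converges \ldots\ to a sign-changing eigenfunction'' hides the central difficulty: one must first prove that $\|u_p^k\|_\infty^{p-1}$ stays bounded as $p\to 1$. The only a~priori information is $m_k(u_p^k)=2$, but the standard Gidas--Spruck blow-up uses the \emph{full} Morse index, which is not controlled; the paper resolves this by passing to the circular sector $S_k$ of amplitude $\pi/k$, where the $k$-Morse index bound becomes a genuine Morse index bound for a mixed Dirichlet--Neumann problem (Lemma \ref{lem-10-nuovo}), and then runs a delicate blow-up that must handle the corners and the mixed boundary case by case (Proposition \ref{prop7.4}). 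Without this bound your spectral-gap heuristics never get started. Once the bound and the convergence $\bar u_p^k\to\varphi_{2,\rad}$ are in hand, your Lyapunov--Schmidt alternative could be made to work; however your energy-expansion argument, as written, does not close: $L_p$ has two negative \emph{radial} eigenvalues in $H^1_{0,k}$, so positivity of $Q_p$ on the non-radial part alone does not yield $E_p(u_p+w)>E_p(u_p)$. The paper takes a different route to rigidity: it derives a second-order expansion of $\|u_p^k\|_\infty^{p-1}$ that matches exactly the one for $\|u_p\|_\infty^{p-1}$ (compare \eqref{gammanp-1} and \eqref{8.14}), then shows that if $u_{p_n}^k\neq u_{p_n}$ the normalized difference $w_n=(u_{p_n}^k-u_{p_n})/\|u_{p_n}^k-u_{p_n}\|_\infty$ must converge to $\pm\varphi_{2,\rad}$, while an integral identity obtained by testing against $\varphi_{2,\rad}$ simultaneously forces the limit to vanish.
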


The fact that the symmetry of the domain is not totally caught by these least energy solutions is reasonable, since we are dealing with sign-changing solutions, anyway the symmetry breaking phenomenon when $k=3,4,5$ (case $ii)$) and its dependence  on the value of the exponent $p$  were totally unexpected. It is also interesting that we can identify the symmetries of the solutions at which this symmetry breaking phenomenon occurs.

Theorem \ref{prop1.4}-$ii)$ combined with \eqref{primaquadratino} and \eqref{quadratino} provides another example for non-radial symmetric  sign-changing solution of \eqref{problem} which are \emph{quasi-radial} in the sense of Definition \ref{def:quasiradial}. Differently with respect to Theorem \ref{teo1}, this result is now for any $p$ large enough:
\begin{corollary}\label{cor:teoleast}
Let $k=4,5$, then   $u_p^k$ is not radial but it is \emph{quasi-radial} for $p$ large  enough. 
In particular $u_p^k\neq u_p$. Moreover $u_p^k\neq\widetilde u_p$ and \eqref{quadratino} holds.
\end{corollary}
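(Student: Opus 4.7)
The plan is simply to combine three facts that have already been recorded in the introduction. By \eqref{primaquadratino} we already know that for every $k\geq 4$ and for $p$ sufficiently large, the least energy symmetric solution $u_p^k$ is quasi-radial, which in turn yields \eqref{quadratino} (both assertions being consequences of the energy estimates of \cite{RenWei,GGP2} together with the results of \cite{DeMarchisIanniPacellaJDE}). On the other hand, by Theorem \ref{prop1.4}-$ii)$, for $k=4,5$ there exists $p^\star>1$ such that for every $p\geq p^\star$ the solution $u_p^k$ is non-radial. Hence, choosing $p$ large enough so that both statements apply simultaneously, we obtain at once that $u_p^k$ is non-radial and quasi-radial, together with $\sharp(u_p^k)=2$ and $m(u_p^k)\geq 4$.

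Once this is established, the remaining assertions are immediate. Since $u_p$ is radial while $u_p^k$ is not, we automatically get $u_p^k\neq u_p$. For the distinction $u_p^k\neq \widetilde u_p$, the idea is to compare the nodal sets: by \cite{AftalionPacella} the nodal line of $\widetilde u_p$ touches $\partial B$, whereas by Definition \ref{def:quasiradial} the nodal line of the quasi-radial solution $u_p^k$ is the union of disjoint simple closed curves contained in $B$, and in particular does \emph{not} touch $\partial B$; hence the two solutions cannot coincide.

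There is essentially no technical obstacle here: the corollary is a packaging statement whose content has already been produced in Theorem \ref{prop1.4} and in the two displays \eqref{primaquadratino} and \eqref{quadratino}. The only small point requiring care is to make sure that the threshold on $p$ for quasi-radiality coming from \eqref{primaquadratino} and the threshold $p^\star$ from Theorem \ref{prop1.4}-$ii)$ can be taken simultaneously, which is achieved by taking the maximum of the two; and to invoke the nodal set description of \cite{AftalionPacella} in order to separate $u_p^k$ from the foliated Schwarz symmetric least energy nodal solution $\widetilde u_p$.
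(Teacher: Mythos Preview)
Your proposal is correct and matches the paper's approach: the corollary carries no separate proof in the paper, being presented as the immediate combination of Theorem \ref{prop1.4}-$ii)$ with \eqref{primaquadratino} and \eqref{quadratino}, exactly as you do. The only minor variation is your argument for $u_p^k\neq\widetilde u_p$ via the nodal set (the quasi-radial nodal line does not touch $\partial B$, while that of $\widetilde u_p$ does by \cite{AftalionPacella}); this is valid, but note that the paper had already recorded $u_p^k\neq\widetilde u_p$ for every $k\geq 2$ and every $p$ by a different observation earlier in the introduction---if the two coincided then $\widetilde u_p$ would be $\frac{2\pi}{k}$-periodic in the angular variable and hence, by its foliated Schwarz symmetry, radial, a contradiction.
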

\begin{figure}[h]
  %\footnotesize
  \centering
  \resizebox{410pt}{!}{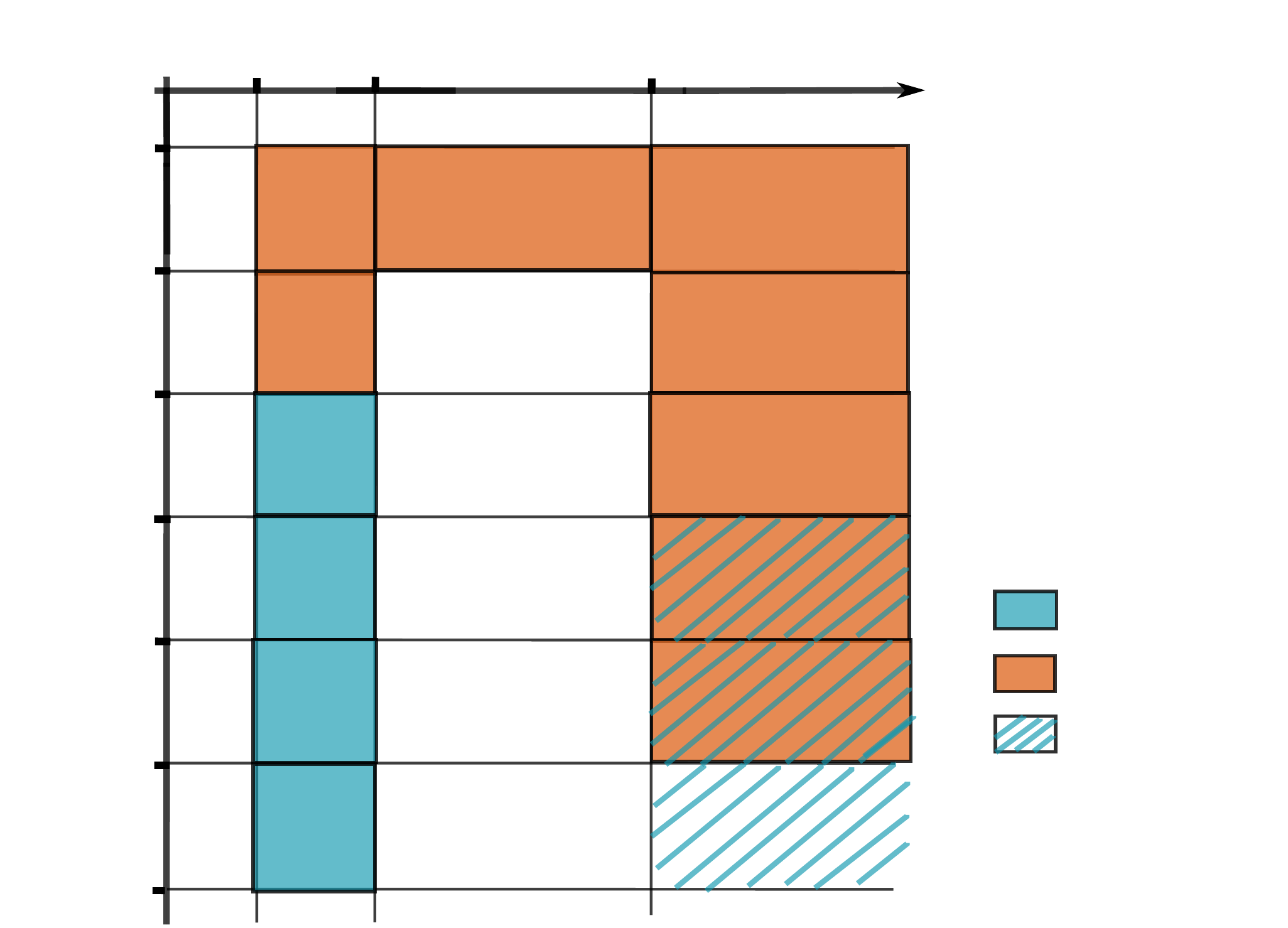}
  \caption{Symmetry of $u_p^k$ from Theorem \ref{prop1.4} and Corollary \ref{cor:teoleast}}
\end{figure}
We conjecture that  the bifurcating solution in $\mathcal X_k$ found in Theorem \ref{teo1}  not only exists for any $p\geq p^k$ but also coincides with $u_p^k$, when $k=4,5$ and even $3$. 
Differently from the higher symmetry cases considered in Corollary \ref{cor:teoleast}, when $k=3$ we do not expect $u_p^k$ to keep the \emph{quasi-radial} shape for large $p$.
For $k=2$ we believe that $u_p^k$  is not radial for all $p$ and also not \emph{quasi-radial} (when $p$ is close to $1$ it could be proved  rigorously,  see Remark \ref{RemarkNoQuaiRadk=2}),  for $k=1$  we recall that $u_p^k=\widetilde u_p$ for any $p\in (1,+\infty)$.  
The case $k\geq 6$ and $p$ large is not covered by the previous result, we conjecture that $u_p^k$ is radial, observe that this is not in contrast with \eqref{quadratino}. 
The asymptotic behavior, as $p\rightarrow +\infty$, of the least energy sign-changing solution $u_p^k$ of \eqref{problem} in the spaces $H^{1}_{0,k}(B)$ will be object of a subsequent paper \cite{GIP}.

\

Next we briefly explain the main ideas to get Theorem \ref{teo1} and Theorem \ref{prop1.4}.

\

The bifurcation in Theorem \ref{teo1} is  with respect to the exponent $p$ of the nonlinearity, previous results in this direction can be found for instance in \cite{GladialiGrossiPacellaSrikanth} and \cite{G10}. 
Observe that the bifurcation   can occur only at values $p$ at which the least energy nodal radial solution $u_p$ is degenerate and that a sufficient condition to identify degeneracy  points is to have a change in the Morse index of $u_p$.
  \\
This paper starts then from the recent results in \cite{DeMarchisIanniPacellaMathAnn} where the Morse index of the radial least energy sign-changing solution $u_p$ is computed for large values of $p$, proving the existence of an exponent  $p^{\star}>1$ such that:
\begin{equation}\label{morseplargeintro}
m(u_p)=12 \qquad \forall\ p\geq p^{\star}.
\end{equation}
This result is only for large $p$ and it strongly relies on the asymptotic behavior of $u_p$ as $p\rightarrow +\infty$, which has been described in  \cite{GGP2}. Indeed, an asymptotic analysis of the behavior of the solution $u_p$ as $p\rightarrow 1$ shows that a suitable re-normalization of $u_p$ converges to the second radial eigenfunction of the Laplace operator with Dirichlet boundary conditions (see Lemma \ref{lemma-pvicino1}) and this allows  to compute the Morse index of $u_p$ for $p$ close to $1$, showing that it has a different value in this range. More precisely in Proposition \ref{risultatoMorse_pvicino1} we get the existence of $\delta>0$ such that
\begin{equation}\label{morse-index-6}
 m(u_p)=6 \qquad \forall\ p\in (1,1+\delta).
\end{equation}
Hence \eqref{morseplargeintro} and \eqref{morse-index-6} prove  that along the branch of radial solutions $(p,u_p)$ of \eqref{problem} there should be points at which the Morse index increases and this change of the Morse index of $u_p$ in the interval $(1,+\infty)$  suggests  bifurcation  from $u_p$. 

\

We underline that in the convex domain $B$ this phenomenon is specific of sign-changing solutions, since the positive solution in $B$ is unique and non-degenerate  (for the uniqueness in more general convex domains see \cite{DeMarchisGrossiIanniPacellaprogress}).
\\
Anyway this is the first time that a non-radial bifurcation result from sign-changing solutions in convex domains is  observed and there was no chance to get it
before the study of the Morse index in  \cite{DeMarchisIanniPacellaMathAnn}.

\

To prove the result in Theorem \ref{teo1} we need first to analyze the degeneracies of the solution $u_p$. This is the goal of Sections \ref{section:generale}, \ref{section p large} and \ref{sse:pvicino1}.
We first consider in Section \ref{se:auxiliary} an auxiliary singular weighted eigenvalue problem 
\begin{equation} \label{problemaPesatoPallaINTRO}
\begin{cases}
\begin{array}{ll}
-\Delta \psi - p|u_p(x)|^{p-1} \psi =\frac{\beta}{|x|^2} \psi\qquad & \text{ in } B\setminus\{0\}, \\
\psi= 0 & \text{ on } \partial B\\
\int_B|\nabla \psi|^2+\frac{\psi^2}{|x|^2}<+\infty,
\end{array} 
\end{cases}
\end{equation}
which has the same kernel and the same number of negative eigenvalues of the linearized operator at $u_p$ (see Lemma \ref{lemma:Morse=numeroAutovaloriesatoPalla}) and 
whose main advantage relies on the fact that, in addition, a classical spectral decomposition into radial and angular part may be applied to it (Lemma \ref{lemma:decomposizionePalla}). 
The weighted eigenvalue problem \eqref{problemaPesatoPallaINTRO} belongs to the class of eigenvalue problems which has been studied in \cite{GGN2}, where the  eigenvalues for \eqref{problemaPesatoPalla} have been variationally characterized in the case when they are \emph{negative}. 
\\
Since  $u_p$ is the radial least energy nodal solution, then in the space of  radial functions its Morse index is $2$, in  Section \ref{subsec:mo}, in view of the spectral decomposition,  we  estimate the two negative radial eigenvalues of problem \eqref{problemaPesatoPallaINTRO} from above and from below by certain consecutive eigenvalues of $-\Delta_{S^1}$. The proof is based on the approximation of the negative eigenvalues of problem \eqref{problemaPesatoPallaINTRO} by the negative eigenvalues of a family of  weighted eigenvalue problems in annuli already studied in \cite{DeMarchisIanniPacellaMathAnn}, in particular we can extend some previous estimates in \cite{DeMarchisIanniPacellaMathAnn} related to the negative radial eigenvalues in annuli  to the negative eigenvalues for the singular problem \eqref{problemaPesatoPallaINTRO}. As a consequence of our estimates we get  information about the Morse index of the solution $u_p$ (Lemma   \ref{proposition:autovaloriRadiaiGenerale}) and a general characterization of its degeneracy (Proposition \ref{p4.7}), for any $p>1$. Finally, thanks to \eqref{morseplargeintro} and \eqref{morse-index-6}, we get more specific results both in the case   
 $p$ large and  $p$ close to $1$ (see Sections \ref{section p large} and \ref{sse:pvicino1}).
\\
Observe that, due now to the spectral decomposition, we can decompose any solution of the linearized equation at $u_p$ (and more in general each solution of the eigenvalue problem  \eqref{problemaPesatoPallaINTRO}) along spherical harmonics,  which in $\R^2$ are  the functions $\cos ( j\theta), \ \sin (j \theta)$ with $j\in \N$, getting in particular an explicit representation of the solutions of the linearized equation when they are nontrivial (and more in general of the eigenfunctions of \eqref{problemaPesatoPallaINTRO} associated with \emph{negative} eigenvalues).
As a consequence we can then \emph{identify the symmetries} of those functions which  are responsible of the degeneracy of $u_p$ (or which give rise to negative eigenvalues for the linearized operator at $u_p$). This aspect has been investigated in Section \ref{se:symmspaces}, where the symmetric spaces ($H^1_{0,k}(B)$ and) $\mathcal X_k$ have been introduced and the degeneracy and Morse index  of $u_p$ in these spaces studied (see Proposition \ref{Morse-simmetrico1}, \ref{Morse-simmetrico2} and \ref{lemma:degenerazioneSimmetria}).
\\
The reason for restricting to the spaces $\mathcal X_k$ is to isolate a \emph{unique}  function in the kernel of the linearized operator; more precisely, on one side it allows to select one suitable spherical harmonic (between $\sin$ and $\cos$) that produces degeneracy and, on the other side it avoids a possible double degeneracy due to the contemporary vanishing of two eigenvalues,  possibility that cannot be ruled out and it is specific of sign-changing solutions. 
Since we do not know explicitly the solution $u_p$, it is not clear whether the transversality condition of the well-known Crandall-Rabinowitz Theorem (for one dimensional kernel) is satisfied or not. Anyway the bifurcation result may be obtained here using a degree argument. 
The separation of the  branches  is obtained defining suitable cones $\mathcal K_k\subset\mathcal X_k$ of monotone functions introduced by Dancer in \cite{D} and using the degree in cones, see \cite{Amman} (see Section \ref{se:bifurcation} for the definitions of the cones).
The \emph{quasi-radiality}  is inherited from the radial least energy solution $u_p$, since  near the bifurcation point the bifurcating solution is a small perturbation of it (see  Remark \ref{remark:shape}).\\
Along the branch instead the number of nodal regions and the shape of the solutions may change, anyway the characterization of the behavior for branches of non-radial solutions may be a very difficult task to investigate, we also conjecture that the branches exist for every $p\geq p^k$.

\

In this paper we have focused on the radial least energy sign-changing solution $u_p$ of \eqref{problem}. A bifurcation result similar to Theorem \ref{teo1} could be obtained from any nodal radial solution $u_p^m$ of  \eqref{problem} with $m> 2$ nodal regions, provided information about its Morse index when $p$ is large is available. In this case  we expect that the symmetries which cause the degeneracy and hence produce branches of bifurcating solutions, should be of the same type of the one for functions in $\mathcal X_k$ (which derive by the symmetry groups of spherical harmonics), but with different values of $k$, probably $k\geq 6$.
%Indeed, the computation of the Morse index of the solution $u_p^m$ for large value of $p$ is a work in progress. Recall that our starting point is the paper \cite{DeMarchisIanniPacellaMathAnn} where only the case of $m=2$ was considered. The computation of the Morse index of $u_p^m$ for $p$ close to $1$ can be handled in a very similar way to the case of $m=2$, computing the Morse index of the radial eigenfunctions of the Laplacian with Dirichlet boundary conditions with exactly $m$ nodal domains, see Section \ref{sse:pvicino1}. What we expect is that when $m>2$ the symmetric spaces that cause the degeneracy and, hence, that produce branches of bifurcating solutions, should be of the same type of the $\mathcal X_k$ (which derive by the symmetry groups of spherical harmonics), but with different values of $k$.} 
  
  \

Moreover one could think to extend the bifurcation result in Theorem \ref{teo1} also to higher dimension $N\geq 3$, when $p\in (1,\frac{N+2}{N-2})$.
Indeed the behavior of all the radial sign-changing solutions of \eqref{problem} has been studied in \cite{DeMarchisIanniPacellaAdv} and in particular their Morse index has been explicitly computed when $p$ is sufficiently close to $\frac{N+2}{N-2}$, giving for instance,  for the radial least energy sign-changing solution $u_p$:
\[m(u_p)=2+N, \quad \mbox{ for $p$ close to $\frac{N+2}{N-2}$. }\]
Similarly as in the $2$-dimensional case, we expect a change in the Morse index of $u_p$ as $p$ varies from $1$ to $\frac{N+2}{N-2}$. Indeed $u_p$ should converge as $p\rightarrow 1$  to the radial Dirichlet eigenfunction with $2$ nodal regions of the Laplace operator in $B$ and this would imply
\[m(u_p)= 2+N+\frac{(N+2)(N-1)}2, \quad\mbox{ for $p$ close to $1$.}\]  
Again a change in the Morse index  should give a nonradial bifurcation result. An extra difficulty in dimension $N\geq 3$  would be to identify the symmetry groups of the spherical harmonics, which are much more involved than those of the $2$-dimensional spherical harmonics, see for instance \cite{AG}. 

%{ricordarsi di DIRE QUALCOSA SU (forthcoming papers?!conjecture?)
%1. caso di soluzioni nodali radiali $u_p^m$ con numero qualunque  $m\geq 2$ di regioni nodali:\\ ci aspettiamo un cambio di indice di Morse e quindi fenomeni di biforcazioni simili. \\In particolare il conto dell'indice di Morse per  soluzione $u_p^m$ con qualunque  $m\geq 2$  per $p$ grande e' ancora un work in progress (in \cite{DeMarchisIanniPacellaMathAnn} e' stato trattato solo il caso $m=2$), mentre per $p$ vicino ad $1$ il calcolo si fa in modo simile a quanto fatto qui nel caso $m=2$. 
%2. in dimensione $N\geq 3$ ci aspettiamo anche qui che avvenga un fenomeno analogo, dovuto al cambio di indice di Morse al variare di $p\in (1, \frac{N+2}{N-2})$.\\ In questo caso il calcolo dell'indice di Morse per $p$ vicino all'esponente critico e' stato gia' fatto per \emph{tutte} le soluzioni nodali con un numero qualunque $m\geq 2$ di regioni nodali (vedere \cite{DeMarchisIanniPacellaAdv}), ma il calcolo dell'indice di Morse per $p$ vicino ad $1$  puo' essere che sia piu' complicato}
%{perch\'e le armoniche sferiche sono un po' diverse, vediamo}

\

Next we discuss the main ideas behind the proof of Theorem \ref{prop1.4}, which is contained in Section  \ref{section:proofTeoLeastEnergy}.

\

The \emph{non-radial} part is a byproduct of the study of the symmetry groups that cause the degeneracy and the bifurcation from $u_p$. Indeed in order to prove that $u_p$ and $u_p^k$ do not coincide one would like to compare their Morse indexes and show that they are different. However the computation of $m(u_p^k)$ may be very difficult, but if we restrict to  the symmetric spaces $H^1_{0,k}(B)$ then the Morse index of $u_p^k$ is always $2$ (see Lemma \ref{lem-10-1}). On the other side 
we are able to compute the  \emph{symmetric Morse index} also for the radial solution  $u_p$  (Proposition \ref{Morse-simmetrico1} and \ref{Morse-simmetrico2}). Observe that it is computed only for $p$ close to $1$ and $p$ large since it is deduced, among other things, from the asymptotic analysis of $u_p$ as $p\rightarrow 1$ and as $p\rightarrow +\infty$ respectively.

\

The proof of the {\sl radial part} of Theorem \ref{prop1.4} is more involved. It relies on a careful blow-up procedure in the spirit of \cite{GS} for showing $L^{\infty}$ bounds for the solutions $u_p^k$ (see Proposition \ref{prop7.4}). Once an $L^{\infty}$ bound
is available one can deduce the result by studying the asymptotic behavior of the solutions $u_p^k$ as $p\rightarrow 1$ (see the proof of Proposition \ref{leastRadiale}). In particular a delicate expansion  of $\norm{u_p^k}_{\infty}$ at $p=1$ up to the second order is needed.
\\
Getting a uniform  $L^{\infty}$ bound is somehow standard for solutions with uniformly bounded Morse index, since  one shows that the bound on the Morse index is preserved as $p\rightarrow 1$, while  the blow-up analysis of 
unbounded  solutions in $L^{\infty}$-norm  leads to solutions to limit problems in unbounded domains, whose  Morse index is not finite, thus reaching a contradiction.
\\
The main problem here is that for the least energy symmetric solutions  $u_p^k$ we do not have a bound for the full Morse index, but {\sl  only for the $k$-Morse index} (see Lemma \ref{lem-10-1}), while in the rescaling procedure the symmetries are not preserved.
\\
To overcome this technical difficulty we exploit the symmetry of $u_p^k$ and reduce problem \eqref{problem} to the circular sector $S_k$ of the ball of amplitude $\frac{\pi}k$, for $k\in \N_0$.
In particular we are able to convert  the bound on the  $k$-Morse index   to a bound on the full Morse index of $u_p^k$ in the sector $S_k$ (Morse index for a mixed Dirichlet-Neumann problem, see Lemma \ref{lem-10-nuovo}) and finally we perform the blow-up argument in $S_k$.
\\
Also the blow-up procedure in $S_k$ requires special care, since we have to deal with mixed boundary conditions and, above all, with the angular points of $S_k$. For these reasons  the analysis of the rescaled solutions includes several different cases, depending upon the location of the maximum points in the sector. Anyway in all the cases we end-up with solutions to  a limit linear problem in  unbounded domains %(either the plane or the half-plane or the quarter of the plane)
with either Dirichlet or Neumann or mixed boundary conditions, whose Morse index is finite. Finally studying the Morse index of solutions for these limit problems (Proposition \ref{prop7.3}) we get a contradiction.

\tableofcontents

\section{Preliminary results} \label{section:preliminaries}   
\begin{proposition}\label{PropositionUnicoMaxeMin}  \eqref{problem} admits a unique radial solution $(u_p)$ having $2$ nodal regions and satisfying \eqref{segno-in-zero}. Moreover:
\begin{itemize}
\item[$(i)$] $u_p(0)=\|u\|_{\infty}$
\item[$(ii)$] in each nodal region there is exactly one critical point (namely the maximum and the minimum points)
\end{itemize}
\end{proposition}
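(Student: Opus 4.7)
The plan is to split the statement into two independent pieces: the existence and uniqueness of a radial solution with $2$ nodal regions and $u(0)>0$ is the classical Ni--Nussbaum / Kajikiya theorem already cited in the introduction, so nothing new needs to be proved there; the qualitative statements (i)--(ii) will instead follow from a short ODE analysis based on monitoring the sign of $(r u'(r))'$.

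Rewrite the equation in radial form as $-(r u')'= r|u|^{p-1} u$ on $(0,1)$ with $u'(0)=0$ and $u(1)=0$, and let $r_0\in(0,1)$ be the unique interior zero of $u$, so that $u>0$ on $[0,r_0)$ and $u<0$ on $(r_0,1)$. On the inner nodal region $(0,r_0)$ the sign $u>0$ gives $(r u')'<0$; integrating from $0$, where $r u'$ vanishes, yields $u'<0$ on all of $(0,r_0)$. Hence $u$ is strictly decreasing on $[0,r_0]$, and the only critical point in the closed inner region is $r=0$. Symmetrically, on $(r_0,1)$ one has $u<0$, hence $(r u')'>0$, i.e. $r u'$ is strictly increasing. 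Uniqueness for the Cauchy problem at $r_0$ and at $r=1$ rules out $u'(r_0)=0$ and $u'(1)=0$ (either would force $u\equiv 0$), so $(r u')(r_0)<0$ while $(r u')(1)>0$. By strict monotonicity there is exactly one $r_1\in(r_0,1)$ with $u'(r_1)=0$; evaluating the equation at $r_1$ gives $-u''(r_1)=|u(r_1)|^{p-1}u(r_1)<0$, so $r_1$ is a strict local minimum and is the unique critical point in the outer region. This proves (ii).

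For (i) the natural tool is the radial energy
\[
E(r)\ :=\ \tfrac12 (u'(r))^2+\tfrac{1}{p+1}|u(r)|^{p+1},
\]
which along radial solutions satisfies $E'(r)=-(u'(r))^2/r$. By the previous step $u'\neq 0$ on $(0,r_1)$, so $E$ is strictly decreasing on $[0,r_1]$; comparing
\[
\frac{u(0)^{p+1}}{p+1}\ =\ E(0)\ >\ E(r_1)\ =\ \frac{|u(r_1)|^{p+1}}{p+1}
\]
yields $u(0)>|u(r_1)|$. Since any global extremum of $|u|$ on $[0,1]$ is attained either at the boundary (where $u$ vanishes) or at an interior critical point, and the interior critical points are precisely $0$ and $r_1$, we conclude $\|u\|_\infty=u(0)$.

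No substantive difficulty is expected: the only points requiring a bit of care are the non-vanishing of $u'$ at $r_0$ and at $r=1$, both immediate consequences of uniqueness for the Cauchy problem associated to the radial ODE, together with the observation that at an interior critical point the sign of $u''$ is determined by the equation itself.
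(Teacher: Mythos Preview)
Your argument is correct. The paper actually states this proposition without proof, treating it as a preliminary fact (existence and uniqueness are attributed in the introduction to \cite{NN} and \cite{KAJIKIYA}, and (i)--(ii) are taken as standard ODE consequences). Your write-up supplies exactly what is missing: a clean self-contained ODE argument. The monotonicity of $r u'$ on each nodal interval together with the Cauchy-uniqueness observation at $r_0$ and $r=1$ is the right tool for (ii), and the decreasing radial energy $E(r)=\tfrac12(u')^2+\tfrac{1}{p+1}|u|^{p+1}$ with $E'=-(u')^2/r$ is the standard device for comparing the two critical values and obtaining (i). One small remark: for the uniqueness step at $r_0$ and $r=1$ you implicitly use that $s\mapsto |s|^{p-1}s$ is $C^1$ (hence locally Lipschitz) for $p>1$, which is worth saying explicitly; otherwise the argument is complete as written.
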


In \cite[Lemma 5.2]{HRS} the authors proved the following estimate that can be useful in the sequel: 
\begin{lemma}
For any $p_*\in (1,+\infty)$ there exist constants $m,M$ such that, for any $p\in(1,p_*]$
\begin{equation}\label{*}m\leq  \left(\norm{ u_p}_{\infty} \right)^{p-1}\leq M.
\end{equation}
\end{lemma}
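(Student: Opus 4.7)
The plan is to reduce the statement to a continuity property of an ODE zero with respect to the parameter $p$. Set $K_p := \|u_p\|_\infty = u_p(0)$ (by Proposition \ref{PropositionUnicoMaxeMin}) and $\mu_p := K_p^{(p-1)/2}$, and define
\[
v_p(s) := \frac{1}{K_p}\, u_p\!\left(\frac{s}{\mu_p}\right).
\]
A direct chain-rule computation shows that $v_p$ is the unique radial solution of the initial value problem
\[
-v''(s) - \frac{1}{s}\, v'(s) = |v(s)|^{p-1}v(s), \qquad v(0) = 1,\quad v'(0) = 0,
\]
on $[0,+\infty)$. The Dirichlet boundary condition $u_p = 0$ on $\partial B$ becomes $v_p(\mu_p) = 0$, and since $\sharp(u_p) = 2$ the rescaling $v_p$ has exactly one interior zero in $(0, \mu_p)$; hence $\mu_p$ is the second positive zero $s_2(p)$ of the solution of the IVP above, and one obtains the identity
\[
K_p^{p-1} = \mu_p^2 = s_2(p)^2.
\]

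It is therefore enough to show that $s_2(\cdot)$ is bounded above and away from $0$ on $(1,p_*]$. The nonlinearity $(v,p)\mapsto |v|^{p-1}v$ is jointly continuous and, for $p\geq 1$, locally Lipschitz in $v$, so the classical theorem on continuous dependence of ODE solutions on parameters yields that $p\mapsto v_p$ is continuous in $C^1_{\mathrm{loc}}[0,+\infty)$ on the \emph{closed} interval $[1,p_*]$. At the endpoint $p=1$ the equation becomes Bessel's equation, and the unique solution with $v(0)=1$, $v'(0)=0$ is $J_0$, so that $s_2(1) = j_{0,2}$, the second positive zero of $J_0$, which is simple since $J_0'(j_{0,2})\neq 0$. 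Combining this simplicity with the $C^1$-continuity of $p\mapsto v_p$, the implicit function theorem gives continuity of $s_2$ at $p=1$; the same argument applies at every $p\in(1,p_*]$. Thus $s_2$ is continuous and strictly positive on the compact set $[1,p_*]$, hence bounded above and below by positive constants, which via the displayed identity translates into the desired bounds on $K_p^{p-1}$.

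The delicate point is the limit $p\to 1^+$: one must verify both that no two consecutive zeros of $v_p$ coalesce and that $s_2(p)$ does not escape to $+\infty$. Coalescence is ruled out by the simplicity of the zeros of $J_0$, while the escape scenario is ruled out by the uniform convergence $v_p\to J_0$ on any fixed compact interval containing $[0, j_{0,2}+1]$, which is an immediate consequence of the continuous-dependence argument above and forces the zero pattern of $v_p$ to mimic that of $J_0$ for $p$ close to $1$.
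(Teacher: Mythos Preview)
The paper does not supply its own proof of this lemma; it simply cites \cite[Lemma~5.2]{HRS}. Your argument is correct and is essentially the natural one: rescale $u_p$ to the normalized IVP solution $v_p$ with $v_p(0)=1$, identify $\|u_p\|_\infty^{\,p-1}$ with $s_2(p)^2$, the square of the second positive zero of $v_p$, and then use continuous dependence of ODE solutions on the parameter $p$ together with the simplicity of the zeros to obtain continuity of $s_2$ on the compact interval $[1,p_*]$. One small point could be made more explicit: the ODE carries a regular singularity at $s=0$, so the continuous-dependence statement you invoke is the version adapted to that setting (e.g.\ via the equivalent integral equation $v(s)=1-\int_0^s t^{-1}\!\int_0^t \tau\,|v(\tau)|^{p-1}v(\tau)\,d\tau\,dt$ and a contraction argument uniform in $p$); this is routine but worth a line.
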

Finally we state a Proposition which provides the behavior, at the singularity, of solutions to a singular ordinary differential equation. This result is partially contained in \cite[Lemma 2.4]{GGN2}, although one implication is new and  proved here.
\begin{proposition}\label{p2.2}
Let $\psi$ be a solution to 
\begin{equation}\label{f1a}
\begin{cases}
-\psi'' - \frac{1}{r}\psi' +\beta^2\frac{\psi}{r^2}=h\psi, \quad \text{in} \ \ (0,1)\\
\psi(1)=0,        \ \int_0^1 r (\psi')^2dr< \infty
\end{cases}
\end{equation}
with $h\in L^\infty(0,1)$ and $\beta>0$. Assume that $\psi$  satisfies one of the following conditions:
\begin{eqnarray*}
a)  & \psi(0)=0\\
b) &  \int_0^1\frac{\psi^2}{r} dr< \infty.
\end{eqnarray*}
Then $\psi\in L^\infty(0,1)$ and 
\begin{equation}\label{psi-in-zero}
\psi(r)=O(r^{\beta}) \ \ \text{ as } \ r\to 0.
\end{equation} 
\end{proposition}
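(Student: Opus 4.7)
The plan is to reduce the equation, via the change of variables $\psi(r)=r^{\beta}\phi(r)$, to the conservative form
\[(r^{2\beta+1}\phi'(r))'=-r^{2\beta+1}h(r)\phi(r)\qquad\text{on }(0,1),\]
so that proving $\psi(r)=O(r^{\beta})$ amounts to showing $\phi\in L^{\infty}(0,1)$. The homogeneous equation $(r^{2\beta+1}\phi')'=0$ has fundamental solutions $1$ and $r^{-2\beta}$, corresponding under the substitution to the two Frobenius solutions $r^{\beta}$ and $r^{-\beta}$ of the unperturbed version of \eqref{f1a}. The whole point is thus to rule out the singular mode $r^{-\beta}$ in $\psi$.

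Two successive integrations of the transformed equation from a fixed $r_{0}\in(0,1)$, followed by a Fubini exchange, yield a representation
\[\phi(r)=A(r_{0},r)-\frac{r^{-2\beta}}{2\beta}\,g(r_{0},r),\qquad g(r_{0},r):=r_{0}^{2\beta+1}\phi'(r_{0})+\int_{r}^{r_{0}}t^{2\beta+1}h(t)\phi(t)\,dt,\]
with $A(r_{0},r)$ collecting terms that stay bounded as $r\to 0$. The crux is to show that $g(r_{0},r)\to 0$ as $r\to 0^{+}$ under either hypothesis, since this guarantees (via a short estimate on $r^{-2\beta}g$) that $\phi\in L^{\infty}$ near the origin, hence globally by the classical regularity of $\psi$ on $(0,1]$.

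Under a), the classical regularity of $\psi$ on $(0,1)$ together with $\psi(0)=0$ gives $\psi\in C([0,1])\subset L^{\infty}$; in particular $t^{2\beta+1}h\phi=t^{\beta+1}h\psi$ is absolutely integrable near $0$ and $g(r_{0},\cdot)$ admits a finite limit $L_{a}$ at $0$. If $L_{a}\neq 0$ then $\phi(r)\sim -L_{a}r^{-2\beta}/(2\beta)$, whence $\psi(r)\sim -L_{a}r^{-\beta}/(2\beta)\to\pm\infty$, contradicting $\psi(0)=0$; hence $L_{a}=0$. Under b), Cauchy--Schwarz gives
\[\int_{0}^{r_{0}}t^{2\beta+1}|h(t)\phi(t)|\,dt\le\|h\|_{\infty}\Bigl(\int_{0}^{r_{0}}t^{2\beta+3}\,dt\Bigr)^{1/2}\Bigl(\int_{0}^{r_{0}}t^{2\beta-1}\phi(t)^{2}\,dt\Bigr)^{1/2}<\infty,\]
so again $g(r_{0},\cdot)$ has a finite limit $L_{b}$ at $0$. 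If $L_{b}\neq 0$, the same asymptotic yields $\psi^{2}/r\sim L_{b}^{2}r^{-2\beta-1}/(4\beta^{2})$ near $0$, which is not integrable on $(0,1)$ for any $\beta>0$: a contradiction. Thus $L_{b}=0$ as well.

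The principal technical obstacle is case b), where only a weighted $L^{2}$ control on $\phi$ is available: one must verify that $A(r_{0},r)$ stays bounded and that $r^{-2\beta}g(r_{0},r)$ vanishes at the origin without any a priori pointwise bound on $\phi$, which requires a careful tuning of the Cauchy--Schwarz estimate above (and possibly a short bootstrap when $\beta$ is large, to handle the power mismatch between $r^{-2\beta}$ and the bound $o(r^{\beta+2})$ on the relevant integrals). By contrast, case a) reduces essentially to a continuity-plus-contradiction argument.
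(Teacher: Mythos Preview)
Your approach is essentially the same as the paper's: the substitution $\psi=r^{\beta}\phi$ is exactly the Wronskian identity the paper obtains by testing against $v(r)=r^{\beta}$, and both routes lead to the representation \eqref{fra1} (your $\phi(t)=A-\tfrac{r^{-2\beta}}{2\beta}g$) together with the same iterative bootstrap for $\beta\ge 2$. Two minor differences worth noting: (i) to kill the singular mode, the paper shows directly that $r_n^{\beta+1}\psi'(r_n)\to 0$ along a sequence, whereas you argue by contradiction ($L\neq 0\Rightarrow\psi\sim cr^{-\beta}$, incompatible with either hypothesis)---your route is slightly cleaner and avoids the intermediate estimate $|r_n^{\beta}\int_{r_n}^1\psi/r\,dr|\le Cr_n^{\beta}|\log r_n|$; (ii) for case b) the paper simply cites \cite{GGN2}, while you supply the Cauchy--Schwarz argument. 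Be aware that the bootstrap is needed in \emph{both} cases a) and b) when $\beta\ge 2$ (not only in b) as your last paragraph suggests), since $r^{-2\beta}g(r_0,r)=O(r^{2-\beta})$ after the first pass.
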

\begin{proof}
When $\psi$ satisfies condition $b)$ then the thesis follows from Lemma 2.4 in \cite{GGN2} (see 
estimate (2.28)). When $\psi$ satisfies condition $a)$ we observe that since $\psi$ solves \eqref{f1a} then $\psi\in L^{\infty}(0,1)$. We can proceed as 
in the proof of Lemma 2.4 in \cite{GGN2}. 
Then, multiplying by $r_n$ \eqref{f1a} and integrating in $(r_n,1)$ we get 
\[r_n^{\beta+1}\psi'(r_n)-r_n^\beta \psi'(1)+\beta^2r_n^{\beta}\int_{r_n}^1\frac{\psi}r \ dr=r_n^{\beta}\int_{r_n}^1r h(r)\psi(r)\ dr.\]
Using the fact that along a sequence $r_n\to 0$ it holds
\[\big| r_n^{\beta}\int_{r_n}^1 \frac {\beta^2}s\psi(s) \ ds \Big| \leq C r_n^{\beta}|\log{r_n}|=o(1)\]
we get as $n\to \infty$
\[r_n^{\beta+1}\psi'(r_n)=o(1).\]
Observe now that the function $v(r)=r^{\beta}$ satisfies
\begin{equation}\label{inter}
-v''-\frac 1r v'+\frac{\beta^2}{r^2}v=0 \ \text{ in }(0,1) \ , \ v(0)=0
\end{equation}
We multiply \eqref{f1a} by $v$, we multiply \eqref{inter} by $\psi$, we integrate on $(r_n,R)$, with $R\in (0,1)$, we subtract the two equations and we get
\[\int_{r_n}^R r^{\beta+1}h(r)\psi(r) \ dr=r_n^{\beta+1}\psi'(r_n)-\beta r_n^{\beta}\psi(r_n)-R^{\beta+1}\psi'(R)+\beta R^{\beta}\psi(R)\]
and, passing to the limit as $n\to \infty$
\[
 \int_0^R r^{\beta +1}h(r)\psi(r) \ dr=-R^{\beta+1}\psi'(R)+\beta R^{\beta}\psi(R)\]
which implies for any $t\in(0,1)$
\begin{equation}\label{fra1}
\frac{\psi(t)}{t^{\beta}}=\int_t^1\frac1{R^{2\beta+1}}
\left(\int_0^Rs^{\beta+1}h(s)\psi(s)ds\right)dR.
\end{equation}
The boundedness of $h(s)$ and $\psi(s)$ then gives
\begin{equation}\label{fra2}
\Big|\int_0^R s^{\beta +1}h(s)\psi(s)ds\Big|\leq CR^{\beta+2}
\end{equation}
which, together with \eqref{fra1} gives
\[
\frac{|\psi(t)|}{t^{\beta}}\leq \begin{cases}
C |1-t^{2-\beta}| & \text{ if }\beta\neq 2\\
C(1-\log t) & \text{ if }\beta= 2
\end{cases}\]
and this implies the thesis in case $\beta<2$. When $\beta\geq 2$ instead we have $ |\psi(t)|\leq Ct^2$ for $\beta>2$ and $ |\psi(t)|\leq Ct^{\beta-\varepsilon}$ for $\beta=2$ where $0<\varepsilon<<1$. Inserting these estimates into \eqref{fra2} then we have
\[  
\Big|\int_0^R s^{\beta +1}h(s)\psi(s)ds\Big|\leq \begin{cases}
CR^{\beta+4} & \text{ if } \beta>2\\
CR^{2\beta+1-\varepsilon}& \text{ if } \beta=2
\end{cases}
\]
which, together with \eqref{fra1} gives
\[
\frac{|\psi(t)|}{t^{\beta}}\leq \begin{cases}
C |1-t^{4-\beta}| & \text{ if }\beta\neq 4\\
C(1-\log t) & \text{ if }\beta= 4\\
C(1-t^{1-\varepsilon})& \text{ if }\beta= 2
\end{cases}\]
which implies the thesis when $\beta<4$. We can repeat the procedure. At each step the set of values of $\beta$ at which 
\eqref{psi-in-zero} is satisfied increases by $2$. Then for every value of $\beta$ the thesis follows after a finite number of steps. 
\end{proof}

\

\section{Linearized operator}\label{section:linearizedOperator}
Let $L_{p}: H^2(B)\cap H^1_0(B)\rightarrow L^2(B)$ be the linearized operator at $u_p$, namely
\begin{equation}\label{linearizedOperator} L_{p} v: =   -\Delta v-p|u_p(x)|^{p-1}v.
\end{equation}
It is well known that $L_p$ admits a sequence of eigenvalues which, counting them according to their multiplicity, we denote by
\[\mu_1(p)< \mu_2(p)\leq\ldots\leq\mu_i(p)\leq\ldots,\quad \mu_i(p)\rightarrow +\infty  \mbox{ as }i\rightarrow +\infty,\]
where the first inequality is strict because it is known that $\mu_1(p)$ is simple. 
We also recall their min-max characterization
\begin{eqnarray}\label{CourantCharEigenv}
\mu_i(p) &=& \min_{\substack{
W\subset H^1_{0}(B)\\ dim W=i}}   \max_{\substack{v\in W\\v\neq 0}}\ \ \
R_p[v],\qquad i\in\N_0
\end{eqnarray}
where $R_p[v]$ is the Rayleigh quotient
\begin{equation}\label{Rayleigh}
R_p[v]:=\frac{Q_p(v)}{\int_B v(x)^2 dx}
\end{equation}
and $Q_p: H^1_0(B)\rightarrow \mathbb R$ denotes the quadratic form associated to $L_p$, namely
\begin{equation}\label{formaQuadratica}
Q_p (v):=\int_B \left[|\nabla v(x)|^2 -p|u_p(x)|^{p-1}v(x)^2  \right]dx.
\end{equation}

\

Since $u_p$ is a radial solution to \eqref{problem} we can also consider the subsequence of $(\mu_i(p))_{i\in\mathbb N_0}$ of the radial eigenvalues of $L_p$
(i.e. eigenvalues which are associated to a radial eigenfunction) that we denote by  \[\mu_{i,\rad}(p), \quad i\in\N_0\]
and which are all simple in the space of radial functions. 

For the eigenvalues $\mu_{i,\rad}(p)$ an analogous characterization holds:
\begin{eqnarray}\label{CourantCharEigenvRad}
\mu_{i,\rad}(p) &=& \min_{\substack{
W\subset H^1_{0,rad}(B)\\ dim W=i}}   \max_{\substack{v\in W\\v\neq 0}}\ \ \
R_p[v]
\end{eqnarray}
where $R_p$ is as in \eqref{Rayleigh} and $H^1_{0,\rad}(B)$ is the subspace of the radial functions of $H^1_0(B)$. Moreover it is known that $\mu_{1,\rad}(p)=\mu_1(p).$

\

The {\sl Morse index of $u_p$}, denoted by $m(u_p)$, is the maximal dimension of a subspace $X\subseteq H^1_0(B)$ such that $Q_p(v)<0,  \ \forall v\in X\setminus\{0\}$. Since $B$ is a bounded domain this is equivalent to say that $m(u_p)$ is  the number of the negative eigenvalues of $L_p$ counted with their multiplicity.
\\
The {\sl radial Morse index of $u_p$}, denoted by $m_{\rad}(u_p)$,
is instead the number of the negative radial eigenvalues $\mu_{i,\rad}(p)$ of $L_{p}$.

\

By the results in  \cite{AftalionPacella} we have
\begin{lemma}\label{LemaAftalionPacella} 
For any $p>1$
\[(+\infty >)\ m(u_p)\geq 4.\]
\end{lemma}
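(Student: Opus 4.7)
My plan is to combine two orthogonal sources of negative directions for the quadratic form $Q_p$ associated with the linearized operator $L_p$. The upper bound $m(u_p)<+\infty$ is standard: since $p|u_p|^{p-1}\in L^\infty(B)$ and $B$ is bounded, $L_p$ is a relatively compact perturbation of the Dirichlet Laplacian, so its spectrum is discrete with $\mu_i(p)\to+\infty$, and only finitely many eigenvalues can be negative.

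For the lower bound $m(u_p)\ge 4$ I will exhibit four linearly independent directions in $H^1_0(B)$ on which $Q_p$ is negative definite, split between the radial subspace and the first angular Fourier mode. For the radial part I would test with $u_p^+$ and $u_p^-$. These belong to $H^1_0(B)$ and have disjoint supports, and testing the equation $-\Delta u_p=|u_p|^{p-1}u_p$ against $u_p^{\pm}$ yields the standard identity
\[
Q_p(u_p^\pm)=(1-p)\int_B (u_p^\pm)^{p+1}\,dx<0
\]
for $p>1$, while the bilinear cross-term between $u_p^+$ and $u_p^-$ vanishes because of the disjoint supports. Hence $Q_p$ is negative definite on $\mathrm{span}(u_p^+, u_p^-)$, producing two negative (radial) eigenvalues.

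For two further negative directions I would restrict to the first angular Fourier mode: a test function $w=\varphi(r)\cos\theta$ (or $\varphi(r)\sin\theta$) gives
\[
Q_p(w)=\pi\int_0^1\!\left[r(\varphi')^2+\frac{\varphi^2}{r}-p|u_p|^{p-1}\varphi^2\,r\right]dr,
\]
which is the quadratic form of the singular one-dimensional operator
\[
L_{p,1}\varphi:=-\varphi''-\frac{1}{r}\varphi'+\frac{1}{r^2}\varphi-p|u_p|^{p-1}\varphi
\]
on $(0,1)$ with the natural boundary conditions $\varphi(0)=\varphi(1)=0$. The key observation is that $\psi(r):=u_p'(r)$ solves $L_{p,1}\psi=0$ on $(0,1)$ (differentiate the radial ODE for $u_p$ in $r$), that $\psi(0)=0$ by smooth radial regularity, that $\psi(1)=u_p'(1)\neq 0$ by Hopf's lemma applied in the negative nodal annulus, and that $\psi$ has exactly one interior zero in $(0,1)$, located at the unique critical point $r^*$ in the negative nodal region granted by Proposition \ref{PropositionUnicoMaxeMin}. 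A Sturm oscillation argument will then force the first Dirichlet eigenvalue $\lambda_1^{(1)}$ of $L_{p,1}$ to be strictly negative: if $\lambda_1^{(1)}>0$, the positive first eigenfunction $\varphi_1$ would have to possess an interior zero between the two consecutive zeros $0$ and $r^*$ of $\psi$, which is impossible; and if $\lambda_1^{(1)}=0$, then $\psi$ and $\varphi_1$ both solve $L_{p,1}\cdot=0$ with the same vanishing rate at $r=0$, so ODE uniqueness forces them to be proportional, contradicting $\psi(1)\neq 0$.

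The single negative eigenvalue of $L_{p,1}$ thus produces two linearly independent negative directions $\varphi_1(r)\cos\theta$ and $\varphi_1(r)\sin\theta$ for $L_p$, which are $L^2$-orthogonal to the radial pair from the second paragraph. Summing gives $m(u_p)\ge 2+2=4$. The main delicate point is making the Sturm comparison rigorous at the singular endpoint $r=0$: I would handle it within the framework of Proposition \ref{p2.2}, using that both $\psi$ and $\varphi_1$ vanish at $r=0$ at the rate $O(r)$ dictated by the angular index $k=1$, which is precisely the regularity needed for the standard 1D comparison theorem to apply despite the singular coefficients.
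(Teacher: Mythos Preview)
The paper does not give its own proof of this lemma; it simply records it as a consequence of \cite{AftalionPacella}. Your argument is correct and is essentially a self-contained reproduction of the Aftalion--Pacella mechanism specialized to dimension two: two radial negative directions coming from $u_p^\pm$ on the two nodal regions, plus two angular-mode-$1$ negative directions obtained by observing that $\psi=u_p'$ solves $L_{p,1}\psi=0$ with $\psi(0)=0$, $\psi(1)\neq 0$ and one interior zero, which via Sturm comparison forces the first Dirichlet eigenvalue of $L_{p,1}$ to be negative.

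Your handling of the singular endpoint $r=0$ is the right one: both $\psi$ and the first eigenfunction behave like $O(r)$ near the origin (for $\psi$ this follows from the Taylor expansion of the smooth radial function $u_p$, for $\varphi_1$ from Proposition~\ref{p2.2} with $\beta=1$), so the Sturm--Liouville Wronskian $r(\psi'\varphi_1-\psi\varphi_1')$ vanishes at $r=0$ and the comparison argument on $(0,r^*)$ goes through; and in the equality case $\lambda_1^{(1)}=0$, Frobenius theory for the regular singular point (indicial roots $\pm 1$) gives uniqueness up to scalar of the solution with $O(r)$ behavior, yielding the desired contradiction with $\psi(1)\neq 0=\varphi_1(1)$. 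The block-orthogonality in $Q_p$ between the radial pair and the angular pair, and between $\cos\theta$ and $\sin\theta$, is exactly as you say, so the four directions genuinely span a $4$-dimensional subspace on which $Q_p$ is negative definite.
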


Moreover it is well known (see for instance \cite{BartschWeth}, see also \cite{HRS}) the following 

\begin{lemma}\label{LemmaMorseIndexRadiale}
For any $p>1$
\begin{equation}
m_{\rad}(u_p)=2.
\end{equation}
\end{lemma}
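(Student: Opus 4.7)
The plan is to establish $m_{\rad}(u_p)\ge 2$ and $m_{\rad}(u_p)\le 2$ separately, with both arguments exploiting that $u_p$ is a \emph{radial least energy} sign-changing solution.

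For the lower bound I would use $u_p^{\pm}$ as explicit test functions in the Rayleigh quotient. Since $u_p$ is radial, so are $u_p^+$ and $u_p^-$, hence both lie in $H^1_{0,\rad}(B)$. Testing the equation $-\Delta u_p = |u_p|^{p-1}u_p$ against $u_p^{\pm}$ and integrating by parts gives $\int_B |\nabla u_p^{\pm}|^2 = \int_B |u_p^{\pm}|^{p+1}$, so by \eqref{formaQuadratica}
\[
Q_p(u_p^{\pm}) = \int_B |\nabla u_p^{\pm}|^2 - p\int_B |u_p|^{p-1}(u_p^{\pm})^2 = (1-p)\int_B |u_p^{\pm}|^{p+1} < 0.
\]
Because $u_p^+$ and $u_p^-$ have disjoint supports, they are linearly independent and $L^2$-orthogonal, and for any $v = \alpha u_p^+ + \beta u_p^-\ne 0$ one has $Q_p(v) = \alpha^2 Q_p(u_p^+) + \beta^2 Q_p(u_p^-)<0$. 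The min-max formula \eqref{CourantCharEigenvRad} applied to $W = \mathrm{span}(u_p^+, u_p^-)$ then yields $\mu_{2,\rad}(p)<0$, so $m_{\rad}(u_p)\ge 2$.

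For the upper bound I would use the fact that $u_p$ minimizes $E_p$ on the radial nodal Nehari set
\[
\mathcal M_{\rad}^{\pm} = \bigl\{u\in H^1_{0,\rad}(B) : u^\pm\not\equiv 0,\ \langle E_p'(u),u^+\rangle = \langle E_p'(u),u^-\rangle = 0\bigr\}.
\]
The two constraints $G_1(u):=\langle E_p'(u),u^+\rangle$ and $G_2(u):=\langle E_p'(u),u^-\rangle$ are transversal at $u_p$: a direct computation using the equation for $u_p$ and $p>1$ shows that the $2\times 2$ matrix $(\langle G_i'(u_p),u_p^{\pm}\rangle)$ is diagonal with diagonal entries $(1-p)\int_B |u_p^{\pm}|^{p+1}\ne 0$, hence invertible. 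Therefore $\mathcal M_{\rad}^{\pm}$ is locally a $C^1$-submanifold of $H^1_{0,\rad}(B)$ of codimension $2$, with tangent space $T_{u_p} = \ker G_1'(u_p)\cap \ker G_2'(u_p)$. Since $E_p''(u_p)=Q_p$ and $u_p$ is a minimizer on $\mathcal M_{\rad}^{\pm}$, we obtain $Q_p(v)\ge 0$ for every $v\in T_{u_p}$. Thus $Q_p$ is nonnegative on a subspace of codimension $2$ in $H^1_{0,\rad}(B)$, which by \eqref{CourantCharEigenvRad} forces $\mu_{3,\rad}(p)\ge 0$ and hence $m_{\rad}(u_p)\le 2$.

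The genuinely nontrivial step is the transversality/codimension-$2$ claim for $\mathcal M_{\rad}^{\pm}$; everything else is just plugging $u_p^{\pm}$ into the quadratic form. That step is the standard computation underlying \cite{CastroCossioNeuberger,BartschWeth}, and it is exactly where the hypothesis $p>1$ is used in an essential way.
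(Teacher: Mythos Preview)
The paper does not actually prove this lemma: it states the result as well known and cites \cite{BartschWeth} and \cite{HRS}. Your sketch is a correct reconstruction of the Bartsch--Weth/Castro--Cossio--Neuberger argument, carried out in $H^1_{0,\rad}(B)$ instead of $H^1_0(B)$; the lower bound via $u_p^{\pm}$ and the upper bound via minimality on the (codimension-two) radial nodal Nehari set are exactly the ingredients behind the cited references, and you correctly flag the transversality step as the only place where real work is needed. One technical caveat worth keeping in mind when you write this out in full: the maps $u\mapsto u^{\pm}$ are not $C^1$ on $H^1_0$, so the ``$C^1$-submanifold'' picture has to be implemented via the fibering/scaling map $(s,t)\mapsto s\,u_p^{+}-t\,u_p^{-}$ (which is smooth) rather than by differentiating $G_i$ directly; this is precisely how \cite{BartschWeth} circumvents the issue, and your Hessian computation is the $(s,t)$-Hessian of that map at $(1,1)$.
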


\

The previous lemma means that for any $p>1$
\[\mu_{1,\rad}(p)<\mu_{2,\rad}(p)<0\leq \mu_{3,\rad}(p)<\ldots, \] 
next we show that 
\[
\mu_{3,\rad}(p)>0,
\]
namely that the problem
\begin{equation}\label{linearizedProblem}
\left\{\begin{array}{lr}
L_pv=0 & \mbox{ in }B\\
v=0 & \mbox{ on }\partial B
\end{array}
\right.
\end{equation}
doesn't admit nontrivial radial solutions, indeed the following result holds:

\begin{lemma}\label{lemma:radiallyNonDeg} For any  $p>1$ $u_p$ is radially non-degenerate.
\end{lemma}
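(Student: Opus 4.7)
The plan is to reduce the non-degeneracy to the observation that bounded radial solutions of the linearized ODE form a one-dimensional space spanned by an explicit scaling generator, and then to invoke Hopf's lemma at $r=1$.

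First, a radial $v\in\ker L_p$ with $v|_{\partial B}=0$ is a function $v=v(r)$ satisfying
\[
-v''-\tfrac{1}{r}v'-p|u_p(r)|^{p-1}v=0\ \text{on}\ (0,1),\qquad v(1)=0,\qquad \int_0^1 r(v')^2\,dr<+\infty,
\]
the last condition coming from $v\in H^1_0(B)$. The ODE has a regular singular point at $r=0$ with indicial equation $\sigma^2=0$ (double root), so the two fundamental solutions behave near the origin like a bounded analytic function and like $\log r$, respectively. The logarithmic branch violates $\int_0^1 r(v')^2\,dr<+\infty$ (indeed $r(1/r)^2=1/r$ is not integrable at $0$), hence any admissible $v$ lies in the one-dimensional space of bounded solutions. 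This is the only technical step of the argument; alternatively one would like to invoke Proposition \ref{p2.2}, but the assumption $\beta>0$ there forces a direct ODE analysis in the radial ($\beta=0$) case.

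Next, I would exhibit an explicit element of this one-dimensional space through the \emph{scaling generator} of the nonlinearity. Differentiating the scale-invariance $u_p^\lambda(x):=\lambda^{2/(p-1)}u_p(\lambda x)$ of $-\Delta u=|u|^{p-1}u$ at $\lambda=1$ shows that
\[
\phi(r)\ :=\ \tfrac{2}{p-1}u_p(r)+r\,u_p'(r)
\]
solves the radial linearized ODE and is bounded on $[0,1]$. Moreover $\phi\not\equiv0$: otherwise $u_p'=-\tfrac{2}{(p-1)r}u_p$ would give $u_p(r)=C\,r^{-2/(p-1)}$, contradicting the finiteness of $u_p(0)=\|u_p\|_\infty$ from Proposition \ref{PropositionUnicoMaxeMin}. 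Therefore the bounded radial solutions of the linearized ODE are exactly the multiples of $\phi$.

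Finally, any radial $v\in\ker L_p$ has the form $v=c\phi$, and the Dirichlet condition $v(1)=0$ forces $c\,\phi(1)=c\,u_p'(1)=0$ (using $u_p(1)=0$). Since $u_p$ is strictly negative in the outer annular nodal region $\{r_0<r<1\}$ and vanishes on $\partial B$, Hopf's lemma applied to $-u_p$ yields $u_p'(1)\neq 0$. Hence $c=0$, i.e.\ $v\equiv 0$, proving radial non-degeneracy. Once the scaling generator is on the table, everything beyond Step 1 is elementary; the main obstacle lies precisely in the $H^1$-exclusion of the log-singular branch, which exploits the two-dimensional radial weight $r\,dr$.
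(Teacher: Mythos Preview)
Your proof is correct and rests on the same scaling mechanism as the paper's. The paper packages the argument via the shooting criterion of Smoller--Wassarman (radial non-degeneracy $\Leftrightarrow\partial_\alpha w_\alpha(T)\neq0$), then differentiates the explicit one-parameter family $w_\alpha(r)=T(\alpha)^{-2/(p-1)}u_p(r/T(\alpha))$ in $\alpha$; a short computation shows that $\partial_\alpha w_\alpha\big|_{\alpha=u_p(0)}$ is a nonzero multiple of your generator $\phi=\tfrac{2}{p-1}u_p+r\,u_p'$, so both proofs reduce to $\phi(1)=u_p'(1)\neq0$ via Hopf. Your version is more self-contained (no external reference) at the cost of the indicial-equation step; note that this step could also be bypassed by elliptic regularity, since any $H^1_0(B)$ solution of $L_pv=0$ is automatically in $C^2(\bar B)$ and hence bounded at the origin.
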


\begin{proof} 
Given a solution $w_{\alpha}$ for the problem  
\begin{equation}\label{problvaliniz}
\left\{
\begin{array}{lr}
w_{\alpha}''+\frac{1}{r}w_{\alpha}'+|w_{\alpha}|^{p-1}w_{\alpha}=0  &\quad \mbox{ in } (0,T)\\
w_{\alpha}(0)=\alpha>0
\\
w_{\alpha}'(0)=0\\
w_{\alpha} \quad \mbox{ has exactly $1$ zero in $(0,T)$}\\
w_{\alpha}(T)=0
\end{array}
\right.
\end{equation}  
where $T>0$, it is not difficult to see (see \cite{SmollerWassarman}) that  $w_{\alpha}$  is differentiable with respect to $\alpha$ and that it is radially non-degenerate in $(0, T)$  if and only if $\frac{\partial w_{\alpha}}{\partial\alpha}|_{r=T}\neq 0$.
\\
Observe that  $u_p$  solves \eqref{problvaliniz} with $\alpha=u_p(0)>0$ and $T=1$. 
\\
Moreover for any $\alpha>0$ \eqref{problvaliniz} has a unique solution $w_{\alpha}$ which is obtained by scaling $u_p$ as
\[w_{\alpha}(r):=T(\alpha)^{-\frac{2}{p-1}}u_p(\frac{r}{T(\alpha)}),\]
where $T=T(\alpha):=\left(\frac{u_p(0)}{\alpha}\right)^{\frac{p-1}{2}}$. 
\\
Hence it  is immediate to check that  $\frac{\partial w_{\alpha}}{\partial\alpha}|_{r=T(\alpha)}\neq 0$, from which it then follows that $u_p$ is radially non-degenerate.
\end{proof}

\

\section{Morse index and degeneracy of $u_p$}\label{section:generale}

The section is organized as follows: we first consider an auxiliary weighted eigenvalue problem (problem \eqref{problemaPesatoPalla} below), whose main advantage, as we will see,  relies on the fact that it shares with $L_p$ the same spectral properties (see Lemma \ref{lemma:Morse=numeroAutovaloriesatoPalla}) and,  in addition, a classical spectral decomposition into radial and angular part may be applied to it (Lemma \ref{lemma:decomposizionePalla} in the section). 
The study of the auxiliary problem is carried out for any $p>1$, getting  information about the Morse index of the solution $u_p$ (Lemma   \ref{proposition:autovaloriRadiaiGenerale}) and a general characterization of its degeneracy (Proposition \ref{p4.7}).

\

\subsection{An auxiliary weighted eigenvalue problem}\label{se:auxiliary}

\

We consider the auxiliary eigenvalue problem
 \begin{equation} \label{problemaPesatoPalla}
\begin{cases}
\begin{array}{ll}
-\Delta \psi - p|u_p(x)|^{p-1} \psi =\frac{\beta}{|x|^2} \psi\qquad & \text{ in } B\setminus\{0\}, \\
\psi= 0 & \text{ on } \partial B\\
\int_B|\nabla \psi|^2+\frac{\psi^2}{|x|^2}<+\infty,
\end{array} 
\end{cases}
\end{equation} where $\beta\in\mathbb R$ and $p>1$.\\

Observe that, since $p|u_p|^{p-1}\in L^{\infty}(B)$, \eqref{problemaPesatoPalla} belongs to the class of eigenvalue problems which has been studied in \cite{GGN2}, where the  eigenvalues for \eqref{problemaPesatoPalla} have been variationally characterized in the case when they are \emph{negative}.

\

In the following we recall the variational characterization obtained in \cite{GGN2}. In particular they have observed that when the associated Rayleigh quotient is greater or equal than zero there is a compactness problem, but as far as the quotient is strictly negative, the eigenvalues and eigenfunctions maintain the usual properties of the classical ones.
\\
Let us denote by  $\mathcal H$  the closure of $C^{\infty}_0(B)$ with respect to the norm $\|v\|^2_{\mathcal H}=\int_{B}\left( |\nabla v|^2+\frac{v^2}{|x|^2}\right) dx$. Notice that $\mathcal H
% 
%=\{v\in H^1_0(B)\, : \, \int_B|\nabla \psi|^2+\frac{\psi^2}{|%x|^2}<+\infty
%\}
\subset H^1_0(\Omega)$  and the inclusion is strict (consider for instance the function $w(x)=1-|x|^2$).\\
For $\eta, \xi\in\mathcal H$ we write
\begin{equation}\label{perpH}\eta\perp_{\mathcal H}\xi \quad\Leftrightarrow \quad \int_{B}\frac{\eta\xi}{|x|^2}dx=0.   
\end{equation}

\

Observe that if $\psi,\widetilde{\psi}\in\mathcal H$ are weak solutions to \eqref{problemaPesatoPalla} related respectively to the   eigenvalues $\beta$ and $\widetilde{\beta}$, $\beta\neq\widetilde{\beta}$ then
\begin{equation}\label{ortogonalita_auto_deboli}
\psi\perp_{\mathcal H}\widetilde{\psi}
\end{equation}
(just multiply  \eqref{problemaPesatoPalla} by $\widetilde{\psi}$, the equation \eqref{problemaPesatoPalla} for the eigenvalue $\widetilde{\beta}$  by $\psi$, integrate and subtract).

\

We define 
\begin{eqnarray}\label{CourantCharEigenvpesatipalla1}
\beta_1(p) &:=&   \inf_{\substack{v\in \mathcal H,\,\,v\neq 0}}\ \ \
\widetilde{R_p}[v]
\end{eqnarray}
where  
$\widetilde{R_p}[v]$ is the Rayleigh quotient
\begin{equation}\label{Rayleigh2}
\widetilde{R_p}[v]:=\frac{Q_p(v)}{\int_B \frac{v(x)^2}{|x|^2} dx}
\end{equation}
and $Q_p$ is as in \eqref{formaQuadratica}.

\

From \cite[Proposition 2.1]{GGN2} we know that when $\beta_1(p)<0$ then this infimum  is achieved at a radial function $\psi_1 \in \mathcal H$, $\psi_1>0$ in $B\setminus\{0\}$, which solves 
\begin{equation}\label{problemaPesatoPallaDebole1}
\int_B\nabla\psi_1\nabla v-p|u_p|^{p-1}\psi_1v\, dx= \beta_1(p)\int_B \frac{\psi_1 v}{|x|^2}\, dx, \qquad \forall v\in \mathcal H.
\end{equation}  Moreover $\beta_1(p)$ is simple (in $\mathcal H$). In this case we can then define
\begin{eqnarray}\label{CourantCharEigenvpesatipalla2}
\beta_2(p) &:=&   \inf_{\substack{v\in \mathcal H,\,\,v\neq 0\\ v\perp_{\mathcal H}\psi_1}}\ \ \
\widetilde{R_p}[v]
\end{eqnarray}
which again is achieved when it is negative (see \cite[Proposition 2.3]{GGN2}) and any function $\psi_2\in\mathcal H$  at which $\beta_2(p)$ is achieved solves 
\begin{equation}\label{problemaPesatoPallaDebole2}
\int_B\nabla\psi_2\nabla v-p|u_p|^{p-1}\psi_2v\, dx= \beta_2(p)\int_B \frac{\psi_2 v}{|x|^2}\, dx, \qquad \forall v\in \mathcal H,
\end{equation}  
and by definition  $\psi_1\perp_{\mathcal H}\psi_2$, then $\psi_2$ must change sign. %If $\beta_2$ is not simple, then all the linearly  independent functions where it is attained may be chosen to be pairwise orthogonal in the sense of \eqref{perpH}.
\\
\\
 More in general, by iterating, if $\beta_j(p)<0$  and $\psi_j\in\mathcal H$ is a function where it is achieved, for $j=1,\ldots, i-1$, we can define
\begin{eqnarray}\label{CourantCharEigenvpesatipalla3}
\beta_i(p) &:=&   \inf_{\substack{v\in \mathcal H,\,\,v\neq 0\\ v\perp_{\mathcal H}span\{\psi_1,\ldots,\psi_{i-1}\}}}\ \ \
\widetilde{R_p}[v],\qquad i\in\N, \ i\geq 2
\end{eqnarray}
 which (again \cite[Proposition 2.3]{GGN2}) is achieved if it is negative and, in such a case, any function $\psi_i\in\mathcal H$  at which $\beta_i(p)$ is achieved solves
 \begin{equation}\label{problemaPesatoPallaDebolei}
\int_B\nabla\psi_i\nabla v-p|u_p|^{p-1}\psi_iv\, dx= \beta_i(p)\int_B \frac{\psi_i v}{|x|^2}\, dx, \qquad \forall v\in \mathcal H,
\end{equation}  and changes sign. %Moreover if $\beta_i$ is not simple, then all the linearly  independent functions where it is attained may be chosen to be pairwise orthogonal in the sense of \eqref{perpH}.

\

Similarly, restricting to  the subspace
$\mathcal H_{\rad}$ of the radial functions of $\mathcal H$, we can also define:
\begin{eqnarray}\label{CourantCharEigenvpesatipallaRADIALE1}
\beta_{1,\rad}(p) &:=&    \inf_{\substack{v\in \mathcal H_{\rad},\,\,v\neq 0}}\ \ \
\widetilde{R_p}[v]  \ (= \beta_1(p))
\end{eqnarray}
and, if $\beta_{j,\rad}(p)<0$ for $j=1,\ldots, i-1$
\begin{eqnarray}\label{CourantCharEigenvpesatipallaRADIALE}
\beta_{i,\rad}(p) &:=&   \inf_{\substack{v\in \mathcal H_{\rad},\,\,v\neq 0\\ v\perp_{\mathcal H}span\{\phi_{1},\ldots,\phi_{i-1}\}}}\ \ \
\widetilde{R_p}[v],\qquad i\in\N, \ i\geq 2
\end{eqnarray}
where $\phi_j\in\mathcal H_{\rad}$ is the function where $\beta_{j,\rad}(p)$ is achieved for  $j=1,\ldots, i-1$ (observe that $\phi_1=\psi_1$) and solve
\begin{equation}\label{problemaPesatoPallaDeboleiRadiale}
\int_B\nabla\phi_j\nabla v-p|u_p|^{p-1}\phi_jv\, dx= \beta_{j,\rad}(p)\int_B \frac{\phi_j v}{|x|^2}\, dx, \qquad \forall v\in \mathcal H_{\rad}.
\end{equation}

\

\begin{lemma}[Variational characterization \cite{GGN2}] 
\label{valoriVariaz=AutovaloriesatoPalla}
The negative eigenvalues (resp. negative radial eigenvalues) of problem \eqref{problemaPesatoPalla} coincide with the negative numbers  $\beta_i(p)$'s  defined in \eqref{CourantCharEigenvpesatipalla1}-\eqref{CourantCharEigenvpesatipalla3} (resp. with the numbers $\beta_{i,\rad}(p)$'s defined in \eqref{CourantCharEigenvpesatipallaRADIALE1}-\eqref{CourantCharEigenvpesatipallaRADIALE}). Moreover, by \eqref{ortogonalita_auto_deboli}, the corresponding 
eigenfunctions, which solve \eqref{problemaPesatoPalla}, are in $\mathcal H$ and can be  chosen to be orthogonal in the sense of \eqref{perpH}. 
\end{lemma}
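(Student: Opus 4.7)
The plan is to prove the set-theoretic equality of the two families (negative eigenvalues of \eqref{problemaPesatoPalla} versus the variationally defined numbers $\beta_i(p)$) by a double inclusion, then to deduce the orthogonality directly from \eqref{ortogonalita_auto_deboli}. The statement is essentially a repackaging of \cite[Propositions 2.1 and 2.3]{GGN2} combined with the discussion just given, so the work consists in correctly matching the two lists.

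For the forward direction, I would note that each negative $\beta_i(p)$ is realized as the value of $\widetilde R_p$ at a minimizer $\psi_i\in\mathcal H$, and by the standard Lagrange multiplier argument (already recorded in \eqref{problemaPesatoPallaDebole1}, \eqref{problemaPesatoPallaDebole2}, \eqref{problemaPesatoPallaDebolei}) this $\psi_i$ satisfies the weak form of \eqref{problemaPesatoPalla} with eigenvalue $\beta_i(p)$. Testing against $v\in C^{\infty}_c(B\setminus\{0\})$ and using $p|u_p|^{p-1}\in L^{\infty}(B)$ together with standard elliptic regularity, one upgrades $\psi_i$ to a classical solution of the PDE on $B\setminus\{0\}$; the boundary condition and the integrability of $|\nabla\psi_i|^2+\psi_i^2/|x|^2$ are automatic from $\psi_i\in\mathcal H$. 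The radial version of this step is identical, working throughout in $\mathcal H_{\rad}$.

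For the converse, given a negative eigenvalue $\beta$ of \eqref{problemaPesatoPalla} with eigenfunction $\psi\in\mathcal H\setminus\{0\}$, I would test the equation against $\psi$ to obtain $\widetilde R_p[\psi]=\beta<0$, then argue by contradiction. If $\beta\notin\{\beta_j(p)\}_j$, then \eqref{ortogonalita_auto_deboli} forces $\psi\perp_{\mathcal H}\psi_j$ for every $j$ with $\beta_j(p)\neq\beta$; letting $i$ be the first index with $\beta_i(p)>\beta$, the function $\psi$ is admissible in the infimum \eqref{CourantCharEigenvpesatipalla3}, producing a value $\beta<\beta_i(p)$ that contradicts the definition of $\beta_i(p)$. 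The remaining orthogonality statement follows immediately from \eqref{ortogonalita_auto_deboli} for distinct eigenvalues, while within an eigenspace associated with a repeated negative eigenvalue (necessarily finite-dimensional, by \cite{GGN2}) one simply applies Gram--Schmidt with respect to the inner product $\langle\eta,\xi\rangle_{\mathcal H}:=\int_B \eta\xi/|x|^2\,dx$.

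The hard part I anticipate is ensuring that ``the first $\beta_i(p)$ larger than $\beta$'' is genuinely well-defined, i.e.\ that the negative eigenvalues form a discrete set inside $(-\infty,0)$ with no accumulation below zero. This is precisely where the sign constraint is crucial: the singular weight $1/|x|^2$ destroys compactness of minimizing sequences once the Rayleigh quotient crosses zero, because concentration at the origin becomes possible, but negativity preserves coercivity and rules this out. Since this delicate compactness analysis is already carried out in \cite{GGN2}, the present lemma reduces to bookkeeping and to invoking their results to justify the two inclusions above.
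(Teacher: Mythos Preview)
Your proposal is correct and in fact goes beyond what the paper does: the paper provides no proof at all for this lemma, simply recording it as a direct consequence of \cite[Propositions~2.1 and~2.3]{GGN2} and of the orthogonality relation \eqref{ortogonalita_auto_deboli}. Your sketch of the double inclusion, together with the honest deferral of the compactness/discreteness step to \cite{GGN2}, is exactly the content of that citation, so there is nothing to add.
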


The following relation holds between the Morse index of $u_p$ and the number of negative eigenvalues of the weighted problem \eqref{problemaPesatoPalla}:
\begin{lemma}[\cite{GGN2}, Lemma 2.6]\label{lemma:Morse=numeroAutovaloriesatoPalla} 
The  Morse index (resp. radial Morse index) of $u_p$ coincides with the number of negative eigenvalues (resp. negative radial eigenvalues) of problem \eqref{problemaPesatoPalla} counted according to their multiplicity.
\end{lemma}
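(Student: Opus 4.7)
The plan is to establish a two-sided inequality between $m(u_p)$ and $N^{-}(p):=\#\{i:\beta_i(p)<0\}$ (counted with multiplicity), using the variational characterizations of both quantities; the radial case is completely analogous, just replacing $\mathcal H$ with $\mathcal H_{\rm rad}$ and $H^1_0(B)$ with $H^1_{0,\rm rad}(B)$.

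\textbf{Easy direction $N^-(p)\le m(u_p)$.} Assume $\beta_1(p)\le\ldots\le\beta_{N^-}(p)<0$ with corresponding eigenfunctions $\psi_1,\ldots,\psi_{N^-}\in\mathcal H\subset H^1_0(B)$, which may be chosen $\mathcal H$-orthogonal by \eqref{ortogonalita_auto_deboli}. Testing \eqref{problemaPesatoPallaDebolei} with $\psi_j$ for $j\ne i$ gives $Q_p(\psi_i,\psi_j)=\beta_i(p)\int_B\psi_i\psi_j/|x|^2\,dx=0$, so the Gram matrix of $Q_p$ on the basis $\{\psi_i\}$ is diagonal with negative entries $\beta_i(p)\int_B\psi_i^2/|x|^2\,dx$. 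Hence $Q_p$ is negative definite on the $N^-$-dimensional subspace $\mathrm{span}\{\psi_1,\ldots,\psi_{N^-}\}\subset H^1_0(B)$, so $m(u_p)\ge N^-(p)$ by the variational definition of Morse index.

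\textbf{Hard direction $m(u_p)\le N^-(p)$.} Set $m:=m(u_p)$ and let $e_1,\ldots,e_m\in H^1_0(B)$ be eigenfunctions of $L_p$ associated with $\mu_1(p),\ldots,\mu_m(p)<0$, so that $Q_p$ is negative definite on $X:=\mathrm{span}\{e_1,\ldots,e_m\}$. The obstacle is that the $e_i$'s may fail to lie in $\mathcal H$: in the two-dimensional ball, the weight $|x|^{-2}$ produces a logarithmic divergence if $e_i(0)\ne 0$. To bypass this, I would use a logarithmic cutoff
\[
\chi_\varepsilon(x)=\min\Big(1,\,\tfrac{(\log(|x|/\varepsilon))_+}{\log(1/\varepsilon)}\Big),
\]
which satisfies $\chi_\varepsilon\equiv 0$ on $B_\varepsilon$ and $\chi_\varepsilon\to 1$ in $H^1_0(B)$ as $\varepsilon\to 0$, because $\int_B|\nabla\chi_\varepsilon|^2\le C/\log(1/\varepsilon)\to 0$. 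Setting $\tilde e_i^\varepsilon:=e_i\chi_\varepsilon$, each $\tilde e_i^\varepsilon$ vanishes near the origin and is bounded by $\|e_i\|_\infty/\varepsilon$ elsewhere (regularity of $e_i$), hence lies in $\mathcal H$. Moreover $\tilde e_i^\varepsilon\to e_i$ strongly in $H^1_0(B)$, so both matrices $[Q_p(\tilde e_i^\varepsilon,\tilde e_j^\varepsilon)]$ and $[\int_B \tilde e_i^\varepsilon\tilde e_j^\varepsilon\,dx]$ converge to their counterparts on $X$. By continuity, for $\varepsilon$ small the family $\{\tilde e_i^\varepsilon\}_{i=1}^m$ is linearly independent and $Q_p$ is negative definite on $Y_\varepsilon:=\mathrm{span}\{\tilde e_1^\varepsilon,\ldots,\tilde e_m^\varepsilon\}\subset\mathcal H$.

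\textbf{Conclusion via the iterative min-max.} To conclude $\beta_m(p)<0$, I would run the following dimension-counting argument along the iterative characterizations \eqref{CourantCharEigenvpesatipalla1}--\eqref{CourantCharEigenvpesatipalla3}. For each $i=1,\ldots,m$, the constraint $v\perp_{\mathcal H}\mathrm{span}\{\psi_1,\ldots,\psi_{i-1}\}$ is a collection of $i-1$ linear conditions on the $m$-dimensional subspace $Y_\varepsilon$, so the intersection contains a nonzero vector $v_i$. Then
\[
\beta_i(p)\le \widetilde R_p[v_i]=\frac{Q_p(v_i)}{\int_B v_i^2/|x|^2\,dx}<0,
\]
since $Q_p$ is negative definite on $Y_\varepsilon$. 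Iterating up to $i=m$ yields $\beta_m(p)<0$, hence $N^-(p)\ge m$. The main obstacle is the technical construction of the cutoff and the verification that negativity of $Q_p$ is preserved through truncation, which is specific to the planar setting due to the failure of Hardy's inequality in two dimensions; once this is handled, the rest is a clean application of the variational characterizations of Lemma \ref{valoriVariaz=AutovaloriesatoPalla}. The radial statement follows verbatim by restricting all subspaces and eigenfunctions above to $H^1_{0,\rm rad}(B)$ and $\mathcal H_{\rm rad}$, noting that the cutoff $\chi_\varepsilon$ is itself radial.
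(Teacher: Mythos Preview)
The paper does not prove this lemma; it simply cites \cite[Lemma 2.6]{GGN2}. Your proposal therefore supplies a complete argument where the paper relies on an external reference, and the argument is correct. The two-step structure --- one inequality from the negativity of $Q_p$ on the span of the weighted eigenfunctions, the reverse inequality from approximating the $L_p$-eigenfunctions by elements of $\mathcal H$ via a logarithmic cutoff --- is exactly the right mechanism in dimension two, where the failure of Hardy's inequality is only logarithmic and can be absorbed by the cutoff.

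Two cosmetic remarks. First, writing ``$\chi_\varepsilon\to 1$ in $H^1_0(B)$'' is imprecise, since the constant function $1$ is not in $H^1_0(B)$; what you actually need and use is $\|\nabla\chi_\varepsilon\|_{L^2}\to 0$ together with $\chi_\varepsilon\to 1$ boundedly a.e., from which $\tilde e_i^\varepsilon\to e_i$ in $H^1_0(B)$ follows. Second, the phrase ``bounded by $\|e_i\|_\infty/\varepsilon$'' presumably refers to $|\tilde e_i^\varepsilon(x)|/|x|$ (which is indeed bounded by $\|e_i\|_\infty/\varepsilon$ on $\{|x|\ge\varepsilon\}$ and vanishes elsewhere), not to $\tilde e_i^\varepsilon$ itself; this is the estimate that gives $\tilde e_i^\varepsilon\in\mathcal H$. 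Neither point affects the validity of the proof.
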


As a consequence we have:

\begin{lemma}\label{2AutovRadNegativiPesatoPalla}
For any  $p>1$
\[ \beta_{1,\rad}(p)<\beta_{2,\rad}(p)<0.\] 
Moreover $\beta_{3,\rad}(p)=0$ and it is not an eigenvalue for \eqref{problemaPesatoPalla}. 
\end{lemma}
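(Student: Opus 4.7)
The plan has three pieces: first identify the two negative radial eigenvalues from Morse-index information, then show $\beta_{3,\rad}(p)\leq 0$ via a concentration argument exploiting the failure of the two-dimensional Hardy inequality, and finally rule out $\beta=0$ as a (radial) eigenvalue using Lemma \ref{lemma:radiallyNonDeg}.

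For the first piece, Lemma \ref{LemmaMorseIndexRadiale} gives $m_{\rad}(u_p)=2$, so by Lemma \ref{lemma:Morse=numeroAutovaloriesatoPalla} problem \eqref{problemaPesatoPalla} has exactly two negative radial eigenvalues, counted with multiplicity. Lemma \ref{valoriVariaz=AutovaloriesatoPalla} identifies these with $\beta_{1,\rad}(p)$ and $\beta_{2,\rad}(p)$. The simplicity of $\beta_1(p)=\beta_{1,\rad}(p)$ in $\mathcal H$ (recalled right after \eqref{problemaPesatoPallaDebole1}), combined with $\phi_2\perp_{\mathcal H}\phi_1$, then forces $\beta_{1,\rad}(p)<\beta_{2,\rad}(p)<0$. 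Moreover $\beta_{3,\rad}(p)\geq 0$, for otherwise \cite[Proposition 2.3]{GGN2} would yield a minimizer producing a third negative radial eigenvalue, contradicting the count.

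For the second piece, I would exploit that under the logarithmic change of variables $r=e^t$ a radial $\psi(r)=\varphi(\log r)$ satisfies
\[
\int_B|\nabla\psi|^2\,dx=2\pi\int_{-\infty}^{0}(\varphi')^2\,dt,\qquad \int_B\frac{\psi^2}{|x|^2}\,dx=2\pi\int_{-\infty}^{0}\varphi^2\,dt,
\]
while the bounded potential contributes at most $2\pi p\|u_p\|_{\infty}^{p-1}\int\varphi^2 e^{2t}\,dt$. Taking a sine bump $\varphi_n$ supported in $[-2n,-n]$ produces $\psi_n\in\mathcal H_{\rad}$ supported in the thin annulus $\{e^{-2n}\leq|x|\leq e^{-n}\}$ with $\widetilde R_p[\psi_n]=O(1/n^2)+O(e^{-2n})\to 0^+$. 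To enforce $\psi_n\perp_{\mathcal H}\mathrm{span}\{\phi_1,\phi_2\}$ I would project, setting $\widetilde\psi_n:=\psi_n-\sum_{i=1}^2 c_{i,n}\phi_i$. The crucial point is that each $\phi_i$ satisfies condition $b)$ of Proposition \ref{p2.2} (because $\phi_i\in\mathcal H$), so $\phi_i(r)=O(r^{\sqrt{-\beta_{i,\rad}(p)}})$ as $r\to 0$, whence the projection coefficients $c_{i,n}$ are exponentially small in $n$. A short computation using that $\phi_i$ is an eigenfunction then shows $\widetilde R_p[\widetilde\psi_n]\to 0^+$ as well, giving $\beta_{3,\rad}(p)\leq 0$.

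For the third piece, if some $\psi\in\mathcal H_{\rad}\setminus\{0\}$ solved \eqref{problemaPesatoPalla} with $\beta=0$, then via the inclusion $\mathcal H\subset H^1_0(B)$ it would be a nontrivial radial solution of the linearized equation $L_p\psi=0$, contradicting Lemma \ref{lemma:radiallyNonDeg}. The main technical obstacle lies in the second piece, specifically in verifying that the orthogonalization to $\phi_1,\phi_2$ preserves the asymptotic vanishing of the Rayleigh quotient; the precise decay rate at the origin from Proposition \ref{p2.2} is exactly what is needed to make this work.
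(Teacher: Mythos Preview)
Your proposal is correct and follows essentially the same three-part structure as the paper: the first and third pieces are identical to the paper's argument, and the second piece uses the same mechanism---construct a radial test sequence whose weighted Rayleigh quotient tends to $0$, then orthogonalize against $\phi_1,\phi_2$ using the decay $\phi_i(r)=O(r^{\sqrt{-\beta_{i,\rad}}})$ from Proposition~\ref{p2.2}. The only difference is the concrete test function: the paper takes $\eta_\varepsilon(x)\approx 1-|x|$ cut off near the origin (so the projection coefficients are merely bounded, the numerator $Q_p$ stays bounded, and the denominator $\int\eta_\varepsilon^2/|x|^2\sim-\log\varepsilon\to\infty$), whereas you concentrate a sine bump in a thin annulus via the Emden--Fowler change $r=e^t$ (so the projection coefficients are exponentially small and both numerator and denominator are explicitly computable); either choice exploits the failure of the two-dimensional Hardy inequality and leads to the same conclusion.
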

\begin{proof}

The first statement is a consequence of Lemma \ref{LemmaMorseIndexRadiale} and  Lemma \ref{lemma:Morse=numeroAutovaloriesatoPalla}.
\\ Observe that the value $\beta_{3,\rad}(p)$ is well defined by \eqref{CourantCharEigenvpesatipallaRADIALE}, being both $\beta_{1,\rad}(p)$ and $\beta_{2,\rad}(p)$ negative, moreover $\beta_{3,\rad}(p)\geq 0$ from Lemma \ref{valoriVariaz=AutovaloriesatoPalla} and Lemma \ref{lemma:Morse=numeroAutovaloriesatoPalla}, since 
$m_{\rad}(u_p)=2$ by Lemma \ref{LemmaMorseIndexRadiale}.
In particular even if $\beta_{3,\rad}(p)=0$ it cannot be  an eigenvalue for \eqref{problemaPesatoPalla} because $\mathcal H\subset H^1_0(B)$ and $u_p$ is radially nondegenerate by  Lemma \ref{lemma:radiallyNonDeg}.\\
To show that $\beta_{3,\rad}(p)=0$ we let $\phi_j\in \mathcal H_{\rad}$ be the function where $\beta_{j,\rad}(p)$ is achieved for  $j=1,2$, we choose the test functions
\begin{equation*}
\eta_{\ep}(x):=
\begin{cases}
1-|x|&\hbox{if }\ep\le|x|\le1\\
\frac{2(1-\ep)}\ep|x|+\ep-1&\hbox{if }\frac\ep 2\le|x|\le\ep\\
0&\hbox{if }|x|\le\frac\ep 2
\end{cases}
\end{equation*}
defined for $0<\ep<1$ and we let 
\[\widetilde\eta_\ep(x):=\eta_\ep(x)-a_\ep\phi_1-b_\ep\phi_2\]
where $a_\ep,b_\ep\in \R$ are given by
\[
a_\ep:=\frac{\int_B \frac{\eta_\ep\phi_1} {|x|^{2}}}{\int_B \frac{\phi_1^2}{|x|^{2}} }\quad , \quad
b_\ep:=\frac{\int_B \frac{\eta_\ep\phi_2}{|x|^{2}}}{\int_B \frac{\phi_2^2}{|x|^{2}}}\]
so that $\widetilde\eta_\ep$ is orthogonal in the sense of \eqref{perpH} to $\phi_j$, $j=1,2$ for any $\ep\in (0,1)$.\\
Moreover observe that by our choice of the test functions $\eta_\ep$ there exists $C=C_p>0$ such that
\begin{equation}
\label{etaepUnifBounded}
\int_B \left(|\nabla \eta_\ep|^2-p|u_p|^{p-1}\eta_\ep^2\right)\leq C,
\end{equation}
for any $\ep\in (0,1)$.\\
Since $\beta_{j,\rad}(p)<0$ for $j=1,2$, by Proposition \ref{p2.2} we have that 
\begin{equation}\label{numero}
 \phi_j(r)=O\left( r^{\sqrt {-\beta_{j,\rad}(p)}}\right) \quad \text{ as }r\rightarrow 0.
\end{equation}
This last estimate together with the definition of $\eta_\ep$ then implies that 
\[
\begin{split}
\int_0^1 \frac { \eta_\ep\phi_j}r\, dr=&\frac {2(1-\ep)} {\ep}\int_{\frac {\ep}2}^{\ep} \phi_j(r)\, dr+(\ep-1)\int_{\frac {\ep}2}^{\ep}\frac { \phi_j(r)}r\, dr+\int_{\ep}^1\frac{(1-r)\phi_j(r)}r\, dr\\
%&\hspace{-4em}\overset{\eqref{numero}} {\leq}2(1-\ep)c \ep^{\sqrt {-\beta_{j,\rad}(p)}}+ {(\ep-1)c\ep^{\sqrt {-\beta_{j,\rad}(p)}} (1-\frac 1{2^{\sqrt {-\beta_{j,\rad}(p)} } } )+c(1-\ep^{\sqrt {-\beta_{j,\rad}(p)}})}\leq C\\
&\hspace{-4em}\overset{\eqref{numero}} {\leq} C+O\left(\ep^{\sqrt {-\beta_{j,\rad}(p)}}\right)\leq C
\end{split}
\]
so that $a_\ep$ and $b_\ep$ are uniformly bounded.

From \eqref{CourantCharEigenvpesatipallaRADIALE} and the orthogonality between $\widetilde\eta_\ep$ and $\phi_j$, $j=1,2$ then  $\beta_{3,\rad}(p)\leq    \widetilde{R_p}[\widetilde\eta_\ep]$
where 
\begin{equation}\label{R_peta-tilde}
\widetilde{R_p}[\widetilde\eta_\ep]=\frac{Q_p(\widetilde\eta_\ep )}{\int_B \frac{\widetilde\eta_\ep ^2}{|x|^2} dx}.
\end{equation}
An easy computation shows that 
\begin{eqnarray*}
&Q_p(\widetilde\eta_\ep )=&\int_B \left(|\nabla \eta_\ep|^2-p|u_p|^{p-1}\eta_\ep^2\right)+a_\ep^2\int_B\left(|\nabla \phi_1|^2 -p|u_p|^{p-1}\phi_1^2\right)\\
&&+b_\ep^2\int_B\left(|\nabla \phi_2|^2 -p|u_p|^{p-1}\phi_2^2\right)-2a_\ep\int_B\left(\nabla \eta_\ep\cdot \nabla \phi_1-p|u_p|^{p-1}\eta_\ep\phi_1\right)\\
&&-2b_\ep\int_B\left(\nabla \eta_\ep\cdot \nabla \phi_2-p|u_p|^{p-1}\eta_\ep\phi_2\right) -2a_{\ep}b_\ep\int_B\left(\nabla \phi_1\cdot \nabla \phi_2-p|u_p|^{p-1}\phi_1\phi_2\right)
\end{eqnarray*}
and, using that $\phi_j$, $j=1,2$ solves \eqref{problemaPesatoPallaDeboleiRadiale}, that $\phi_1\perp_{\mathcal H}\phi_2$  and recalling the definition of $a_\ep,b_\ep$, we then get
\[Q_p(\widetilde\eta_\ep )= \int_B \left(|\nabla \eta_\ep|^2-p|u_p|^{p-1}\eta_\ep^2\right)-a_\ep^2\beta_{1,\rad}(p)\int_B\frac{\phi_1^2}{|x|^2}-b_\ep^2\beta_{2,\rad}(p)\int_B\frac{\phi_2^2}{|x|^2}.\]
The last equality, together with \eqref{etaepUnifBounded} and the boundedness of $a_\ep,b_\ep$ implies that 
\[Q_p(\widetilde\eta_\ep )\leq C\]
for any $\ep\in (0,1)$.
Finally, using again the definition of  $a_\ep,b_\ep$ we have
\begin{eqnarray*}
\int_B \frac{\widetilde\eta_\ep ^2}{|x|^2} dx &= &\int_B \frac{\eta_\ep ^2}{|x|^2}
-a_\ep^2 \int_B\frac{\phi_1^2}{|x|^2}-b_\ep^2\int_B\frac{\phi_2^2}{|x|^2}\\
&\overset{\mbox{$a_\ep,  b_\ep$ bounded}}{\geq} & \int_B \frac{\eta_\ep ^2}{|x|^2}-C\\
&= & 2\pi \left(\frac{(1-\ep)^2}{\ep^2} 
\int_{\frac {\ep}2}^{\ep}\frac {(2r-\ep)^2}r \, dr+\int_{\ep}^1\frac {(1-r)^2}r\, dr\right)-C\\
&= & 2\pi\left(-\log \ep +\ep \log 2+(1-\ep)(\ep-2)\right)-C\\
&=& -2\pi \log \ep \left(1+o(1)\right) \quad \text{ as }\ep \rightarrow 0.
\end{eqnarray*}
The conclusion then follows using \eqref{R_peta-tilde} and $0\leq \beta_{3,\rad}(p)\leq    \widetilde{R_p}[\widetilde\eta_\ep]$.
\end{proof}

\

Here and in the following we denote by  $\alpha_k$, $k\in\mathbb N$ the {\sl spherical harmonics} in dimension $2$, namely
 the
homogeneous harmonic polynomials of degree $k$ considered on the unit sphere $S^1\subset\mathbb R^2$.
They can be written explicitly, using the polar coordinates $x=(r\cos\theta, r\sin\theta)$ 
\begin{equation}
\label{espressioneArmonicheSferiche}
\alpha_k(\theta)=\left\{\begin{array}{ll}c & k=0
\\c_1\cos (k\theta)+c_2\sin (k\theta) & k= 1,2,3,\ldots\end{array} \right.
\end{equation}
for $c,c_1,c_2\in\mathbb R$.

Recall that the set $(\alpha_k)_{k\in\mathbb N}$ is a complete orthogonal system for $L^2(S^1)$, hence 
any function $v\in L^2(B)$ can be written as
\begin{equation}\label{spectralDec1}
v(r,\theta)=\sum_{k=0}^{+\infty}h_k(r)\alpha_k(\theta)
\end{equation}
where 
\begin{equation}\label{spectralDec2}
h_k(r):=\int_0^{2\pi}\alpha_k(\theta)v(r,\theta)d\theta, \qquad r\in (0,1).
\end{equation}
Moreover if $v(r,\theta)$ is continuous in the origin, then $2 \pi c v(0)=h_0(0)$ (where $c$ is the constant in \eqref{espressioneArmonicheSferiche}) and 
\begin{equation}\label{h_kInZero} h_k(0)=0, \quad \forall k\geq1.
\end{equation}

Recall also that the  eigenvalues of the  Laplace-Beltrami operator $-\Delta_{S^1}$ on the unit sphere $S^1$ are the numbers $k^2$, $k\in\mathbb N$, that they have multiplicity  $1$ if $k=0$ and multiplicity  $2$ if $k\geq 1$,  and that the spherical harmonics 
$\alpha_k$ are the eigenfunctions associated to the eigenvalue $k^2$.

\

 For the negative eigenvalues  of \eqref{problemaPesatoPalla} we  then have the
following spectral decomposition into radial and angular part, where the angular part is given by the eigenvalues of $-\Delta_{S^1}$: 
\begin{lemma}\label{lemma:decomposizionePalla} 
Let $p>1$. For any $i=1,\ldots, m(u_p)$ there exists $(j,k)\in \{1,2\}\times\mathbb N$ ($(j,k)$ depending also on $p$)
such that  
\begin{equation}\label{spectralDecomposition}
\beta_i(p)= \beta_{j,\rad}(p)+k^2. 
\end{equation} 
Conversely for every $(j,k)\in \{1,2\}\times\mathbb N$ such that  $\beta_{j,\rad}(p)+k^2<0$ there exists $i\in\{1,\ldots, m(u_p)\}$ ($i$ depending also on $p$) for which \eqref{spectralDecomposition} holds.\\

Moreover the eigenspace associated to each negative eigenvalue $\beta(p)$ of \eqref{problemaPesatoPalla}
is spanned by the functions
\begin{equation}\label{eigenspaceSpectralDec}
\phi_j(r)\cos (k\theta)\ \mbox{ and }\ \phi_j(r)\sin (k\theta),\quad \forall\, (j,k) \mbox{ such that }\  \beta_{j,\rad}(p)+k^2=\beta(p),
\end{equation}
where $\phi_j$ is the radial eigenfunction to \eqref{problemaPesatoPalla} associated to the radial eigenvalue $\beta_{j,\rad}(p)$ (which is simple in the space of radial functions).
%
%when $k=0$ the index $i$ is unique and the associated eigenfunction is \[\phi_j(r),\] while for  $k\geq 1$ there are exactly $2$ indeces $i$  and the corresponding eigenfunctions are 
%\[\phi_j(r)\cos (k\theta)\qquad\mbox{and}\qquad \phi_j(r)\sin (k\theta)\]
%respectively.}
% {$\beta_i(p)$ has multiplicity  $1$ if $k=0$ and multiplicity  $2$ if 
%$k\geq 1$.
\end{lemma}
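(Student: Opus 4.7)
The strategy is to exploit the classical spectral decomposition into radial and angular parts, which is made natural by the singular weight $1/|x|^2$ appearing on the right-hand side of \eqref{problemaPesatoPalla}. Given an eigenfunction $\psi\in\mathcal H$ associated to a negative eigenvalue $\beta<0$, I would expand it in spherical harmonics as in \eqref{spectralDec1}, namely
\[\psi(r,\theta)=\sum_{k=0}^{+\infty}h_k(r)\,\alpha_k(\theta),\]
with $h_k$ defined by \eqref{spectralDec2}. A preliminary step is to check that each $h_k$ inherits the required integrability, i.e.\ belongs to $\mathcal H_{\rad}$, which follows from $\psi\in\mathcal H$ via Parseval and the $L^2$-orthogonality of the $\alpha_k$ on $S^1$.

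Next, I would test the weak formulation of \eqref{problemaPesatoPalla} against functions of the form $v(r,\theta)=\varphi(r)\alpha_k(\theta)$ with $\varphi\in C_0^\infty((0,1))$. Using $-\Delta_{S^1}\alpha_k=k^2\alpha_k$ to handle the angular part of the Laplacian and the orthogonality of the $\alpha_j$ in $L^2(S^1)$, the identity collapses to a one-dimensional weak equation for $h_k$, namely
\[-h_k''-\tfrac{1}{r}h_k'-p|u_p|^{p-1}h_k=\frac{\beta-k^2}{r^2}\,h_k \qquad \text{in }(0,1),\]
with $h_k(1)=0$ and $h_k\in\mathcal H_{\rad}$. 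In other words, each $h_k$ is a (singular) radial eigenfunction of \eqref{problemaPesatoPalla} corresponding to the \emph{shifted} eigenvalue $\beta-k^2$.

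Since $\beta<0$ and $k^2\ge 0$, the shifted value $\beta-k^2$ is itself strictly negative, hence by Lemma \ref{2AutovRadNegativiPesatoPalla} it must coincide with $\beta_{1,\rad}(p)$ or $\beta_{2,\rad}(p)$. By the simplicity of the negative radial eigenvalues recalled in that lemma, it follows that $h_k=c_{j,k}\phi_j$ for the unique $j\in\{1,2\}$ such that $\beta-k^2=\beta_{j,\rad}(p)$, and $h_k\equiv 0$ for every other pair. Reassembling the sum, every eigenfunction of \eqref{problemaPesatoPalla} for the eigenvalue $\beta$ is a linear combination of the functions displayed in \eqref{eigenspaceSpectralDec}, which proves both the identity \eqref{spectralDecomposition} and the description of the eigenspace. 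For the converse direction, given any $(j,k)\in\{1,2\}\times\mathbb{N}$ with $\beta_{j,\rad}(p)+k^2<0$, a direct substitution using the polar form of $-\Delta$ together with \eqref{problemaPesatoPallaDeboleiRadiale} shows that both $\phi_j(r)\cos(k\theta)$ and $\phi_j(r)\sin(k\theta)$ satisfy \eqref{problemaPesatoPallaDebolei} with eigenvalue $\beta_{j,\rad}(p)+k^2$; as this value is negative, Lemma \ref{valoriVariaz=AutovaloriesatoPalla} identifies it with some $\beta_i(p)$.

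The main technical point I foresee is making the spherical harmonic decomposition truly rigorous inside the singular space $\mathcal H$: one must verify that the series $\sum h_k\alpha_k$ converges in $\mathcal H$, that every $h_k$ individually lies in $\mathcal H_{\rad}$, and that termwise testing against $\varphi(r)\alpha_k(\theta)$ is legitimate. Once this is settled via Parseval applied to the norm $\int_B(|\nabla v|^2+v^2/|x|^2)\,dx$ (together with Fubini), the ODE reduction above and Lemma \ref{2AutovRadNegativiPesatoPalla} do the rest.
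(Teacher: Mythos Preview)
Your proposal is correct and follows essentially the same route as the paper: decompose an eigenfunction $\psi$ along spherical harmonics, derive the radial ODE for each $h_k$ with the shifted parameter $\beta-k^2$, and invoke Lemma~\ref{2AutovRadNegativiPesatoPalla} (together with Lemmas~\ref{valoriVariaz=AutovaloriesatoPalla} and~\ref{lemma:Morse=numeroAutovaloriesatoPalla}) to force $\beta-k^2\in\{\beta_{1,\rad}(p),\beta_{2,\rad}(p)\}$, while the converse and the eigenspace description are handled by direct computation. The only cosmetic differences are that the paper obtains the ODE for $h_k$ by differentiating under the integral sign rather than testing against $\varphi(r)\alpha_k(\theta)$, checks $h_k\in\mathcal H_{\rad}$ via Jensen's inequality rather than Parseval, and argues the eigenspace statement by contradiction rather than by your direct reassembly; none of these alters the substance of the argument.
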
 
\begin{proof} 
{\bf Step 1.}
{\sl We show the first statement.}
\\
By Lemma \ref{valoriVariaz=AutovaloriesatoPalla} and Lemma \ref{lemma:Morse=numeroAutovaloriesatoPalla} the value $\beta_i(p)$, for any $i=1,\ldots, m(u_p)$,  is a (negative) eigenvalue for problem  \eqref{problemaPesatoPalla} and so there exists a function $\psi\neq 0$ which satisfies \eqref{problemaPesatoPalla} with $\beta=\beta_i(p)$. 
Decomposing $\psi$ along spherical harmonics (see \eqref{spectralDec1}, \eqref{spectralDec2}), we write
\[\psi(r,\theta)=\sum_{k=0}^{+\infty} h_k(r)\alpha_k(\theta)\]
where 
\begin{equation}\label{decompos}
h_k(r):=\int_0^{2\pi}\alpha_k(\theta)\psi(r,\theta)d\theta, \qquad r\in (0,1).
\end{equation}
Then, since $\psi\neq 0$ and $(\alpha_k)_k$ is a complete orthogonal system for $L^2(S^1)$, it follows that $h_k\neq 0$ for some $k\in\mathbb N$, moreover
it  satisfies 

\begin{eqnarray*}
-h_k''-\frac{1}{r}h_k'&=& 
\int_0^{2\pi}\left(-\psi_{rr}-\frac{1}{r}\psi_r\right)\alpha_k \,d\theta
\\
&=&\int_{0}^{2\pi} \left(-\Delta\psi+\frac{1}{r^2}\Delta_{S^1}\psi\right)\alpha_k\,d\theta
\\
&=& p |u_p|^{p-1}h_k +\frac{\beta_i(p)}{r^2} h_k+
\frac{1}{r^2}\int_0^{2\pi} \left(\Delta_{S^1}\psi\right) \alpha_k\,d\theta.
\end{eqnarray*}
Integrating the last term by parts we get
\begin{equation}\label{A}
\begin{cases}
-h_k''-\frac 1r h_k'-p|u_p|^{p-1}h_k =\frac{\beta_i(p)-k^2}{r^2} h_k & \text{ in }(0,1)\\
h_k(1)=0,
\end{cases}
\end{equation}
where $\beta_i(p)-k^2\leq \beta_i(p)<0$. Next we show that it satisfies also the condition
\begin{equation} \label{radsphint}
\int_0^1 r(h_k')^2+\frac{h^2_k}{r}<+\infty.
\end{equation}

%{Condition \eqref{radsphint} is equivalent to say that $h_k\in\mathcal H_{\rad}$. Let us assume by contradiction that $h_k\in H^1_{0,rad}(B)\setminus \mathcal H_{\rad}$. Then
%let $\phi_1$, $\phi_2\in\mathcal H_{\rad}$ be  the radial eigenfunctions associated to the negative eigenvalues $\beta_{1,\rad}$ and $\beta_{2,\rad}$ of the weighted problem  respectively  and let  $V:=\rm{span}\{\phi_1,\phi_2\}\subset \mathcal H_{\rad}\subset H^1_{0,rad}(B)$ then
%\begin{equation}
%\label{dim2}
%dim(V)=2
%\end{equation}
%(if $\phi_1=c\phi_2$ then $\int_B\frac{\phi_1\phi_2}{|x|^2}dx\neq 0$ a contradiction with \eqref{ortogonalita_auto_deboli}) 
%and by our assumption
%\begin{equation}
%\label{hnoV}
%h_k\not\in V.
%\end{equation}
%Moreover multiplying  the equation in \eqref{A} by $h_k$ and integrating we get
%\begin{equation}\label{formaNeg}
%Q_p[h_k]=(\beta_i(p)-k^2)\int_B\frac{h_k^2}{|x|^2}dx<0.
%\end{equation}
%Clearly \eqref{dim2}, \eqref{hnoV} and \eqref{formaNeg} imply that $m_{\rad}(u_p)\geq 3$, which is in contradiction with Lemma \ref{LemmaMorseIndexRadiale}, as a consequence necessarily $h_k\in \mathcal H_{\rad}$.\\}

Indeed using \eqref{decompos} we get 
\begin{eqnarray}\label{primop}
 \int_0^1 \frac{h_k(r)^2}{r}\; dr
&=&\int_0^1 \frac 1 r 
\left(\int_{0}^{2\pi}\alpha_k(\theta)\psi(r,\theta) \;d \theta\right)^2 dr\\\nonumber
&\overset{\text{Jensen ineq.}}{\leq}& \int_0^1\frac 1 r\int_{0}^{2\pi}\alpha^2_k(\theta)\psi^2(r,\theta) \; d \theta\, dr\\
\nonumber
&\overset{\text{ $\alpha_k$ are bounded}}{\leq}& C\int_0^1\int_0^{2\pi} \frac{\psi^2(r,\theta)}{r^2} r\; dr\, d\theta
= C\int_B \frac{\psi^2(x)}{|x|^2}<\infty,
\end{eqnarray}
where last estimate follows from \eqref{problemaPesatoPalla}. In the same way we obtain 
\begin{eqnarray}\label{secondop}
 \int_0^1r\left(h'_k(r)\right)^2dr &=&\int_0^1 r \left(\int_{0}^{2\pi}\alpha_k(\theta)\frac {\partial \psi(r,\theta)}{\partial r}d \theta\right)^2dr\\\nonumber
&\leq & C\int_0^1\int_0^{2\pi}r \left|\frac {\partial \psi(r,\theta)}{\partial r}\right|^2 dr d\theta\leq C\int_B|\nabla \psi(x)|^2 dx<\infty,
\end{eqnarray} 
showing \eqref{radsphint}.
\\
By Lemma \ref{valoriVariaz=AutovaloriesatoPalla}, Lemma \ref{lemma:Morse=numeroAutovaloriesatoPalla} and Lemma \ref{2AutovRadNegativiPesatoPalla}  problem \eqref{A}-\eqref{radsphint} 
admits only two negative eigenvalues which coincide with $\beta_{1,\rad}(p)$ and $\beta_{2,\rad}(p)$. Then \eqref{A}-\eqref{radsphint} has a nontrivial solution $h_k$ (related to a negative eigenvalue) if and only if $\beta_{j,\rad}(p)=\beta_i(p)-k^2$ for some $j=1,2$. This ends the proof of the existence of $(j,k)\in \{1,2\}\times \mathbb N$  which satisfies \eqref{spectralDecomposition}.
%\\
%{Next we prove the uniqueness of $(j,k)\in \{1,2\}\times \mathbb N$ for which \eqref{spectralDecomposition} holds.
%Let $i\in\{1,\ldots,m(u_p)\}$ and let $(j,k)$, $(\widetilde j,\widetilde k)\in  \{1,2\}\times \mathbb N$ such that
%\begin{equation}\label{eqstella}
% \beta_i(p)=\beta_{j,\rad}(p)+k^2=\beta_{\widetilde j,\rad}(p)+\widetilde k^2,
% \end{equation}
% we show that $(j,k)=(\widetilde j,\widetilde k)$.\\
%It is enough to prove that $j=\widetilde j$, since then  from \eqref{eqstella} we obtain also $k=\widetilde k$. Let us assume by contradiction that $j\neq \widetilde j$ and  w.l.g.  let $j=1$ and $\widetilde j=2$, then  $k> \widetilde k$ by \eqref{eqstella} and so the number 
%\[\beta:=\beta_{2,\rad}(p)- (k^2-\widetilde k^2) <\beta_{2,\rad}(p)<0.\] 
%Observe now that $\beta$ is a (negative) non-radial eigenvalue for the weighted problem \eqref{problemaPesatoPalla}, indeed easy computation shows that a non-radial eigenfunction associated to $\beta$ is given by the  function 
%\[
%\psi(x):=\phi_2(|x|)\alpha(\frac{x}{|x|}), 
%\]
%where
% $\phi_2$ is an eigenfunction associated to the radial eigenvalue $\beta_{2,\rad}(p)$ and $\alpha$ is an eigenfunction of $-\Delta_{S^1}$ associated to the eigenvalue $k^2-\widetilde k^2\ \in\mathbb N\setminus \{0\}$ (see \eqref{espressioneArmonicheSferiche}). 
% This is in contradiction with \eqref{eqstella}, since from it
% \[\beta=\beta_{1,\rad}(p).\]}}
 
\

{\bf Step 2.}
{\sl We show the converse statement.}
\\
Let $(j,k)\in \{1,2\}\times\mathbb N$ be such that  $\beta_{j,\rad}(p)+k^2<0$, let   $\phi_j$ be an eigenfunction associated to the radial eigenvalue $\beta_{j,\rad}(p)$ (which is simple in the space of the radial functions) and $\alpha_k$ be an eigefunction of $-\Delta_{S^1}$ associated to the eigenvalue $k^2$ (see \eqref{espressioneArmonicheSferiche}). 
Then easy computation shows that the number $\beta_{j,\rad}(p)+k^2$ is a negative eigenvalue for the weighted problem \eqref{problemaPesatoPalla} with eigenfunction given by 
\begin{equation} 
\label{moltiplicazione}
\psi_{j,k}(x):=\phi_j(|x|)\alpha_k(\frac{x}{|x|}). 
\end{equation}
As a consequence, by Lemma \ref{valoriVariaz=AutovaloriesatoPalla} and Lemma \ref{lemma:Morse=numeroAutovaloriesatoPalla}, there exists $i\in\{1,\ldots, m(u_p)\}$ for which \eqref{spectralDecomposition} holds.

\

{\bf Step 3.}
{\sl We prove that the eigenspace of a negative eigenvalue $\beta(p)$ of problem \eqref{problemaPesatoPalla} is spanned by the functions in  \eqref{eigenspaceSpectralDec}.}
\\
Let $m\in\N_0$ be the multiplicity of $\beta(p)$, so there exists an index $\ell\in\N$, $\ell\geq 1$ such that
 \[\beta(p)=\ \beta_{\ell}(p)=\beta_{\ell+1}(p)=\cdots\beta_{\ell+m-1}(p)\ < \beta_{\ell+m}(p)\]
 and if $\ell \geq 2$ also \[\beta_{\ell-1}(p)<\ \beta(p)\] ($m$ is the number of subsequent indexes $i$ in our notation). \\
By {\bf Step 1.} for every $i=\ell,\ldots, \ell+m-1$ there exists a couple $(j,k)\in \{1,2\}\times\N$ for which \eqref{spectralDecomposition} holds (some of the couples may coincide).
\\
Then considering the set
\[\mathcal I:=\{(j,k)\in\{1,2\}\times\N\ :\   \beta_i(p)=\beta(p)=\beta_{j,\rad}(p)+k^2,\quad i=\ell,\ldots \ell +m\},\] as seen in {\bf Step 2.} all the functions in 
 \eqref{moltiplicazione} with $(j,k)\in\mathcal I$ are eigenfunctions for \eqref{problemaPesatoPalla}. 
Observe that since $\beta_{j,\rad}(p)$ is simple in the space of radial functions and $\alpha_k$ are the functions in \eqref{espressioneArmonicheSferiche} one obtains all the functions in  \eqref{eigenspaceSpectralDec}, which are linearly independent. \\
Last we prove by contradiction that the eigenspace of $\beta(p)$ consists only of the functions in \eqref{eigenspaceSpectralDec}. So let us assume  the existence of another eigenfunction $\psi\neq 0$, 
\begin{equation}\label{orto}
\psi\perp_{\mathcal H} \mbox{ span}\left\{\phi_j(r)\cos (k\theta),\ \phi_j(r)\sin (k\theta)\ :\ (j,k) \in\mathcal I\right\},
\end{equation} 
then similarly as in {\bf Step 1.} we can write
\begin{equation}
\label{expa}
\psi(r,\theta)=\sum_{s=0}^{+\infty} h_s(r)\alpha_s(\theta)
%\left(\cos(s\theta)+\sin(s\theta)\right)
\end{equation}
where
\[
h_s(r):=\int_0^{2\pi}\alpha_s(\theta)\psi(r,\theta)d\theta, \qquad r\in (0,1).
\]
Since $\psi\neq 0$ then there exists $s\in\N$ such that $h_s\neq 0$. Then, as in {\bf Step 1.} we can prove that for any $s$ such that $h_s\neq 0$  there exists $t_s\in \{1,2\}$ such that 
\begin{equation}\label{PerAssurdo}
\beta(p)=\beta_{t_s,\rad} +s^2\quad \mbox{ and }\quad h_s=\phi_{t_s}.
\end{equation}
As a consequence \eqref{expa} becomes 
\[\psi(r,\theta)=\sum_{s=0, \ h_s\neq 0}^{+\infty} \phi_{t_s}(r)\alpha_s(\theta)
\]
and so the orthogonality condition \eqref{orto} gives 
\[0=\sum_{s=0}^{\infty} \int_0^{1}\frac{\phi_{t_s}\phi_j}{r} dr\ \int_{0}^{2\pi}\alpha_s\alpha_k d\theta=\sum_{s=0, \ h_s\neq 0}^{+\infty} \delta_{t_s,j}\delta_{s,k}, \qquad \forall (j,k)\in\mathcal I.\]
As a consequence, for any $(j,k)\in\mathcal I$ either $s\neq k$ or if $s=k$ then necessarily $t_s\neq j$, namely the couple
$(t_s,s)\not\in\mathcal I$.
Since \eqref{PerAssurdo} holds this contradicts the definition of the set $\mathcal I$.
% 
%
%Last the multiplicity follows from the spherical harmonic decomposition of any eigenfunction $\psi_i$ of problem \eqref{problemaPesatoPalla} associated with the eigenvalue $\beta_i(p)$ as 
%\begin{equation}\label{serien}\psi_i(r,\theta)=\sum_{m=0}^{+\infty}h_m(r)\alpha_m(\theta)
%\end{equation}
%where as usual  
%\[h_m(r):=\int_{S^1}\alpha_m(\theta)\psi_i(r,\theta)d\theta, \qquad r\in (0,1).\]
%{Indeed the same arguments used in {\bf Step 1} imply that when \eqref{spectralDecomposition} holds then  \eqref{serien} reduces to
%\[\psi_i(r,\theta)=h_k(r)\alpha_k(\theta)\] with $h_k$ eigenfunction associated to the  radial eigenvalue $\beta_{j,\rad}(p)$.}\edz{}
%The conclusion comes then from the fact that $\beta_{1,\rad}(p)$ and $\beta_{2,\rad}(p)$ are simple and  that $\alpha_k$ is an eigenfunction associated to the eigenvalues  $k^2$ of $-\Delta_{S^1}$, which have multiplicity $2$ for $k\geq 1$, and multiplicity $1$ for $k=0$.
\end{proof}

\

\subsection{Morse index and characterization of the degeneracy of $u_p$}\label{subsec:mo}

\

In the next result we estimate the two negative radial eigenvalues of the auxiliary weighted eigenvalue problem \eqref{problemaPesatoPalla} from above and from below by consecutive eigenvalues of $-\Delta_{S^1}$. The proof is based on the approximation of problem \eqref{problemaPesatoPalla} by a family of  weighted eigenvalue problems in annuli already studied in \cite{DeMarchisIanniPacellaMathAnn}, in particular we exploit some previous estimates in \cite{DeMarchisIanniPacellaMathAnn} related to the negative radial eigenvalues to this family of approximating auxiliary problems. As a consequence of our estimates we  also get that the Morse index of $u_p$ is even for any $p>1$ and uniformly bounded in $p$. Moreover the estimate of the two negative radial eigenvalues of \eqref{problemaPesatoPalla} is the starting point to characterize the degeneracy of $u_p$, this last result is contained in Proposition \ref{p4.7} at the end of the section.
\begin{lemma} \label{proposition:autovaloriRadiaiGenerale}
\begin{equation}\label{nu2controllato}
-1\leq \beta_{2,\rad}(p)<0 \quad \forall p>1.
\end{equation}
For any $p>1$ there exists a unique $j=j(p)\in\mathbb N$, $j\geq 2$ such that
\begin{equation}\label{nu1controllato}
-j^2\leq \beta_{1,\rad}(p)<-(j-1)^2
\end{equation} 
and
\begin{equation}\label{MorsePari} 
m(u_p)=2j
\end{equation}
Moreover $j(p)\leq C$ for any $p>1$, where the constant $C>0$ does not depend on $p$.
\end{lemma}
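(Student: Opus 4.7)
The strategy is to combine the spectral decomposition from Lemma \ref{lemma:decomposizionePalla} with an annulus approximation scheme, the main delicate point being the lower bound $\beta_{2,\rad}(p)\ge -1$ in \eqref{nu2controllato}.

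First, for any $\ep\in(0,1)$ I consider the regular weighted radial eigenvalue problem obtained from \eqref{problemaPesatoPalla} by restricting to the annulus $A_\ep:=B\setminus\overline{B_\ep(0)}$ with Dirichlet boundary conditions on $\partial A_\ep$, whose $i$-th radial eigenvalue I denote by $\beta^\ep_{i,\rad}(p)$. This is precisely the problem analyzed in \cite{DeMarchisIanniPacellaMathAnn}, where the bound $\beta_{2,\rad}^\ep(p)\ge -1$ on the second negative radial eigenvalue is established. I would then show that for every $p>1$ and every $i$ for which $\beta_{i,\rad}(p)<0$ one has $\beta_{i,\rad}^\ep(p)\to\beta_{i,\rad}(p)$ as $\ep\to 0$: the inequality $\limsup_\ep\beta^\ep_{i,\rad}(p)\le\beta_{i,\rad}(p)$ follows from density of compactly supported radial functions in $\mathcal H_{\rad}$ combined with the variational characterization \eqref{CourantCharEigenvpesatipallaRADIALE}, while the reverse inequality $\liminf_\ep\beta^\ep_{i,\rad}(p)\ge\beta_{i,\rad}(p)$ follows from uniform $\mathcal H_{\rad}$-bounds on suitably normalized annular eigenfunctions (obtained by testing the annulus equation against themselves) which allow extraction of a limit admissible in the variational problem for $\beta_{i,\rad}(p)$. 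Applying this with $i=2$ and combining with the annular estimate yields \eqref{nu2controllato}.

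Granted \eqref{nu2controllato}, the existence of $j(p)$ and the Morse index formula follow from spectral decomposition. Since by \eqref{CourantCharEigenvpesatipallaRADIALE1} one has $\beta_{1,\rad}(p)\ge-p\|u_p\|_\infty^{p-1}>-\infty$ (using $|x|\le 1$, so that $\int_B v^2/|x|^2\ge\int_B v^2$), there is a unique integer $j\ge 1$ with $-j^2\le\beta_{1,\rad}(p)<-(j-1)^2$. By Lemma \ref{lemma:decomposizionePalla} each negative eigenvalue of \eqref{problemaPesatoPalla} has the form $\beta_{r,\rad}(p)+k^2$ for some $(r,k)\in\{1,2\}\times\mathbb N$, with multiplicity equal to the dimension of the $k$-th spherical harmonic eigenspace, namely $1$ if $k=0$ and $2$ if $k\ge 1$. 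Counting with multiplicity, the row $r=1$ contributes $1+2(j-1)=2j-1$ eigenvalues (from $k=0,1,\ldots,j-1$), while the row $r=2$ contributes exactly $1$ (from $k=0$ only, since $\beta_{2,\rad}(p)+1\ge 0$ by \eqref{nu2controllato}). Summing and applying Lemma \ref{lemma:Morse=numeroAutovaloriesatoPalla} gives $m(u_p)=2j$, i.e.\ \eqref{MorsePari}, and the bound $j\ge 2$ is an immediate consequence of Lemma \ref{LemaAftalionPacella}.

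The uniform bound $j(p)\le C$ reduces to a uniform lower bound on $\beta_{1,\rad}(p)$ and splits naturally into two regimes. For $p\in(1,p^\star]$, estimate \eqref{*} applied with $p_*=p^\star$ provides a uniform $L^\infty$-bound on $p|u_p|^{p-1}$, whence the variational bound $\beta_{1,\rad}(p)\ge-p\|u_p\|_\infty^{p-1}$ is uniform in this range. For $p\ge p^\star$, the Morse index computation $m(u_p)=12$ of \cite{DeMarchisIanniPacellaMathAnn} combined with \eqref{MorsePari} forces $j(p)=6$. Combining the two regimes produces a uniform constant $C$. The principal obstacle of the proof lies in the first step: the careful passage to the limit $\ep\to 0$ in the annulus approximation, and in adapting the annular estimate $\beta_{2,\rad}^\ep(p)\ge -1$ from \cite{DeMarchisIanniPacellaMathAnn} to the singular problem on the punctured ball, as one must handle the non-trivial behavior of radial eigenfunctions near the origin dictated by Proposition \ref{p2.2}.
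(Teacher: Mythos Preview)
Your proposal is correct and follows essentially the same route as the paper: annulus approximation to import the estimate $\beta_{2,\rad}^n(p)>-1$ from \cite{DeMarchisIanniPacellaMathAnn} and pass it to the limit, combined with the spectral decomposition of Lemma~\ref{lemma:decomposizionePalla} and the multiplicity count of the spherical harmonics to obtain \eqref{MorsePari}. The one genuine deviation is in the uniform bound on $j(p)$: the paper obtains $\beta_{1,\rad}(p)\ge -C$ for \emph{all} $p>1$ in one stroke via the pointwise estimate $p|u_p(x)|^{p-1}|x|^2\le C$ from \cite{DeMarchisIanniPacellaJEMS} (see \eqref{semifin}--\eqref{Q3}), whereas you split into two regimes using \eqref{*} for bounded $p$ and the Morse index result from \cite{DeMarchisIanniPacellaMathAnn} for large $p$; both arguments are valid, but note that you should invoke the threshold $\hat p$ of Proposition~\ref{risultatoMorse_pGrande} rather than $p^\star$, since the latter is only defined in Proposition~\ref{lemma:autovaloriRadialipgrande}, which itself relies on the present lemma.
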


\begin{proof}
Let us consider the set 
\begin{equation}\label{def:anello}
A_n:=\{x\in \mathbb R^2\ :\ \frac{1}{n}<|x|<1 \}, \quad n\in \mathbb N_0
\end{equation}
and the weighted eigenvalue problem
 \begin{equation} \label{problemaPesatoAnello}
\begin{cases}
\begin{array}{ll}
-\Delta \psi - p|u_p(x)|^{p-1} \psi =\frac{\beta}{|x|^2} \psi\qquad & \text{ in } A_n, \\
\psi= 0 & \text{ on } \partial A_n
\end{array} 
\end{cases}
\end{equation} 
and let us denote by  
 \[
 \beta_i^n(p), \quad \beta_{i,\rad}^n(p), \quad i\in\N_0
 \] 
its eigenvalues, counted with their multiplicity, and the radial eigenvalues, which are simple in the space of radial functions, respectively. 

\

Clearly the following characterizations hold
\begin{eqnarray}\label{defmutilde1n}
\beta_i^n(p)&=&
\min_{\substack{
V\subset H^1_{0}(A_n)\\ dim V=i}}   \max_{\substack{v\in V\\v\neq 0}} \widetilde{R^n_p}[v]
\end{eqnarray} 
\begin{eqnarray}\label{defbetatilde1n}
\beta_{i,\rad}^n(p)&=&
\min_{\substack{
V\subset H^1_{0,rad}(A_n)\\ dim V=i}}   \max_{\substack{v\in V\\v\neq 0}} \widetilde{R^n_p}[v]
\end{eqnarray} 
where $\widetilde{R^n_p}$ is the Rayleigh quotient
\begin{equation}\label{Rayleigh3}
\widetilde{R^n_p}[v]:=\frac{Q^n_p(v)}{\int_{A_n} \frac{v(x)^2}{|x|^2} dx}
\end{equation}
and $Q^n_p(v):=\int_{A_n}\left(|\nabla v(x)|^2-p|u_p(x)|^{p-1}v(x)^2\right) dx$.

\

{\bf Step 1.}
{\sl We show that for any $p>1$ there exists a unique $j=j(p)\in\mathbb N$, $j\geq 2$ and $n_p\in\mathbb N_0$ such that
\begin{equation}\label{morsePari}m(u_p)=2j,
\end{equation} 
and for $n\geq n_p$
\begin{equation}\label{posizionebetatilden1}
-j^2\leq\beta_{1,\rad}^n(p)<-(j-1)^2
\end{equation}
\begin{equation}\label{posizionebetatilden2}
-1< \beta_{2,\rad}^n(p)<0.
\end{equation}
}
As already proved in \cite[Proposition 4.3]{DeMarchisIanniPacellaMathAnn}  there exists $n_p'\in\mathbb N_0$ such that
\begin{equation}\label{morseradCoincide}
2\overset{Lemma \ \ref{LemmaMorseIndexRadiale}}{=} m_{\rad}(u_p)=\#\{\mbox{negative eigenvalues $\beta_{i,\rad}^n(p)$}\}, \quad\mbox{ for }n\geq n_p',
\end{equation}
namely
\begin{equation}
\label{ahah}\beta_{1,\rad}^n(p)< \beta_{2,\rad}^n(p)<0\leq \beta_{3,\rad}^n(p)<\ldots, \quad\mbox{ for }n\geq n_p',
\end{equation}
where the strict inequalities are due to the fact that the radial eigenvalues are simple in the space of the radial functions.\\ 
From \cite[Proposition 4.5]{DeMarchisIanniPacellaMathAnn}  we also know that for any $p>1$ there exists $n_p''\in\mathbb N_0$ such that 
\begin{equation}
\label{dip}
\beta_{2,\rad}^n(p)> -1,\ \ \mbox{ for any }n\geq  n_p''.
\end{equation}
Hence \eqref{posizionebetatilden2} follows immediately from  \eqref{ahah} and \eqref{dip}. In order to conclude the proof observe that from \cite[Proposition 4.3]{DeMarchisIanniPacellaMathAnn} we  also know that
\begin{equation}\label{morseCoincide}
4\overset{ Lemma \ \ref{LemaAftalionPacella}}{\leq } m(u_p)=\#\{\mbox{negative eigenvalues $\beta_i^n(p)$}\}, \quad\mbox{ for }n\geq n_p'.
\end{equation}
We show that, as a consequence of \eqref{morseCoincide}, using \eqref{ahah}-\eqref{dip} and a spectral decomposition for the eigenvalues $\beta_i^n(p)$ in $A_n$, necessarily also \eqref{morsePari} and  \eqref{posizionebetatilden1}  hold. Indeed recall that for the eigenvalues in $A_n$ the spectral decomposition holds:
\begin{equation}\label{spectralDecompositionAnello}
\beta_i^n(p)= \beta_{j,\rad}^n(p)+k^2\quad i,j \in\mathbb N_0,\ k\in\mathbb N, 
\end{equation} 
where as before $k^2$ are  the  eigenvalues of the  Laplace-Beltrami operator on the unit sphere $S^1$, and the eigenfunctions $\psi$ associated to the eigenvalue  $\beta_i^n(p)$ may be obtained by multiplying the spherical harmonics $\alpha_k$ associated to $k^2$ (given in \eqref{espressioneArmonicheSferiche}) with the radial $j$-th eigenfunction $\phi_j^n$ for problem \eqref{problemaPesatoAnello}, similarly as we already did in \eqref{moltiplicazione}:
\begin{equation}
\label{moltiplicazione2}
\psi(x)=\phi_j^n(|x|)\alpha_k(\frac{x}{|x|}). 
\end{equation}

\

By \eqref{spectralDecompositionAnello} and \eqref{ahah} it follows that the modes $k$ that contributes to the Morse index of $u_p$ are those such that 
\begin{equation}\label{spectralDecompositionAnelloModiCheContano}
\beta_i^n(p)= \beta_{j,\rad}^n(p)+k^2 <0,\qquad j=1,2.
\end{equation}
The case $j=2$ in \eqref{spectralDecompositionAnelloModiCheContano} is possible only when $k=0$ by \eqref{dip}. Hence by \eqref{moltiplicazione2} and recalling  that there is only $1$ spherical harmonic for $k=0$ (see \eqref{espressioneArmonicheSferiche}) we get only $1$ contribution to the Morse index in this case.
\\
The case $j=1$ gives instead $1$ contribution (for $k=0$) and moreover,  by  \eqref{morseCoincide},  it must also give other contribution ($k\geq 1$). As a consequence  \eqref{posizionebetatilden1} holds.
Hence by \eqref{moltiplicazione2} and recalling  that there are two spherical harmonics for $k\geq 1$, and only $1$ for $k=0$ (see \eqref{espressioneArmonicheSferiche}) we get in this case
that the total contribution of $\beta_{1,\rad}^n(p)$ to the Morse index is then $2(j-1)+1$. 
\\
Summing up all the contributions from both $j=1$ and $j=2$ we get \eqref{morsePari}.

\

{\bf Step 2.}
{\sl We show that for any $p>1$ the sequence  $(\beta_{i,\rad}^n(p))_n$ is monotone non-increasing and
\[\beta_{i,\rad}(p)=\lim_{n\rightarrow +\infty}\beta_{i,\rad}^n(p)=\inf_n \beta_{i,\rad}^n(p), \quad i=1,2.\]
}
The monotonicity of $(\beta_{i,\rad}^n(p))_n$  follows by the variational characterization \eqref{defbetatilde1n} of these eigenvalues and by the
canonical embeddings $H^1_0(A_{n})\subset H^1_0(A_{n+1})$, $\forall n\in\N_0$.
\\
By the monotonicity of $(\beta_{i,\rad}^n(p))_n$ we can define the values
 \begin{equation}\label{deflim}\delta_i(p):=\lim_{n\rightarrow +\infty} \beta_{i,\rad}^n(p)=\inf_n \beta_{i,\rad}^n(p)\overset{\mbox{{\scriptsize{{\bf Step 1}}}}}{<}0, \quad i=1,2.
\end{equation}
Then the proof of {\bf Step 2.}   consists in proving that 
\begin{equation}\label{tesiSTEP2}
\delta_i(p)=\beta_{i,\rad}(p), \quad i=1,2.
\end{equation}
Let $\widetilde{\phi _i^n}$ be the radial eigenfunction of problem  \eqref{problemaPesatoAnello} corresponding to the radial eigenvalue $\beta_{i,\rad}^n(p)$ and normalized so that
\begin{equation}
\label{normalizzazione}
\int_{A_n} \left(\widetilde{\phi _i^n}\right)^2 =1.
\end{equation}
Then, extending $ \widetilde{\phi _i^n}$ to zero in $B\setminus A_n$, from \eqref{normalizzazione} we have
\begin{equation}\label{deltaip}\int _{B} \frac{\left(\widetilde{\phi _i^n}\right)^2}{|x|^2}>\int_{B} \left(\widetilde{\phi _i^n}\right)^2 =1.
\end{equation}
By {\bf Step 1.} we know in particular that 
$\beta_{i,\rad}^n(p)<0$, so from \eqref{problemaPesatoAnello} we get
\begin{equation}\label{betaiequ}
\int_{B} | \nabla \widetilde{\phi _i^n}|^2-p \int_{B}|u_p|^{p-1} \left(\widetilde{\phi _i^n}\right)^2=\beta_{i,\rad}^n(p)\int _{B} \frac{\left(\widetilde{\phi _i^n}\right)^2}{|x|^2}< 0,
\end{equation}
from which it follows that the sequence $(\widetilde{\phi _i^n})_n$ is bounded in $H^1_0(B)$, indeed 
\begin{equation}\label{c}
\int_{B} | \nabla \widetilde{\phi _i^n}|^2<p \int_{B}|u_p|^{p-1} \left(\widetilde{\phi _i^n}\right)^2\leq p\|u_p\|_{\infty}^{p-1}\int_{B}\left(\widetilde{\phi _i^n}\right)^2\overset{\eqref{normalizzazione}}{\leq} C_p.
\end{equation} 
Hence, up to a subsequence, $\widetilde{\phi _i^n}\rightarrow \widetilde{\phi_i}$ as $n\rightarrow +\infty$ weakly in $H^1_0(B)$, strongly in $L^2(B)$ and pointwise a.e. in $B$ and then as a consequence
\begin{equation}
\label{stainH10}
\int_{B} | \nabla {\widetilde{\phi_i}}|^2\leq \liminf_{n\rightarrow +\infty}\int_{B} | \nabla \widetilde{\phi _i^n}|^2\overset{\eqref{c}}{\leq} C_p
\end{equation}
and
\begin{equation}\label{convergenzapartel2}
p\int_{B}|u_p|^{p-1} \left({\widetilde{\phi_i}}\right)^2=\lim_{n\rightarrow +\infty} p \int_{B}|u_p|^{p-1} \left(\widetilde{\phi _i^n}\right)^2.
\end{equation}
$\widetilde{\phi_i}$ is radial and moreover, since $\int_{B} \left({\widetilde{\phi_i}}\right)^2 =1$, we have \begin{equation}\label{nonZ}\widetilde{\phi_i}\neq 0.
\end{equation} 
Furthermore, since $\widetilde{\phi _1^n}\geq 0$,  we also  have  
\begin{equation}
\label{nonegp}\widetilde{\phi_1}(x)\geq 0 \mbox{ in }B.
\end{equation}
Moreover there exists $C>0$, independent on $n\in\mathbb N$, such that 
\begin{equation}
\label{normpesfin}
\int_B  \frac{\left({\widetilde{\phi_i}}\right)^2}{|x|^2}\overset{\mbox{\footnotesize{Fatou}}}{\leq} \liminf_{n\rightarrow +\infty} \int _{B} \frac{\left(\widetilde{\phi _i^n}\right)^2}{|x|^2}\leq C,
\end{equation}
indeed, if by contradiction we have that $\int _{B} \frac{\left(\widetilde{\phi _i^n}\right)^2}{|x|^2}\rightarrow +\infty$, then by \eqref{c} and \eqref{convergenzapartel2} we derive
\[\delta_i(p)=\lim_{n\rightarrow +\infty} \beta_{i,\rad}^n(p)\overset{\eqref{betaiequ}}{=}\lim_{n\rightarrow +\infty}\frac{\int_{B} | \nabla \widetilde{\phi _i^n}|^2-p \int_{B}|u_p|^{p-1} \left(\widetilde{\phi _i^n}\right)^2}{\int _{B} \frac{\left(\widetilde{\phi _i^n}\right)^2}{|x|^2}}=0,\]
which is in contradiction with \eqref{deflim}.
Observe that by the bounds in \eqref{stainH10} and \eqref{convergenzapartel2} and the estimate in \eqref{deltaip} we also get that $\delta_i(p)>-\infty$.
By \eqref{stainH10} and \eqref{normpesfin} it follows that 
\begin{equation}\label{limInH}
\widetilde{\phi_i}\in\mathcal H_{\rad}. 
\end{equation}
Moreover \eqref{normpesfin} implies that, up to a subsequence,  
$\widetilde{\phi _i^n}$ converges to $\widetilde{\phi_i}$ also weakly in 
$L^2_{\frac 1{|x|^2}}(B)$.
Multiplying  \eqref{problemaPesatoAnello} by $\varphi\in \mathcal{H}$ and integrating on $A_n$, we have 
\begin{equation}\label{d}
\int_{A_n}\nabla  \widetilde{\phi _i^n}\cdot \nabla \varphi -\int_{|x|=\frac{1}{n}}\left(\nabla \widetilde{\phi _i^n}\cdot \nu \right)   \varphi-p \int_{A_n}  |u_p|^{p-1}\widetilde{\phi _i^n}\varphi=\beta_{i,\rad}^n(p)\int_{A_n} \frac{\widetilde{\phi _i^n}\varphi}{|x|^2}
\end{equation}
(where the only boundary term is the one on the interior part of $\partial A_n$ since $\varphi(x)=0$ when $|x|=1$).\\ 
Now by the weak convergence of  $\widetilde{\phi _i^n}\rightarrow \widetilde{\phi_i}$ in $H^1_0(B)$ and in
$L^2_{\frac 1{|x|^2}}(B)$, we have
\begin{equation}
\label{term1}
\int_{A_n}\nabla  \widetilde{\phi _i^n}\cdot \nabla \varphi=\int_{B}\nabla  \widetilde{\phi _i^n}\cdot \nabla \varphi\underset{n\rightarrow +\infty}{\longrightarrow} \int_{B}\nabla  \widetilde{\phi_i}\cdot \nabla \varphi,
\end{equation}
\begin{equation}
\label{term2}
\int_{A_n}  |u_p|^{p-1}\widetilde{\phi _i^n}\varphi=\int_{B}  |u_p|^{p-1}\widetilde{\phi _i^n}\varphi\underset{n\rightarrow +\infty}{\longrightarrow} \int_{B}  |u_p|^{p-1}\widetilde{\phi_i}\varphi 
\end{equation}
and  
\begin{equation}
\label{term3}
\int_{A_n} \frac{\widetilde{\phi _i^n}\varphi}{|x|^2}=\int_{B} \frac{\widetilde{\phi _i^n}\varphi}{|x|^2}\underset{n\rightarrow +\infty}{\longrightarrow}\int_{B} \frac{\widetilde{\phi_i}\varphi}{|x|^2}.
\end{equation}
Furthermore 
\begin{equation}
\label{convTermBordo}
\int_{|x|=\frac{1}{n}}\left(\nabla \widetilde{\phi _i^n}\cdot \nu \right)   \varphi \underset{n\rightarrow +\infty}{\longrightarrow} 0,
\end{equation}
indeed 
\begin{eqnarray}
\left|\int_{|x|=\frac{1}{n}}\left(\nabla \widetilde{\phi _i^n}\cdot \nu \right)   \varphi \right| &\leq & \frac{1}{n}\int_{|x|=\frac{1}{n}} |\nabla \widetilde{\phi _i^n}|\frac{|\varphi|}{|x|}
\leq \frac{1}{n}\int_{ A_n}|\nabla \widetilde{\phi _i^n}|\frac{|\varphi|}{|x|}\nonumber\\
 &\overset{\mbox{{\footnotesize H\"older}}}{\leq} &  \frac{1}{n}\left(\int_{ B}|\nabla \widetilde{\phi _i^n}|^2\right)^{\frac 12}\left(\int_B \frac{|\varphi|^2}{|x|^2}\right)^{\frac{1}{2}} 
 \nonumber\\
 &\overset{\eqref{c}}{\leq} & \frac{1}{n}\sqrt{C_p}\|\varphi\|_{\mathcal H}\underset{n\rightarrow +\infty}{\longrightarrow} 0.
\end{eqnarray}
Passing to the limit into \eqref{d} and using \eqref{term1}, \eqref{term2}, \eqref{term3} and \eqref{convTermBordo}, we get that $\widetilde{\phi_i}$ satisfies
\begin{equation}
\label{equadelta}
\int_{B}\nabla {\widetilde{\phi_i}}\cdot \nabla \varphi-p\int_{B}  |u_p|^{p-1}{\widetilde{\phi_i}}\varphi=\delta_i(p)\int_{B} \frac{{\widetilde{\phi_i}}\varphi}{|x|^2}, \quad\mbox{ for any }\varphi \in \mathcal{H},
\end{equation}
in particular by \eqref{limInH} we can choose $\varphi=\widetilde{\phi_i}\neq 0$ by \eqref{nonZ},  so since $\delta_i(p)<0$ we have from  \eqref{equadelta} that  the quadratic form evaluated at $\widetilde{\phi_i}$ is negative
\[Q_p(\widetilde{\phi_i})<0.\]
As a consequence, since $m_{\rad}(u_p)=2$, it must be
\[\widetilde{\phi_i}\in span\{\phi_1,\phi_2\}\]
where $0\neq \phi_j\in\mathcal H_{\rad}\subset H^1_0(B)$ is the function where the negative weighted radial eigenvalue $\beta_{j,\rad}(p)$ is achieved for  $j=1,2$, which satisfies
\eqref{problemaPesatoPallaDeboleiRadiale} (and so $Q_p(\phi_j)<0$) and such that $\phi_1\geq 0$ and $\phi_1\perp_{\mathcal H}\phi_2$ (so $\phi_2$ changes sign).
\\
Choosing the test function $\varphi=\phi_j$, $j=1,2$ into \eqref{equadelta} and using the equation \eqref{problemaPesatoPallaDeboleiRadiale} for $\phi_j$ we also get
\[[\delta_i(p)-\beta_{j,\rad}(p)]\int_B\frac{\widetilde{\phi_i}\phi_j}{|x|^2}=0\quad\mbox{ for }j=1,2,\]
hence the only possibility is that there exists $\alpha\in\mathbb R$ such that 
\begin{equation}\label{alternativa}\mbox{ either }\ \left\{\begin{array}{lr}\widetilde{\phi_i}=\alpha\phi_1\\\delta_i(p)=\beta_{1,\rad}(p)\end{array}\right. \ \mbox{ or }\ \left\{\begin{array}{lr}\widetilde{\phi_i}=\alpha\phi_2 \\\delta_i(p)=\beta_{2,\rad}(p)\end{array}\right.   
\end{equation}
for $i=1,2$. By \eqref{nonegp} it follows that  necessarily  there exists $\alpha\in\mathbb R$ such that
\[\left\{\begin{array}{lr}\widetilde\phi_1=\alpha \phi_1\\\delta_1(p)=\beta_{1,\rad}(p).\end{array}\right.\]

Moreover \eqref{posizionebetatilden1} and \eqref{posizionebetatilden2} proved in {\bf Step 1.} and the definition of $\delta_i(p)$ in \eqref{deflim} imply that
\[ 
\delta_1(p)< -1 \leq \delta_2(p),
\]
hence $\delta_1(p)\neq\delta_2(p)$ and so by \eqref{alternativa} necessarily there exists $\beta\in\mathbb R$ such that
 \[\left\{\begin{array}{lr}\widetilde\phi_2=\beta\phi_2 \\\delta_2(p)=\beta_{2,\rad}(p)\end{array}\right. \]
which concludes the proof of  \eqref{tesiSTEP2}.
\\
{\bf Step 3.} {\sl Conclusion.} 
\\
\eqref{MorsePari} is the same as \eqref{morsePari} in {\bf Step 1}. Moreover passing to the limit in \eqref{posizionebetatilden1} and in  \eqref{posizionebetatilden2} proved in {\bf Step 1} and using the results of {\bf Step 2} we obtain \eqref{nu1controllato} and \eqref{nu2controllato} respectively, where the strict inequalities are due to the monotonicity of the sequences $(\beta_{1,\rad}^n(p))_n$ and $(\beta_{2,\rad}^n(p))_n$.
\\
Last we show that there exists $C>0$ independent of $p$ such that 
\begin{equation}\label{nuUnoLimitato}
-C\leq \beta_{1,\rad}(p)\ (< 0)\quad \mbox{ for any $p>1$}
\end{equation}
from which the uniform bound on $j(p)$ then follows and this concludes the proof.
Let $\phi_p\in\mathcal H$ be a function where $\beta_{1,\rad}(p)$ is achieved,  then by \eqref{problemaPesatoPallaDeboleiRadiale}, choosing $v=\phi_p$, we have:
\begin{eqnarray*}
0\leq \int_{B}|\nabla\phi_p(y)|^2dy&=&\int_{B}p|\upp(y)|^{p-1}\phi_p(y)^2 dy+\beta_{1,\rad}(p)\int_{B}\frac{\phi_p(y)^2}{|y|^2}dy\\
&=&\int_{B}\left(p|\upp(y)|^{p-1}|y|^2+\beta_{1,\rad}(p)\right)\frac{\phi_p(y)^2}{|y|^2} dy\\
&\leq &  \left[\max_{y\in B} \left(p|\upp(y)|^{p-1}|y|^2\right)+ \beta_{1,\rad}(p)\right] \int_{B}\frac{\phi_p(y)^2}{|y|^2} dy,
\end{eqnarray*}
 As a consequence 
 \begin{equation}\label{semifin}
 \beta_{1,\rad}(p)\geq -\max_{y\in B} \left( p|\upp(y)|^{p-1}|y|^2\right).
 \end{equation} 
We recall the following pointwise estimate for $u_p$ which has been proved in  \cite{DeMarchisIanniPacellaJEMS}:
 \begin{equation}\label{Q3}
p|u_p(x)|^{p-1}|x|^2\leq C, \quad \forall p>1, \ \forall x\in B,
\end{equation}
 for a certain $C>0$  (see property $(P_3^k)$ in  \cite[Proposition 2.2]{DeMarchisIanniPacellaJEMS}, observing that in  the radial case the origin is the only absolute maximum point of $|u_p|$ and that $k=1$ by \cite[Proposition 3.6]{DeMarchisIanniPacellaJEMS}).
The conclusion follows combining  \eqref{Q3} with \eqref{semifin}.
\end{proof}

\begin{remark} \label{Remark:convergenzaAutovaloriDAnelloAPalla} In the proof of Lemma \ref{proposition:autovaloriRadiaiGenerale} we have introduced the auxiliary weighted eigenvalue problems
\eqref{problemaPesatoAnello} in the annuli $A_n$, $n\in\mathbb N_0$  and,
 as an intermediate step, we have shown that for any fixed $p>1$ the sequence  $(\beta_{i,\rad}^n(p))_n$ of the $i$-th radial eigenvalues for these problems is monotone non-increasing and 
\begin{equation}
\label{convergenzaAutovRadiali}
\beta_{i,\rad}(p)=\lim_{n\rightarrow +\infty} \beta_{i,\rad}^n(p)=\inf_n \beta_{i,\rad}^n(p), \quad i=1,2.
\end{equation}
We stress that by the spectral decomposition in
\eqref{spectralDecompositionAnello} we also get the non-increasing monotonicity of the sequence of the $j$-th eigenvalues $(\beta_j^n(p))_n$ of the problems  \eqref{problemaPesatoAnello}. Moreover combining  \eqref{convergenzaAutovRadiali}, the spectral decomposition in \eqref{spectralDecompositionAnello} and  Lemma \ref{lemma:decomposizionePalla} we also know the limit for the  negative ones:
\[
\beta_j(p)=\lim_{n\rightarrow +\infty} \beta_j^n(p)= \inf_n \beta_j^n(p), \quad j=1,\ldots, m(u_p). 
\]
The auxiliary weighted eigenvalue problems
\eqref{problemaPesatoAnello} in the annuli $A_n$, $n\in\mathbb N_0$ is the same already introduced and studied in \cite{DeMarchisIanniPacellaMathAnn} when computing the Morse index of $u_p$ for large $p$.
\end{remark}
%
%\begin{remark}
%{Dove convergono gli altri $\beta_{i,\rad}^n(p)$, per  $i\geq 3$ ? (la monotonia c'e' sempre, quindi ammettono limite)
%\\
%In generale, essendo i $\beta_{i,\rad}^n(p)\geq 0$, per  $i\geq 3$, allora i loro limiti saranno autovalori nonnegativi di \eqref{problemaPesatoPalla} per i quali pero non abbiamo una caratterizzazione variazionale...)}
%\end{remark}
%
%
%
%
%\
%
%

Next we investigate the degeneracy of the solution $u_p$, for any $p>1$. This result will be useful to characterize the degeneracy of $u_p$ in the case of large $p$. Moreover we will need it to identify the possible bifurcation points and select the eigenfunctions related to them.

\begin{proposition}[Characterization of degeneracy]\label{p4.7}
For any $p\in (1, +\infty)$ let $j=j(p)\in\mathbb N$, $j\geq 2$ be as in Lemma \ref{proposition:autovaloriRadiaiGenerale}. The solution $u_p$ is degenerate if and only if 
\begin{equation}\label{1deg}\beta_{1,\rad}(p)=-j^2
\end{equation}
or 
\begin{equation}\label{2deg}\beta_{2,\rad}(p)=-1.
\end{equation}
Moreover the space of the solutions to the linearized problem \eqref{linearizedProblem} at a value $p$ that satisfies \eqref{1deg} and/or \eqref{2deg}
 is spanned by 
\begin{equation}\label{autof1}
v_{j}(r,\theta)=\phi_1(r) \left(A \sin(j\theta)+B\cos(j\theta)\right) \qquad  A,B\in \R
\end{equation}
and/or
\begin{equation}
\label{autof2}
v(r,\theta)=\phi_2(r) \left(A \sin(\theta)+B\cos(\theta)\right) \qquad  A,B\in \R,
\end{equation}
where $\phi_1, \phi_2$ are the  eigenfunctions associated to the first and second radial eigenvalue $\beta_{1,\rad}(p)$, $\beta_{2,\rad}(p)$ respectively.\\
Hence $Ker (L_p)$ has dimension $0$ when  neither  \eqref{1deg} nor \eqref{2deg} hold, dimension  $2$ when either  \eqref{1deg} or \eqref{2deg} holds, and dimension $4$ when   \eqref{1deg} and \eqref{2deg} hold. 
%Moreover the space of solutions to the linearized equation at a value $p$ that satisfies \eqref{1deg} has dimension $2$ and it is spanned by 
%\begin{equation}\label{autof1}
%v_{j}(r,\theta)=\phi_1(r) \left(A \sin(j\theta)+B\cos(j\theta)\right)
%\end{equation}
%where $\phi_1$ is the  eigenfunction associated to the first radial eigenvalue $\beta_{1,\rad}(p)$ and $A,B\in \R$, while the space of solutions to the linearized equation at a value $ p$ that satisfies \eqref{2deg} has dimension $2$ and it is spanned by 
%\begin{equation}
%\label{autof2}
%v(r,\theta)=\phi_2(r) \left(A \sin(\theta)+B\cos(\theta)\right)
%\end{equation}
%where  $\phi_2$ is the eigenfunction associated to the second radial eigenvalue $\beta_{2,\rad}(p)$  and $A,B\in \R$.
\end{proposition}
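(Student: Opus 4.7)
The plan is to reduce the kernel equation $L_p v=0$ to the radial weighted problem \eqref{problemaPesatoPalla} by decomposing $v$ along spherical harmonics exactly as in the proof of Lemma \ref{lemma:decomposizionePalla}, and then use the Lemma \ref{proposition:autovaloriRadiaiGenerale} localization of $\beta_{1,\rad}(p)$ and $\beta_{2,\rad}(p)$ to pinpoint when the eigenvalue $0$ can actually be reached. The radial non-degeneracy of $u_p$ (Lemma \ref{lemma:radiallyNonDeg}) will rule out the $k=0$ mode for free, so all the work is concentrated in the angular modes $k\geq 1$.

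More precisely, take any $v\in H^2(B)\cap H^1_0(B)$ with $L_p v=0$. Writing
\[
v(r,\theta)=h_0(r)+\sum_{k\geq 1}\bigl[h_k^c(r)\cos(k\theta)+h_k^s(r)\sin(k\theta)\bigr]
\]
in the usual $L^2(S^1)$-basis and inserting into $L_p v=0$, each coefficient $h=h_k^c$ or $h_k^s$ satisfies
\[
-h''-\tfrac{1}{r}h'+\tfrac{k^2}{r^2}h - p|u_p|^{p-1}h=0 \quad \text{ in } (0,1), \qquad h(1)=0.
\]
For $k=0$ this is the radial linearized equation, which admits only $h_0\equiv 0$ by Lemma \ref{lemma:radiallyNonDeg}. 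For $k\geq 1$, since $v\in H^2(B)$ is continuous on $\bar B$ in dimension two, we have $h(0)=0$, so condition $a)$ of Proposition \ref{p2.2} holds and yields $h(r)=O(r^k)$ near the origin. This decay, combined with the finite energy $\int_0^1 r(h')^2\,dr<\infty$ inherited from $v\in H^1_0$ exactly as in \eqref{primop}--\eqref{secondop}, places $h$ in $\mathcal H_{\rad}$ as a radial eigenfunction of \eqref{problemaPesatoPalla} with eigenvalue $-k^2<0$. Lemma \ref{proposition:autovaloriRadiaiGenerale} forces $-k^2\in\{\beta_{1,\rad}(p),\beta_{2,\rad}(p)\}$: since $\beta_{2,\rad}(p)\in[-1,0)$ and $\beta_{1,\rad}(p)\in[-j^2,-(j-1)^2)$ with $j\geq 2$, the only possibilities are $k=1$ with $\beta_{2,\rad}(p)=-1$, or $k=j$ with $\beta_{1,\rad}(p)=-j^2$. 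In each case the simplicity of the radial eigenvalue in $\mathcal H_{\rad}$ forces $h$ to be a scalar multiple of $\phi_2$ or $\phi_1$ respectively.

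For the converse and the dimension count, if \eqref{1deg} holds then a direct calculation (inserting into $L_p$ and using that $\phi_1$ solves \eqref{problemaPesatoPallaDeboleiRadiale} with $\beta_{1,\rad}(p)=-j^2$) shows $\phi_1(r)\bigl(A\sin(j\theta)+B\cos(j\theta)\bigr)\in H^1_0(B)$ lies in $\ker L_p$ for every $A,B\in\mathbb R$; the same for \eqref{2deg} with $\phi_2$ and $k=1$. Because $j\geq 2$ the two angular modes $k=1$ and $k=j$ are distinct, and within each mode $\sin(k\theta)$ and $\cos(k\theta)$ are linearly independent, so the kernel has exactly dimension $0$, $2$, or $4$ according to how many of \eqref{1deg}, \eqref{2deg} are satisfied, and it is spanned by the functions in \eqref{autof1}, \eqref{autof2}. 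The only genuinely delicate step is the use of Proposition \ref{p2.2} to upgrade an $H^1_0$-mode to an $\mathcal H_{\rad}$-eigenfunction, since $\mathcal H\subsetneq H^1_0(B)$; the rest is the same bookkeeping of the spectral decomposition already carried out in Lemma \ref{lemma:decomposizionePalla} for strictly negative eigenvalues, now applied at the boundary value $\beta=0$.
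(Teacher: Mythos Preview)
Your proof is correct and follows essentially the same route as the paper's own argument: decompose a kernel element along spherical harmonics, eliminate the $k=0$ mode via Lemma \ref{lemma:radiallyNonDeg}, use Proposition \ref{p2.2} (via $h_k(0)=0$) to gain the $O(r^k)$ decay that places each nontrivial $h_k$ in $\mathcal H_{\rad}$, then invoke the fact that the weighted problem \eqref{problemaPesatoPalla} has exactly the two negative radial eigenvalues $\beta_{1,\rad}(p),\beta_{2,\rad}(p)$ together with their localization from Lemma \ref{proposition:autovaloriRadiaiGenerale}. The only minor remark is that the statement ``$-k^2\in\{\beta_{1,\rad}(p),\beta_{2,\rad}(p)\}$'' really rests on Lemma \ref{2AutovRadNegativiPesatoPalla} (there are exactly two negative radial eigenvalues and $\beta_{3,\rad}(p)=0$ is not attained), while Lemma \ref{proposition:autovaloriRadiaiGenerale} is what then pins down the admissible $k$; the paper separates these two citations explicitly.
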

\begin{proof}
$u_p$ is degenerate if and only if there exists $v\in H^1_0(B)$, $v\neq 0$ such that
 \begin{equation}\label{nucleoNonVuoto}\left\{
\begin{array}{ll}
-\Delta v - p|u_p|^{p-1} v =0 \qquad & \text{ in } B, \\
v= 0 & \text{ on } \partial B.
\end{array} 
\right.
\end{equation}

{\bf Step 1.} {\sl We show that if $u_p$ is degenerate then \eqref{1deg} or \eqref{2deg} hold.}
\\
If $u_p$ is degenerate,  problem \eqref{nucleoNonVuoto} admits a solution $v$ which is continuous in $B$ by elliptic regularity. Then we can decompose $v$ along spherical harmonics, namely for $k\in\mathbb N$ we consider the radial function
\begin{equation}
\label{defhk}
h_k(r):=\int_0^{2\pi}\alpha_k(\theta)v(r,\theta)\, d\theta,\qquad r\in [0,1)
\end{equation}
where $\alpha_k$ is an eigefunction of $-\Delta_{S^1}$ associated to the eigenvalue $k^2$ (see \eqref{espressioneArmonicheSferiche}---\eqref{h_kInZero}). Since $(\alpha_k)_k$ is a complete orthogonal system for $L^2(S^1)$ and $v\neq 0$, then necessarily $h_k\neq 0$ for some $k\in\mathbb N$.
Moreover, similarly as in {\bf Step 1} in the proof of Lemma \ref{lemma:decomposizionePalla}, it is easy to show that 
 $h_k$, for these values of $k$, is a nontrivial solution to the problem 
\begin{equation}\label{radsph}
\left\{
\begin{array}{lr}
-h_k''-\frac 1r h_k'-p|u_p|^{p-1}h_k =\frac{-k^2}{r^2} h_k & \text{ in }(0,1)\\
h_k(1)=0
\end{array}\right. 
\end{equation}
Observe  that $k\geq 1$, since $u_p$ is radially nondegenerate by Lemma \ref{lemma:radiallyNonDeg}, so (see  \eqref{h_kInZero}), one has also
\begin{equation}\label{zeroh}h_k(0)=0.
\end{equation}
Next we show that $h_k$ satisfies also the condition
\begin{equation} %\label{radsphint1}
\int_0^1 r(h_k')^2+\frac{h^2_k}{r}<+\infty. 
\end{equation}

Indeed, since $v\in H^1_0(B)$, we can argue as in the proof of \eqref{secondop} to get
\begin{equation} \label{radsphint1}
\int_0^1 r(h_k')^2<+\infty
\end{equation}
and moreover, using Proposition \ref{p2.2}, we also have that $h_k(r)=O(r^k)$,  as $r \to 0$, which implies
\begin{equation}\label{sommability}
\int_0^1\frac{h^2_k}{r}<+\infty.
\end{equation}
By  Lemma \ref{valoriVariaz=AutovaloriesatoPalla}, Lemma \ref{lemma:Morse=numeroAutovaloriesatoPalla} and Lemma \ref{2AutovRadNegativiPesatoPalla}  problem \eqref{radsph}-\eqref{radsphint1}-\eqref{sommability} admits only two negative eigenvalues which coincide with $\beta_{1,\rad}(p)$ and $\beta_{2,\rad}(p)$. Hence  we conclude that $h_k$ is nontrivial if and only if
  $\beta_{i,\rad}(p)=-k^2$ for some $i=1,2$ and $k\geq 1$. 
The equalities \eqref{1deg} and \eqref{2deg}  then follow remembering that, by Lemma \ref{proposition:autovaloriRadiaiGenerale}, $-1\leq\beta_{2,\rad}(p)<0$ and  $-j^2\leq \beta_{1,\rad}(p)<-(j-1)^2$ for some $j=j(p)\in\N$, $j\geq 2$.
\\
{\bf Step 2.} {\sl We show that if  \eqref{1deg} or \eqref{2deg} hold then $u_p$ is degenerate.}
\\
Let 
\begin{equation}\label{nuclei}
v_{i,k}(x):=\phi_i(|x|)\alpha_k(\frac{x}{|x|}), 
\end{equation}
where
 $\phi_i$ is an eigenfunction associated to the radial eigenvalue $\beta_{i,\rad}(p)$ and $\alpha_k$ is an eigefunction of $-\Delta_{S^1}$ associated to the eigenvalue $k^2$ (see \eqref{espressioneArmonicheSferiche}). Then easy computation shows that if \eqref{1deg} (resp. \eqref{2deg}) holds then $v_{i,k}$ with $i=1$ and $k=j$ (resp. $i=2$ and $k=1$) solves \eqref{nucleoNonVuoto}.
 \\ 
{\bf Step 3.} {\sl We show that the space of solutions of \eqref{nucleoNonVuoto} at a value $p$ that satisfies \eqref{1deg} (resp. \eqref{2deg}) is given by \eqref{autof1}  (resp. \eqref{autof2}).}
\\
The functions in \eqref{autof1}  (resp. \eqref{autof2}) clearly solve \eqref{nucleoNonVuoto}. This follows from {\bf Step 2}, recalling the explicit expression of $\alpha_k$ (see \eqref{espressioneArmonicheSferiche}). 
\\
To prove that the space of solutions to  \eqref{nucleoNonVuoto} is spanned by the functions in \eqref{autof1}  and/or  \eqref{autof2}, recall that $\alpha_k$ is an orthogonal basis for $L^2(S^1)$, hence
any nontrivial solution $v$ to \eqref{nucleoNonVuoto} may be written in $L^2(B)$ as
\begin{equation}\label{serie}v(r,\theta)=\sum_{k=0}^{+\infty}h_k(r)\alpha_k(\theta)
\end{equation}
with $h_k$ defined as in \eqref{defhk}. 
Then the same arguments used in {\bf Step 1} imply that when only \eqref{1deg} holds then $h_k=0$ for any $k\neq j$ and so   \eqref{serie} reduces to
\[v(r,\theta)=h_j(r)\alpha_j(\theta)\] with $h_j$ eigenfunction associated to the  radial eigenvalue $\beta_{1,\rad}(p)$, namely $h_j=\phi_1$; similarly when only \eqref{2deg} holds then $h_k=0$ for any $k\neq 1$ and so   \eqref{serie} reduces to \[v(r,\theta)=h_1(r)\alpha_1(\theta)\] where $h_1$ is now the eigenfunction associated to the radial eigenvalue $\beta_{2,\rad}(p)$, namely $h_1=\phi_2$.
\end{proof}

\

\section{The case $p$ large}\label{section p large}

In \cite{DeMarchisIanniPacellaMathAnn}, exploiting the asymptotic analysis of $u_p$ for $p\rightarrow +\infty$, it has been already proved that

\begin{proposition}\label{risultatoMorse_pGrande}
There exists $\hat{p}>1$ such that
\begin{equation}
m(u_p)=12 \qquad \forall\ p\geq \hat{p}.
\end{equation}
\end{proposition}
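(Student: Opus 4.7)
The proof naturally splits via Lemma \ref{proposition:autovaloriRadiaiGenerale}, which shows that $m(u_p)=2j(p)$ where $j(p)\geq 2$ is uniquely determined by the bracket $-j(p)^2\le\beta_{1,\rad}(p)<-(j(p)-1)^2$, while $\beta_{2,\rad}(p)\in[-1,0)$ for every $p>1$. Hence the statement $m(u_p)=12$ is equivalent to $j(p)=6$, i.e.\ to the two-sided bound $-36\le\beta_{1,\rad}(p)<-25$ for all $p\ge\hat p$. Using the spectral decomposition of Lemma \ref{lemma:decomposizionePalla}, this value of $j$ accounts exactly for $1+2\cdot 5=11$ negative eigenvalues coming from $\beta_{1,\rad}(p)$ (modes $k=0,1,\dots,5$) plus one negative eigenvalue coming from $\beta_{2,\rad}(p)$ (only mode $k=0$), adding up to $12$. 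The plan is therefore to show that $\beta_{1,\rad}(p)$ converges, as $p\to+\infty$, to a value strictly inside the open interval $(-36,-25)$.

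To access this limit I would combine three ingredients. First, the asymptotic description of $u_p$ obtained in \cite{GGP2}: after rescaling around the origin (where the maximum is attained by Proposition \ref{PropositionUnicoMaxeMin}) and a suitable renormalization, the positive bump of $u_p$ converges to a standard Liouville-type profile on $\R^2$, and an analogous concentration takes place around the inner nodal sphere for the negative part. Second, the annular truncation of Remark \ref{Remark:convergenzaAutovaloriDAnelloAPalla}, which identifies $\beta_{i,\rad}(p)$ with the monotone limit, as $n\to\infty$, of the radial eigenvalues $\beta_{i,\rad}^n(p)$ of the approximating problems \eqref{problemaPesatoAnello} on $A_n$. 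Third, the uniform pointwise bound \eqref{Q3}, $p|u_p(x)|^{p-1}|x|^2\le C$, which prevents the weighted potential from blowing up uncontrollably and thus allows a safe passage to the limit in the Rayleigh quotients \eqref{Rayleigh2}. Combining these, a radial Sturm--Liouville analysis of problem \eqref{problemaPesatoPalla} should identify the limit of $\beta_{1,\rad}(p)$ as an explicit computable value in $(-36,-25)$, and then \eqref{MorsePari} forces $m(u_p)=2\cdot 6=12$.

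The main obstacle is the commutation of the limits $p\to+\infty$ and $n\to+\infty$ along the family of rescalings centered at the moving concentration points of $u_p$: one must argue that the rescaled radial eigenfunctions realising $\beta_{1,\rad}^n(p)$ neither escape to infinity nor collapse to the singularity faster than predicted, and that the singular weight $\beta/|x|^2$ is preserved in the limit problem (which it is, being scale invariant). A careful Fatou-type argument modelled on Step 2 of the proof of Lemma \ref{proposition:autovaloriRadiaiGenerale}, together with the explicit eigenvalue count for the linearised Liouville profile (known from the asymptotic analysis in \cite{GGP2}), should deliver the precise location of the limit eigenvalue and hence $j(p)=6$ for all $p\ge\hat p$, concluding the proof.
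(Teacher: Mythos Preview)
The paper does not prove this proposition; it is quoted from \cite{DeMarchisIanniPacellaMathAnn}. Your strategy---reduce via Lemma \ref{proposition:autovaloriRadiaiGenerale} to $j(p)=6$ and establish this by showing $\beta_{1,\rad}(p)$ converges as $p\to+\infty$ to a value strictly inside $(-36,-25)$---is indeed the route taken in \cite{DeMarchisIanniPacellaMathAnn} and recapitulated here in Lemma \ref{lemma:limiteBeta1}, where the limit is identified as $-\tfrac{\ell^2+2}{2}\simeq -26.9$ with $\ell\simeq 7.1979$. So at the level of overall architecture your proposal matches the actual argument.

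There is, however, a real gap in what you write. You locate the limit eigenvalue via ``the explicit eigenvalue count for the linearised Liouville profile (known from \cite{GGP2})'', but \cite{GGP2} contains only the asymptotics of $u_p$, not the spectral analysis; the latter is the substance of \cite{DeMarchisIanniPacellaMathAnn}. More importantly, the limit of $\beta_{1,\rad}(p)$ is \emph{not} read off from the linearisation around the positive Liouville bump at the origin: that rescaling produces a limit eigenvalue equal to $-1$ (see the sketch of Lemma \ref{lemma:limiteBeta1}), which would give $j=1$ and is ruled out. The correct limit comes from rescaling with $\varepsilon_p^-$ around the \emph{minimum} point $s_p$, and the key non-trivial step---absent from your outline---is to prove that the rescaled eigenfunction $\widetilde\psi_{1,p}^-$ does not vanish in the limit, so that $\beta_{1,\rad}(p)$ genuinely converges to the first eigenvalue of the corresponding limit problem. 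This requires a delicate analysis of $r\mapsto p|u_p(r)|^{p-1}r^2$ as $p\to\infty$, not merely the uniform bound \eqref{Q3}. Your mention of ``commutation of limits $p\to\infty$ and $n\to\infty$'' via the annuli is tangential: as the paper notes, the argument can be run directly in the ball.
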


Moreover one can also improve some partial result in \cite{DeMarchisIanniPacellaMathAnn} about the asymptotic behavior of the first eigenvalue $(\beta^{n}_{1,\rad}(p))_n$ of the auxiliary weighted problem  \eqref{problemaPesatoAnello}  (cfr. \cite[Theorem 6.1]{DeMarchisIanniPacellaMathAnn}) and deduce the following asymptotic result for the first eigenvalue $\beta_{1,\rad}(p)$ in the ball (whose proof is sketched at the end of the section, see also \cite{AG2} for a detailed proof.)
%
%
%il valore del limite e' quello in \cite[Theorem 6.1]{DeMarchisIanniPacellaMathAnn} ma li e' solo per una ''subsequence'' opportuna $n=n_p\rightarrow +\infty$  as $p\rightarrow +\infty$ della successione $(\beta^{n}_{1,\rad}(p))_n$, definita in \eqref{defbetatilde1n}, see also Remark \ref{Remark:convergenzaAutovaloriDAnelloAPalla})}}
%
\begin{lemma} \label{lemma:limiteBeta1}
\[\lim_{p\rightarrow +\infty}\beta_{1}(p)=-\frac{\ell^2+2}{2}\sim -26,9\]
where $\ell=\lim_{p\to \infty}\frac{s_p}{\varepsilon _p^-}\simeq 7.1979$ and  $s_p\in (0,1)$ is the minimum point of $u_p$, (i.e. the point such that $\norm{u_p^-}_{\infty}=u_p^-(s_p)=-u_p(s_p)$) and $\varepsilon _p^-$ is such that $\left(\varepsilon_p^-\right)^{-2}=p|u_p(s_p)|^{p-1}$.
\end{lemma}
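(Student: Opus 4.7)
The plan is to combine two tools that are essentially in hand: the radial/annular approximation of Remark \ref{Remark:convergenzaAutovaloriDAnelloAPalla}, which gives $\beta_{1,\rad}(p)=\inf_n \beta_{1,\rad}^n(p)=\lim_{n\to\infty}\beta_{1,\rad}^n(p)$, and the sharp asymptotic analysis of $u_p$ for large $p$ from \cite{GGP2}. The paper \cite{DeMarchisIanniPacellaMathAnn}, Theorem 6.1, already provides partial asymptotic information for $\beta_{1,\rad}^n(p)$ as $p\to\infty$ (with $n$ fixed) via a blow-up at the minimum point $s_p$ at scale $\varepsilon_p^-$. My aim is to push that analysis so that the dependence on $n$ drops out in the limit and the eigenvalue in the whole ball is identified together with its explicit value.

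First I observe that, thanks to the monotonicity $\beta_{1,\rad}^{n+1}(p)\le\beta_{1,\rad}^n(p)$ and the lower bound \eqref{nuUnoLimitato}, $\beta_{1}(p)$ is uniformly bounded in $p$, so any sequence $p_k\to\infty$ admits a subsequence along which $\beta_{1}(p_k)\to\beta_\infty$ for some $\beta_\infty\le 0$, and it suffices to identify this limit. I would then take a radial first eigenfunction $\phi_{1,p}\in\mathcal H_{\rad}$, normalized in the $\frac{1}{|x|^{2}}$-weighted $L^2$ norm, perform the blow-up $r=\varepsilon_p^- t$ and study $\Phi_p(t):=\phi_{1,p}(\varepsilon_p^- t)$. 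Multiplying the radial version of \eqref{problemaPesatoPalla} by $(\varepsilon_p^-)^2$ and using $(\varepsilon_p^-)^{-2}=p|u_p(s_p)|^{p-1}$ turns the equation into
\[
-\Phi_p''(t)-\frac{\Phi_p'(t)}{t}-\Bigl(\frac{|u_p(\varepsilon_p^- t)|}{|u_p(s_p)|}\Bigr)^{p-1}\Phi_p(t)=\frac{\beta_{1,\rad}(p)}{t^2}\Phi_p(t),\qquad t\in\bigl(0,1/\varepsilon_p^-\bigr),
\]
where the coefficient $(|u_p|/|u_p(s_p)|)^{p-1}$ converges by \cite{GGP2} to $e^{U(t)}$, with $U$ the radial Liouville profile centered at $t=\ell$. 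Standard elliptic estimates together with the normalization and the positivity of $\Phi_p$ should yield $C^2_{loc}((0,\infty))$ convergence to a positive limit $\Phi_\infty$ solving the limit ODE
\[
-\Phi_\infty''-\frac{\Phi_\infty'}{t}-e^{U(t)}\Phi_\infty=\frac{\beta_\infty}{t^2}\Phi_\infty\qquad \text{in }(0,\infty),
\]
with the appropriate integrability at $0$ and $\infty$.

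The last step is the explicit identification of $\beta_\infty$: since $U$ is an explicit radial Liouville-type profile around the circle $|x|=\ell$, a distinguished solution of the above linear ODE can be obtained by differentiating $U$ along a one-parameter family of Liouville solutions (translations/dilations in the rescaled plane), and evaluating the associated eigenvalue gives precisely $\beta_\infty=-(\ell^2+2)/2$. Since the subsequence was arbitrary, this identifies $\lim_{p\to\infty}\beta_1(p)$.

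The main obstacle I expect is two-sided: one has to exchange the $n\to\infty$ and $p\to\infty$ limits coherently, and one has to rule out loss of mass for the eigenfunctions $\phi_{1,p}$ on scales other than $\varepsilon_p^-$. The delicate region is near the origin, where $u_p^+$ also concentrates and where the weight $1/|x|^2$ is singular; here one would use the pointwise bound \eqref{Q3}, together with the Hardy-type information encoded in $\phi_{1,p}\in\mathcal H$, to trap the $\frac{\phi_{1,p}^{2}}{|x|^{2}}$-mass inside an annulus of scale $\varepsilon_p^-$ around $s_p$, and hence to guarantee that $\Phi_\infty$ is nontrivial and that the limit eigenvalue equation really sees $\beta_\infty$ and not $0$.
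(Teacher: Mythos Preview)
Your proposal is essentially the paper's own approach: work directly with the ball eigenfunction $\phi_{1,p}\in\mathcal H_{\rad}$, rescale at the minimum scale $\varepsilon_p^-$, pass to the limit singular ODE with the Liouville potential $e^{U}$, and identify $\beta_\infty=-(\ell^2+2)/2$; the paper likewise flags nontriviality of the rescaled limit (via the behavior of $r\mapsto p|u_p(r)|^{p-1}r^2$) as the delicate point. One clarification: the annular approximation and the ``exchange of $n\to\infty$ and $p\to\infty$'' are not actually needed---the paper bypasses this entirely by redoing the \cite{DeMarchisIanniPacellaMathAnn} argument directly in $B$, and it rules out mass at the origin scale $\varepsilon_p^+$ by the specific observation that the $+$-scale limit problem has first eigenvalue $-1$, which is incompatible with $\beta_1(p)<-25$ for large $p$.
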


Using the general  analysis previously done in Section \ref {section:generale} (Lemma \ref{proposition:autovaloriRadiaiGenerale} and Proposition \ref{p4.7}), combining it with Proposition \ref{risultatoMorse_pGrande} above and with the asymptotic result in Lemma \ref{lemma:limiteBeta1}, we completely characterize the degeneracy of the solution $u_p$ when  $p$ is large. Our result reads as follows:
\begin{proposition}
\label{lemma:autovaloriRadialipgrande} There exists $p^{\star}>1$ such that for any $p\geq p^{\star}$ 
\begin{equation}\label{nu1controllatopgrande}
-36 \ <\  \beta_{1,\rad}(p)<-25
\end{equation} 
\begin{equation}\label{nu2controllatopgrande}
-1\leq \beta_{2,\rad}(p)<0
\end{equation}
Moreover for $p\ge p^{\star}$ the solution $u_p$ is degenerate if and only if 
%{ \begin{equation}\label{1degpgrande}\beta_{1,\rad}(p)=-36 
%\end{equation}
%or} 
\begin{equation}\label{2degpgrande}\beta_{2,\rad}(p)=-1.
\end{equation}
The space of solutions to the linearized problem \eqref{linearizedProblem} at a value $p\geq p^{\star}$ that satisfies 
%{\eqref{1degpgrande}  and/or}
 \eqref{2degpgrande}
 is spanned by 
 %{ 
%\[v_{6}(r,\theta)=\phi_1(r) \left(A \sin(6\theta)+B\cos(6\theta)\right) \qquad  A,B\in \R\]
%and/or
%}
\begin{equation}
%\label{autof2}\nonumber
v(r,\theta)=\phi_2(r) \left(A \sin(\theta)+B\cos(\theta)\right) \qquad  A,B\in \R,
\end{equation}
where %{ $\phi_1$,} 
$\phi_2$ is the  eigenfunction associated to the %{ first and } 
second radial eigenvalue 
%{ $\beta_{1,\rad}(p)$,} 
$\beta_{2,\rad}(p)$ 
%{ respectively}
.\\
Hence $Ker (L_p)$ for $p\geq p^{\star}$ has dimension $0$ when   \eqref{2degpgrande} does not hold and  dimension $2$ when it holds.
%{ Hence $Ker (L_p)$ for $p\geq p^{\star}$ has dimension $0$ when  neither  \eqref{1degpgrande} nor \eqref{2degpgrande} hold, dimension $2$ when either  \eqref{1degpgrande} or \eqref{2degpgrande} hold, and dimension $4$ when   \eqref{1degpgrande} and \eqref{2degpgrande} hold. }
%
%Moreover at a value $\bar p$ that satisfies \eqref{1degpgrande} $dim \left( Ker (L_{\bar p})\right)=2$  and  $Ker (L_{\bar p})$ is spanned by 
%$$v_{6}(r,\theta)=\phi_1(r) \left(A \sin(6\theta)+B\cos(6\theta)\right)$$
%where $\phi_1$ is the  eigenfunction associated to the first radial eigenvalue $\beta_{1,\rad}(p)$ and $A,B\in \R$, while the space of solutions to the linearized equation at a value $\bar p$ that satisfies \eqref{2degpgrande} has dimension $2$ and it is spanned by 
%$$v(r,\theta)=\phi_2(r) \left(A \sin(\theta)+B\cos(\theta)\right)$$
%where  $\phi_2$ is the eigenfunction associated to the second radial eigenvalue $\beta_{2,\rad}(p)$  and $A,B\in \R$.
%\edz{anche qui da come e' scritto non e' molto chiaro che possano verificarsi entrambe contemporaneamente in qualche p...e che quindi in quel caso il ker ha dim 4}
\end{proposition}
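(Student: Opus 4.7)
The plan is to combine three ingredients already available in the paper: the Morse index computation in Proposition \ref{risultatoMorse_pGrande}, the general structural result in Lemma \ref{proposition:autovaloriRadiaiGenerale}, and the asymptotic limit in Lemma \ref{lemma:limiteBeta1}. By Lemma \ref{proposition:autovaloriRadiaiGenerale}, for every $p>1$ there is a unique integer $j=j(p)\geq 2$ with $m(u_p)=2j(p)$ and $-j^2\leq \beta_{1,\rad}(p)<-(j-1)^2$, together with $-1\leq \beta_{2,\rad}(p)<0$. Choosing $p\geq \hat p$ and invoking Proposition \ref{risultatoMorse_pGrande} gives $2j(p)=m(u_p)=12$, hence $j(p)=6$ and automatically $-36\leq \beta_{1,\rad}(p)<-25$. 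This already produces the weak form of \eqref{nu1controllatopgrande} and gives \eqref{nu2controllatopgrande} for free.

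The first real task is to upgrade the left-hand inequality in \eqref{nu1controllatopgrande} to a strict one. For this I would use Lemma \ref{lemma:limiteBeta1}: recalling that $\beta_1(p)=\beta_{1,\rad}(p)$ by \eqref{CourantCharEigenvpesatipalla1}--\eqref{CourantCharEigenvpesatipallaRADIALE1}, the limit equals $-(\ell^{2}+2)/2\approx-26.9$, which is strictly greater than $-36$. Therefore one can pick $p^\star\geq\hat p$ large enough so that $\beta_{1,\rad}(p)>-36$ for all $p\geq p^\star$; together with the upper bound $\beta_{1,\rad}(p)<-25$ already available, this yields \eqref{nu1controllatopgrande}.

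The second task is the characterization of degeneracy. Proposition \ref{p4.7} asserts that, at our value $j(p)=6$, the solution $u_p$ is degenerate if and only if either $\beta_{1,\rad}(p)=-36$ or $\beta_{2,\rad}(p)=-1$. The strict lower bound $\beta_{1,\rad}(p)>-36$ obtained above rules out the first possibility for $p\geq p^\star$. Hence degeneracy on this range can only occur via \eqref{2degpgrande}, and by the same proposition the kernel is then spanned by $\phi_2(r)\sin\theta$ and $\phi_2(r)\cos\theta$, where $\phi_2$ is the radial eigenfunction associated to $\beta_{2,\rad}(p)$. This gives $\dim\mathrm{Ker}(L_p)\in\{0,2\}$ with the dichotomy stated.

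The main obstacle is really a single quantitative point: ensuring that the limit value $-(\ell^{2}+2)/2$ of $\beta_{1,\rad}(p)$ is genuinely above $-36$, so that the borderline case $\beta_{1,\rad}(p)=-36$ can be excluded for $p$ large. Once Lemma \ref{lemma:limiteBeta1} is in hand, this reduces to the numerical check $\ell^{2}+2<72$, i.e.\ $\ell<\sqrt{70}\approx 8.37$, which is comfortably satisfied by $\ell\simeq 7.1979$. Everything else is a bookkeeping step combining the Morse index count $m(u_p)=12$ with the spectral decomposition already encoded in Lemma \ref{proposition:autovaloriRadiaiGenerale} and the kernel description of Proposition \ref{p4.7}.
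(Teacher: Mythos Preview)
Your proof is correct and follows essentially the same route as the paper: combine Lemma \ref{proposition:autovaloriRadiaiGenerale} with Proposition \ref{risultatoMorse_pGrande} to get $j(p)\equiv 6$ and the weak bounds, then use Lemma \ref{lemma:limiteBeta1} to make the left endpoint in \eqref{nu1controllatopgrande} strict, and finally invoke Proposition \ref{p4.7} to obtain the degeneracy characterization and kernel description. The paper's proof is precisely this, stated more tersely.
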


\begin{proof}
The proof follows from Lemma \ref{proposition:autovaloriRadiaiGenerale}, Proposition \ref{p4.7} and observing that by Proposition \ref{risultatoMorse_pGrande} $j(p)\equiv 6$ for $p\geq \hat{p}$  and that moreover by  Lemma \ref{lemma:limiteBeta1} there exists $p^{\star}(\geq\hat{p})$ such that the equality
\[\beta_{1,\rad}(p)=36\] is never attained when  $p\geq p^{\star}$.
\end{proof}

\

We conclude the section with:
\begin{proof}[Sketch of the proof of Lemma \ref{lemma:limiteBeta1}]$\;$\\ 
In \cite[Theorem 6.1]{DeMarchisIanniPacellaMathAnn} it has been proved that, if $\beta^n_1(p)$ is the first eigenvalue  of the auxiliary weighted problem  \eqref{problemaPesatoAnello}  in the annulus $A_n$, then there exists a sequence $n_p$ such that $n_p\to \infty$ as $p\to \infty$ and
\begin{equation}\label{provvisoria} \tag{A.1}
\lim_{p\to \infty}\beta_1^{n_p}(p)=-\frac{\ell^2+2}2.
\end{equation}
Here we want to show that the proof for the annulus $A_{n_p}$ in  \cite{DeMarchisIanniPacellaMathAnn} can be adapted to the ball $B$, so that one gets the same asymptotic result for the first eigenvalue $\beta_{1}(p)$ in the ball.

\

The proof of the convergence \eqref{provvisoria} in \cite{DeMarchisIanniPacellaMathAnn} deeply relies on the study of the behavior of the radial solution $u_p$ as $p\to \infty$,  it is quite long and involved and requires several steps, which we now try to retrace.\\

Let us first recall that $u_p$ admits two limit problems, one obtained when rescaling $u_p$ with respect to its maximum point, which is $0$ (the scaling parameter in this case is $\varepsilon^+_p$ defined by $\left(\varepsilon_p^+\right)^{-2}=pu_p(0)^{p-1}$) and the second one obtained rescaling $u_p$ with respect to its minimum point $s_p$ (with scaling parameter given by $\varepsilon_p^-$) (see \cite[Theorem 1]{GGP2} for the rigorous statement of the result). \\

As in \cite{DeMarchisIanniPacellaMathAnn} the idea to prove the result is now to consider the eigenvalue problem associated to $\beta_1(p)$, rescale properly the first eigenfunction $\psi_{1,p}$ using the scaling parameters $\varepsilon_p^\pm$
and pass to the limit into the rescaled equations. More precisely, similarly as in \cite{DeMarchisIanniPacellaMathAnn}, we will obtain again that the  rescaled eigenfunction $\widetilde  \psi_{1,p}^{+}(x):=\psi_{1,p}\left(\varepsilon_p^{+} x\right)$ vanishes, while the other  rescaled eigenfunction $\widetilde  \psi_{1,p}^{-}(x):=\psi_{1,p}\left(\varepsilon_p^{-} x\right)$ converges (in some sense) to the first eigenfunction of some eigenvalue limit problem, whose first eigenvalue is exactly the value $ -\frac{\ell^2+2}2$ in \eqref{provvisoria}.
\\

One of the main point, crucial to pass to the limit in the rescaled equation and get the limit eigenvalue problem, is to prove the analogous of Lemma 6.2, 6.3 and 6.4 in \cite{DeMarchisIanniPacellaMathAnn}, where  some estimates on the first eigenvalue $\beta_1^{n_p}(p)$ and on the associated rescaled eigenfunction are obtained.  
Similar estimates can be easily obtained exactly in the same way as in \cite{DeMarchisIanniPacellaMathAnn} directly for the first rescaled eigenfunctions $\widetilde  \psi_{1,p}^{\pm}$ in the ball (without restricting to $A_{n_p}$) and imply in turn the convergence
\begin{equation}\label{provvisoria2}\tag{A.2}
\widetilde  \psi_{1,p}^{\pm}\to C^{\pm}\widetilde  \psi_{1}^{\pm}\ \text{ as }p\to \infty
\end{equation}
in some sense (in particular uniformly on compact sets of $\R^2$), where $\widetilde\psi_{1}^{\pm}$ are  the first eigenfunctions of certain limit eigenvalue problems, associated respectively to eigenvalues $\widetilde \beta_1^{\pm}$. In particular, since we can prove that $\widetilde \beta_1^+=-1$ and we already know  that
\[\beta_1(p)\leq \beta_1^{n_p}(p)<-25 \ \text{ as $p$ large},\]
then necessarily
\[\widetilde  \psi_{1,p}^{+}\to 0 \ \text{ as }p\to \infty.\]

The other main point, following  the proof of \eqref{provvisoria}, is to show that $\widetilde  \psi_{1,p}^{-}$ does not vanishes.
This step requires a deep analysis on the behavior of the function 
\[
[0,1]\ni r\mapsto p|u_p(r)|r^2 \ \  \text{ as }p\rightarrow \infty\]
and luckily this behavior  does not depend on the annulus $A_{n_p}$ and this produces estimates in all of the ball $B$. As a consequence, we  can follow step by step the proof of  Proposition 6.6 in \cite{DeMarchisIanniPacellaMathAnn}, getting analogously  that
\[
\liminf_{p\to \infty}\int_{\{\frac 1K<|x|<K\}}\frac{\left(\widetilde  \psi_{1,p}^-\right)^2}{|x|^2}\ dx>0\]
for some $K>0$.
The rest of the proof then follows similarly as in \cite{DeMarchisIanniPacellaMathAnn}. One can find in \cite{AG2} all the details.
\end{proof}

\

\section{The case $p$ close to $1$}\label{sse:pvicino1}

Let us fix some notation.
We denote by $(\lambda_i)_i$ the sequence of the Dirichlet eigenvalues of $-\Delta$ in $B$, counted with their multiplicity.
Moreover let $(\varphi_i)_i$ be a basis of eigenfunctions in $L^2(B)$ associated to $\lambda_i$.
\\
We also denote by $(\lambda_{i,\rad})_i$ and $(\varphi_{i, \rad})_i$ the subsequences of the radial eigenvalues and eigenfunctions respectively (it is well known that $\lambda_{i,\rad}$ are simple in the space of radial functions and that $\varphi_{i,\rad}$ has $i-1$ zeros).
\\
The main result of this section is the following:
\begin{proposition}\label{risultatoMorse_pvicino1}
There exists $\delta>0$ such that 
\begin{equation}\label{morseindex-pvicino1}
 m(u_p)=6 \qquad \forall\ p\in (1,1+\delta)
\end{equation}
and $u_p$ is nondegenerate for $p\in (1,1+\delta)$  (namely $\mu_7(p)>0$).
\\
Moreover 
\begin{eqnarray}\label{iRadiali}
&& \mu_{i}(p) \underset{p\rightarrow 1}{\longrightarrow} \lambda_i- \lambda_{2,\rad} <0,\qquad i=1,\ldots, 5 
\\
\nonumber
&&\mu_{6}(p)=\mu_{2,\rad}(p)\underset{p\rightarrow 1}{\longrightarrow} \lambda_6- \lambda_{2,\rad}= 0^-
\end{eqnarray} 
and, up to a subsequence 
\begin{equation} \label{convAutofDefinitivaTeo}
v_{i,p}\underset{p\rightarrow 1}{\longrightarrow} C\frac{\varphi_i}{\|\varphi_i\|_\infty }\ \mbox{ in } C(\bar B),  \qquad i=1,\ldots, 6
\end{equation}
where $C=\pm1$ and $\mu_i(p)$, $\mu_{i,\rad}(p)$ are the Dirichlet eigenvalues and radial eigenvalues respectively of the linearized operator $L_p$ at $u_p$ (see \eqref{linearizedOperator}, \eqref{CourantCharEigenv} and \eqref{CourantCharEigenvRad}) and $v_{i,p}$ are the eigenfunctions of $L_p$ associated to the eigenvalues $\mu_{i,p}$ and normalized in $L^{\infty}(B)$ ($\|v_{i,p}\|_\infty =1$).
\end{proposition}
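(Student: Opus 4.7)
The plan is to pass to the limit as $p\to 1^+$ in the eigenvalue problem for $L_p$, exploiting the convergence of a suitable renormalization of $u_p$ to a Dirichlet eigenfunction of $-\lap$. The starting point is the asymptotic analysis cited in Lemma \ref{lemma-pvicino1}: writing $w_p:=u_p/\|u_p\|_\infty$ and $c_p:=\|u_p\|_\infty^{p-1}$, one has $\|w_p\|_\infty=1$, $c_p$ is bounded by \eqref{*}, and $w_p$ solves $-\lap w_p=c_p|w_p|^{p-1}w_p$. Standard elliptic estimates yield $C^1$-compactness of $(w_p)$; any limit $w$ is radial, sign-changing with exactly two nodal regions, has $w(0)>0$ and $\|w\|_\infty=1$, and satisfies $-\lap w=c\,w$ with $c=\lim c_p$. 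These properties uniquely identify $c=\lambda_{2,\rad}$ and $w=\varphi_{2,\rad}/\|\varphi_{2,\rad}\|_\infty$, so the convergence holds without extracting a subsequence.

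From this, the potential $V_p(x):=p|u_p(x)|^{p-1}=p\,c_p|w_p(x)|^{p-1}$ is uniformly bounded (again by \eqref{*}) and converges almost everywhere to the constant $\lambda_{2,\rad}$, since $|w_p|^{p-1}\to 1$ pointwise where $w\neq 0$ and the nodal circle of $w$ has measure zero. Dominated convergence then gives $V_p\to\lambda_{2,\rad}$ in $L^q(B)$ for every $q<\infty$. Through the min--max characterization \eqref{CourantCharEigenv}, the eigenvalues of $L_p=-\lap-V_p$ converge to those of the limit Schr\"odinger operator $L_1:=-\lap-\lambda_{2,\rad}$, namely to $\lambda_i-\lambda_{2,\rad}$; applying elliptic regularity to the $L^\infty$-normalized eigenfunctions $v_{i,p}$ gives equiboundedness in $C^{1,\alpha}(\bar B)$, and, extracting subsequences, $C(\bar B)$-convergence to limit eigenfunctions of $-\lap$ lying in the eigenspace of $\lambda_i$ and normalized in $L^\infty$. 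This produces \eqref{convAutofDefinitivaTeo}, up to an a posteriori choice of basis in the degenerate eigenspaces and up to a sign $C=\pm 1$.

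To count the negative limits, recall that on the unit disk the Dirichlet spectrum of $-\lap$ is explicit via Bessel functions:
\[
\lambda_1=j_{0,1}^2<\lambda_2=\lambda_3=j_{1,1}^2<\lambda_4=\lambda_5=j_{2,1}^2<\lambda_6=j_{0,2}^2=\lambda_{2,\rad}<\lambda_7=\lambda_8=j_{3,1}^2.
\]
Hence exactly five eigenvalues lie strictly below $\lambda_{2,\rad}$, one (simple and radial) equals it, and the next is strictly above. Consequently $\mu_i(p)<0$ for $i\leq 5$ and $\mu_7(p)>0$ for $p$ close to $1$. For the borderline index $6$, eigenvalue continuity gives $\mu_6(p)\to 0$, while Lemma \ref{LemmaMorseIndexRadiale} forces $\mu_{2,\rad}(p)<0$ for all $p>1$; since $\lambda_6$ is simple in the Dirichlet spectrum and the neighboring nonradial limits $\lambda_5-\lambda_{2,\rad}$ and $\lambda_7-\lambda_{2,\rad}$ are bounded away from $0$, the ordering of the $\mu_i$'s forces $\mu_6(p)=\mu_{2,\rad}(p)\to 0^-$. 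This yields $m(u_p)=6$ and the nondegeneracy $\mu_7(p)>0$ for $p\in(1,1+\delta)$.

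The main obstacle is the treatment of the two-dimensional Laplace eigenspaces $\lambda_2=\lambda_3$ and $\lambda_4=\lambda_5$: the $C(\bar B)$-limits of $v_{i,p}$ only lie in the corresponding eigenspace, so \eqref{convAutofDefinitivaTeo} has to be read modulo a choice of orthogonal basis $\{\varphi_i\}$ made a posteriori to match the selected subsequences. A secondary subtlety, already addressed above, is ruling out that a nonradial eigenvalue rather than $\mu_{2,\rad}(p)$ is the one crossing zero at $p=1$; this is excluded by the simplicity of $\lambda_6=\lambda_{2,\rad}$ in the Dirichlet spectrum combined with the radial Morse index constraint from Lemma \ref{LemmaMorseIndexRadiale}.
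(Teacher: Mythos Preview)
Your argument is correct and follows the same overall strategy as the paper: use Lemma~\ref{lemma-pvicino1} to see that the potential $p|u_p|^{p-1}$ converges to the constant $\lambda_{2,\rad}$, deduce convergence of the eigenvalues $\mu_i(p)\to\lambda_i-\lambda_{2,\rad}$ and of the $L^\infty$-normalized eigenfunctions, and read off the Morse index from the ordering in Lemma~\ref{lemma:morseSecAutofRadialeLap}.

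There is one genuine difference worth noting. To conclude that the borderline eigenvalue $\mu_6(p)$ is strictly negative, the paper first proves $m(u_p)\geq 5$ by testing $Q_p$ on $\varphi_1,\dots,\varphi_5$ and then invokes the evenness of $m(u_p)$ established in Lemma~\ref{proposition:autovaloriRadiaiGenerale} (which relies on the weighted eigenvalue machinery of Section~\ref{section:generale}) to jump to $m(u_p)\geq 6$; only afterwards does it identify $\mu_6(p)=\mu_{2,\rad}(p)$. You bypass this: since the same min--max argument in $H^1_{0,\rad}$ gives $\mu_{2,\rad}(p)\to 0$, and among $\mu_2(p),\dots,\mu_7(p)$ only $\mu_6(p)$ has limit $0$ (the others being bounded away from $0$ by Lemma~\ref{lemma:morseSecAutofRadialeLap}), the identity $\mu_6(p)=\mu_{2,\rad}(p)$ follows directly, and then $\mu_6(p)<0$ comes from Lemma~\ref{LemmaMorseIndexRadiale}. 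Your route is more self-contained here, avoiding the spectral decomposition of Section~\ref{section:generale}; the paper's route, on the other hand, fits into a broader framework that is reused elsewhere (e.g.\ Corollary~\ref{lemma:autovaloriRadialipvicino1}). Your remark about the two-dimensional eigenspaces $\lambda_2=\lambda_3$ and $\lambda_4=\lambda_5$ is well taken: the statement \eqref{convAutofDefinitivaTeo} must indeed be read with the basis $\{\varphi_i\}$ chosen a~posteriori along the subsequence, a point the paper leaves implicit.
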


We observe that, combining  \eqref{morseindex-pvicino1} with the general results about the Morse index of $u_p$ and the characterization of its degeneracy  given  in  Section \ref{section p large} for any $p>1$ (Proposition \ref{p4.7} and Lemma \ref{proposition:autovaloriRadiaiGenerale} respectively), we  also have the following estimates for the $2$ negative radial eigenvalues of the auxiliary problem \eqref{problemaPesatoPalla}, when $p$ is close to $1$:
\begin{corollary}
\label{lemma:autovaloriRadialipvicino1}   
Let $\delta>0$ be as in Proposition  \ref{risultatoMorse_pvicino1}.  Then
for any $p\in (1,1+\delta)$ 
\begin{equation}\label{nu1controllatopvicino1}
-9  <  \beta_{1,\rad}(p)<-4
\end{equation} 
\begin{equation}\label{nu2controllatopvicino1}
-1  <  \beta_{2,\rad}(p)<0.
\end{equation}
\end{corollary}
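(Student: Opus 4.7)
\textbf{Proof plan for Corollary \ref{lemma:autovaloriRadialipvicino1}.} The statement is essentially a direct combination of three results already available in the excerpt: the general estimates of Lemma \ref{proposition:autovaloriRadiaiGenerale}, the degeneracy characterization in Proposition \ref{p4.7}, and the Morse index computation together with the nondegeneracy assertion of Proposition \ref{risultatoMorse_pvicino1}. No new analysis is needed; the argument is a short deduction.

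First I would apply Lemma \ref{proposition:autovaloriRadiaiGenerale} to fix, for each $p\in(1,1+\delta)$, the integer $j=j(p)\geq 2$ such that
\begin{equation*}
-j^{2}\leq \beta_{1,\rad}(p)<-(j-1)^{2},\qquad -1\leq \beta_{2,\rad}(p)<0,\qquad m(u_p)=2j.
\end{equation*}
By Proposition \ref{risultatoMorse_pvicino1} we have $m(u_p)=6$ throughout the range $p\in(1,1+\delta)$, so the identity $m(u_p)=2j(p)$ forces $j(p)\equiv 3$. Substituting into the first chain of inequalities yields the non-strict bounds $-9\leq \beta_{1,\rad}(p)<-4$ and, from the second chain, $-1\leq \beta_{2,\rad}(p)<0$.

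It remains to upgrade the two non-strict inequalities at the left endpoints to strict ones, which is exactly where the nondegeneracy of $u_p$ comes in. Proposition \ref{p4.7} asserts that $u_p$ is degenerate if and only if either $\beta_{1,\rad}(p)=-j(p)^{2}=-9$ or $\beta_{2,\rad}(p)=-1$. Since Proposition \ref{risultatoMorse_pvicino1} also guarantees that $u_p$ is nondegenerate for $p\in(1,1+\delta)$, both of these equalities are excluded, giving $\beta_{1,\rad}(p)>-9$ and $\beta_{2,\rad}(p)>-1$. Combining with the previous step, we obtain \eqref{nu1controllatopvicino1} and \eqref{nu2controllatopvicino1}, which completes the proof. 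There is no real obstacle here; the only care required is to observe that the integer $j(p)$ from Lemma \ref{proposition:autovaloriRadiaiGenerale} is uniquely determined by the Morse index, and that the strict inequalities come precisely from ruling out the two degeneracy scenarios identified in Proposition \ref{p4.7}.
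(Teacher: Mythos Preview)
Your proof is correct and follows exactly the same approach as the paper: use Lemma \ref{proposition:autovaloriRadiaiGenerale} with $m(u_p)=6$ from Proposition \ref{risultatoMorse_pvicino1} to get $j(p)\equiv 3$ and the non-strict bounds, then invoke the nondegeneracy in Proposition \ref{risultatoMorse_pvicino1} together with Proposition \ref{p4.7} to make the left-hand inequalities strict.
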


\begin{proof}
From Lemma \ref{proposition:autovaloriRadiaiGenerale}, observing that \eqref{morseindex-pvicino1} implies $j(p)\equiv 3$ for $p\in (1,1+\delta)$, we have that
\begin{eqnarray*}
&& -9  \leq \beta_{1,\rad}(p)<-4
\\
&& 
-1  \leq \beta_{2,\rad}(p)<0,
\end{eqnarray*}
for $p\in (1,1+\delta)$. The strict inequalities in the left hand sides follow from the nondegeneracy of $u_p$ in $(1,1+\delta)$ (see Proposition \ref{risultatoMorse_pvicino1}) and from the characterization of the degeneracy in Proposition \ref{p4.7}.
\end{proof}

In order to obtain the previous result  we need to analyze the behavior of the solution $u_p$, as $p$ is close to $1$. 
We will show that  $u_p$ converges, as $p\rightarrow 1$, to the second radial Dirichlet eigenfunction of $-\Delta$ in the ball $B$ (Lemma \ref{lemma-pvicino1} below).\\
Hence let us recall some useful result for the Dirichlet eigenvalues and for the second radial eigenfunction of $-\Delta$ in $B$.

\begin{lemma}\label{lemma:morseSecAutofRadialeLap}  
One has
\[ m\left(\varphi_{2,\rad}\right)=5\]
and in particular  
\begin{equation}\label{ordinamentol}\lambda_1=\lambda_{1,\rad}<\lambda_2=\lambda_3<\lambda_4=\lambda_5<\lambda_6=\lambda_{2,\rad}< \lambda_7\leq \ldots.
\end{equation}
\end{lemma}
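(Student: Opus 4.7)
The plan is to compute the Dirichlet spectrum of $-\Delta$ in $B\subset\mathbb R^2$ explicitly via Bessel functions and then to read off both the ordering \eqref{ordinamentol} and the Morse index count $m(\varphi_{2,\rad})=5$ from a finite number of numerical inequalities between zeros of Bessel functions. First, writing $x=(r\cos\theta,r\sin\theta)$ and separating variables, one checks in the standard way that every Dirichlet eigenfunction of $-\Delta$ in $B$ is a finite linear combination of functions of the form $J_k(j_{k,n}\,r)\cos(k\theta)$ and $J_k(j_{k,n}\,r)\sin(k\theta)$, where $J_k$ is the Bessel function of order $k\in\mathbb N$ and $j_{k,n}$ its $n$-th positive zero ($n\in\mathbb N_0$). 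The corresponding eigenvalue is $j_{k,n}^{2}$, which is simple when $k=0$ (radial mode) and has multiplicity $2$ when $k\geq 1$ (because of the pair $\cos(k\theta),\sin(k\theta)$). In particular $\lambda_{n,\rad}=j_{0,n}^{2}$ and, up to a multiplicative constant, $\varphi_{n,\rad}(x)=J_{0}(j_{0,n}|x|)$.

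Next I would use the classical numerical values of the first positive zeros of $J_k$, namely
\[
j_{0,1}\simeq 2.4048,\quad j_{1,1}\simeq 3.8317,\quad j_{2,1}\simeq 5.1356,\quad j_{0,2}\simeq 5.5201,\quad j_{3,1}\simeq 6.3802,
\]
together with the additional inequalities $j_{0,2}<j_{1,2}\simeq 7.0156$ and $j_{0,2}<j_{4,1}\simeq 7.5883$ (all of which follow from the standard tabulation of zeros of Bessel functions, or alternatively from classical monotonicity/interlacing properties of the $j_{k,n}$ in both indices together with the inequality $j_{0,n+1}<j_{0,n}+\pi$). Squaring and using the multiplicity count recalled above then yields
\[
\lambda_1=j_{0,1}^{2}=\lambda_{1,\rad},\qquad \lambda_2=\lambda_3=j_{1,1}^{2},\qquad \lambda_4=\lambda_5=j_{2,1}^{2},\qquad \lambda_6=j_{0,2}^{2}=\lambda_{2,\rad},
\]
and $\lambda_7=\min\{j_{3,1}^{2},j_{1,2}^{2},j_{4,1}^{2},j_{0,3}^{2},\dots\}=j_{3,1}^{2}>\lambda_6$, which is exactly \eqref{ordinamentol}.

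Finally, the quantity $m(\varphi_{2,\rad})$ must be interpreted as the Morse index of $\varphi_{2,\rad}$ with respect to its linearized operator $-\Delta-\lambda_{2,\rad}$ with Dirichlet boundary conditions; equivalently it equals the number of Dirichlet eigenvalues of $-\Delta$ on $B$ that are strictly smaller than $\lambda_{2,\rad}$, counted with multiplicity. From the ordering just established, the eigenvalues strictly less than $\lambda_{2,\rad}=\lambda_6$ are exactly $\lambda_1,\lambda_2,\lambda_3,\lambda_4,\lambda_5$, so $m(\varphi_{2,\rad})=5$. The only step that requires care (and which I consider the main, though modest, obstacle) is the \emph{strictness} of the inequalities $j_{2,1}<j_{0,2}<j_{3,1}$: the first guarantees that $\lambda_6$ really is radial rather than coming from the mode $k=2$, and the second rules out that $\lambda_7$ coincides with $\lambda_6$, so that the $\lambda_{2,\rad}$ sits in position $6$ of the total ordering and not on a boundary between blocks. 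Both gaps are comfortable ($j_{0,2}-j_{2,1}\simeq 0.38$ and $j_{3,1}-j_{0,2}\simeq 0.86$), hence can be certified by the standard bounds on Bessel zeros cited above.
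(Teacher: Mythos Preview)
Your proof is correct and follows essentially the same route as the paper: separation of variables, identification of the Dirichlet spectrum of $-\Delta$ on $B$ with the squares $j_{k,n}^{2}$ of the Bessel zeros (with multiplicity $1$ for $k=0$ and $2$ for $k\geq 1$), and then reading off the ordering \eqref{ordinamentol} and the count $m(\varphi_{2,\rad})=5$ from the classical inequalities $j_{0,1}<j_{1,1}<j_{2,1}<j_{0,2}<j_{3,1},j_{1,2}$. The paper cites Watson's tables for these inequalities; your explicit numerical values and your remark on the strict gaps $j_{2,1}<j_{0,2}<j_{3,1}$ serve the same purpose.
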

\begin{proof} This proof is classical, we write it for completeness.
The eigenfunctions of the Laplace operator $-\Delta$ with Dirichlet boundary conditions in $B$ are given, in radial coordinates, by
\begin{equation}\label{autof-laplace}
\widetilde\varphi_{n,k}(r,\theta)=J_n(\nu_{nk}r)\times \left\{\begin{array}{ll}
\cos (n\theta) &\\
\sin(n\theta) &\hbox{ for } n\neq 0
\end{array}\right.
\end{equation}
for $n\in\N,$ $k\in\N_0$, 
where $J_n$ are the Bessel functions of the first kind (see for instance \cite{Watson}) and  $\nu_{nk}$ is the $k$-th  positive root of $J_n$ (for any fixed $n$ there are infinitely many roots). The corresponding eigenvalues are given by 
\begin{equation}\label{eigenLap}
\widetilde\lambda_{nk}= \nu_{nk}^2,
\end{equation} hence they are simple for $n=0$ and have multiplicity $2$ when $n\geq 1$.\\
 From \eqref{autof-laplace} it follows that the second radial eigenfunction is 
 \[\varphi_{2,\rad}(r)=J_0(\nu_{02}r)\] and so by \eqref{eigenLap} the second radial eigenvalue is \begin{equation}\label{eigenLap2}\lambda_{2,\rad } = \nu_{02}^2 .
 \end{equation}
 The Morse index of $\varphi_{2,\rad}$ is the number of eigenvalues (counted with multiplicity) of the Laplace operator $-\Delta$ with Dirichlet boundary conditions in $B$ which are strictly less than $\lambda_{2,\rad}$. By \eqref{eigenLap} and \eqref{eigenLap2} this is equivalent to compute the number of the zeros $\nu_{nk}$  which are strictly less than $\nu_{02}$, recalling that when $n\geq 1$ each eigenvalue has multiplicity $2$.\\
It is known (see \cite[\uppercase{table vii}]{Watson})  that \begin{equation} \label{ordine}
\nu_{01}<\nu_{11}<\nu_{21}<\nu_{02},
\end{equation} while 
\begin{equation} 
\label{ordineancora}
\nu_{12},\nu_{22}, \nu_{h1}>\nu_{02},\qquad \forall h\geq 3
\end{equation} hence the Morse index of $\varphi_{2,\rad}$ is $5$.\\
By  \eqref{eigenLap}, \eqref{ordine} and \eqref{ordineancora} (recalling the multiplicities) it  follows that
\begin{eqnarray*}
\lambda_1 &=& \widetilde\lambda_{01},\\ 
\lambda_2=\lambda_3 &=& \widetilde\lambda_{11}, \\
\lambda_4=\lambda_5 &=&\widetilde\lambda_{21},\\
\lambda_6 &=& \widetilde\lambda_{02}
\ <\ \lambda_7,
\end{eqnarray*}
and that  \eqref{ordinamentol} holds.
\end{proof}

\

\subsection{Asymptotic behavior of $u_p$ as $p\rightarrow 1$}

\

We now analyze the asymptotic behavior of $u_p$, as $p\rightarrow 1$. In particular we obtain an expansion of its  $L^{\infty}$-norm up to the second order which will be useful for the proof of Theorem \ref{prop1.4} (see Proposition \ref{leastRadiale}).
%In \cite[Lemma 5.2]{HRS} the authors proved 
%\begin{lemma}
%For any $p_*\in (1,+\infty)$ there exist constants $m,M$ such that, for any $p\in(1,p_*]$
%\begin{equation}\label{*}m\leq  \left(\norm{ u_p}_{\infty} \right)^{p-1}\leq M.
%\end{equation}
%\end{lemma}
%Using these bounds we can prove the following

\begin{lemma}\label{lemma-pvicino1}
Let $p_n$  be any sequence converging to $1$. Then 
\begin{equation}\label{u-n} 
\bar{u}_n:=\frac {u_{p_n}}{\norm{u_{p_n}}_{\infty}}\rightarrow 
\varphi_{2,\rad} =J_0(\nu_{02}|x|)\quad \text{ in }C( \bar B)
\end{equation}
(recall that, by the definition of $J_0$, we have that  $\norm{\varphi_{2,\rad}}_{\infty}=\varphi_{2,\rad}(0)=J_0(0)=1$)
and
\begin{equation}\label{gammanp-1} 
\norm{u_{p_n}}_{\infty}^{p_n-1}= \lambda_{2,\rad}\left(1-\widetilde c (p_n-1)\right)+o(p_n-1) \text{ as }n\to \infty
\end{equation}
where 
\begin{equation}\label{c-tilde}
\widetilde c:=\frac{\int_B\ \varphi_{2,\rad}^2\log|\varphi_{2,\rad}|dx}{\int_B\ \varphi_{2,\rad}^2dx}
\end{equation}
\end{lemma}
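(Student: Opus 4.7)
The plan is a two-stage argument. First, normalize $u_{p_n}$ to unit $L^\infty$-norm, extract a $C^1$-subsequential limit and identify it as the second radial Dirichlet eigenfunction $\varphi_{2,\rad}$; second, test the rescaled equation against $\varphi_{2,\rad}$ and use the pointwise expansion $|t|^{p-1}=1+(p-1)\log|t|+O\bigl((p-1)^2(\log|t|)^2\bigr)$ to read off the second-order term of $\gamma_n:=\|u_{p_n}\|_\infty^{p_n-1}$.

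I would first set $\bar u_n:=u_{p_n}/\|u_{p_n}\|_\infty$. This is radial with $\|\bar u_n\|_\infty=1$ and $\bar u_n(0)=1$ (Proposition \ref{PropositionUnicoMaxeMin}), and solves
\[
-\Delta \bar u_n=\gamma_n|\bar u_n|^{p_n-1}\bar u_n\ \text{ in }B,\qquad \bar u_n=0\ \text{ on }\partial B.
\]
By \eqref{*}, $\gamma_n\in[m,M]$, so the right-hand side is uniformly $L^\infty$-bounded and standard elliptic regularity yields uniform $C^{1,\alpha}(\bar B)$-estimates. Along a subsequence, $\bar u_n\to\bar u$ in $C^1(\bar B)$ and $\gamma_n\to\gamma_\star\in[m,M]$. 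Since $|t|^{p_n-1}t\to t$ uniformly for $|t|\le 1$, passing to the limit gives $-\Delta\bar u=\gamma_\star\bar u$ with $\bar u=0$ on $\partial B$, and $\bar u(0)=\|\bar u\|_\infty=1$. Hence $\bar u=\varphi_{k,\rad}$ and $\gamma_\star=\lambda_{k,\rad}$ for some $k\ge 1$. Letting $r_n\in(0,1)$ denote the nodal radius of $\bar u_n$ and extracting $r_n\to r_\star\in[0,1]$, the sign conditions pass to the limit, so $\bar u$ has at most one interior sign change and $k\in\{1,2\}$. To rule out $k=1$, I would consider $v_n:=-u_{p_n}>0$ on $A_n:=\{r_n<|x|<1\}$: it solves $-\Delta v_n=v_n^{p_n}$ with Dirichlet data, and testing against the first Dirichlet eigenfunction of $A_n$ yields $\lambda_1(A_n)\le\|v_n\|_\infty^{p_n-1}\le\gamma_n\le M$; since $\lambda_1(A_n)\to+\infty$ when $r_n\to 1$, this forces $r_\star\in(0,1)$, so $\bar u$ has a genuine interior zero, $k=2$, and $\gamma_n\to\lambda_{2,\rad}$. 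Uniqueness of the limit upgrades subsequential to full sequential convergence, establishing \eqref{u-n}.

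For the expansion, I would multiply the equation for $\bar u_n$ by $\varphi_{2,\rad}$, integrate by parts and use $-\Delta\varphi_{2,\rad}=\lambda_{2,\rad}\varphi_{2,\rad}$ to obtain
\[
(\lambda_{2,\rad}-\gamma_n)\int_B\bar u_n\varphi_{2,\rad}\,dx=\gamma_n\int_B\bigl(|\bar u_n|^{p_n-1}-1\bigr)\bar u_n\varphi_{2,\rad}\,dx.
\]
Since $|\bar u_n|\le 1$, the pointwise Taylor expansion gives $|\bar u_n|^{p_n-1}=1+(p_n-1)\log|\bar u_n|+R_n$ with $|R_n|\le\tfrac12(p_n-1)^2(\log|\bar u_n|)^2$ a.e., and boundedness of $t\mapsto|t|(\log|t|)^2$ on $[-1,1]$ turns the $R_n$-contribution into $O((p_n-1)^2)$. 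Dominated convergence (the zeros of $\varphi_{2,\rad}$ yield only integrable log-singularities) then gives
\[
\frac{\int_B\bar u_n\varphi_{2,\rad}\log|\bar u_n|\,dx}{\int_B\bar u_n\varphi_{2,\rad}\,dx}\ \longrightarrow\ \tilde c,
\]
and combined with $\gamma_n\to\lambda_{2,\rad}$ this yields $\lambda_{2,\rad}-\gamma_n=(p_n-1)\lambda_{2,\rad}\tilde c+o(p_n-1)$, i.e.\ \eqref{gammanp-1}.

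The main obstacle I expect is the elimination of the case $k=1$: the nodal-count argument alone leaves both $\varphi_{1,\rad}$ and $\varphi_{2,\rad}$ as candidates, and a quantitative tool (here the blow-up of $\lambda_1(A_n)$ on a shrinking annulus contrasted with the uniform bound $\gamma_n\le M$) is essential to discriminate between them.
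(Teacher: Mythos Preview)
Your proof is correct and follows the same overall strategy as the paper: normalize, use \eqref{*} and elliptic regularity to extract a $C^1$-limit solving the linear eigenvalue problem, identify the limit as $\varphi_{2,\rad}$, then test against $\varphi_{2,\rad}$ and expand $|\bar u_n|^{p_n-1}-1$ to obtain the second-order asymptotics of $\gamma_n$.

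The one substantive difference is in ruling out $r_\star=1$. The paper argues via ODE: if $r_n\to 1$, Rolle's theorem produces $\xi_n\in(r_n,1)$ with $\bar u_n'(\xi_n)=0$, and $C^1$-convergence together with the Hopf boundary lemma for $\bar u$ gives a contradiction. Your argument is PDE-theoretic: on the outer annulus $A_n$ the positive function $-u_{p_n}$ solves a superlinear equation, and testing against the first Dirichlet eigenfunction of $A_n$ forces $\lambda_1(A_n)\le M$, which is incompatible with $r_n\to 1$. Both are standard; yours is slightly more robust in that it does not use the radial ODE structure or $C^1$-convergence, while the paper's is shorter given those ingredients are already in hand. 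For the expansion step, the paper uses the exact identity $e^x-1=x\int_0^1 e^{tx}\,dt$ in place of your Taylor remainder estimate; the two are interchangeable here.
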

\begin{proof}
The function $\bar{u}_n$ defined in \eqref{u-n} satisfies  
\begin{equation}\label{eq:u-n}
\left\{\begin{array}{lr}
-\Delta \bar{u}_n= \gamma_n^{p_n-1}|\bar{u}_n  |^{p_n-1}\bar{u}_n\qquad  \mbox{ in }B\\
\bar{u}_n =0\qquad\qquad\qquad\qquad\qquad\mbox{ on }\partial B\\
 \bar{u}_n(0)=1
\end{array}\right.
\end{equation}
where $\gamma_n:=\norm{u_{p_n}}_{\infty}$. From \eqref{*} it easily follows 
\[\Arrowvert \gamma_n^{p_n-1}|\bar{u}_n  |^{p_n-1}\bar{u}_n\Arrowvert_{\infty}\leq M,\]
from which
\begin{equation}\label{bounGradL2}
\|\nabla\bar u_n\|_{L^2(B)}\leq M.
\end{equation}
Moreover  we have 
the following estimate
\begin{equation}\label{stima-agg}
|\left( |\bar{u}_n  |^{p_n-1}-1\right)\bar{u}_n|\leq c(p_n-1)
\end{equation}
 in $\bar B$, with $c$ independent on $n$. Estimate \eqref{stima-agg} obviously holds, for any fixed $n$, at the points at which $\bar{u}_n=0$. When $\bar{u}_n\neq 0$ instead it comes as in \cite[(3.10)]{AGG} from the identity $e^x-1=x\int_0^1e^{tx}\ dt$, from which 
\begin{equation}\label{6.12-bis}
|\bar{u}_n  |^{p_n-1}-1=(p_n-1)\log |\bar{u}_n  |\int_0^1 \left(|\bar{u}_n  |^{p_n-1}\right)^t\ dt,
\end{equation}
so that 
\[\big||\bar{u}_n  |^{p_n-1}-1\big|\leq (p_n-1)\big|\log |\bar{u}_n  |\big|,\]
which implies \eqref{stima-agg} by the boundedness of the function $x\mapsto x\log x$ in $(0,1)$. From \eqref{stima-agg} we get
\begin{equation}\label{6.17}
\left(|\bar{u}_n  |^{p_n-1}-1\right)\bar{u}_n\rightarrow 0\quad \text{ as }n\to +\infty\end{equation}
uniformly in $\bar B$.  Then, by \eqref{bounGradL2} and \eqref{6.17}, $\bar{u}_n$ converges, up to a subsequence, in $C(\bar B)$ to a solution to
\[
\left\{\begin{array}{lr}
-\Delta \bar{u}= \gamma\bar{u}\qquad\qquad  \mbox{ in }B\\
\bar{u} =0\qquad\qquad\qquad\mbox{ on }\partial B\\
 \bar{u}(0)=1
\end{array}\right.\]
where $\gamma:=\lim_{n\rightarrow +\infty} \gamma_n^{p_n-1}> 0$ by \eqref{*}. Moreover $\bar{u}$ is radial and we will prove that it has two nodal regions.
This implies that $\bar{u}=\varphi_{2,\rad}$  showing \eqref{u-n} and consequently $\gamma=\lambda_{2,\rad}$.
Since the convergence in \eqref{u-n} holds for every subsequence, then it holds directly for the sequence $\bar{u}_n$.\\
Next we show that $\bar u$ has $2$ nodal regions. Observe that  the number of nodal regions of $\bar u$ cannot be grater then $2$ since  $\bar u_n$ has $2$ nodal regions and it converges uniformly to $\bar u$. Let $r_n$ be the unique zero of $\bar{u}_n$ in $(0,1)$, up to a subsequence $r_n\rightarrow r_0$, then $\bar{u}$ has $2$ nodal regions if we show that $r_0\in (0,1)$.  The $C^0$ convergence  of $\bar{u}_n$ to $\bar{u}$ easily implies that $r_0>0$ since $\bar{u}(0)=1$. So by contradiction let us assume $r_n\rightarrow 1$ as $n\rightarrow +\infty$. By  Rolle  Theorem there exists $\xi_n\in (r_n,1)$ such that $\bar{u}_n'(\xi_n)=0$ for any $n$. By assumption $\xi_n\rightarrow 1$ as $n\rightarrow +\infty$. Moreover observe that the convergence in \eqref{u-n} holds also in $C^1(B)$, by standard regularity theory, so it follows that $\bar{u}'(\xi_n)\rightarrow 0$  and this is not possible since the Hopf boundary Lemma implies $\bar{u}'(r)\neq 0$ in a neighborhood of $r=1$.

\

We have shown so far that $  \gamma_n^{p_n-1}\to \lambda_{2,\rad}$ as $n\to \infty$.
To conclude we have to prove the expansion in \eqref{gammanp-1}. Let us multiply \eqref{eq:u-n} by $\varphi_{2,\rad}$ and integrate over $B$. We get
\[
\gamma_n^{p_n-1}\int_B |\bar{u}_n  |^{p_n-1}\bar{u}_n \varphi_{2,\rad}=\int_B \nabla 
\bar{u}_n\nabla \varphi_{2,\rad}=\lambda_{2,\rad}\int_B \bar{u}_n \varphi_{2,\rad}
\]
where last equality follows by the definition of $ \varphi_{2,\rad}$. This implies
%\begin{eqnarray*}
% \lambda_{2,\rad}\int_B \bar{u}_n \varphi_{2,\rad}&=& \left(\gamma_n^{p_n-1}- \lambda_{2,\rad}\right) \int_B |\bar{u}_n  |^{p_n-1}\bar{u}_n \varphi_{2,\rad} +\\
%&+& \lambda_{2,\rad}\int_B \bar{u}_n \varphi_{2,\rad}+\lambda_{2,\rad} \int_B\left(|\bar{u}_n  |^{p_n-1}-1\right)\bar{u}_n \varphi_{2,\rad}
%\end{eqnarray*}
%so 
%
that
\begin{equation}\label{8.19}
\lambda_{2,\rad} \int_B\left(|\bar{u}_n  |^{p_n-1}-1\right)\bar{u}_n \varphi_{2,\rad}=\left(\lambda_{2,\rad}-\gamma_n^{p_n-1}\right) \int_B |\bar{u}_n  |^{p_n-1}\bar{u}_n \varphi_{2,\rad}.
\end{equation}
By using the identity \eqref{6.12-bis}, which holds a.e. in $B$, we also have
\[
\int_B\left(|\bar{u}_n  |^{p_n-1}-1\right)\bar{u}_n \varphi_{2,\rad}=
(p_n-1)\int_B\bar{u}_n \varphi_{2,\rad} \log| \bar{u}_n|\int_0^1 |\bar{u}_n|^{t(p_n-1)}dt \ dx
\]
and so from \eqref{8.19} we get
\begin{equation}
\label{eqRapporto}
\frac{\lambda_{2,\rad}-\gamma_n^{p_n-1}}{\lambda_{2,\rad}(p_n-1)}=\frac{\int_B \bar{u}_n \varphi_{2,\rad} \log| \bar{u}_n|\int_0^1 |\bar{u}_n|^{t(p_n-1)}dt \ dx} {\int_B |\bar{u}_n  |^{p_n-1}\bar{u}_n \varphi_{2,\rad}\ dx}.
\end{equation}
To conclude the proof we show that the right hand side of \eqref{eqRapporto} converges to the constant $\widetilde c$ in \eqref{c-tilde}.
First we observe that the uniform convergence of $\bar u_n$ to $\varphi_{2,\rad}$ in $B$ implies
\begin{equation}
\label{laPrima}
\int_B |\bar{u}_n  |^{p_n-1}\bar{u}_n \varphi_{2,\rad}\to
\int_B \varphi_{2,\rad}^2\neq 0\  \text{ as }n\to \infty
\end{equation}
(recall that $\varphi_{2,\rad}(x)=J_0(\nu_{02}|x|)$). Moreover, 
 since $\|\bar u_n\|_{\infty}\leq 1$, ($\bar u_n\neq 0$ q.o.) and the function $x\mapsto x\log x $ is bounded in $(0,1)$, then  the term $\bar{u}_n \varphi_{2,\rad} \log| \bar{u}_n| \int_0^1 |\bar{u}_n|^{t(p_n-1)}dt \in L^{\infty}(B)$ and
 \[\norm{\bar{u}_n \varphi_{2,\rad} \log| \bar{u}_n|\int_0^1 |\bar{u}_n|^{t(p_n-1)}dt  }_{L^{\infty}(B)}\leq C,
  \]
  so by  the convergence of $\bar u_n$ to $\varphi_{2,\rad}$ and the dominated convergence theorem we also get
%
%for any $ x\in B$ such that $\varphi_{2,\rad}(x)=0$ we have 
%\[\bar{u}_n \varphi_{2,\rad} \log| \bar{u}_n|\int_0^1 |\bar{u}_n|^{t(p_n-1)}dt=0%= \varphi_{2,\rad}^2 \log|  \varphi_{2,\rad} |\]
%while for any $ x\in B$ such that $\varphi_{2,\rad}(x)\neq 0$ % and $\varphi_{2,\rad}(x)\neq 1$
%we have 
%\[|\bar{u}_n|^{t(p_n-1)}=e^{t(p_n-1)\log |\bar{u}_n|}\to 1 \ \text{ as }\ n\to \infty. \]
%We can then infer, by the uniform convergence in $B$ that
%$$\bar{u}_n \varphi_{2,\rad} \log| \bar{u}_n|\in L^{\infty}(B)$$
%and
%\[\norm{\bar{u}_n \varphi_{2,\rad} \log| \bar{u}_n|}_{L^{\infty}(B)}\leq C\]
%so that 
\begin{equation}
\label{laSeconda}\int_B\bar{u}_n \varphi_{2,\rad} \log| \bar{u}_n|\int_0^1 |\bar{u}_n|^{t(p_n-1)}dt \ dx\to
\int_B \varphi_{2,\rad}^2\log|\varphi_{2,\rad}|dx \ \text{ as }n\to \infty.
\end{equation}
%since $\varphi_{2,\rad}(x)= 1$ if and only if $x=0$.
%Moreover
%\[\int_B |\bar{u}_n  |^{p_n-1}\bar{u}_n \varphi_{2,\rad}\to
%\int_B \varphi_{2,\rad}^2\neq 0 \text{ as }n\to \infty
%\]
%(Recall that $\varphi_{2,\rad}(x)=J_0(\nu_{02}|x|)$).
%
Then, from \eqref{eqRapporto}, by \eqref{laPrima} and \eqref{laSeconda}, it follows that  $\frac{\lambda_{2,\rad}-\gamma_n^{p_n-1}}{\lambda_{2,\rad}(p_n-1)}$ is bounded and, up to a subsequence, 
\[
\frac{\lambda_{2,\rad}-\gamma_n^{p_n-1}}{\lambda_{2,\rad} (p_n-1)}\to \widetilde c \ \text{ as }n\to \infty.
\]
Since this convergence holds for every subsequence, then it holds for the sequence 
concluding the proof.

\end{proof}

\

\subsection{Proof of Proposition \ref{risultatoMorse_pvicino1}}

\

Using  Lemma \ref{lemma-pvicino1} and Lemma \ref{lemma:morseSecAutofRadialeLap}  we can finally prove  Proposition \ref{risultatoMorse_pvicino1}.

\begin{proof}[Proof of Proposition \ref{risultatoMorse_pvicino1}]  
The proof of \eqref{morseindex-pvicino1} consists in showing that for $p$ sufficiently close to $1$  \begin{equation}\label{tesiMorse}
m(u_p)=m\left(\varphi_{2,\rad}\right)+1,
\end{equation}
where $m\left(\varphi_{2,\rad}\right)=5$ by Lemma \ref{lemma:morseSecAutofRadialeLap}.  We divide it into three steps. First observe that for $\bar u_p$  defined from $u_p$ as in \eqref{u-n}
\begin{equation}
\label{stessoLinearizzato}|u_p|^{p-1}=\norm{ u_p}_{\infty}^{p-1}|\bar {u}_p|^{p-1}.
\end{equation}

\

{\bf Step 1.}
{\sl We show that $m(u_p)\geq m\left(\varphi_{2,\rad}\right)+1$, for $p$ sufficiently close to $1$.}
\\
Let  $Q_p\ : H^1_0(B)\rightarrow \R$ be the   quadratic form in \eqref{formaQuadratica} and let us consider  the first $5$ Dirichlet eigenfunctions  $\varphi_1,\dots,\varphi_5$ of  $-\Delta$  in $B$ and the corresponding eigenvalues $\lambda_1,\dots,\lambda_5$. Then by \eqref{stessoLinearizzato} we have that %\edz{$\bar Q_p=Q_p$,  ho uniformato le notazioni\\inoltre\\c'era una disuguaglianza  errata nella catena (vedi file vecchio nella cartella bozze)\\ quindi ho modificato la proof}
\begin{align*}
 Q_p(\varphi_i) & =  \int_B \left[|\nabla \varphi_i|^2 -p|u_p|^{p-1}\varphi_i^2  \right]dx\\
 & \overset{\eqref{stessoLinearizzato}}{=}\int_{B}\left[|\nabla \varphi_i|^2-p\norm{ u_p}_{\infty}^{p-1}|\bar{u}_p|^{p-1}\varphi_i^2\right]\, dx\\
&=\lambda_i \int_B\varphi_i^2\, dx -  p\norm{ u_p}_{\infty}^{p-1} \int_B |\bar u_p|^{p-1}\varphi_i^2\, dx
\\
&
\overset{(\star)}{=}\left(\lambda_i-\lambda_{2,\rad}\right)\int_B\varphi_i^2\, dx +o_p(1)<0
\end{align*}
for $i=1,\dots,5$ and $p$ sufficiently close to $1$, since $\lambda_i<\lambda_{2,\rad}$ by Lemma \ref{lemma:morseSecAutofRadialeLap}, where for the equality in $(\star)$ we have used \eqref{gammanp-1} and the Lebesgue dominated convergence theorem thanks to \eqref{u-n}.
Recalling that the eigenfunctions $\varphi_i$ are orthogonal in $L^2(B)$ and hence in $H^1_0(B)$
this means that the Morse index of $u_p$ is at least $5$ for $p$ sufficiently close to $1$. But
from \eqref{morsePari} in Lemma \ref{proposition:autovaloriRadiaiGenerale} we already know that $m(u_p)$ must be always even, then the Morse index of $u_p$ is at least $6$ for $p$ sufficiently close to $1$.

\

{\bf Step 2.} {\sl  Let $\mu_i(p)\leq 0$ be a non-positive Dirichlet eigenvalue of the operator $L_p$ for $p\in (1,1+\delta)$ and let $v_{i,p}$ be an associated eigenfunction with $\|v_{i,p}\|_{\infty}=1$.  We prove that as $p\rightarrow 1$
\begin{eqnarray}
&& \mu_{i}(p) \ra \lambda_j- \lambda_{2,\rad}  \label{convAuto}
\\
&& v_{i,p}\rightarrow {C_j}\varphi_j \ \mbox{ in } C(\bar B)\ \mbox{ up to a subsequence,}\label{convAutof}
\end{eqnarray}
for a certain $j=j(i)\in\{1,2,3,4,5,6\}$, where  $C_j:=\pm \norm{\varphi_j}^{-1}_\infty$. Moreover we also show that if $l\in\mathbb N$, $l\neq i$ and $\mu_l(p)\leq 0$ for $p\in (1,1+\delta)$, then   
\begin{equation}\label{distinti}j(l)\neq j(i)
\end{equation}
(we stress that under condition \eqref{distinti} it is nevertheless possible to have $\lambda_{j(l)}=\lambda_{j(i)}$). }
\\
Observe that the non-positive eigenvalue $\mu_{i}(p)$ is bounded for $p$ close to $1$, indeed by the standard variational characterization of $\mu_1(p)$
\begin{align*}
\mu_{i}(p)&>\mu_{1}(p)=\mu_{1,\rad}(p) \overset{\eqref{stessoLinearizzato}}{=}\inf_{\substack{v\in H^1_{0,\rad}(B)\\v\neq 0}}
\frac{\int_0^1\left(r\left(v'\right)^2-p\norm{ u_p}_{\infty}^{p-1}|\bar{u}_p|^{p-1}r v^2\right) dr}{\int_0^1 rv^2}\\
&\geq -p\norm{ u_p}_{\infty}^{p-1}\overset{\eqref{gammanp-1}}{\geq}-(\lambda_{2,\rad}+\ep) 
\end{align*}
for $p$ close to $1$. Let $p_n$ be a sequence converging to $1$, then the eigenfunction $v_{i,n}:=v_{i,p_n}$ satisfies
\begin{equation} \label{eigen}
\left\{
\begin{array}{ll}
L_pv_{i,n}\overset{\eqref{stessoLinearizzato}}{=}-\Delta v_{i,n} - p_n   \norm{ u_{p_n}}_{\infty}^{p_n-1}|\bar u_{p_n}|^{p_n-1} v_{i,n}  =\mu_i(p_n) v_{i,n} & \text{ in } B 
\\
\norm{ v_{i,n}}_{\infty}=1\\
v_{i,n}= 0 & \text{ on } \partial B.
\end{array}
\right. 
\end{equation} 
Moreover
  \[\big| p_n   \norm{ u_{p_n}}_{\infty}^{p_n-1}|\bar u_{p_n}|^{p_n-1} v_{i,n}  +\mu_i(p_n) v_{i,n}\big|\leq C\]
  and then, up to a subsequence, $v_{i,n}\to \tilde{\varphi_i}$ in $C(\bar B)$ where $\norm{\tilde \varphi_i}_{\infty}=1$ by the uniform convergence
%Multiplying \eqref{eigen} by $v_{i,n}$ and integrating over $B$ we get
%\[\int_B|\nabla v_{i,n}|^2 
%\leq \left[ p_n \norm{ u_{p_n}}_{\infty}^{p_n-1}+\mu_{i}(p_n)\right]\int_Bv_{i,n}^2
%\overset{\eqref{gammanp-1}}\leq \lambda_{2,\rad}+\ep\]
%and then, up to a subsequence, $v_{i,n}\ra \tilde{\varphi_i}$ weakly in $H^1_0(B)$ and strongly in $L^2(B)$ as $n\rightarrow +\infty$. Moreover $\tilde \varphi_i\neq 0$ since 
%$$1=\int_Bv_{i,n}^2=\int_B\tilde \varphi_i^2$$
and, using \eqref{gammanp-1} and \eqref{u-n}, it follows that $\tilde \varphi_i$ solves
\begin{equation} \label{new}
\begin{cases}
\begin{array}{ll}
-\Delta \tilde\varphi_{i} =\left(\lambda_{2,\rad} +\tilde \mu_{i}\right) \tilde \varphi_{i}\qquad & \text{ in } B \\
\norm{ \tilde \varphi_{i}}_{2}=1\\
\tilde \varphi_{i}= 0 & \text{ on } \partial B,
\end{array} 
\end{cases}
\end{equation} 
where $\tilde \mu_i=\lim_{n\rightarrow +\infty} \mu_{i}(p_n)\leq 0$. This means that $\tilde \varphi_i$ is an eigenfunction of the Laplace operator associated to the eigenvalue $ \lambda_{2,\rad} +\tilde \mu_{i}$, namely there exists $j=1,2,\ldots$ such that
\[\tilde \mu_{i}=\lambda_j-\lambda_{2,\rad}\]
and
\[\tilde \varphi_i={C_j}\varphi_j\]
where $C_j=\pm\norm{\varphi_j}_{\infty}^{-1}$.
Since $ \tilde \mu_{i}\leq 0$, by Lemma \ref{lemma:morseSecAutofRadialeLap} we have necessarily that $j\in\{1,2,3,4,5,6\}$.
%Observe that  $v_{i,n}\rightarrow C_j\varphi_j$ also strongly in $H^1_0(B)$.
Moreover, since the convergence in \eqref{convAuto} holds for any subsequence, then it also holds for the sequence.

Last we prove \eqref{distinti}. Let $l\neq i$ be such that $\mu_l(p)\leq 0$. We can take $v_{l,p}$ orthogonal in $L^2(B)$ to $v_{i,p}$.  The uniform convergence in $\bar B$ implies then that
$$0=\int_B v_{i,p}v_{l,p}={C_{j(i)}C_{j(l)}}\int _B \varphi_{j(i)} \varphi_{j(l)},$$
hence
$j(i)\neq j(l)$.

\

{\bf Step 3.} {\sl Conclusion}
\\
From  {\bf Step 2} we deduce that the operator $L_p$, for $p$ close to $1$, may have at most $6$ non-positive eigenvalues $\mu_i(p)\leq 0$, namely that $\mu_7(p)>0$.\\  Indeed   if we assume by contradiction that  
$\mu_7(p)\leq 0$ for $p$ close to $1$, then \eqref{convAuto} holds for all $i=1,2,\ldots, 7$ and so necessarily $j(7)=j(\hat i)$ for some $\hat i\in\{1,\ldots 6\}$, a contradiction with \eqref{distinti}.
\\
From {\bf Step 1},  we also know that the operator $L_p$ for $p$ close to $1$ has at least $6$ negative eigenvalues $\mu_i(p)< 0$.
\\ 
Combining both the information
we get:
\begin{equation}\label{dimostratoPrimaParte}
\mu_1(p)<\mu_2(p)\leq \mu_3(p)<\mu_4(p)\leq\mu_5(p)< \mu_6(p)<0<\mu_7(p)\leq\ldots
\end{equation}
(the strict inequalities are a consequence of  \eqref{ordinamentol} and of the convergence in \eqref{convAuto}), which proves  both \eqref{tesiMorse} and  the nondegeneracy of $u_p$ for $p$ close to $1$.\\ 
It remains to prove \eqref{iRadiali}. It is well known that $\mu_1(p)=\mu_{1,\rad}(p)$. Moreover  $m_{\rad}(u_p)=2$  by Lemma \ref{LemmaMorseIndexRadiale}, hence there exists a unique $l\in\{2,3,4,5,6\}$ such that $\mu_l(p)=\mu_{2,rad}(p)$. We denote by $v_{l,p}$ a radial eigenfunction associated to $\mu_l(p)$.
Next we show that $l=6$.\\
Observe that as a consequence of \eqref{dimostratoPrimaParte} and of the monotonicity property of the limit, we can take  $j=i$ in the convergences already proved in {\bf Step 2}, namely    \eqref{convAuto} and \eqref{convAutof} become respectively:
\begin{eqnarray}
&& \mu_{i}(p) \ra \lambda_i- \lambda_{2,\rad}  \label{convAutoDefinitiva}
\\
&& v_{i,p}\rightarrow {C_i}\varphi_i \label{convAutofDefinitiva}
\end{eqnarray}
as $p\rightarrow 1$, for any $i=1,\ldots, 6$.\\
Obviously $\varphi_1=\varphi_{1,\rad}$ and moreover, since $\lambda_6=\lambda_{2,\rad}$ by Lemma \ref{lemma:morseSecAutofRadialeLap}, we can take 
$\varphi_6=\varphi_{2,\rad}$,  while $\varphi_i$ is surely not radial for $i=2,3,4,5$.
Observe now that $\varphi_l$ is radial, being obtained in the limit of the radial eigenfunction $v_{l,p}$ in \eqref{convAutofDefinitiva}, this proves that $l=6$.
Last \eqref{convAutoDefinitiva} in the case $i=6$  also gives the limit
$ \mu_{6}(p)=\mu_{2,\rad}(p) \ra 0^-$
as $p\rightarrow 1$.
\end{proof}

\

\section{Regularity of the eigenvalues and the set $\mathcal P^j$}
By Proposition \ref{p4.7} the sets of the exponents $p$ at which $u_p$ is degenerate are
\begin{eqnarray}\label{setdegeneratep}
&&\{p\in(1,+\infty) : \beta_{1,\rad}(p)=-j^2\},\ \mbox{ for }j\in\N_0\nonumber
\\
&&\{p\in(1,+\infty) : \beta_{2,\rad}(p)=-1\}.
\end{eqnarray}
In particular we will be interested in the subset
\begin{equation}\label{def:S_j}
\mathcal P^j:=\left\{p\in(1,+\infty) :\ 
p\mapsto \beta_{1,\rad}(p)+j^2\  \mbox{ changes sign}\right\}, \quad j\in \N_0.
\end{equation}
Clearly $\mathcal P^j\subseteq \left\{p\in(1,+\infty) :\beta_{1,\rad}(p)=-j^2\right\}.$ 
\begin{lemma}\label{l5.1}
The maps $p\mapsto\beta_{i,\rad}(p)$  are analytic in $p$ and the sets of degenerate points in \eqref{setdegeneratep}, when not empty, consist of only isolated points. \\
Moreover $\mathcal P^j\not=\emptyset$, for $j=3,4,5$ 
and there exists an odd number $s_j(\geq 1)$ of isolated  values $p_1^j,\ldots, p^j_{s_j}\in (1+\delta,p^{\star})$ (where $\delta$ and $p^{\star}$ are as in Proposition \ref{risultatoMorse_pvicino1} and Proposition \ref{lemma:autovaloriRadialipgrande} respectively) such that
\[\mathcal P^j=\{p_1^j,\ldots, p^j_{s_j}\}  \qquad j=3,4,5.\]
\end{lemma}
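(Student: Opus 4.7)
My plan is to first establish the real-analytic dependence of $\beta_{i,\rad}(\cdot)$ on $p$, from which the discreteness of the degeneracy sets will be immediate; then, for $j=3,4,5$, I combine the quantitative bounds from Corollary \ref{lemma:autovaloriRadialipvicino1} and Proposition \ref{lemma:autovaloriRadialipgrande} with the intermediate value theorem and a parity argument to obtain $\mathcal P^j$. For the first step, the radial non-degeneracy of $u_p$ (Lemma \ref{lemma:radiallyNonDeg}) allows one to apply the analytic implicit function theorem to the radial reduction of \eqref{problem} in $H^1_{0,\rad}(B)\cap C^{1,\alpha}(\bar B)$, yielding that $p\mapsto u_p$ is real-analytic on $(1,+\infty)$. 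Consequently the multiplication operators $p\mapsto p|u_p|^{p-1}$ depend analytically on $p$, and the one-parameter family of self-adjoint operators on $\mathcal H_{\rad}$ associated with \eqref{problemaPesatoPalla} is analytic of Type (A) in the sense of Kato. Since each radial eigenvalue $\beta_{i,\rad}(p)$ is simple in $\mathcal H_{\rad}$ (see Section \ref{se:auxiliary}), the Kato--Rellich theorem gives its real-analytic dependence on $p$.

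A real-analytic function on the connected interval $(1,+\infty)$ either equals $c$ identically or has only isolated preimages of $c$. Corollary \ref{lemma:autovaloriRadialipvicino1} provides $-9<\beta_{1,\rad}(p)<-4$ and $-1<\beta_{2,\rad}(p)<0$ for $p\in(1,1+\delta)$, while Proposition \ref{lemma:autovaloriRadialipgrande} provides $-36<\beta_{1,\rad}(p)<-25$ for $p\geq p^{\star}$. In particular $\beta_{1,\rad}$ is not identically $-j^2$ for any $j\in\N$ and $\beta_{2,\rad}$ is not identically $-1$, so the two degeneracy sets in \eqref{setdegeneratep} consist only of isolated points.

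For $j\in\{3,4,5\}$, set $g_j(p):=\beta_{1,\rad}(p)+j^2$. The bounds above yield $\beta_{1,\rad}(p)>-9\geq -j^2$ on $(1,1+\delta)$, hence $g_j>0$ there; and $\beta_{1,\rad}(p)<-25\leq -j^2$ on $[p^{\star},+\infty)$, hence $g_j<0$ there (the chains are strict at both ends for every $j\in\{3,4,5\}$). By continuity of $g_j$ from the first step and the intermediate value theorem, $g_j$ vanishes at some point of $(1+\delta,p^{\star})$, and by the previous paragraph these zeros are isolated, hence finite in number on the compact interval $[1+\delta,p^{\star}]$. Since $g_j$ is analytic and changes from strictly positive at $p=1+\delta$ to strictly negative at $p=p^{\star}$, the number of zeros of odd order (i.e., those at which the sign effectively changes) is odd. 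These are precisely the points of $\mathcal P^j\cap(1+\delta,p^{\star})$, and $\mathcal P^j$ contains no further points since $g_j$ has constant strict sign on $(1,1+\delta]$ and $[p^{\star},+\infty)$. This gives $\mathcal P^j=\{p_1^j,\dots,p_{s_j}^j\}\subset(1+\delta,p^{\star})$ with $s_j\geq 1$ odd.

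The main technical obstacle is the real-analytic dependence of $u_p$ on $p$ invoked at the beginning. Because the nonlinearity $(u,p)\mapsto |u|^{p-1}u$ is only $C^1$ in $u$ (not jointly analytic) near the nodal set of $u_p$, one must either reduce the question to the radial shooting ODE -- where solutions depend analytically on $(p,u(0))$ on each interval where they do not vanish, and the boundary condition $u(1)=0$ pins $u(0)$ down analytically via Lemma \ref{lemma:radiallyNonDeg} -- or complexify the resolvent of $L_p$ on the radial subspace and invoke Kato's holomorphic perturbation criterion. Once this analyticity step is granted, the remaining intermediate value and parity arguments are routine.
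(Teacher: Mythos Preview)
Your overall strategy coincides with the paper's: first establish that $p\mapsto u_p$ (and hence $p\mapsto\beta_{i,\rad}(p)$) is real-analytic, then combine the bounds from Corollary \ref{lemma:autovaloriRadialipvicino1} and Proposition \ref{lemma:autovaloriRadialipgrande} with an intermediate value and parity argument. The second half of your argument (isolated zeros, odd number of sign changes in $(1+\delta,p^\star)$) is exactly what the paper does and is correct.

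The difference lies in how the analyticity of $p\mapsto u_p$ is obtained. You invoke the analytic implicit function theorem directly on the radial problem, flagging yourself that $(u,p)\mapsto|u|^{p-1}u$ fails to be analytic near $\{u=0\}$, and you sketch a shooting-on-each-nodal-interval workaround. The paper makes this workaround concrete: it applies the Emden--Fowler change of variables $\tilde w_p(s)=r^{2/(p-1)}u_p(r)$, $r=e^s$, introduces a scaling parameter $z$, and restricts to each of the two intervals $(-\infty,s_1)$ and $(s_1,0)$ determined by the unique interior zero. On each piece the problem is equivalent to a \emph{positive} radial Dirichlet problem, to which Dancer's analyticity result \cite{D} applies directly; one then shows (following \cite{DW}) that there is an analytic choice $z_p$ near $1$ making the two pieces match in $C^1$ at the zero. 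This yields the analyticity of $p\mapsto u_p$ without ever confronting the nonlinearity at a sign change. Your suggestion is in the same spirit, but the paper's version has the advantage of reducing cleanly to an existing black box (Dancer's theorem for positive solutions), whereas your shooting sketch still needs a careful argument that the nodal point and the matching condition vary analytically. Your alternative suggestion of complexifying the resolvent of $L_p$ does not by itself address the issue, since the potential $p|u_p|^{p-1}$ already presupposes analyticity of $u_p$.
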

\begin{proof} 
In \cite{D} it is proved that  for any smooth bounded domain $\Omega\subset \R^2$ for any $p>1$ except possibly for isolated $p$ the equation  $-\Delta u=u^p$ in $\Omega$, $u=0$ on $\partial \Omega$ has a non-degenerate positive solution.
The proof relies on the fact that the map $(u,p)\longrightarrow \left(-\Delta\right)^{-1}(u^p)$ is real analytic when considered in a suitable cone of positive weighted functions.
\\
This proof cannot be directly applied for sign-changing solutions, and so we need to adapt the proof of the analyticity for sign-changing radial fast decay solutions in the exterior of the ball used in \cite{DW}, which holds in $\R^N$, with $N\geq 3$.
\\
Following \cite{DW} we let $\tilde w_p(s)=r^{\frac 2{p-1}}u_p(r)$, for $r=e^s$. This function satisfies
\[
\tilde{w}_p''-\frac 4{p-1}\tilde{w}_p'+\left(\frac 2{p-1}\right)^2\tilde w_p+|\tilde w_p|^{p-1}\tilde w_p=0
\]
for $s\in(-\infty,0)$ with the conditions
\begin{equation}\label{***}
\tilde w_p(0)=0\quad ,\quad \lim_{s\to -\infty}\tilde w_p(s)=0.
\end{equation}
We consider, for $z>0$, the rescaled function $w(t)=\tilde w_p(z^{-1}t)$ that satisfies
\begin{equation}\label{riscalato}
{w}''-\frac 4{p-1}z{w}'+\left(\frac 2{p-1}\right)^2z^2 w+z^2| w|^{p-1} w=0
\end{equation}
in $(-\infty,0)$ with the boundary conditions in \eqref{***}. We let $s_1$ be the unique zero of $w(t)$ in $(-\infty, 0)$ and we consider problem \eqref{riscalato} in one of the intervals $(-\infty,s_1)$ or $(s_1,0)$ with Dirichlet boundary conditions (also at infinity). Of course we have that $r_1=e^{z^{-1}s_1}$ is the unique zero of $u_p$. 
Problem \eqref{riscalato} is equivalent to solve
\[
\begin{cases}
-\Delta u=u^p & \text{ in }\Omega_i\\
u>0  & \text{ in }\Omega_i\\
u=0 & \text{ on }\partial\Omega_i
\end{cases}\]
where $\Omega_1=B(0,e^{z^{-1}s_1})$ or $\Omega_2=B\setminus B(0,e^{z^{-1}s_1})$ and $u$ is radial. The  Dancer  result  for positive solutions in \cite{D} implies then that the positive solutions $w^1_{z,p}$ and $w^2_{z,p}$ to \eqref{riscalato}, in $(-\infty,s_1)$ and $(s_1,0)$ respectively, depend analytically on $p$ and $z$.
\\
Lastly, following the proof of Lemma 3.2 part c) in \cite{DW}, one can show the existence of $z_p$ close to $1$ and analytic in $p$ such that the function  
\[\tilde{w}_p(s) =\left\{
\begin{split}
w^1_{z_p,p}(z_ps) \quad \text{ for }s\in (-\infty,z_p^{-1}s_1]\\ 
-w^2_{z_p,p}(z_ps) \quad \text{ for }s\in (z_p^{-1}s_1,0)
\end{split}
\right.
\]
is $C^1$ in $s=z_p^{-1}s_1$. This proves that $p\mapsto u_p$ is analytic.
\\
The fact that $u_p$ is analytic with respect to $p$ implies that the eigenvalues $\beta_{1,\rad}(p)$, $\beta_{2,\rad}(p)$ are analytic \cite{Kato}. %Moreover $\beta_{1,\rad}(p)$ and $\beta_{2,\rad}(p)$ {are not constant } \edz{{??\\ impreciso:\\per $\beta_{1,\rad}$ ok, ma $\beta_{2,\rad}$ potrebbe essere costante ($\beta_{2,\rad}(p)\geq -1$ sempre)....}\\$\;$\\\\$\;$\\} in $(1,+\infty)$ by\eqref{nu1controllatopgrande}, \eqref{nu1controllatopvicino1} and \eqref{nu2controllatopgrande}.  \eqref{nu2controllatopvicino1}. Then the solutions $p\in (1,+\infty)$ to  $\beta_{i,\rad}(p)=-j^2$ are isolated and can accumulate only at $+\infty$. Finally from \eqref{nu1controllatopgrande}, \eqref{nu1controllatopvicino1}, $\beta_{1,\rad}(p)+j^2$ changes sign for $p\in(1,+\infty)$ when $j=3,4,5$ and this concludes the proof.\\
Moreover by \eqref{nu1controllatopgrande} and \eqref{nu1controllatopvicino1} it follows that $p\mapsto\beta_{1,\rad}(p)$ is not constant in $(1,+\infty)$ and so the solutions $p\in (1,+\infty)$ to  $\beta_{1,\rad}(p)=-j^2$ are isolated and can accumulate only at $+\infty$. 
By \eqref{nu2controllatopgrande} and \eqref{nu2controllatopvicino1} instead, either $p\mapsto\beta_{2,\rad}(p)$  is constant and there are no solutions $p\in (1,+\infty)$ to $\beta_{2,\rad}(p)=-1$ or it is not constant in $(1,+\infty)$  and in this case the solutions $p\in (1,+\infty)$ to  $\beta_{2,\rad}(p)=-1$ are isolated and can accumulate only at $+\infty$.
Finally  \eqref{nu1controllatopgrande} and  \eqref{nu1controllatopvicino1} imply also that $\beta_{1,\rad}(p)+j^2$ changes sign for some $p\in(1+\delta,p^{\star})$ (precisely at an odd number of values  of $p$), when $j=3,4,5$. 
\end{proof}

\

\section{Morse index and degeneracy in symmetric functions spaces}\label{se:symmspaces}

To prove the bifurcation result in Theorem \ref{teo1} and also to prove Theorem  \ref{prop1.4} we need to introduce some  spaces of symmetric functions. To this end we let $O(2)$ be the orthogonal group in $\R^2$, $O_k\subset O(2)$, for $k\in\mathbb N_0$, be the subgroup of rotations of angle $\frac {2\pi}k$ and $\tau\in O(2)$ be the reflection with respect to the $x$-axis, i.e. $\tau(x,y)=(x,-y)$ for any $(x,y)\in \R^2$. For any $k\in \N_0$, we denote by 
\begin{equation}\label{g-k}
\mathcal{G}_k\subset O(2) \ \text{ the subgroup generated by the elements of $O_k$ and by $\tau$ }
\end{equation}
and by 
\begin{equation}
\label{Hk}
H^1_{0,k}(B)\ : =\{v\in H^1_0(B)\ \text{ such that } v(g(x))=v(x), \  \  \forall g\in \mathcal{G}_k, \  \forall x\in B  \}.
\end{equation}

The functions in the spaces $H^1_{0,k}(B)$ clearly possess the following invariances (in polar coordinates $(x,y)=(r\cos \theta,r\sin \theta)$):
\begin{eqnarray} \label{fi00}
&v(r,\theta)=v(r,2\pi-\theta)\\\label{fi0}
&v(r,\theta)=v(r, \theta+\frac{2\pi}k) 
\end{eqnarray}
and so also
\begin{equation}\label{fi1}
v(r,\frac \pi k+\theta)=v(r,\frac \pi k-\theta) %\ \ \ \text{ for every }(r,\theta)\in (0,1)\times [0,2\pi] 
\end{equation}
for every $r\in (0,1]$ and for every $\theta\in [0,2\pi]$. Note that in general $\theta+\frac{2\pi}k\notin[0,2\pi]$, if this occurs we mean that $v(r,\theta)=v(r, \theta+\frac{2\pi}k-2\pi)$ and similarly we do when $\frac{\pi}k\pm\theta\notin[0,2\pi]$.  \\

Observe that when $k=1$ then $O_1$ is the trivial subgroup of $O(2)$ given by the identity map and the functions in  $H^1_{0,1}(B)$ are only invariant by the reflection $\tau$.

\

Clearly the radial solution $u_p\in  H^1_{0,k}(B)$, for every $k\in\N_0$. 
\\
As a consequence, letting as before $\left(\mu_i(p)\right)_{i\in\N_0}$ be the sequence of the eigenvalues of the linearized operator $L_p$ at $u_p$ (see Section \ref{section:linearizedOperator}), we can consider its subsequence  $\left( \mu_{i,k}(p)\right)_{i\in \N_0}$ of the $\mathcal G_k$-symmetric eigenvalues (i.e. eigenvalues associated to an eigenfunction that belongs to $H^1_{0,k}(B)$) for any $k\in\N_0$, which can be characterized as
\begin{eqnarray}\nonumber
\mu_{i,k}(p) &=& \min_{\substack{
W\subset H^1_{0,k}(B)\\ dim W=i}}   \max_{\substack{v\in W\\v\neq 0}}\ \ \
R_p[v],
\end{eqnarray}
where $R_p$ is the usual Rayleigh quotient as in \eqref{Rayleigh}. By the principle of symmetric criticality the functions $v_i$ that attains $\mu_{i,k}(p)$ are indeed solutions to the eigenvalue problem associated to the linearized operator, i.e. they satisfy
\[
\begin{cases}
\begin{array}{ll}
-\Delta v_i - p|u_p(x)|^{p-1} v_i =\mu_{i,k}(p) v_i\qquad & \text{ in } B, \\
v_i= 0 & \text{ on } \partial B
\end{array} 
\end{cases}\]
and are invariant by the action of $\mathcal{G}_k$. It is known that $\mu_{1,k}(p)=\mu_{1,\rad}(p)=\mu_1(p)$, for any $k\in\N_0$, since $v_1$ is a radial function.

\

We then define the  {\sl $k$-Morse index of $u_p$}, that we denote by $m_k(u_p)$, as the number of the negative $\mathcal G_k$-symmetric eigenvalues $\mu_{i,k}(p)$ of $L_p$  counted with multiplicity.

\

To compute the $k$-Morse index of $u_p$  it is useful the following result, analogous to the one in Lemma \ref{lemma:Morse=numeroAutovaloriesatoPalla}:
\begin{lemma}\label{lemma:Morse=numeroAutovaloriesatoPallaSIMMETRIA} 
The  $k$-Morse index of $u_p$ coincides with the number of the negative $\mathcal G_k$-symmetric eigenvalues  of the weighted problem \eqref{problemaPesatoPalla} counted according to their multiplicity.
\end{lemma}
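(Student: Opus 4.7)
The plan is to repeat the proof of Lemma \ref{lemma:Morse=numeroAutovaloriesatoPalla} (which follows \cite[Lemma 2.6]{GGN2}) inside the $\mathcal{G}_k$-symmetric setting. The key observation is that the linearized operator $L_p$, the weight $1/|x|^2$ and the equation \eqref{problemaPesatoPalla} are all $O(2)$-equivariant, and in particular $\mathcal{G}_k$-equivariant; consequently the spectral decompositions of both problems split further according to the $\mathcal{G}_k$-action, and the correspondence between negative eigenvalues of $L_p$ and negative eigenvalues of \eqref{problemaPesatoPalla} established in Lemma \ref{lemma:Morse=numeroAutovaloriesatoPalla} preserves this decomposition.

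Concretely, I would first introduce the closed subspace $\mathcal{H}_k := \mathcal{H}\cap H^1_{0,k}(B)$ and define the $\mathcal{G}_k$-symmetric variational values $\beta_{i,k}(p)$ by the same recipe as the $\beta_i(p)$'s in \eqref{CourantCharEigenvpesatipalla1}--\eqref{CourantCharEigenvpesatipalla3}, replacing $\mathcal H$ by $\mathcal H_k$ everywhere (with the $\perp_{\mathcal H}$-orthogonality still taken in the weighted sense). By carrying over the proofs of \cite[Propositions 2.1, 2.3]{GGN2} verbatim inside $\mathcal H_k$ and invoking the principle of symmetric criticality of Palais, applicable because the Rayleigh quotient $\widetilde{R_p}$ is $\mathcal{G}_k$-invariant, one obtains that whenever $\beta_{i,k}(p)<0$ the infimum is attained by some $\phi_{i,k}\in\mathcal{H}_k$ which is a genuine weak solution of \eqref{problemaPesatoPalla}. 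Conversely, every $\mathcal{G}_k$-symmetric eigenfunction associated with a negative eigenvalue of \eqref{problemaPesatoPalla} realises one of the $\beta_{i,k}(p)$'s, so the number of negative $\mathcal{G}_k$-symmetric eigenvalues of \eqref{problemaPesatoPalla}, counted with multiplicity, equals $\#\{i : \beta_{i,k}(p)<0\}$.

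To finish, I would then show, following the argument of \cite[Lemma 2.6]{GGN2} inside the $\mathcal{G}_k$-invariant subspace, that $\#\{i : \beta_{i,k}(p)<0\}=m_k(u_p)$. Indeed $m_k(u_p)$ equals the maximal dimension of a subspace $V\subset H^1_{0,k}(B)$ on which $Q_p|_{V\setminus\{0\}}<0$, while $\#\{i : \beta_{i,k}(p)<0\}$ equals the maximal dimension of such a subspace inside $\mathcal{H}_k$. The inclusion $\mathcal{H}_k\subset H^1_{0,k}(B)$ gives one inequality, and the cut-off procedure of \cite{GGN2}, analogous to the one employed in Lemma \ref{2AutovRadNegativiPesatoPalla}, yields the other. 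The main point to verify, and the only genuinely new technical issue with respect to the nonsymmetric statement, is that this cut-off near the origin is compatible with the symmetry class; but this is immediate because the truncation is carried out by means of radial cut-off functions, which are automatically $O(2)$-invariant, hence $\mathcal{G}_k$-invariant, so the truncated subspace stays inside $\mathcal{H}_k$ while the estimates on the perturbation of $Q_p$ go through unchanged.
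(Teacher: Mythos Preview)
Your proposal is correct and follows essentially the same approach as the paper: the paper states that the result is ``an easy adaptation of the arguments in \cite[Lemma 2.6]{GGN2}'' based on the variational characterization of the negative $\mathcal{G}_k$-symmetric eigenvalues in the subspace $\mathcal{H}_k$, invoking the principle of symmetric criticality. Your write-up spells out exactly these ingredients, and your observation that the radial cut-off used in the proof of \cite[Lemma 2.6]{GGN2} preserves the $\mathcal{G}_k$-symmetry is precisely the only new point to check.
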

The proof of the previous result is an easy adaptation of the arguments in \cite[Lemma 2.6]{GGN2} and relies on the variational characterization of the negative $\mathcal G_k$-symmetric eigenvalues  of the weighted problem \eqref{problemaPesatoPalla} (i.e. the eigenvalues whose eigenfuntions belong to $H^1_{0,k}(B)$). Indeed observe that they are a subsequence of the eigenvalues of the weighted problem \eqref{problemaPesatoPalla}
and that, as we have already seen in Section \ref{se:auxiliary}, they can be variationally characterized exactly when they are  {\sl negative}. More precisely,  by the principle of symmetric criticality, we can now restrict to  the subspace
$\mathcal H_{k}$ of the $\mathcal G_k$-symmetric functions of $\mathcal H$ ($\mathcal H_{k}\subset H^1_{0,k}(B)$) and define
\begin{eqnarray}\label{CourantCharEigenvpesatipallaSIMMETRICO1}
\beta_{1,k}(p) &:=&    \inf_{\substack{v\in \mathcal H_{k},\,\,v\neq 0}}\ \ \
\widetilde{R_p}[v]  \ (= \beta_1(p)=\beta_{1,\rad}(p))
\end{eqnarray}
and, if $\beta_{j,k}(p)<0$ for $j=1,\ldots, i-1$
\begin{eqnarray}\label{CourantCharEigenvpesatipallaSIMMETRICO}
\beta_{i,k}(p) &:=&   \inf_{\substack{v\in \mathcal H_{k},\,\,v\neq 0\\ v\perp_{\mathcal H}span\{\phi_{1},\ldots,\phi_{i-1}\}}}\ \ \
\widetilde{R_p}[v],\qquad i\in\N, \ i\geq 2,
\end{eqnarray}
where $\phi_j\in\mathcal H_{k}$ is the function where $\beta_{j,k}(p)$ is achieved for  $j=1,\ldots, i-1$ and solve 
\begin{equation}\label{problemaPesatoPallaDeboleiSIMMETRICO}
\int_B\nabla\phi_j\nabla v-p|u_p|^{p-1}\phi_jv\, dx= \beta_{j,k}(p)\int_B \frac{\phi_j
 v}{|x|^2}\, dx, \qquad \forall v\in \mathcal H.
\end{equation}
So similarly as in Lemma \ref{valoriVariaz=AutovaloriesatoPalla} one can prove the following variational characterization, which then gives the characterization of the $k$-Morse index in Lemma \ref{lemma:Morse=numeroAutovaloriesatoPallaSIMMETRIA} above:
\begin{lemma}
\label{valoriVariaz=AutovaloriesatoPallaSIMMETRICO}
The negative $\mathcal G_k$-symmetric eigenvalues  of problem \eqref{problemaPesatoPalla} coincide with the negative numbers  $\beta_{i,k}(p)$'s  in \eqref{CourantCharEigenvpesatipallaSIMMETRICO1}-\eqref{CourantCharEigenvpesatipallaSIMMETRICO}. Moreover the corresponding 
eigenfunctions, which solve \eqref{problemaPesatoPalla}, are in $\mathcal H_k$ and can be  chosen to be orthogonal in the sense of \eqref{perpH}. 
\end{lemma}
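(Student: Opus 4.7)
The plan is to mimic the arguments used in \cite{GGN2} to prove Lemma \ref{valoriVariaz=AutovaloriesatoPalla} (specifically Propositions 2.1 and 2.3 therein), adapting them to the symmetric setting via the principle of symmetric criticality. The key observation is that the group $\mathcal{G}_k\subset O(2)$ acts by Euclidean isometries fixing the origin, hence it preserves both the $H^1_0$-structure, the weight $|x|^{-2}$ and the radial potential $p|u_p|^{p-1}$; consequently $\mathcal H_k$ is a closed subspace of $\mathcal H$ on which both $Q_p$ and the weighted $L^2$-norm are well defined, and the bilinear form appearing in \eqref{problemaPesatoPallaDeboleiSIMMETRICO} is $\mathcal{G}_k$-invariant.

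First I would handle the case $i=1$. Suppose $\beta_{1,k}(p)<0$ and let $(v_n)\subset\mathcal H_k$ be a minimizing sequence with $\int_B v_n^2/|x|^2=1$ and $Q_p(v_n)\to\beta_{1,k}(p)<0$. Since $p|u_p|^{p-1}\in L^\infty(B)$, the condition $Q_p(v_n)<0$ together with the normalization yields a uniform $H^1_0$-bound; the extra bound on $\int_B v_n^2/|x|^2$ gives a uniform $\mathcal H$-bound. Being a closed subspace of $\mathcal H$, $\mathcal H_k$ is weakly closed, so up to a subsequence $v_n\rightharpoonup\phi_1\in\mathcal H_k$. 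Now I would invoke the same compactness argument as in \cite[Proposition 2.1]{GGN2} (which relies on the strict negativity of the Rayleigh quotient to rule out concentration at the origin): this yields $\phi_1\ne 0$ attaining $\beta_{1,k}(p)$. Varying $\phi_1$ inside $\mathcal H_k$ gives \eqref{problemaPesatoPallaDeboleiSIMMETRICO} for all $v\in\mathcal H_k$; the principle of symmetric criticality (applicable because the quadratic form and the weighted inner product are $\mathcal{G}_k$-invariant) then upgrades this to all $v\in\mathcal H$, so $\phi_1$ is indeed an eigenfunction of \eqref{problemaPesatoPalla}.

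The iteration is carried out by induction: assume $\phi_1,\dots,\phi_{i-1}\in\mathcal H_k$ have been constructed realizing the negative values $\beta_{1,k}(p),\dots,\beta_{i-1,k}(p)$. The constraint $v\perp_{\mathcal H}\mathrm{span}\{\phi_1,\dots,\phi_{i-1}\}$ cuts out a closed subspace of $\mathcal H_k$ (the orthogonal complement in the $L^2_{1/|x|^2}$-sense), and if $\beta_{i,k}(p)<0$ the same compactness argument produces a minimizer $\phi_i$. To promote the Euler--Lagrange equation from this constrained subspace to all of $\mathcal H$ one proceeds via Lagrange multipliers: testing the resulting identity against each $\phi_j$ and using that $\phi_j$ itself solves \eqref{problemaPesatoPallaDeboleiSIMMETRICO} with eigenvalue $\beta_{j,k}(p)$, one checks that every multiplier vanishes (when $\beta_{i,k}(p)\ne\beta_{j,k}(p)$ this is immediate from $\phi_i\perp_{\mathcal H}\phi_j$; when the eigenvalues coincide the multiplier can be absorbed by modifying $\phi_i$ within the eigenspace, preserving the orthogonality and the symmetry). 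Finally, symmetric criticality upgrades the equation from $\mathcal H_k$ to $\mathcal H$.

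For the converse, if $\beta<0$ is a $\mathcal{G}_k$-symmetric eigenvalue with eigenfunction $\psi\in\mathcal H_k$, then $\widetilde{R_p}[\psi]=\beta$, so such eigenvalues are exhausted by the min--max values $\beta_{i,k}(p)$ by a standard argument: if $\beta$ were not among them, $\psi$ would be $\perp_{\mathcal H}$-orthogonal (by \eqref{ortogonalita_auto_deboli}, which applies verbatim in $\mathcal H_k$) to all $\phi_j$ with $\beta_{j,k}(p)<\beta$, contradicting the minimality of the next $\beta_{i,k}(p)$. The orthogonality statement in the final clause is then exactly \eqref{ortogonalita_auto_deboli} applied to eigenfunctions in $\mathcal H_k$. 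The principal difficulty I anticipate is \emph{not} new compared to \cite{GGN2}: it lies in the compactness step, which requires strict negativity of the Rayleigh quotient to prevent the minimizing sequence from concentrating at the origin where the weight $|x|^{-2}$ is singular. Since this step has already been carried out in \cite[Propositions 2.1, 2.3]{GGN2} on the full space $\mathcal H$, and the restriction to the closed subspace $\mathcal H_k$ does not affect any of the estimates, the adaptation is essentially bookkeeping.
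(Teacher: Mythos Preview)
Your proposal is correct and follows exactly the approach the paper itself indicates: the paper does not give a separate proof but simply states that ``similarly as in Lemma \ref{valoriVariaz=AutovaloriesatoPalla} one can prove'' the result, having already invoked the principle of symmetric criticality a few lines above. Your write-up is a faithful unpacking of that sentence---adapting \cite[Propositions 2.1, 2.3]{GGN2} to the closed invariant subspace $\mathcal H_k$ and using symmetric criticality to pass from test functions in $\mathcal H_k$ to all of $\mathcal H$---so there is nothing to add.
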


\begin{remark}[$\mathcal G_k$-invariance of the eigenfunctions]\label{remark:simmetriaAutofunzioni}
Recall that, according to  the spectral decomposition result in Lemma \ref{lemma:decomposizionePalla} and using Lemma \ref{2AutovRadNegativiPesatoPalla}, 
%\edz{remark risistemato\\NB: qui, di Lemma \ref{lemma:decomposizionePalla} uso anche la parte che ho aggiunto relativa alle autofunzioni associate}
 we can decompose  the negative eigenvalues 
 %$\beta_{i,k}(p)$ 
 of the weighted  problem \eqref{problemaPesatoPalla} as
\begin{equation}\label{autov-minori-zero}
%\beta_{i,k}(p)=
\beta_{n,\rad}(p)+j^2<0
\end{equation}
for some $n=1,2$ and some $j\in \N $, where $\beta_{n,\rad}(p)$ are the negative radial weighted eigenvalues as defined in Section \ref{se:auxiliary}. 
\\
Moreover the  eigenfunctions  associated to each $(n,j)\in\{1,2\}\times\N$ in the decomposition \eqref{autov-minori-zero}  are explicitly known by Lemma  \ref{lemma:decomposizionePalla}, indeed they are:
\[
\phi_n(r)\cos (j\theta)\ \mbox{ and }\ \phi_n(r)\sin (j\theta)
%,\quad \forall\, (n,j)\in\{1,2\}\times \N \mbox{ such that }\  %\beta_{n,\rad}(p)+j^2=\beta(p),
\]
 where $\phi_n(r)$ is a radial eigenfunction associated to the simple  radial eigenvalue $\beta_{n,\rad}(p)$. 
 \\
Recall also that, by \eqref{eigenspaceSpectralDec}, the eigenspace related to each negative eigenvalue of problem \eqref{problemaPesatoPalla} is generated by these eigenfunctions, with $(n,j)$ varying among all the possible associated decompositions. \\
Hence the $\mathcal G_k$-invariance of the eigenfunctions is known, precisely one has that: 
\begin{itemize}
\item[$a)$] for $j=0$, the eigenvalues $\beta_{1,\rad}(p)<\beta_{2,\rad}(p)<0$ are simple in the space of the radial functions and each one produces $1$ radial eigenfunction $\phi_{n}$ ($n=1,2$ respectively) of problem \eqref{problemaPesatoPalla}, which belongs to $H^1_{0,k}(B)$ for every $k\geq1$;
\item[$b)$] 
for every $j\geq 1$, the eigenfunction $\phi_n(r)\sin (j \theta)$  doesn't belong to any space $H^1_{0,k}(B)$, $k\geq 1$ (since the reflection $\tau\in\mathcal G_k$);
\item[$c)$] 
for every $j\geq 1$, the eigenfunction $\phi_n(r)\cos (j \theta)$ is in $H^1_{0,j}(B)$;      
\item[$d)$]  
for every $j\geq 2$, the eigenfunction $\phi_n(r)\cos (j \theta)$  belongs also to the spaces $H^1_{0,k}(B)$ such that $k\in\N_0$ is a factor  of $j$ (we write $k\mid j$) (in particular it always belongs to $H^1_{0,1}(B)$), while it doesn't belong to the spaces $H^1_{0,k}(B)$ when $k\in\N_0$ is not a factor  of $j$.
\end{itemize}
\end{remark}

\

In the next section we will use the following result:
\begin{lemma}
\label{lemma:legameAutovSimmNegativoLinEPesato} 
Let $p\in (1,+\infty)$. The linearized operator
$L_p$ has a negative eigenvalue with eigenfunction in $H^1_{0,k}(B)\setminus H^1_{0,\rad}(B)$ if and only if 
\begin{equation}\label{negativo-k}
\beta_{1,\rad}(p)+k^2<0 \quad \end{equation}
%and in such a case it is simple and unique.
\end{lemma}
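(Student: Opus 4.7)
First I would reformulate the claim as
\[
\text{($L_p$ has a negative eigenvalue with eigenfunction in }H^1_{0,k}(B)\setminus H^1_{0,\rad}(B)\text{)} \ \Longleftrightarrow \ m_k(u_p)>m_{\rad}(u_p)=2.
\]
The justification is that the coefficient $p|u_p|^{p-1}$ in $L_p$ is radial, so $L_p$ commutes with the action of $O(2)$ on $H^1_0(B)$; in particular $H^1_{0,\rad}(B)$ and its $L^2$-orthogonal complement in $H^1_{0,k}(B)$ are both $L_p$-invariant. Consequently every negative eigenspace of $L_p|_{H^1_{0,k}(B)}$ splits into its radial summand (of total dimension $m_{\rad}(u_p)=2$ by Lemma \ref{LemmaMorseIndexRadiale}) and its purely non-radial $\mathcal G_k$-invariant summand, and a non-radial $\mathcal G_k$-invariant eigenfunction with negative eigenvalue exists precisely when the latter is nontrivial, i.e.\ when $m_k(u_p)>2$.

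Next I would transfer the count of negative eigenvalues to the auxiliary weighted problem \eqref{problemaPesatoPalla}. By Lemma \ref{lemma:Morse=numeroAutovaloriesatoPallaSIMMETRIA}, $m_k(u_p)$ coincides with the number (with multiplicity) of negative $\mathcal G_k$-symmetric eigenvalues of \eqref{problemaPesatoPalla}, and similarly the two negative radial eigenvalues $\beta_{1,\rad}(p),\beta_{2,\rad}(p)$ (Lemma \ref{2AutovRadNegativiPesatoPalla}) account for the two ``radial'' units. Using the spectral decomposition of Lemma \ref{lemma:decomposizionePalla}, every negative eigenvalue of \eqref{problemaPesatoPalla} has the form $\beta_{n,\rad}(p)+j^2$ with $n\in\{1,2\}$, $j\in\mathbb N$, and the associated eigenspace is spanned by $\phi_n(r)\cos(j\theta)$ and $\phi_n(r)\sin(j\theta)$. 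By items $b)$--$d)$ of Remark \ref{remark:simmetriaAutofunzioni}, the $\sin$-eigenfunctions are never $\mathcal G_k$-invariant (because $\tau\in\mathcal G_k$) and $\phi_n(r)\cos(j\theta)$ is $\mathcal G_k$-invariant iff $k\mid j$. Hence, splitting off the two radial contributions ($j=0$), the number of negative $\mathcal G_k$-symmetric eigenvalues exceeds $2$ if and only if
\[
\exists\,(n,j)\in\{1,2\}\times\mathbb N_0 \ \text{ with }\  k\mid j,\ \ \beta_{n,\rad}(p)+j^2<0.
\]

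Finally, Lemma \ref{proposition:autovaloriRadiaiGenerale} gives $\beta_{2,\rad}(p)\ge -1$ for every $p>1$, so $\beta_{2,\rad}(p)+j^2\ge 0$ for all $j\ge 1$ and only $n=1$ can contribute. Since $j\mapsto \beta_{1,\rad}(p)+j^2$ is strictly increasing and the smallest positive multiple of $k$ is $j=k$, the existence of such a pair is equivalent to $\beta_{1,\rad}(p)+k^2<0$, which is \eqref{negativo-k}.

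The main conceptual step is the decomposition argument in the first paragraph, which converts the existence question into a comparison of two Morse indices; once this is done, the rest of the argument is a direct bookkeeping exercise on the spectral decomposition of the weighted problem and the symmetry table of Remark \ref{remark:simmetriaAutofunzioni}, together with the bound $\beta_{2,\rad}(p)\ge -1$ which rules out all non-radial contributions from the second radial eigenvalue.
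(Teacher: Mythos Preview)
Your argument is correct and follows essentially the same route as the paper: both proofs pass to the weighted problem via Lemma \ref{lemma:Morse=numeroAutovaloriesatoPallaSIMMETRIA}, invoke the spectral decomposition of Lemma \ref{lemma:decomposizionePalla} together with the symmetry table of Remark \ref{remark:simmetriaAutofunzioni}, and use the bound $\beta_{2,\rad}(p)\ge -1$ from Lemma \ref{proposition:autovaloriRadiaiGenerale} to eliminate the $n=2$ contributions. The only cosmetic difference is that you first reformulate the statement as $m_k(u_p)>m_{\rad}(u_p)=2$ via the $O(2)$-invariance of $L_p$, whereas the paper argues directly on the eigenfunctions of the weighted problem in $\mathcal H_k\setminus\mathcal H_{\rad}$; the underlying bookkeeping (only $j<k$ can give negative $\beta_{1,\rad}(p)+j^2$ when \eqref{negativo-k} fails, and no such $j$ is a multiple of $k$) is identical.
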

\begin{proof}
%By elliptic regularity theory it is enough to show that the linearized operator $L_p$ has a negative eigenvalue  with eigenfunction in $H^1_{0,k}(B)\setminus H^1_{0,\rad}(B)$ if and only if \eqref{negativo-k} is satisfied.
Lemma \ref{lemma:Morse=numeroAutovaloriesatoPallaSIMMETRIA} implies that  $L_p$ has a negative eigenvalue in $H^1_{0,k}(B)\setminus H^1_{0,\rad}(B)$ if and only if the weighted problem \eqref{problemaPesatoPalla} has a negative eigenvalue in the space $\mathcal H_k\setminus \mathcal H_{\rad}$. By the spectral decomposition  given in Lemma \ref{lemma:decomposizionePalla} then, when \eqref{negativo-k} holds problem \eqref{problemaPesatoPalla} has the negative eigenvalue $\beta(p)=\beta_{1,\rad}(p)+k^2$ with corresponding eigenfunctions $\phi_1(r)\sin (k\theta)$ and $\phi_1(r)\cos (k\theta)$, the second of which  belonging to $\mathcal H_{k}\setminus \mathcal H_{\rad}$. When, instead $\beta_{1,\rad}(p)+k^2\geq 0$ the negative eigenvalues of problem \eqref{problemaPesatoPalla} are: $\beta_{i,\rad}(p)$, for $i=1,2$ with corresponding eigenfunctions $\phi_i(r)\in \mathcal H_{\rad}$ so that they do not belong to  $\mathcal H_k\setminus\mathcal H_{\rad}$ and
$\beta_{1,\rad}(p)+j^2$ for some  $j\in \{1,\dots,k-1\}$ with corresponding  eigenfunctions $\phi_1(r)\sin (j\theta)$ and $\phi_1(r)\cos (j\theta)$ neither of which belong to $\mathcal H_k$ since $j<k$, by Remark \ref{remark:simmetriaAutofunzioni}. This means that when \eqref{negativo-k} is not satisfied then the linearized operator does not admit any negative eigenvalue in $H^1_{0,k}(B)\setminus H^1_{0,\rad}(B)$ concluding the proof.
\end{proof}

By exploiting the information about the location of the weighted radial eigenvalues $\beta_{n,\rad}(p)$, $n=1,2$ obtained in the previous sections we can also derive information about the $k$-Morse index of the radial solution $u_p$ which will be useful to prove the non-radial part in Theorem \ref{prop1.4} (see Section \ref{section:proofTeoLeastEnergy}). 
%
%
%\edz{ho aggiunto violetto}{When $k=1$ we get: 
%%\edz{{Proposition \ref{Morse-simmetrico1} e successivo li uso solo nell'ultima sezione, giusto?} {Si}}
%\begin{proposition}\label{Morse-simmetrico0}
%For any $p>1$ 
%\begin{equation}\label{Morse-symm0}
%m_1(u_p)\geq 3
%\end{equation}
%\end{proposition}
%\begin{proof}
%By Lemma \ref{lemma:Morse=numeroAutovaloriesatoPallaSIMMETRIA} in order to compute $m_1(u_p)$ we have to count the linearly independent eigenfunctions to the weighted problem \eqref{problemaPesatoPalla} which are associated to a negative eigenvalue and belong to the symmetric space $H^1_{0,1}(B)$.
%\\
%From Lemma \ref{proposition:autovaloriRadiaiGenerale} we know that for any $p>1$
%\[\beta_{1,\rad}(p)<-1   ,\  \  \ -1\leq \beta_{2,\rad}(p)<0.\] %\edz{{aggiornato con la dis stretta}}
%Then there are at least the negative  eigenvalues given  by \eqref{autov-minori-zero} with
%\[j=\left\{
%\begin{array}{lr}
%0 & \mbox{ for }n=2\\
%0,1 & \text{ for }n=1
%\end{array}
%\right.
%\]
%The conclusion follows by $a)$, $b)$, $c)$ in Remark \ref{remark:simmetriaAutofunzioni}.
%\end{proof}
%}
%
%
Indeed using the results in Section \ref{section p large} and Section \ref{sse:pvicino1}, we can   explicitly compute the $k$-Morse index of $u_p$, for $p$ large enough and for $p$ close to $1$  respectively:
\begin{proposition}\label{Morse-simmetrico1}
Let $p^{\star}>1$ be as in Proposition \ref{lemma:autovaloriRadialipgrande}. Then for any $p\geq p^{\star}$ 
\begin{equation}\label{Morse-symm}
m_k(u_p)=\begin{cases}
7 & \text{ for }k=1\\
4 & \text{ for }k=2\\
3& \text{ for }k=3,4,5\\
2& \text{ for }k\geq 6
\end{cases}\end{equation}
\end{proposition}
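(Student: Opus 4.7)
The plan is to convert the computation of $m_k(u_p)$ into a counting problem for the auxiliary weighted problem \eqref{problemaPesatoPalla}. By Lemma \ref{lemma:Morse=numeroAutovaloriesatoPallaSIMMETRIA}, $m_k(u_p)$ equals the number of negative $\mathcal{G}_k$-symmetric eigenvalues of \eqref{problemaPesatoPalla} counted with multiplicity. By the spectral decomposition of Lemma \ref{lemma:decomposizionePalla}, each negative eigenvalue has the form $\beta_{n,\rad}(p)+j^2$ for some $(n,j)\in\{1,2\}\times\mathbb{N}$, with associated eigenspace spanned by $\phi_n(r)\cos(j\theta)$ and $\phi_n(r)\sin(j\theta)$.

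Next I would use Proposition \ref{lemma:autovaloriRadialipgrande}, which for $p\geq p^{\star}$ gives $-36<\beta_{1,\rad}(p)<-25$ and $-1\leq\beta_{2,\rad}(p)<0$. From the first bound, the pairs with $n=1$ producing a strictly negative eigenvalue are exactly $j\in\{0,1,2,3,4,5\}$; from the second bound, the only pair with $n=2$ producing a strictly negative eigenvalue is $j=0$ (note that $\beta_{2,\rad}(p)+1\geq 0$ rules out $j=1$). This yields seven distinct negative eigenvalues (counted with the multiplicities coming from the $\sin/\cos$ pair).

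Then I would apply Remark \ref{remark:simmetriaAutofunzioni} to sort these eigenfunctions according to $\mathcal{G}_k$-invariance: the $\sin(j\theta)$ factor never survives the reflection $\tau\in\mathcal G_k$, so it contributes nothing; the two radial eigenfunctions $\phi_1,\phi_2$ (case $j=0$) lie in every $H^1_{0,k}(B)$, hence contribute $2$ to each $m_k(u_p)$; and for $j\geq 1$ the function $\phi_1(r)\cos(j\theta)$ lies in $H^1_{0,k}(B)$ if and only if $k\mid j$.

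Finally I would carry out the bookkeeping: for $k=1$, every $j\in\{1,\dots,5\}$ qualifies, so $m_1(u_p)=2+5=7$; for $k=2$, only $j\in\{2,4\}$ satisfy $k\mid j$, giving $2+2=4$; for $k=3$, only $j=3$ qualifies, giving $2+1=3$; for $k=4$, only $j=4$ qualifies (note $j=2$ is excluded because $4\nmid 2$), giving $2+1=3$; for $k=5$, only $j=5$ qualifies, giving $2+1=3$; for $k\geq 6$, no $j\in\{1,\dots,5\}$ satisfies $k\mid j$, so only the radial contributions survive and $m_k(u_p)=2$. The only genuinely delicate points are correctly enforcing the divisibility criterion from Remark \ref{remark:simmetriaAutofunzioni}(d) (in particular distinguishing $k=2$ from $k=4$) and making sure that the non-strict lower bound $\beta_{2,\rad}(p)\geq -1$ does not spuriously produce an extra negative eigenvalue from $(n,j)=(2,1)$. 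No deeper obstacle is expected beyond this case check.
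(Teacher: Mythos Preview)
Your proposal is correct and follows essentially the same approach as the paper: invoke Lemma \ref{lemma:Morse=numeroAutovaloriesatoPallaSIMMETRIA}, use the bounds of Proposition \ref{lemma:autovaloriRadialipgrande} to identify the admissible pairs $(n,j)$, and then apply the symmetry classification of Remark \ref{remark:simmetriaAutofunzioni} to count which eigenfunctions lie in $H^1_{0,k}(B)$. Your explicit case-by-case bookkeeping is slightly more detailed than the paper's terse ``the conclusion follows by $a)$, $b)$, $c)$ and $d)$ in Remark \ref{remark:simmetriaAutofunzioni}'', but the logic is identical.
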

\begin{proof} By Lemma \ref{lemma:Morse=numeroAutovaloriesatoPallaSIMMETRIA} in order to compute $m_k(u_p)$ we have to count the linearly independent eigenfunctions to the weighted problem \eqref{problemaPesatoPalla} which are associated to a negative eigenvalue and belong to the symmetric space $H^1_{0,k}(B)$.
\\
From Proposition \ref{lemma:autovaloriRadialipgrande} we know that for $p\geq p^{\star}$ it holds 
\[-36  < \beta_{1,\rad}(p)<-25   ,\  \  \ -1\leq \beta_{2,\rad}(p)<0.\] %\edz{{aggiornato con la dis stretta}}
Then all the negative  eigenvalues are given  by \eqref{autov-minori-zero} with
\[j=\left\{
\begin{array}{lr}
0 & \mbox{ for }n=2\\
0,1,2,3,4,5 & \text{ for }n=1
\end{array}
\right.
\]
The conclusion follows by $a)$, $b)$, $c)$  and $d)$ in Remark \ref{remark:simmetriaAutofunzioni}.  
\end{proof}

Analogously for $p$ close to $1$ one has:
\begin{proposition}\label{Morse-simmetrico2}
Let $\delta>0$ be as in Proposition \ref{risultatoMorse_pvicino1}. Then for any $p\in (1,1+\delta)$ 
\begin{equation}\label{Morse-symm2}
m_k(u_p)=\begin{cases}
4& \text{ for }k=1\\
3 & \text{ for }k=2\\
2 & \text{ for }k\geq 3
\end{cases}\end{equation}
\end{proposition}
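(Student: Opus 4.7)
The plan is to mirror the argument used for Proposition \ref{Morse-simmetrico1}. By Lemma \ref{lemma:Morse=numeroAutovaloriesatoPallaSIMMETRIA}, the value $m_k(u_p)$ equals the number of linearly independent eigenfunctions of the weighted problem \eqref{problemaPesatoPalla} that correspond to a negative eigenvalue and belong to $H^1_{0,k}(B)$. By the spectral decomposition (Lemma \ref{lemma:decomposizionePalla}), every negative eigenvalue has the form $\beta_{n,\rad}(p)+j^2<0$ for some $n\in\{1,2\}$ and $j\in\mathbb N$, with the associated eigenspace spanned by $\phi_n(r)\cos(j\theta)$ and $\phi_n(r)\sin(j\theta)$.

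Next I would invoke Corollary \ref{lemma:autovaloriRadialipvicino1}, which for $p\in(1,1+\delta)$ yields $-9<\beta_{1,\rad}(p)<-4$ and $-1<\beta_{2,\rad}(p)<0$, so that the pairs $(n,j)$ producing a negative eigenvalue are precisely
\[
j=0,1,2 \ \text{ for }n=1,\qquad j=0 \ \text{ for }n=2.
\]
Note that the strict inequality $\beta_{2,\rad}(p)>-1$ is what excludes $(n,j)=(2,1)$, and the strict inequality $\beta_{1,\rad}(p)>-9$ excludes $(n,j)=(1,3)$; both strictness statements are consequences of the non-degeneracy of $u_p$ on $(1,1+\delta)$ established in Proposition \ref{risultatoMorse_pvicino1}.

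It then remains to apply items $(a)$--$(d)$ of Remark \ref{remark:simmetriaAutofunzioni} to each admissible pair. The radial eigenfunctions arising from $(1,0)$ and $(2,0)$ lie in every $H^1_{0,k}(B)$ and contribute $2$ to every $m_k(u_p)$. The $\sin(j\theta)$ factors for $j\geq 1$ are never $\mathcal G_k$-invariant because $\tau\in\mathcal G_k$. The $\cos(j\theta)$ contributions follow the divisibility rule $k\mid j$: the pair $(1,1)$ adds $1$ to $m_k(u_p)$ only for $k=1$, and the pair $(1,2)$ adds $1$ only for $k\in\{1,2\}$. Summing the contributions yields $m_1(u_p)=2+1+1=4$, $m_2(u_p)=2+1=3$, and $m_k(u_p)=2$ for every $k\geq 3$, which is exactly \eqref{Morse-symm2}.

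I do not expect any essential obstacle: all the machinery (the variational characterization of Lemma \ref{valoriVariaz=AutovaloriesatoPallaSIMMETRICO}, the spectral decomposition of Lemma \ref{lemma:decomposizionePalla}, and the symmetry classification of Remark \ref{remark:simmetriaAutofunzioni}) has already been set up in the preceding sections, and the only quantitative input needed is the eigenvalue confinement of Corollary \ref{lemma:autovaloriRadialipvicino1}, which immediately caps the number of admissible pairs $(n,j)$. The proof is therefore essentially a bookkeeping exercise that parallels the one already written for $p\geq p^{\star}$.
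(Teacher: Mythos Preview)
Your proposal is correct and follows essentially the same approach as the paper: invoke Corollary \ref{lemma:autovaloriRadialipvicino1} to confine the radial weighted eigenvalues, list the admissible pairs $(n,j)$, and then count the $\mathcal G_k$-invariant eigenfunctions via Remark \ref{remark:simmetriaAutofunzioni}. The paper's proof is in fact even terser, simply stating that one reasons as in the previous proposition; your version spells out the bookkeeping more explicitly but adds nothing new.
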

 \begin{proof}
We reason as in the proof of the previous lemma. 
From Corollary \ref{lemma:autovaloriRadialipvicino1} we know that for $p\in (1,1+\delta) $ it holds 
\[-9< \beta_{1,\rad}(p)<-4 , \ \  \ -1< \beta_{2,\rad}(p)<0.\]
Then all the negative  eigenvalues are given  by \eqref{autov-minori-zero} with
\[j=\left\{
\begin{array}{lr}
0 & \mbox{ for }n=2\\
0,1,2 & \text{ for }n=1
\end{array}
\right.
\]
The conclusion follows again by  Remark \ref{remark:simmetriaAutofunzioni}. 
%
%we have that the eigenvalues $\beta_{1,\rad}(p)$ and $\beta_{2,\rad}(p)$ produce two radial eigenfunctions for problem \eqref{problemaPesatoPalla} corresponding to $j=0$ that belong to $H^1_{0,j}(B)$ for every $j$. The eigenvalue $\beta_{2,\rad}(p)$ does not give any other negative eigenvalue with weight. 
%
%The eigenvalue $\beta_{1,\rad}(p)$ instead produces the negative eigenvalues with weight given by: $\beta_{1,\rad}(p)+1<0$ whose corresponding eigenfunction in $H^1_{0,1}(B)$
%is $\phi_1(r)\cos \theta$ 
%while it has no eigenfunctions in $H^1_{0,k}(B)$ with $k\neq 1$; $\beta_{1,\rad}(p)+4<0$ whose corresponding eigenfunction  both in $H^1_{0,2}(B)$ and in $H^1_{0,1}(B)$ is $\phi_1(r)\cos 2\theta$ while it has no other eigenfunctions in $H^1_{0,k}(B)$ with $k\neq 1,2$.
%
%The thesis follows counting the eigenfunctions in the each space $H^1_{0,j}(B)$.
% 
\end{proof}

 \
 
 Finally we can characterize the degeneracy of $u_p$ in the symmetric spaces.
 We know from Proposition \ref{p4.7} that $u_p$ is degenerate if and only if 
\begin{align*}
&\beta_{1,\rad}(p)+j^2=0 \quad \text{ for some }j=j(p)>1 \ \text{ or }\\
&\beta_{2,\rad}(p)+1=0
\end{align*}
and these equalities can hold at the same time. As we will see in next result, the restriction to the symmetric spaces on one side rules out the degeneracy due to the second case, on the other side reduces the kernel of $L_p$ to be $1$-dimensional.
%oddchangeMODIFICATO
%\\
%As a consequence one gets an odd change in the symmetric Morse index of $u_p$ (see Proposition \ref{prop:OddChange} below)

\begin{proposition}[Characterization of degeneracy in $H^1_{0,k}(B)$]\label{lemma:degenerazioneSimmetria}
Let $\delta>0$ and $p^{\star}>1$ be as in Proposition \ref{risultatoMorse_pvicino1} and Proposition \ref{lemma:autovaloriRadialipgrande} respectively. Let $k\in\N_0$. 
\begin{itemize}  
\item[$i)$] 
if  $p\in (1,1+\delta)$ then $u_p$ is  non-degenerate in $H^1_{0,k}(B)$ for any $k\geq 1$; 
\item[$ii)$] 
if $p\geq p^{\star}$ then $u_p$ is non-degenerate in $H^1_{0,k}(B)$ for any $k\geq 2$;  
\item[$iii)$]  
if $p\in (1+\delta,p^{\star})$ then $u_p$ is degenerate in $H^1_{0,k}(B)$ for $k\geq 2$
if and only if there exists $j\geq 2$ such that
\[\beta_{1,\rad}(p)=-j^2\quad \mbox{ and }\quad k\mid j.\]
In this case the kernel of $L_p$  in $H^1_{0,k}(B)$ is one dimensional and it is spanned by the function $\phi_1(r)\cos(j\theta)$.
\end{itemize}
\end{proposition}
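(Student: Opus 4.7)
The plan is to combine the general characterization of degeneracy of $u_p$ in $H^1_0(B)$ obtained in Proposition \ref{p4.7} with the location of $\beta_{1,\rad}(p)$ and $\beta_{2,\rad}(p)$ provided by Proposition \ref{risultatoMorse_pvicino1} for $p\in(1,1+\delta)$ and Proposition \ref{lemma:autovaloriRadialipgrande} for $p\geq p^{\star}$, and then to intersect the resulting kernel with $H^1_{0,k}(B)$ using the symmetry catalogue for the candidate eigenfunctions collected in Remark \ref{remark:simmetriaAutofunzioni}. The unifying idea is that every element of $\mathrm{Ker}\,L_p$ has the explicit form $\phi_n(r)(A\sin(j\theta)+B\cos(j\theta))$ with $(n,j)\in\{1,2\}\times\N$ prescribed by which of the two degeneracy conditions of Proposition \ref{p4.7} is active, so the question reduces to a purely group-theoretic inspection of whether these angular modes are $\mathcal{G}_k$-invariant.

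Statement $(i)$ is immediate: Proposition \ref{risultatoMorse_pvicino1} asserts that $u_p$ is non-degenerate in the full space $H^1_0(B)$ when $p\in(1,1+\delta)$, hence a fortiori non-degenerate in every subspace. For $(ii)$, I would exploit the bound $-36<\beta_{1,\rad}(p)<-25$ of Proposition \ref{lemma:autovaloriRadialipgrande}: since the only negative perfect squares in the closed interval $[-36,-25]$ are the endpoints themselves, the condition $\beta_{1,\rad}(p)=-j^2$ of Proposition \ref{p4.7} cannot hold, and the only surviving source of degeneracy is $\beta_{2,\rad}(p)=-1$, with kernel spanned in $H^1_0(B)$ by $\phi_2(r)\cos\theta$ and $\phi_2(r)\sin\theta$. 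By Remark \ref{remark:simmetriaAutofunzioni}$(b)$--$(c)$ the sine is never $\tau$-invariant and the cosine belongs to $H^1_{0,k}(B)$ only when $k\mid 1$, that is $k=1$; consequently no kernel element lies in $H^1_{0,k}(B)$ for $k\geq 2$.

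For $(iii)$ I would begin by observing that in the intermediate range $(1+\delta,p^{\star})$ both conditions of Proposition \ref{p4.7} are a priori possible, and may even hold simultaneously. The $\phi_2$-branch is discarded in $H^1_{0,k}(B)$ for $k\geq 2$ by the very same symmetry argument used for $(ii)$. For the $\phi_1$-branch, if $\beta_{1,\rad}(p)=-j^2$ for some $j\geq 2$, the candidates $\phi_1(r)\sin(j\theta)$ and $\phi_1(r)\cos(j\theta)$ must be tested against $\mathcal G_k$-invariance: by Remark \ref{remark:simmetriaAutofunzioni}$(b)$--$(d)$ the sine is killed by $\tau$, while the cosine belongs to $H^1_{0,k}(B)$ precisely when $k\mid j$. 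This produces the stated equivalence and the one-dimensional kernel spanned by $\phi_1(r)\cos(j\theta)$.

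The only (rather minor) obstacle is to verify that no unexpected multiplicity arises: since $\beta_{1,\rad}(p)$ is a single real number, at most one $j\geq 2$ can satisfy $\beta_{1,\rad}(p)=-j^2$, so the $\phi_1$-branch contributes at most one symmetric direction in $H^1_{0,k}(B)$; and even if both degeneracy conditions of Proposition \ref{p4.7} happen to be simultaneously realized, the $\phi_2$-branch still contributes nothing when $k\geq 2$, keeping the symmetric kernel one-dimensional as claimed.
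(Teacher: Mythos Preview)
Your proposal is correct and follows essentially the same route as the paper's proof: Proposition \ref{p4.7} identifies the kernel of $L_p$ in $H^1_0(B)$ as generated by $\phi_2(r)\{\cos\theta,\sin\theta\}$ and/or $\phi_1(r)\{\cos(j\theta),\sin(j\theta)\}$, and the result follows by intersecting with $H^1_{0,k}(B)$ via the symmetry properties of Remark \ref{remark:simmetriaAutofunzioni}. The only cosmetic difference is that for $(ii)$ the paper cites directly the degeneracy characterization already packaged in Proposition \ref{lemma:autovaloriRadialipgrande} (only \eqref{2degpgrande} can occur), whereas you rederive this from the open bounds $-36<\beta_{1,\rad}(p)<-25$; your added remark on the uniqueness of $j$ and the possible coexistence of both degeneracy conditions is a welcome clarification that the paper leaves implicit.
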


\begin{proof}
$i)$ is obvious, since  $u_p$ is  non-degenerate in $H^1_0(B)$ when $p\in (1,1+\delta)$ (Proposition \ref{risultatoMorse_pvicino1}).
\\
$ii)$ follows from the characterization of the degeneracy of $u_p$ in $H^1_0(B)$ for $p$ large. Indeed  when $u_p$ is degenerate in $H^1_0(B)$ by Proposition \ref{lemma:autovaloriRadialipgrande} the kernel of $L_p$ is spanned by the two functions $\phi_2(r)\sin (\theta)$ and $\phi_2(r)\cos(\theta)$ and neither of the two belong to $H^1_{0,k}(B)$, when $k\geq 2$.
\\
$iii)$ follows from the characterization of the degeneracy of $u_p$ in $H^1_0(B)$ given in Proposition  \ref{p4.7}. Indeed, observe that the solution to the linearized equation $v$ in \eqref{autof2} do not belong to $H^1_{0,k}(B)$ when $k\geq 2$, hence $Ker(L_p)\not = \{0\}$ in $H^1_{0,k}(B)$ if and only if $p$ satisfies the equation \eqref{1deg}. 
To conclude let us recall that in this case $Ker( L_p)$ is spanned by the functions
$\phi_1(r) \sin(j\theta)$ and $\phi_1(r)\cos(j\theta)$  (see  \eqref{autof1}) and that  $\phi_1(r) \sin(j\theta)\not\in H^1_{0,k}(B)$ for $k\geq 2$, while $\phi_1(r)\cos(j\theta)\in H^1_{0,k}(B)$ for  any $k\mid  j$.
\end{proof}

\begin{remark}[Odd change in the $k$-Morse index]
\label{rmk:OddChange}
From Proposition \ref{lemma:degenerazioneSimmetria} - $iii)$, Lemma \ref{lemma:Morse=numeroAutovaloriesatoPallaSIMMETRIA} and the usual spectral decomposition of the negative eigenvalues of the weighted  problem \eqref{problemaPesatoPalla} it follows that $p\in (1, +\infty)$ is a value at which the $k$-Morse index $m_k(u_p)$, $k\geq 2$ changes if and only if there exists $j\geq 2$ such that $k\mid j$ and
$p\in \mathcal P^j$, where $\mathcal P^j$ is defined in \eqref{def:S_j}.\\
Moreover the change in the $k$-Morse index is always odd (precisely $\pm 1$).\\
In particular a sufficient condition for $p$ to be a $k$-Morse index odd changing point is that $p\in \mathcal P^k$.
\end{remark}

\

\section{The bifurcation result}\label{se:bifurcation}

In this section we will find nonradial solutions to \eqref{problem}  bifurcating from the curve of radial solutions $(p,u_p)$, looking for  fixed points of the operator $T\ :\ (1,+\infty)\times C^{1,\alpha}_0(\bar B)\longrightarrow C^{1,\alpha}_0(\bar B)$ defined by
\begin{equation}\label{T}
T(p,u)\ :=\left(-\Delta \right)^{-1}\left(|u|^{p-1}u\right).
\end{equation} 
We will restrict to  the $\mathcal G_k$-invariant functions introduced in Section \ref{se:symmspaces}, in particular let us define the spaces
\begin{equation}\label{X}
\mathcal X_k\ := C^{1,\alpha}(\bar B)\cap H^1_{0,k}(B),
%
%\{v \in C^{1,\alpha}_0(\bar B)\ \hbox{ such that } %v(g(x))=v(x)\ , \ \forall g\in \mathcal{G}_k \ , \  x\in B\}
\end{equation}
where $H^1_{0,k}(B)$ is the symmetric space in \eqref{Hk}.  We also set 
\begin{equation}\label{Xrad}\mathcal X_{\rad}:=C^{1,\alpha}(\bar B)\cap H^1_{0,\rad}(B).
\end{equation} 
Obviously $u_p\in\mathcal X_{\rad}\subset \mathcal X_k$, for every $p\in (1,\infty)$ and for every $k\geq 1$.
\\
We will look for  solutions in $\mathcal X_k$ which bifurcate at some point $(p^k, u_{p^k})$. Proposition \ref{lemma:degenerazioneSimmetria}-$iii)$ characterizes the values of $p$ at which $u_p$ is degenerate in $\mathcal X_k$, we will show bifurcation for any $p$ in the subset $\mathcal P_k$  (see  \eqref{def:S_j}) of degenerate values, for $k=3,4,5$.
 Observe that for any fixed $p$ the operator $T(p,\cdot)$ is compact and continuous in $p$ and that also its restriction to the subspaces $\mathcal X_k$, $k\geq 2$  is still compact (and continuous in $p$).

\

In particular we will prove that the  continuum of bifurcating solutions belongs to $\mathcal X_k\setminus\mathcal X_j$, $\forall j> k$  until they are non-radial, thus separating the branches related to different values of $k$. In order to get this property we restrict the operator $T$ to  suitable cones $\mathcal K_k$ in $\mathcal X_k$, defined, similarly as in  \cite{D2}, by imposing some  angular monotonicity to the $\mathcal G_k$-symmetric functions. Hence for $k\in\mathbb N_0$  let us  define the cone:
\begin{equation}\label{K-k}
\mathcal K_k\ : \ =\{v\in \mathcal X_k \ \text{ s.t. $v_{\theta}(r,\theta)\leq 0$ for $0\leq \theta\leq\frac{\pi}k$ , $0<r<1$} \},
\end{equation}
where $ v_{\theta}$ denotes the derivative with respect to the angle $\theta$ of the polar coordinates.
By definition $\mathcal X_{\rad}\subset\mathcal K_k\subset \mathcal X_k$ for any $k\geq 1$ and the monotonicity in the definition implies the following \emph{separation property}: 
\begin{equation}\label{quelloCheGuadagnoConConi}\mathcal K_k\cap \mathcal K_h=\mathcal X_{\rad},\quad \forall h\neq k,
\end{equation}
which will be crucial in order to separate the branches.

\

The complete statement of our bifurcation result is contained in Theorem \ref{teo-bif} below, which is the main result of the section, Theorem \ref{teo1} in the introduction follows from it.

\

Let $\mathcal P^k$, $k\in\mathbb N_0$ be the subset of degenerate exponents defined in \eqref{def:S_j}. By Lemma \ref{l5.1} we know that
\[\emptyset\neq \mathcal P^k=\{p_1^k,\ldots, p_{s_k}^k\}, \ \mbox{ when  $\ k=3,4,5$}\]
 (where $s_k\geq 1$ is an odd integer). We then have:
\begin{theorem}
\label{teo-bif} 
The points $(p^k_h, u_{p^k_h})$, $h\in \{1,\dots,s_k\}$ for $k=3,4,5$  are nonradial bifurcation points from the curve of radial solutions $(p,u_p)$ and
the bifurcating solutions belong to the cone $\mathcal K_k$. The bifurcation is global and the Rabinowitz alternative holds.
Moreover, for every $k=3,4,5$ there exists at least one exponent $p^k\in\{p^k_1,\dots,p^k_{s_k}\}$ such that,
letting $\mathcal{C}_k$ be the continuum that branches out
of $(p^k, u_{p^k})$ then either it is unbounded in $(1,+\infty)\times \mathcal K_k$ or it intersects $\{1\}\times \mathcal K_k$. Finally % {at any point $(p,v)\in\mathcal C_k$ either the solution $v$ belongs to $\mathcal K_k\setminus\mathcal K_j$, $\forall j\neq k$ or it is radial, in particular} 
$\mathcal C_k\cap\mathcal C_j\subset \mathcal X_{\rad}$ for any $j=3,4,5$, $j\neq k$. 
\end{theorem}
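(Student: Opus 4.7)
The plan is to recast \eqref{problem} as a fixed-point equation for the compact operator $T(p,\cdot)$ defined in \eqref{T}, restricted to the cone $\mathcal K_k\subset \mathcal X_k$, and to exploit the Dancer--Amman theory of fixed-point index in cones \cite{D,D2,Amman}. First I would verify that $T(p,\cdot)$ preserves $\mathcal X_k$ and leaves $\mathcal K_k$ invariant (at least locally near the radial branch), and that the linearization $I-\partial_u T(p,u_p)$ shares kernel and negative spectral counting with $L_p$. Observe that $u_p$ is radial, so $u_{p,\theta}\equiv 0$ and therefore $u_p$ lies on $\partial\mathcal K_k$ rather than in the interior of the cone; this is the structural feature that makes the cone tool natural.

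At each $p^k_h\in\mathcal P^k$, $k=3,4,5$, Proposition \ref{lemma:degenerazioneSimmetria}-$iii)$ yields that the kernel of $L_p$ in $\mathcal X_k$ is one-dimensional, spanned by $\phi_1(r)\cos(j\theta)$ for some $j$ with $k\mid j$, while Remark \ref{rmk:OddChange} guarantees that the $k$-Morse index $m_k(u_p)$ jumps by an odd integer across $p^k_h$. By the cone version of the index-jump principle, this odd jump produces a change in the fixed-point index of $T(p,\cdot)$ at $u_p$ relative to a small isolating neighborhood in $\mathcal K_k$, and hence a continuum of nontrivial fixed points in $\mathcal K_k$ branches out from $(p^k_h,u_{p^k_h})$. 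Non-radiality of the bifurcating solutions is forced by the radial non-degeneracy of $u_p$ (Lemma \ref{lemma:radiallyNonDeg}): the implicit function theorem applied to $T(\cdot,\cdot)$ restricted to $(1,+\infty)\times \mathcal X_{\rad}$ shows that any radial fixed point near $u_{p^k_h}$ must coincide with the radial branch $u_p$.

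For the global statement I would apply the cone version of Rabinowitz's alternative: each continuum $\mathcal C^{k,h}\subset(1,+\infty)\times\mathcal K_k$ emanating from $(p^k_h,u_{p^k_h})$ either is unbounded, meets $\{1\}\times\mathcal K_k$, or reconnects to the radial curve at another degenerate point $p^k_{h'}$. Since the number $s_k$ of bifurcation points is odd (Lemma \ref{l5.1}), a parity argument on the cumulative index jumps along the trivial branch $(p,u_p)$ excludes the scenario in which all $s_k$ continua reconnect in pairs: at least one branch $\mathcal C_k$ must therefore be unbounded or reach $p=1$. The separation $\mathcal C_k\cap \mathcal C_j\subset \mathcal X_{\rad}$ for $j\neq k$ is then immediate from the key identity \eqref{quelloCheGuadagnoConConi}, since any common point of the two branches lies in $\mathcal K_k\cap \mathcal K_j=\mathcal X_{\rad}$.

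The main obstacle is the cone-index computation itself. Because $u_p\in\partial\mathcal K_k$, one cannot simply identify the fixed-point index of $T(p,\cdot)$ in $\mathcal K_k$ with the Leray--Schauder degree in $\mathcal X_k$ via the standard linearization formula; Dancer's variant for indices at boundary points of wedges must be invoked instead. Carrying this out requires proving a local $T$-invariance of $\mathcal K_k$ near $u_p$ (via a maximum principle on $v_\theta$ applied to the linearized and perturbed equations), describing the tangent wedge to $\mathcal K_k$ at $u_p$, and checking that the kernel direction $\phi_1(r)\cos(j\theta)$ projects nontrivially onto this tangent wedge, so that the odd multiplicity of the crossing eigenvalue (Remark \ref{rmk:OddChange}) survives the restriction to the cone and yields an effective index jump inside $\mathcal K_k$ rather than being absorbed by the boundary.
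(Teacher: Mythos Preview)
Your overall strategy matches the paper's: recast the problem as a fixed-point equation for $T(p,\cdot)$ in the cone $\mathcal K_k$, use the Dancer--Amann index theory to detect bifurcation from an index jump, apply the Rabinowitz alternative in the cone, and separate branches via \eqref{quelloCheGuadagnoConConi}. The non-radiality near the bifurcation point (via radial non-degeneracy), the parity argument for the existence of at least one branch satisfying alternative $i)$ or $ii)$ (using that $s_k$ is odd), and the separation $\mathcal C_k\cap\mathcal C_j\subset\mathcal X_{\rad}$ are essentially as you describe.

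However, your mechanism for the cone-index jump is not the one that actually works. You argue that the odd jump in $m_k(u_p)$ (Remark \ref{rmk:OddChange}) directly yields a change in the fixed-point index relative to $\mathcal K_k$, but the formula $\mathit{ind}=(-1)^{m_k(u_p)}$ is the Leray--Schauder index in the linear space $\mathcal X_k$, not the cone index. Since $u_p$ lies on $\partial\mathcal K_k$ (as you correctly note), the cone index is governed instead by Dancer's criterion from \cite{D83}: it equals $0$ when the linearization $T'_u(p,u_p)$ has ``property $\alpha$'' (spectral radius $>1$ on the complement $\mathcal X_{\rad}^\perp$ in $\mathcal X_k$), and equals $(-1)^{m_k(u_p)}$ otherwise. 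The paper shows (Lemma \ref{lemma:calcoloIndexCono}) that property $\alpha$ holds precisely when $\beta_{1,\rad}(p)+k^2<0$, while on the other side $m_k(u_p)=2$ by Lemma \ref{lemma:legameAutovSimmNegativoLinEPesato}, so the index is $1$. Thus the cone index jumps between $0$ and $1$ exactly at $p\in\mathcal P^k$---not through an odd-crossing argument but through the on/off switch of property $\alpha$. Your last paragraph correctly flags the cone computation as the main obstacle, but the resolution is cleaner than the tangent-wedge analysis you sketch.

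One further simplification: the invariance $T(p,\mathcal K_k)\subset\mathcal K_k$ holds globally (Lemma \ref{lemma:TMandaConoInCono}), not only locally near $u_p$. If $w\in\mathcal K_k$ and $z=T(p,w)$, then $z_\theta$ solves $-\Delta z_\theta=p|w|^{p-1}w_\theta\le 0$ on the sector $0<\theta<\pi/k$ with Dirichlet data on all sides (from the symmetries \eqref{fi00}, \eqref{fi1} and $z|_{\partial B}=0$), so the maximum principle gives $z_\theta\le 0$ directly.
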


The proof of Theorem \ref{teo-bif} can be found at the end of the section.
The core of the proof consists in getting  bifurcation at the degenerate points  at which there is a change in the  fixed point index of $T(p,\cdot)$  at $u_p$ relative to the cone $\mathcal K_k$ (index introduced in \cite{D83}). These degenerate points $(p,u_p)$ are given by any $p\in\mathcal P^k$ (see Proposition \ref{prop:ChangeIndexCone}). Observe that at  $p\in\mathcal P^k$ also the $k$-Morse index of $u_p$ has a (odd) change (see Remark \ref{rmk:OddChange}).
First we show that:
\begin{lemma}\label{lemma:TMandaConoInCono}
The operator $T(p,\cdot)$ maps $\mathcal X_k$ into $\mathcal X_k$ and in particular  $\mathcal K_k$ into $\mathcal K_k$.
\end{lemma}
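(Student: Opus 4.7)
The lemma has two parts, and I would treat them in sequence.

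For the $\mathcal X_k$-invariance, let $u\in\mathcal X_k$ and set $v=T(p,u)$, so that $-\Delta v=|u|^{p-1}u$ in $B$ with $v=0$ on $\partial B$. Since every $g\in\mathcal G_k\subset O(2)$ is a Euclidean isometry mapping $B$ to itself, $\Delta(v\circ g)=(\Delta v)\circ g$, and the $\mathcal G_k$-invariance of $u$ gives $|u\circ g|^{p-1}(u\circ g)=|u|^{p-1}u$. Hence $v\circ g$ and $v$ solve the same Dirichlet problem, and uniqueness forces $v\circ g=v$, so $v\in H^1_{0,k}(B)$. The $C^{1,\alpha}(\bar B)$ regularity is then standard elliptic regularity, since the right-hand side $|u|^{p-1}u$ is at least H\"older continuous when $u\in C^{1,\alpha}(\bar B)$ and $p>1$.

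For the cone-invariance, fix $u\in\mathcal K_k$ and set $v=T(p,u)$, which lies in $\mathcal X_k$ by the first part. Because $t\mapsto|t|^{p-1}t$ is $C^1$ on $\R$ for $p>1$, Schauder estimates give $v\in C^{2,\beta}(\bar B)$, so one may differentiate the equation in the angular variable using $v_\theta=-yv_x+xv_y$ to obtain
\begin{equation*}
-\Delta v_\theta = p|u|^{p-1}u_\theta \leq 0 \qquad\text{in } S_k:=\{(r,\theta):0<r<1,\ 0<\theta<\pi/k\},
\end{equation*}
where the sign comes precisely from the defining property of $\mathcal K_k$.

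The next step is to verify that $v_\theta\equiv 0$ on the whole of $\partial S_k$. On the arc $\{r=1\}$ this is immediate from $v\equiv 0$ on $\partial B$. On the ray $\{\theta=0\}$ the reflection $\tau\in\mathcal G_k$ yields $v(r,\theta)=v(r,-\theta)$, hence $v_\theta(r,0)=0$. On the ray $\{\theta=\pi/k\}$ one combines $\tau$ with the rotation by $2\pi/k$ in $\mathcal G_k$ to obtain $v(r,\pi/k+\phi)=v(r,\pi/k-\phi)$ (as already noted in \eqref{fi1}), giving $v_\theta(r,\pi/k)=0$. The vertex $r=0$ is handled by the $C^1$-regularity at the origin.

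Finally, I would apply the weak maximum principle to the subharmonic function $v_\theta$ on the bounded Lipschitz sector $S_k$: since $v_\theta$ is continuous on $\overline{S_k}$ and vanishes on $\partial S_k$, it satisfies $v_\theta\leq 0$ throughout $S_k$. This is precisely the statement $v\in\mathcal K_k$. The main point of the argument is the identification of the homogeneous boundary data for $v_\theta$ on the two radial sides of $S_k$, which is forced by the $\mathcal G_k$-symmetry; the corner at the origin is the only mild technicality, but creates no issue since the weak maximum principle for subharmonic functions continuous up to the closed domain holds without any regularity hypothesis on the boundary.
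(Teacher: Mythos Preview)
Your proof is correct and follows essentially the same route as the paper: the $\mathcal X_k$-invariance via uniqueness for the Dirichlet problem under the action of $\mathcal G_k$, and the cone-invariance by differentiating in $\theta$, reading off the homogeneous boundary data for $v_\theta$ from the symmetries \eqref{fi00}--\eqref{fi1}, and applying the maximum principle on the sector. If anything, your version is slightly more careful about the regularity needed (you only claim $v\in C^{2,\beta}(\bar B)$, which is what one actually gets for general $p>1$, whereas the paper asserts $C^{3,\alpha}$) and about the corner at the origin.
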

\begin{proof}
Let $w\in \mathcal X_k$ and let $z=T(p,w)$. Since $w\in C^{1,\alpha}(B)$ then $z\in C^{3,\alpha}(B)$ and  by definition of $T$,  it is a classical solution to 
\begin{equation}\label{z}
\begin{cases}
-\Delta z= |w|^{p-1}w & \text{ in } B, \\
z= 0 & \text{ on } \partial B.
\end{cases} 
\end{equation} 
Let $\tilde z(x)=z(g(x))$, for $g\in\mathcal G_k$. Then $\tilde z$ is a solution to \eqref{z}, because $w\in \mathcal X_k$ and $-\Delta$ is invariant by the action of $\mathcal G_k$. This implies $\tilde z=z$ getting that $z\in \mathcal X_k$.
\\
It remains to show that when $w\in\mathcal K_k$ also the monotonicity assumption on $w$ is preserved by $T$. Since $z\in C^{3,\alpha}(B)$ we can compute 
 $z_\theta=\frac{\partial z}{\partial \theta}$ and letting $w_\theta=\frac{\partial w}{\partial \theta}$, we have that $z_\theta$ is a classical solution to 
\[
\begin{cases}
-\Delta z_\theta = p|w|^{p-1}w_\theta & \text{ in } (0,1)\times (0,\frac \pi k), \\
z_\theta(1,\theta)= 0 & \text{ on } \partial B.
\end{cases} 
\]
By assumption $w\in \mathcal K_k$ so that $w_\theta\leq 0$ in $(0,1)\times (0,\frac \pi k)$. Moreover $z_\theta(r,0)=0$ since $z$ is even
%\edz{NB.  lemma usa \eqref{fi00} e \eqref{fi1}, quindi vale solo grazie a riflessione $\tau\in\mathcal G$}
in $\theta$ (see \eqref{fi00}) and moreover $z_\theta(r,\frac \pi k)=0$  by \eqref{fi1}. The maximum principle then yields $z_{\theta}\leq 0$ for $0\leq \theta\leq\frac{\pi}k$ , $0<r<1$, concluding the proof.
\end{proof}

\

When $u_p$ is an isolated fixed point for $T(p,\cdot)$ we can consider its index relative to the cone $\mathcal K_k$ (see \cite{D83}), which we denote by 
$\mathit{ind}_{\mathcal K_k}\left(T(p,\cdot),u_p\right)$. \\
 We can compute  $\mathit{ind}_{\mathcal K_k}\left(T(p,\cdot),u_p\right)$ when  $u_p$ is non-degenerate in $\mathcal X_k$. In this case the characterization in Proposition \ref{lemma:degenerazioneSimmetria}-$iii)$  implies  in particular  that $\beta_{1,\rad}(p)+k^2\neq 0$, we then have:
\begin{lemma}\label{lemma:calcoloIndexCono} Let $k\geq 2$ and $p$ be such that $u_p$ is non-degenerate in $\mathcal X_k$
then 
\[
\mathit{ind}_{\mathcal K_k}\left(T(p,\cdot),u_p\right)=\left\{
\begin{array}{ll}
0 & \text{ if }\beta_{1,\rad}(p)+k^2<0\\
{1}& \text{ if }\beta_{1,\rad}(p)+k^2>0
\end{array}\right.
\] 
\end{lemma}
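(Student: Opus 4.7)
The plan is to apply the Dancer--Amann fixed point index theorem (see \cite{D83}, \cite{Amman}) for compact maps on closed convex cones, which expresses the cone index in terms of the spectrum of the linearization $L:=T'(p,u_p)[v]=p(-\Delta)^{-1}(|u_p|^{p-1}v)$ restricted to the tangent wedge at~$u_p$.

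First I would identify the tangent wedge $W_{u_p}$ of $\mathcal{K}_k$ at~$u_p$. Since $u_p$ is radial, $u_{p,\theta}\equiv 0$, so $u_p+tv\in\mathcal{K}_k$ for $t>0$ precisely when $v\in\mathcal{K}_k$; hence $W_{u_p}=\mathcal{K}_k$, with lineality part $W_{u_p}\cap(-W_{u_p})=\mathcal{X}_{\rad}$. Compactness of $L$ on $\mathcal{X}_k$ and the invariance $L(\mathcal{K}_k)\subseteq\mathcal{K}_k$ follow from Lemma~\ref{lemma:TMandaConoInCono}, and the non-degeneracy of $u_p$ in $\mathcal{X}_k$ amounts to the invertibility of $I-L$ there.

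Next I would analyse the spectrum of $L|_{\mathcal{X}_k}$ by Fourier decomposition in the angle. By $\mathcal{G}_k$-invariance every eigenfunction is of the form $h(r)\cos(nk\theta)$, $n\in\mathbb{N}$, and $Lv=\lambda v$ reduces to the 1D Dirichlet problem
\[
-h''-\frac{1}{r}h'+\frac{(nk)^{2}}{r^{2}}h=\frac{p}{\lambda}|u_p|^{p-1}h,\qquad h(1)=0,
\]
whose $m$-th eigenvalue $\gamma_m^{(nk)}(p)=p/\lambda$ is strictly increasing in $(nk)^{2}$. Substituting $h=\phi_m$ (the $m$-th radial eigenfunction of \eqref{problemaPesatoPalla}) with $\lambda=1$ gives the identity $\gamma_m^{(nk)}(p)=p\iff\beta_{m,\rad}(p)+(nk)^{2}=0$, and by the above monotonicity
\[
\lambda>1\iff\beta_{m,\rad}(p)+(nk)^{2}<0.
\]
Combined with Lemmas~\ref{2AutovRadNegativiPesatoPalla}--\ref{proposition:autovaloriRadiaiGenerale} (which give $\beta_{m,\rad}(p)\geq 0$ for $m\geq 3$ and $\beta_{2,\rad}(p)\geq-1$) and the standing assumption $k\geq 2$, the eigenvalues of $L|_{\mathcal{X}_k}$ exceeding~$1$ are exactly the two radial ones ($n=0$, $m=1,2$), plus one cosine mode $\phi_1(r)\cos(nk\theta)$ for each $n\geq 1$ with $(nk)^{2}<-\beta_{1,\rad}(p)$.

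Finally I would conclude by Dancer's theorem. When $\beta_{1,\rad}(p)+k^{2}<0$, the function $\psi_\star:=\phi_1(r)\cos(k\theta)$ is an eigenvector of $L$ with eigenvalue $>1$; since $\phi_1\geq 0$ (Lemma~\ref{valoriVariaz=AutovaloriesatoPalla}) and $\sin(k\theta)\geq 0$ on $(0,\pi/k)$, one has $(\psi_\star)_\theta\leq 0$ there, so $\psi_\star\in\mathcal{K}_k\setminus(-\mathcal{K}_k)$ provides a genuine cone direction outside the lineality subspace $\mathcal{X}_{\rad}$; Dancer's theorem then forces $\mathit{ind}_{\mathcal{K}_k}(T(p,\cdot),u_p)=0$. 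When instead $\beta_{1,\rad}(p)+k^{2}>0$, the inequality $\beta_{m,\rad}(p)+(nk)^{2}>0$ holds for every $n\geq 1$ and every $m$, so all eigenvectors of $L|_{\mathcal{X}_k}$ with eigenvalue $\geq 1$ are radial and hence lie inside the lineality subspace of the wedge. Dancer's theorem then identifies the cone index with the Leray--Schauder index $(-1)^{\nu}$ of $u_p$ in~$\mathcal{X}_k$, where $\nu=2$ is the total count of eigenvalues of $L|_{\mathcal{X}_k}$ exceeding~$1$; this yields the index~$1$. The most delicate point is the careful invocation of the version of Dancer's theorem adapted to a wedge with non-trivial lineality part, namely checking that $\psi_\star$ provides a cone eigendirection transverse to $\mathcal{X}_{\rad}$ in the first case, and that no such transverse eigendirection with $\lambda\geq 1$ exists in the second case.
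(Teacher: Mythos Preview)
Your proposal is correct and follows essentially the same approach as the paper: both apply Dancer's cone index theorem from \cite{D83}, identify the tangent wedge at $u_p$ as $\mathcal K_k$ with lineality space $\mathcal X_{\rad}$, and reduce the computation to whether the linearization $L=T'_u(p,u_p)$ has an eigenvalue exceeding $1$ with a non-radial eigenfunction in $\mathcal X_k$. The only minor difference is in how this spectral condition is verified: you Fourier-decompose $L$ directly and link the mode-by-mode eigenvalues to $\beta_{m,\rad}(p)$ via monotonicity in the angular momentum, whereas the paper uses Dancer's equivalent ``property~$\alpha$'' formulation (spectral radius on $\mathcal X_{\rad}^{\perp}$ greater than $1$) and reduces it, through a continuity argument in the coupling parameter $t\in(0,1)$, to the sign of the smallest non-radial eigenvalue of $L_p$ in $\mathcal X_k$, which is then identified with the sign of $\beta_{1,\rad}(p)+k^2$ by the already-established Lemma~\ref{lemma:legameAutovSimmNegativoLinEPesato}.
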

\begin{proof}
By Lemma \ref{lemma:TMandaConoInCono} we can consider the operator $T$ restricted to the space $\mathcal X_k$, namely $T : (1,+\infty)\times \mathcal X_k\longrightarrow \mathcal X_k$ for some $k\geq 2$. Let us denote by $T'_u$ the Frech\'et derivative of $T$ with respect to $u$. Since $u_p$ is non-degenerate in $\mathcal X_k$,
then $I-T'_u(p,u_p) : \mathcal X_k\longrightarrow \mathcal X_k$ is invertible. We can then apply Theorem 1 in \cite{D83} getting that
\begin{equation}\label{o2}
\!\!\!\!\!\!\!\!\mathit{ind}_{\mathcal K_k}\left(T(p,\cdot),u_p\right)=\left\{
\begin{array}{ll}
0 & \text{ if } T'_u \text{ has the property } \alpha\\
\mathit{ind}_{\mathcal X_k}\left(T'_u(p,u_p),0\right) & \text{ if } T'_u \text{ does not have  the property } \alpha
\end{array}\right.
\end{equation}
where we refer to \cite{D83} for the definition of the property $\alpha$.
Moreover, since $u_p$ is isolated in $\mathcal X_k$ (again by its nondegeneracy) and since $I-T'_u(p,u_p)$ is invertible we have
\begin{equation}\label{o1}
\mathit{ind}_{\mathcal X_k}\left(T'_u(p,u_p),0\right)=\lim_{r\to 0}\mathit{deg}_{\mathcal X_k}
\left(I-T'_u(p,\cdot), U_r(u_p),0\right)=(-1)^{m_k(u_p)}\end{equation}
where $\mathit{deg}$ is the usual Leray-Schauder degree in the Banach space $\mathcal X_k$, $U_r(u_p):=\{w\in \mathcal X_k\ : \ \|u_p-w\|<r\}$ and the last equality follows by standard results for the Leray Schauder degree of linear, compact, invertible maps (see for instance \cite{AM}). 
The characterization of the degeneracy in $\mathcal X_k$ (see Proposition \ref{lemma:degenerazioneSimmetria}-$iii)$)  implies  in particular  that  $\beta_{1,\rad}(p)+k^2\neq 0$ at the non-degenerate point $p$, the rest of the proof is devoted to show that 
\begin{equation}\label{propertyalpha}
\mbox{$T'_u$ has the property $\alpha$ if and only if $\beta_{1,\rad}(p)+k^2<0$.}
\end{equation} 
In this case indeed \eqref{o2} and \eqref{o1} implies the result since by Lemma \ref{lemma:legameAutovSimmNegativoLinEPesato} and Lemma \ref{LemmaMorseIndexRadiale} one has
\[m_k(u_p)=2,\mbox{ when }\beta_{1,\rad}(p)+k^2>0.\]
The property $\alpha$ in \eqref{o2} is stated in \cite[Lemma 2]{D83}. Following the same notations we have that the linear map $T'_u(p,u_p)$ has the property $\alpha$ if and only if (Lemma 2-(a) of \cite{D83}) the spectral radius of $T'_u(p,u_p)$ is greater than 1 when restricted to the orthogonal complement to $\mathcal X_{\rad}$ in $\mathcal X_k$, which we denote by    
$\mathcal X_{\rad}^\perp$
(observe that in our case the subspace $S_{u_p}$ in \cite{D83} is $\mathcal X_{\rad}$). Equivalently, as observed also in \cite[proof of Theorem 1]{D2}, $T'_u(p,u_p)$ has the property $\alpha$ if and only if there exist  $t\in(0,1)$ and $h\in \mathcal X_{\rad}^\perp$ such that $h=tT'_u(p,u_p)h$, namely, recalling the definition of $T$, such that the linear equation
\begin{equation}\label{o3}
\begin{cases}
-\Delta h-tp|u_p|^{p-1}h=0 & \text{ in }B\\
h=0 &\text{ on }\partial B
\end{cases}\end{equation}
admits a nontrivial solution $h\in \mathcal X_{\rad}^{\perp}$ for some $t\in(0,1)$. This is equivalent to say that zero is an eigenvalue of the problem 
\[
\begin{cases}
-\Delta h-tp|u_p|^{p-1}h=\mu h & \text{ in }B\\
h=0 &\text{ on }\partial B
\end{cases}\]
with eigenfunction in
$\mathcal X_{\rad}^\perp$ for some $t\in(0,1)$. We denote by $\mu_t$ the smallest eigenvalue of this problem in $\mathcal X_{\rad}^\perp$, which depends on $t$.   By the variational characterization of the eigenvalues $\mu_t$ is decreasing in $t$. Moreover $\mu_0>0$, since when $t=0$ $\mu_0$ is the first Dirichlet eigenvalue in $\mathcal X_{\rad}^\perp$ of the Laplace operator in $B$ which is strictly positive. When $t=1$ instead $\mu_1$ is the smallest eigenvalue in $\mathcal X_{\rad}^\perp$ of the linearized operator $L_p$. 
When $\mu_1$ is negative then there exists a $t\in(0,1)$ such that \eqref{o3} has a solution in $\mathcal X_k\setminus\mathcal X _{\rad}$. When $\mu_1$ is positive instead then $\mu_t>\mu_1>0$ for any $t\in (0,1)$ and equation \eqref{o3} does not have a solution in $\mathcal X_k\setminus\mathcal X _{\rad}$.
Finally from Lemma \ref{lemma:legameAutovSimmNegativoLinEPesato} we have that $\mu_1<0$  if and only if $\beta_{1,\rad}(p)+k^2<0$ and this concludes the proof of \eqref{propertyalpha}.
\end{proof}

As a consequence one can characterize the set of the points $p$ at which  the index $\mathit{ind}_{\mathcal K_k}\left(T(p,\cdot),u_p\right)$ changes:
\begin{proposition}[Change in the fixed point index relative to $\mathcal K_k$] \label{prop:ChangeIndexCone} 
$p\in (1,+\infty)$ is a  value at which $\mathit{ind}_{\mathcal K_k}\left(T(p,\cdot),u_p\right)$ changes, for $k\geq 2$ if and only if $p\in\mathcal P^k$,
where the set $\mathcal P^k$ is the one defined in \eqref{def:S_j}.
\end{proposition}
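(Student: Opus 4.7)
The plan is to derive the equivalence as an almost immediate consequence of Lemma \ref{lemma:calcoloIndexCono}, once one controls the location of the degeneracies of $u_p$ in $\mathcal X_k$. First I would observe that by Lemma \ref{l5.1} the map $p\mapsto\beta_{1,\rad}(p)$ is real-analytic on $(1,+\infty)$ and, since it is non-constant (e.g.\ by \eqref{nu1controllatopgrande} and \eqref{nu1controllatopvicino1}), the level set $\{p:\beta_{1,\rad}(p)=-j^2\}$ is discrete for every $j\in\N$. Combined with Proposition \ref{lemma:degenerazioneSimmetria}-$iii)$, this shows that the set of $p$ at which $u_p$ is degenerate in $\mathcal X_k$ is discrete; consequently every $p_0\in(1,+\infty)$ has a punctured neighborhood $U\setminus\{p_0\}$ on which $u_p$ is non-degenerate in $\mathcal X_k$. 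On such a neighborhood Lemma \ref{lemma:calcoloIndexCono} gives
\[
\mathit{ind}_{\mathcal K_k}(T(p,\cdot),u_p)\;=\;\begin{cases}0 & \text{if }\beta_{1,\rad}(p)+k^2<0,\\ 1 & \text{if }\beta_{1,\rad}(p)+k^2>0,\end{cases}
\]
so, by continuity of $\beta_{1,\rad}$, the index is locally constant throughout each connected component of $U\setminus\{p_0\}$.

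The two implications then follow directly from this formula. For $p_0\in\mathcal P^k\Rightarrow$ the index changes: by the very definition of $\mathcal P^k$, the quantity $\beta_{1,\rad}(p)+k^2$ takes opposite strict signs on the two sides of $p_0$ (after possibly shrinking $U$), so the displayed formula yields that the index equals $0$ on one side and $1$ on the other. For the converse, a change of the index at $p_0$ cannot come from a region where $\beta_{1,\rad}(p)+k^2$ has constant strict sign, so again by the displayed formula $\beta_{1,\rad}(p)+k^2$ must take opposite strict signs on the two components of $U\setminus\{p_0\}$, which is exactly the condition $p_0\in\mathcal P^k$.

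The only mildly delicate point, which I would address explicitly in the converse direction, is to rule out the degeneracies coming from $\beta_{1,\rad}(p_0)=-j^2$ with $j>k$ and $k\mid j$ (these are also degenerate points of $u_p$ in $\mathcal X_k$ by Proposition \ref{lemma:degenerazioneSimmetria}-$iii)$). At such a $p_0$, continuity of $\beta_{1,\rad}$ yields $\beta_{1,\rad}(p)+k^2\approx k^2-j^2<0$ on both sides of $p_0$, so the displayed formula gives index $0$ on both components of $U\setminus\{p_0\}$ and no change occurs. This confirms that only the zeros of $\beta_{1,\rad}(p)+k^2$ at which the sign \emph{actually} changes contribute to index changes, so the relevant set is precisely $\mathcal P^k$ and not the full zero set of $\beta_{1,\rad}(p)+k^2$.
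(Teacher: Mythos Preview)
Your argument is correct and follows the same route as the paper: isolation of the degeneracies via analyticity (Lemma \ref{l5.1}) combined with the index formula of Lemma \ref{lemma:calcoloIndexCono} at nearby non-degenerate points. The final paragraph on the case $\beta_{1,\rad}(p_0)=-j^2$ with $j>k$, $k\mid j$ is not strictly needed, since Lemma \ref{lemma:calcoloIndexCono} already shows the index depends only on the sign of $\beta_{1,\rad}(p)+k^2$, but it does no harm.
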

\begin{proof}
If $p\in \mathcal P^k$ then  $(p,u_p)$ is an isolated degenerate point  (Lemma \ref{l5.1}), as a consequence
the values $p=p_h^k\pm\delta$  are non-degenerate  for any $\delta>0$ small
and by definition of $\mathcal P_k$ we also have $[\beta_{1,\rad}(p+\delta)+k^2][\beta_{1,\rad}(p-\delta)+k^2]<0$. The conclusion then follows by Lemma \ref{lemma:calcoloIndexCono} applied at the points $p=p_h^k\pm\delta$.
\\
 Viceversa if $\mathit{ind}_{\mathcal K_k}\left(T(p,\cdot),u_p\right)$ changes at $p$ then  by Lemma \ref{lemma:calcoloIndexCono} $p$ satisfies $\beta_{1,\rad}(p)=-k^2$ and $\beta_{1,\rad}(p)+k^2$ changes sign at $p$. 
This implies that necessarily $p\in\mathcal P^k$.
\end{proof}

\

\subsection{Proof of Theorem \ref{teo-bif}}
\begin{proof}
{\bf Step 1.}
{\sl Non-radial local bifurcation in $\mathcal K_k$}\\
Let us consider $p_h^k$ for a certain $h\in\{1,\ldots, s_k\}$. By Proposition \ref{prop:ChangeIndexCone} we know that  $ \mathit{ind}_{\mathcal K_k}\left(T(p,\cdot),u_p\right)$ changes  as $p$ crosses $p_h^k$,  namely that for any $\delta>0$ small 
\begin{equation}\label{indiceDiv}
\mathit{ind}_{\mathcal K_k}
\left(T(  p^k_h-\delta ,\cdot),u_{p^k_h-\delta}\right)
\neq
\mathit{ind}_{\mathcal K_k}
\left(T(  p^k_h+\delta ,\cdot),u_{p^k_h+\delta}\right),
\end{equation}
 we now show that $(p^k_h, u_{ p^k_h})$ is a bifurcation point in $(1,+\infty)\times \mathcal K_k$.\\
Hence let us  assume by contradiction that  $(p^k_h, u_{ p^k_h})$ is not a
bifurcation point in $(1,+\infty)\times \mathcal K_k$, then we can find $\delta>0$ and a neighborhood $\mathcal O$ of  $\{(p,u_p): p\in ( p^k_h-\delta, p^k_h+\delta)\} $ in $( p^k_h-\delta, p^k_h+\delta)\times \mathcal K_k$ such that $u-T(p,u)\neq 0$ for every $(p,u)$ in $\mathcal O$ different from $(p,u_p)$. We can choose $\delta>0$ such that \eqref{indiceDiv} holds.
Letting $\mathcal O_p:=\{v\in \mathcal K_k\ : \ (p,v)\in \mathcal O\}$, it then follows that  there are no solutions to  $u-T(p,u)= 0$ on $\cup_{p\in  (p^k_h-\delta, p^k_h+\delta) }\{p\} \times \partial  \mathcal{O}_p$ 
and there is only the radial solution $(p,u_p)$ in $\left(\{ p^k_h-\delta\}\times \mathcal{O}_{  p^k_h-\delta }\right)\cup  \left(\{ p^k_h+\delta \}\times \mathcal{O}_{  p^k_h+\delta }\right)$. By the  homotopy invariance of the fixed point index in the cone, see \cite{D83}, then we have that 
\[
\mathit{ind}_{\mathcal K_k}\left(T(p,\cdot),u_p\right) \ \ \hbox{ is constant for } \  p\in(  p^k_h-\delta,  p^k_h+\delta),
\]
which is in contradiction with \eqref{indiceDiv}.
This proves the local bifurcation.
The bifurcating solutions belong to $\mathcal K_k$ since $T$ maps the cone in itself (Lemma  \ref{lemma:TMandaConoInCono}) and are non-radial for $p$ close to $p_h^k$
since
$u_p$ is radially non-degenerate by Lemma \ref{lemma:radiallyNonDeg}.

\

{\bf Step 2.}
{\sl Global bifurcation and Rabinowitz alternative}
\\ 
We can adapt the proof of Theorem 3.3 in \cite{G10}. One of the main differences is that now, since the cone $\mathcal K_k$ is not a Banach space, we substitute  the Leray-Schauder degree used in \cite{G10} with the degree in the convex cone $\mathcal K_k$, which we  denote by $\mathit{deg}_{\mathcal K_k}(I-T(p,\cdot), \mathcal O,0)$, for any  open (with the induced topology) set $\mathcal O$  in $\mathcal K_k$. 
The degree in the convex cone has been introduced in \cite{Amman} (where it is called {\sl index}),  its definition arises directly from the Leray-Schauder degree (to which it coincides when the cone is a Banach space) and in particular it admits the same properties of the Leray-Schauder degree (normalization, additivity, homotopy invariance, permanence, excision, solution property, etc, see \cite[Theorem 11.1 and 11.2]{Amman}).
\\
Following  \cite{G10}, let $\mathcal S\ : =\left\{(p,u_p): p\in (1,+\infty)\right\} \subseteq (1,+\infty)\times \mathcal K_k$  be the curve of radial least-energy solutions, let $\Sigma_k$ be the closure of the set
$  \{(p,v)\in  \left((1,+\infty)\times \mathcal K_k\right)\setminus \mathcal S : v \text{ solves }\eqref{problem}\}$ 
and let $\mathcal C_k$ be the closed connected component of $\Sigma_k$ bifurcating from $( p^k_h , u_{ p^k_h})$.
Assume by contradiction that
the Rabinowitz alternative, namely one of the following, does not occur:
\begin{itemize}
\item[$i)$] $\mathcal C_k$ is unbounded in $(1,+\infty)\times \mathcal K_k$;
\item[$ii)$]  $\mathcal C_k$ intersects $\{1\}\times \mathcal K_k$;
\item[$iii)$]  there exists $ p_l^k$ with $l\neq h$ such that  $( p^k_l, u_{ p^k_l})\in\mathcal C_k\cap \mathcal S$.
  \end{itemize}
Then as in Step 2 in the proof of \cite[Theorem 3.3]{G10} we can then construct a suitable neighborhood $\mathcal O$  of $\mathcal C_k$ in $\mathcal K_k$ such that $\partial \mathcal O\cap \Sigma_k=\emptyset$, $\mathcal O\cap \mathcal S\subset (p^k_h-\delta, p^k_h+\delta)\times \mathcal K_k$ for $\delta$  such that $
u_{p_h^k\pm \delta}$ is nondegenerate and  moreover there exists $c_0>0$ such that  $\| v-u_p\|_{\mathcal X_k}\geq c_0$ for $(p,v)\in \mathcal O$ such that $|p-p^k_h|\geq \delta$. Then we can follow the proof of Step 3 and Step 4 in \cite[Theorem 3.3]{G10}, recalling now that, for $\Lambda_c:=\{(p,v)\in (1,+\infty)\times \mathcal X_k :  \| v-u_p\|_{\mathcal X_k}< c\}
$
one has
\[\mathit{deg}_{\mathcal K_k}(I-T(p_h^k\pm \delta,\cdot), \left(\mathcal O\cap \Lambda_c\right)_{p_h^k\pm \delta},0)=\mathit{ind}_{\mathcal K_k}(T(p_h^k\pm \delta,\cdot),u_{p_h^k\pm \delta})\]
for any $c<c_0$.
The fixed point index relative to the cone $\mathcal K_k$ can be then computed in $p_h^k\pm \delta$  and it assumes either the value $0$ or $1$ (Lemma \ref{lemma:calcoloIndexCono}). The proof of Step 3 and 4 of \cite[Theorem 3.3]{G10} can be repeated and so we get a contradiction.
\\
We can now adapt the proof of \cite[Proposition 2.3]{G16}, again using the degree in the convex cone $\mathcal K_k$ which is, as already observed,  either $0$ or $1$ in a neighborhood of the isolated (in $\mathcal X_k$) solution $u_p$. 
The main difference is that, in the final part of the proof of  \cite[Proposition 2.3]{G16} we now obtain, following the notations of \cite{G16}, that
\[\mathit{deg}_{\mathcal K_k}(S_r(p,v), \mathcal O\cap B_r(p_l^k,u_{ p_l^k}),0)=\pm 1\]
for every $p_l^k\in \mathcal P^k$. This implies again that the number of points $p_l^k\in \mathcal P^k$ which belong to $\mathcal C_k$, including $( p^k_h, u_{ p^k_h})$, has to be even if $\mathcal C_k$ is bounded. 
 Since the total number $s_k$ of  points in $\mathcal P^k$ is odd (see Lemma \ref{l5.1}), then there exist at least one value $p^k\in\{p^k_h\}_{h=1,\ldots, s_k}$ at which either $i)$ or $ii)$ holds.
 
 \
 
{\bf Step 3.}
{\sl Conclusion}\\
Since the bifurcating solutions are not radial for $p$ close to $p_h^k$, the separation property  \eqref{quelloCheGuadagnoConConi} implies that near the bifurcation points $\mathcal{C}_k\neq \mathcal{C}_i$  if $k\neq i$. Moreover $(\mathcal{C}_k\cap \mathcal{C}_i)\subset(\mathcal K_k\cap\mathcal K_j)$ hence it contains only radial solutions.
\end{proof}

\

\begin{remark}[Shape of the bifurcating solutions]
 \label{remark:shape}
Observe that from the definition of the space $\mathcal X_h$ and from the separation property \eqref{quelloCheGuadagnoConConi} of $\mathcal K_k$ it follows that
\begin{equation}\label{quelloCheGuadagnoConConiSeparazione}\mathcal K_k\cap \mathcal X_h=\mathcal X_{\rad},\quad \forall h> k
\end{equation}
 and so, as stated in Theorem \ref{teo1} in the introduction, either the bifurcating solution belongs to $\mathcal X_k\setminus\mathcal X_j$, $\forall j>k$ or it is radial.
\\
Moreover, since the kernel of the linearized operator is one dimensional when restricted to the spaces $\mathcal X_k$ (Proposition \ref{lemma:degenerazioneSimmetria}-iii)), we can get an expansion of the bifurcating solution found in Theorem \ref{teo-bif} near the bifurcation point $(p^k, u_{p^k})$, even if we cannot apply the Crandall-Rabinowitz result to obtain some regularity on the solutions set. Indeed, applying Proposition 2.4 in \cite{G16} we know that there exists $\varepsilon_0>0$ such that for any $0<\varepsilon<\varepsilon_0$ if $(p,v) \in \mathcal{C}_k\cap \left(B_{\varepsilon}( p^k, u_{p^k})\setminus\{( p^k, u_{p^k})\}\right)$, then  
\[ v(r,\theta)= u_p(r)+\alpha_{\varepsilon} \phi_1(r) \cos \left(k \theta\right)+ \psi_{\varepsilon}(r,\theta)\]
where $\alpha_{\varepsilon}\to 0$ as $\varepsilon\to 0$, $\phi_1(r)>0$ is a first eigenfunction of the weighted eigenvalue problem as  defined in Proposition \ref{p4.7} and $\psi_{\varepsilon}(r,\theta)\in \mathcal X_k$ is such that $\|\psi_{\varepsilon}\|_{\infty}=o(\alpha_{\varepsilon})$ as $\varepsilon\to 0$. 
\begin{figure}[h]
  \centering\Huge
\resizebox{240pt}{!}{\input{almostradial.pdf_tex}}
\end{figure}
As a consequence, near the bifurcation point, the solutions we found not only are in $\mathcal X_k\setminus \mathcal X_{\rad}$ but, being small perturbation of the radial least energy solution $u_p$, they also inherit from $u_p$ the property of having two nodal domains and of being quasi-radial in the sense of Definition \ref{def:quasiradial}.
\\
We remark that along the branch the number of nodal regions of the solutions may change and that moreover far from the bifurcation point they may also loose the \emph{quasi-radial shape} and their nodal line could touch the boundary.
\end{remark}

\begin{remark}[Multiple bifurcation]
Observe that we can obtain a solution to \eqref{problem} by rotating the solution $v$ in Theorem \ref{teo-bif} of an angle $\alpha$. This solution coincides with the one bifurcating from $u_p$ in the direction 
\[   w(r,\theta)=\phi_1(r)\left(a\sin (k\theta)-b\cos (k\theta)\right) \in Ker(L_p)\]
with $\alpha=\arctan (-a/b)$, letting $\hat \tau$ be the reflection with respect to the hyperplane $ax+by=0$ and restrincting to the spaces
\[\widehat{\mathcal X}_k \ := C^{1,\alpha}_0( B)\cap \widehat{H}^1_{0,k}(B),\]
where
$\widehat{H}^1_{0,k}(B)\ : =\{v\in H^1_0(B)\ \text{ such that } v(g(x))=v(x), \  \  \forall g\in \widehat{\mathcal{G}}_k , \ \forall x\in B  \}$
and $\widehat{\mathcal{G}}_k\subset O(2)$ is the group generated by $O_k$ and by the reflection $\hat \tau$.

\end{remark}

 \begin{remark}[Bifurcation via odd change in the k-Morse index of $u_p$]
We stress that in order to get the bifurcation result one could work directly in the space $\mathcal X_k$, $k=3,4,5$ without restricting to the cones $\mathcal K_k\subset\mathcal X_k$ substituting the degree  in the cone $\mathcal K_k$ with the usual Leray-Schauder degree  in $\mathcal X_k$.
\\
Anyway the bifurcation  result obtained in this way is only partial, since a priori different branches of solutions could coincide.
\\
The advantage of restricting to the cones  $\mathcal K_k$ in the proof of Theorem \ref{teo-bif}  is that set $\mathcal K_k\cap\mathcal K_j$ contains only radial functions when $k\neq j$, and this allow to separate the branches.
 \end{remark}

\

\section{The proof of Theorem \ref{prop1.4}} \label{section:proofTeoLeastEnergy}

Let us consider the functional $E_p$ in \eqref{functional} restricted to the space $H^1_{0,k}(B)$, $k\in\mathbb N_0$ defined in  \eqref{Hk}. 
By the principle of symmetric criticality critical points of  $E_p$ in $H^1_{0,k}(B)$ are solutions to \eqref{problem} which are invariant by the action of $\mathcal{G}_k$. In particular (see \cite{CastroCossioNeuberger}) to produce nodal solutions to \eqref{problem} which are invariant by the action of $\mathcal{G}_k$  one can minimize the functional $E_p$ on the nodal symmetric Nehari set 
\[M_k\ :=\{v\in H^1_{0,k}(B)\ : \ v^+\neq 0,\ v^-\neq 0,\  E_p'(u)u^+=E_p'(u)u^-=0\}\]
where $E_p'$ is the Fr\'echet derivative of $E_p$. 
Then a function $\bar u$ such that
\[E_p(\bar u)=\inf_{u\in M_k}E_p(u)\]
is a least energy sign-changing $\mathcal{G}_k$-invariant solution to \eqref{problem} and we  denote it by $u_p^k$, for $k=1,2,\dots$.

\

The proof of the {\sl non-radial part} in Theorem \ref{prop1.4} follows directly by comparing the $k$-Morse index of the radial solution $u_p$ (computed in Section \ref{se:symmspaces}) with the $k$-Morse index of the least energy symmetric solution $u_p^k$.

Indeed, following the same arguments in  \cite[Theorem 1.3]{BartschWeth} and working in the space of symmetric functions $H^1_{0,k}(B)$, one can prove  the following result:
\begin{lemma}\label{lem-10-1}
Let $u_p^k$ be a least energy sign-changing solution to \eqref{problem} in the space $H^1_{0,k}(B)$. Then 
\begin{equation}\label{k-morse-upk}
m_k\left( u_p^k\right)=2, \quad\forall p\in (1,+\infty), 
\end{equation}
where $m_k$ denotes the $k$-Morse index of $u_p^k$.
\end{lemma}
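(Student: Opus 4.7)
The plan is to adapt the standard argument of Bartsch--Weth \cite{BartschWeth} to the equivariant setting, exploiting the fact that $\mathcal G_k$-invariance of $u_p^k$ (and hence of $(u_p^k)^+$ and $(u_p^k)^-$) together with $\mathcal G_k$-invariance of $E_p$ allows all constructions (Nehari projections, pseudo-gradient deformations, eigenfunction decompositions) to be carried out within $H^1_{0,k}(B)$.

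For the lower bound $m_k(u_p^k)\ge 2$, I would test the quadratic form $Q_p$ associated to the linearization $L_p$ at $u_p^k$ on $u^\pm:=(u_p^k)^\pm\in H^1_{0,k}(B)$. The Nehari conditions defining $M_k$ give
\[
\int_B|\nabla u^\pm|^2=\int_B|u_p^k|^{p-1}(u^\pm)^2,
\]
so that
\[
Q_p(u^\pm)=(1-p)\int_B|u_p^k|^{p-1}(u^\pm)^2<0.
\]
Since $u^+$ and $u^-$ have disjoint supports, they span a two-dimensional subspace of $H^1_{0,k}(B)$ on which $Q_p$ is negative definite, yielding $m_k(u_p^k)\ge 2$.

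For the upper bound $m_k(u_p^k)\le 2$, I would argue by contradiction. If $m_k(u_p^k)\ge 3$, there is a three-dimensional subspace $V\subset H^1_{0,k}(B)$, transverse to $\mathrm{span}\{u^+,u^-\}$, on which $Q_p$ is negative definite. For $w\in V$ small and $(s,t)$ close to $(1,1)$, consider
\[
\eta(s,t,w):=s u^++t u^-+w\in H^1_{0,k}(B),
\]
and the associated map $\Psi_w(s,t):=\bigl(E_p'(\eta)\,\eta^+,\,E_p'(\eta)\,\eta^-\bigr)$, which measures how far $\eta$ is from $M_k$. A Brouwer-type degree/implicit-function argument (as in \cite{BartschWeth}) produces a continuous selection $w\mapsto(s(w),t(w))$ with $(s(0),t(0))=(1,1)$ such that $\eta(s(w),t(w),w)\in M_k$ for $w$ in a small neighbourhood of $0$ in $V$. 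The standard strict concavity of $E_p$ along the Nehari fibres $(s,t)\mapsto E_p(s u^++t u^-)$ combined with the negative definiteness of $Q_p$ on $V$ then yields, for some small $w\ne0$,
\[
E_p\bigl(\eta(s(w),t(w),w)\bigr)<E_p(u_p^k),
\]
contradicting the minimality of $E_p(u_p^k)$ on $M_k$, since the whole construction keeps $\eta$ in $H^1_{0,k}(B)$ by $\mathcal G_k$-invariance of $u^\pm$ and $V$.

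The main obstacle will be the continuous selection $(s(w),t(w))$ and the accompanying strict energy decrease: one must verify that the three negative directions of $Q_p$ in $V$ really survive after projecting onto $M_k$, i.e.\ that the two degrees of freedom absorbed by $(s,t)$ do not exhaust all the descent directions provided by $V$. This is precisely the point where the hypothesis $\dim V=3>2$ is used and where one invokes a fixed-point/degree computation to close the argument. The remaining analytic steps---continuity of the Nehari projection, the quadratic expansion of $E_p$ at $u_p^k$, and the persistence of the nodal structure of $\eta$ for small $w$---are routine and proceed as in \cite{BartschWeth}, with the sole observation that the $\mathcal G_k$-equivariance is preserved throughout.
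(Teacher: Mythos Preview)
Your proposal is correct and matches the paper's approach exactly: the paper does not give an independent proof but simply refers to \cite[Theorem 1.3]{BartschWeth}, noting that the argument carries over verbatim once one works in $H^1_{0,k}(B)$ in place of $H^1_0(B)$. Your write-up spells out precisely this adaptation, including the key observation that $(u_p^k)^\pm$ and the negative eigenspace lie in $H^1_{0,k}(B)$ so that the Nehari/degree construction stays inside the symmetric space.
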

By comparing \eqref{k-morse-upk} with the information in Propositions \ref{Morse-simmetrico1} and  \ref{Morse-simmetrico2} we get then the proof of the non-radial part of Theorem 	\ref{prop1.4}
since necessarily  $u_p^k$ is not radial for any  $p$ and $k$ such that $m_k(u_p)>2$.

\

\

The proof of the {\sl radial part} of Theorem \ref{prop1.4} is more involved and is the goal of the rest of this section where first we show an $L^{\infty}$ bound for the solution $u_p^k$ for $p$ close to $1$ (Proposition \ref{prop7.4}) and then, using this bound, we deduce the result by studying the asymptotic behavior of the solutions $u_p^k$ as $p\rightarrow 1$ (this is done in the proof of Proposition \ref{leastRadiale}). 

\

As already discussed in the introduction we do not have  a bound for the full Morse index of $u_p^k$, but {\sl  only for the $k$-Morse index} (Lemma \ref{lem-10-1} above), for this reason,  exploiting the symmetry of $u_p^k$, we reduce problem \eqref{problem} from the ball $B$ to the circular sector $S_k$ of the ball defined in polar coordinates as
\[S_k:=\{(r,\theta)\ : \ 0<r<1 \ \ ,\ \  0<\theta<\frac{\pi}k\}.\]
\begin{figure}[h]
  %\footnotesize
  \centering
  \resizebox{170pt}{!}{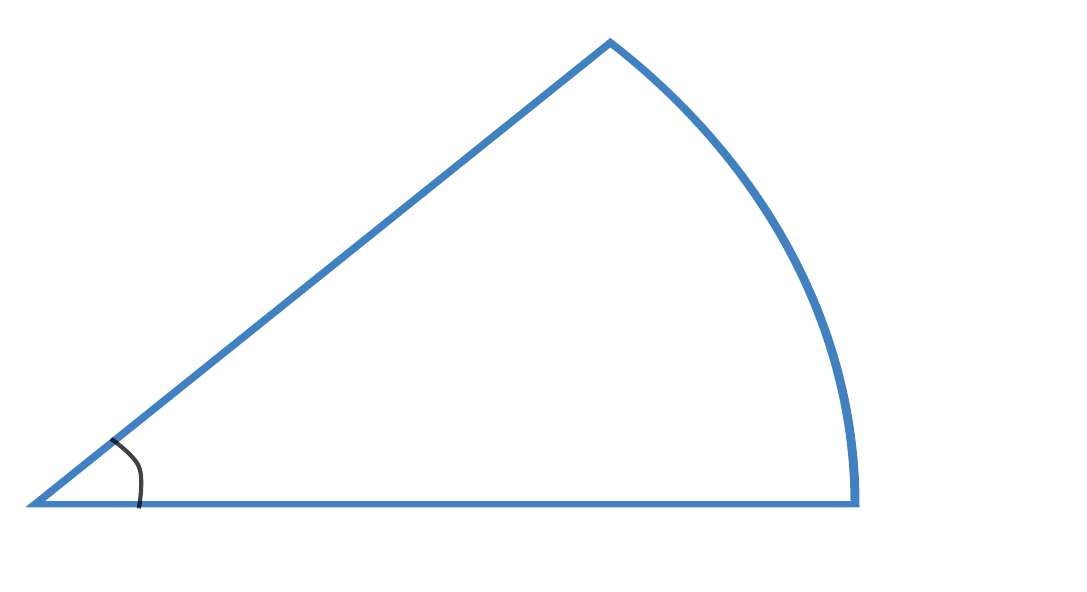}
  \caption{Sector $S_k$}
\end{figure}
Indeed setting  $\Gamma_1:=\{(r,\theta): \ r=1, \ \theta\in(0,\frac \pi k)\}$, $\Gamma_2 :=\{(r,\theta): \ \theta=0, \ r\in(0,1)\}$,   $\Gamma_3:= \{(r,\theta): \ \theta=\frac \pi k,\ r\in(0,1)\}$, $A=(\cos \frac{\pi}k, \sin \frac{\pi}k)$ and $B=(1,0)$, one has $\partial S_k=\Gamma_1\cup \Gamma_2\cup \Gamma_3\cup \{O,A,B\}$ and any regular function $v$ to \eqref{problem} which is invariant by the action of the group $\mathcal{G}_k$, satisfies 
\[v\in C^1(S_k\cup\Gamma_2\cup \Gamma_3\cup O)\ \ , \ \ \frac{\partial  v }{\partial \nu}=0 \ \ \ \text{ on } \Gamma_2\cup \Gamma_3\] 
where $\nu$ denotes the outer normal vector to the boundary of $S_k$.
Hence $u_p^k$ is a classical solution to 
\begin{equation}\label{sector}
\begin{cases}
-\Delta u_p^k=|u_p^k|^{p-1}u_p^k & \text{ in }S_k\\
u_p^k=0 & \text{ on } \Gamma_1\\
\frac{\partial  u_p^k }{\partial \nu}=0 & \text{ on } \Gamma_2\cup \Gamma_3.
\end{cases}\end{equation}

\

In next result we convert the bound on the  $k$-Morse index in \eqref{k-morse-upk}  into a bound on the full {\em{mixed-Morse index}} of $u_p^k$ in the sector $S_k$.

\begin{lemma}\label{lem-10-nuovo}
Let $u_p^k$ be the least energy sign-changing solution to \eqref{problem} in  the space $H^1_{0,k}(B)$. 
Then for any $p\in (1,+\infty)$ the mixed eigenvalue problem 
\begin{equation}\label{sector-lin}
\begin{cases}
-\Delta v=p|u_p^k|^{p-1}v +\mu v& \text{ in }S_k\\
v=0 & \text{ on } \Gamma_1\\
\frac{\partial v}{\partial \nu}=0 & \text{ on } \Gamma_2\cup \Gamma_3
\end{cases}\end{equation}
admits only $2$ negative eigenvalues $\mu$.
\end{lemma}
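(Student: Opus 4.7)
The plan is to exploit the natural isomorphism between the space $H^1_{0,k}(B)$ of $\mathcal{G}_k$-invariant functions vanishing on $\partial B$ and the space $V:=\{v\in H^1(S_k)\,:\, v=0\text{ on }\Gamma_1\}$ of functions on the sector with mixed boundary data. More precisely, given $v\in V$, one reflects $v$ across $\Gamma_2$ to obtain a function on the sector of amplitude $\frac{2\pi}{k}$ that is even with respect to the $x$-axis, and then one propagates by successive rotations of angle $\frac{2\pi}{k}$ to obtain a function $\tilde v$ defined on all of $B$. The Neumann conditions on $\Gamma_2\cup\Gamma_3$ guarantee that $\tilde v\in H^1_{0,k}(B)$ (the traces across the reflection/rotation interfaces match in $H^{1/2}$), and conversely any function in $H^1_{0,k}(B)$ restricts to an element of $V$. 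This map $v\mapsto \tilde v$ is a linear bijection.

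Next I would observe that this bijection preserves the Rayleigh quotient associated with the linearized problem. Indeed, by the $\mathcal{G}_k$-invariance of $u_p^k$ and $|x|^2$-independence of the quantities involved, for every $v\in V$ one has
\begin{equation*}
\int_B\!\bigl(|\nabla\tilde v|^2-p|u_p^k|^{p-1}\tilde v^2\bigr)\,dx = 2k\int_{S_k}\!\bigl(|\nabla v|^2-p|u_p^k|^{p-1}v^2\bigr)\,dx,\qquad \int_B\tilde v^2\,dx=2k\int_{S_k}v^2\,dx,
\end{equation*}
so the factor $2k$ cancels out in the Rayleigh quotient. By the usual min-max characterization of eigenvalues, this immediately implies a one-to-one correspondence between the eigenvalues of the mixed problem \eqref{sector-lin} on $S_k$ and the $\mathcal{G}_k$-symmetric eigenvalues of the linearized operator $L_p(u_p^k)$ on $B$, with matching sign. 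In particular, the number of negative eigenvalues of \eqref{sector-lin} equals $m_k(u_p^k)$.

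Then applying Lemma \ref{lem-10-1}, which gives $m_k(u_p^k)=2$ for any $p\in(1,+\infty)$, yields the conclusion that \eqref{sector-lin} has exactly $2$ negative eigenvalues.

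The main technical point is verifying that the reflection-then-rotation extension lands in $H^1(B)$ (and hence in $H^1_{0,k}(B)$), which relies on the Neumann conditions on $\Gamma_2\cup\Gamma_3$ producing matching traces across the interfaces, and on checking that no spurious contribution arises at the origin or at the corner points $\{O,A,B\}$ (these are sets of capacity zero in dimension $2$ for the $H^1$ trace theory, so they create no obstruction). Once the isomorphism and the Rayleigh quotient identity are established, the counting of negative eigenvalues is immediate from Lemma \ref{lem-10-1} together with the standard variational characterization.
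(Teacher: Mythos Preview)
Your proposal is correct and follows essentially the same approach as the paper: both arguments exploit the correspondence between $\mathcal{G}_k$-invariant functions on $B$ and functions on the sector $S_k$ with mixed boundary data, and then invoke Lemma~\ref{lem-10-1}. The paper argues slightly more concretely by restricting the two symmetric eigenfunctions to $S_k$ (giving at least two) and extending hypothetical extra sector eigenfunctions back to $B$ (giving at most two), whereas you package this as a Rayleigh-quotient-preserving isomorphism and apply min-max; these are equivalent formulations of the same idea. One small remark: the even reflection/rotation extension of a generic $v\in V$ lies in $H^1_{0,k}(B)$ automatically because even reflection of an $H^1$ function is $H^1$ and the traces match at the interfaces by construction---no Neumann condition is needed for membership in $H^1$ (the Neumann condition is what makes an \emph{eigenfunction} extend to an eigenfunction, which is implicit in your Rayleigh-quotient argument anyway).
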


\begin{proof}
Because of Lemma \ref{lem-10-1} the Dirichlet eigenvalue problem
\begin{equation}\label{nuovo-lin}
\begin{cases}
-\Delta v=p|u_p^k|^{p-1}v +\mu v& \text{ in }B\\
v=0 & \text{ on } \partial B
\end{cases}\end{equation}
admits only two linearly independent eigenfunctions $\tilde \psi_1$ and $\tilde\psi_2$ which are invariant by the action of $\mathcal{G}_k$, are regular, by elliptic regularity theory,
and which correspond to a negative eigenvalue, say $\mu_1^k$ and $\mu_2^k$. By the symmetry properties of $\tilde \psi_i$ it is straightforward to see, that, the restriction of  $\tilde \psi_i$ to the sector $S_k$ satisfies \eqref{sector-lin} corresponding to the same eigenvalue $\mu_i^k<0$ for $i=1,2$. This shows that the number of negative eigenvalues of \eqref{sector-lin} is at least two. Viceversa, if problem \eqref{sector-lin} possess $m>2$ negative eigenvalues $\mu_i$ corresponding to the eigenfunctions $\psi_1,\dots,\psi_m$ (that we take orthogonal in $L^2(S_k)$), then, denoting by $\tilde \psi_1,\dots,\tilde \psi_m$ the extension  of  $\psi_1,\dots,\psi_m$ to $B$ under the action of $\mathcal{G}_k$, it is easy to see that $\tilde \psi_1,\dots,\tilde \psi_m\in H^1_{0,k}(B)$ solve \eqref{nuovo-lin} corresponding to the eigenvalues $\mu_1<\dots\leq \mu_m<0$ and are orthogonal in $L^2(B)$ contradicting Lemma \ref{lem-10-1}. This shows that the number of negative eigenvalues for problem \eqref{sector-lin} is at most two concluding the proof.
\end{proof}

\

In order to get an uniform $L^{\infty}$ bound for the solution $u_p^k$ we want to perform a blow-up argument in the sector  $S_k$ exploiting the uniform bound of the mixed Morse index in Lemma \ref{lem-10-nuovo}. 
\\
This blow-up procedure in $S_k$ requires special care, since we have to deal with mixed boundary conditions and above all with the angular points of $S_k$. For these reasons  the analysis of the rescaled solutions includes several different cases, depending on the location of the maximum points in the sector which gives different shapes of the limiting domain. Anyway in all the cases we end-up with solutions to  a limit linear problem in  unbounded domains 
with either Dirichlet or Neumann or mixed boundary conditions, whose Morse index (or symmetric Morse index) is finite. In order to rule-out this possibility we will need the following  symmetric version of a well known non-existence result:

\begin{proposition}\label{prop7.3}
 Let $\Sigma$ be either $\R^2$ or $\R^2_{+}:=\{(x,y)\in \R^2\ : y> 0\}$ and let 
$\mathcal{G}$ be any subgroup of $O(2)$ which preserves $\Sigma$. 
Let $u$ be any nontrivial solution to the problem
\begin{equation}\label{prob-limite}
-\Delta u-u=0 \ \ \text{ in }\Sigma
\end{equation}
and when $ \Sigma=\R^2_{+}$ assume also that 
\begin{equation}
\label{boundary-cond}
u=0 \ \ \text{ on }\partial \Sigma. 
\end{equation}
Then, the  $\mathcal{G}$-Morse index of $u$ is not finite.
\\
Here
the
$\mathcal{G}$-Morse index of a solution $u$ to \eqref{prob-limite} is the maximal dimension of a subspace $X\subseteq C^{\infty}_{0,\mathcal{G}}(\Sigma)$ such that 
\begin{equation}\label{quad}
Q(v):=\int_{\Sigma} \left[|\nabla v|^2-|v|^2\right]dx <0, \ \forall v\in X\setminus\{0\},
\end{equation}
where $C^{\infty}_{0,\mathcal{G}}(\Sigma)$ denotes the subspace of $C^{\infty}_0(\Sigma)$ of the  functions invariant with respect to the action of $\mathcal{G}$.
\end{proposition}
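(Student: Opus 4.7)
Since the equation $-\Delta u-u=0$ is linear, the quadratic form $Q(v)=\int_\Sigma(|\nabla v|^2-v^2)\,dx$ appearing in the definition of the $\mathcal{G}$-Morse index does not depend on $u$ at all; consequently the nontriviality of $u$ plays no explicit role, and the claim reduces to exhibiting, for every $N\in\mathbb{N}$, an $N$-dimensional $\mathcal{G}$-invariant subspace of $C^\infty_0(\Sigma)$ on which $Q$ is negative definite. I will produce an infinite family $\{\phi_j\}_{j\in\mathbb{N}}\subset C^\infty_{0,\mathcal{G}}(\Sigma)$ with pairwise disjoint supports and $Q(\phi_j)<0$ for every $j$; disjointness of supports then yields $Q(\sum_j c_j\phi_j)=\sum_j c_j^2 Q(\phi_j)<0$ on their span, whence the $\mathcal{G}$-Morse index is infinite.

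The mechanism producing negative directions is the dilation identity. For any $\phi\in C^\infty_0(\Sigma)$ (with $\mathrm{supp}(\phi)$ bounded away from $\partial\mathbb{R}^2_+$ when $\Sigma=\mathbb{R}^2_+$) and $\lambda>0$, setting $\phi_\lambda(x):=\phi(\lambda x)$, a change of variables in dimension two gives
\[
Q(\phi_\lambda) \;=\; \int_{\Sigma}|\nabla\phi|^2\,dx \;-\; \lambda^{-2}\int_{\Sigma}\phi^2\,dx,
\]
so $Q(\phi_\lambda)\to-\infty$ as $\lambda\to 0^+$. Moreover, since every $g\in O(2)$ commutes with the dilation $x\mapsto\lambda x$, the function $\phi_\lambda$ inherits $\mathcal{G}$-invariance from $\phi$; and for $\lambda\leq 1$ the support of $\phi_\lambda$ still avoids $\partial\mathbb{R}^2_+$ in the half-space case.

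It remains to construct one nontrivial seed $\phi\in C^\infty_{0,\mathcal{G}}(\Sigma)$ whose support lies in an annular shell $\{R_1<|x|<R_2\}\cap\Sigma$. If $\mathcal{G}$ is finite, pick a base point $x_0\in\Sigma$ with $|x_0|$ large (and, for $\Sigma=\mathbb{R}^2_+$, with large $y$-coordinate) together with a small radius $\rho>0$, arranged so that the orbit balls $\{g\,B_\rho(x_0):g\in\mathcal{G}\}$ are pairwise disjoint and contained in $\Sigma$; then symmetrize any $\phi_0\in C^\infty_0(B_\rho(x_0))$ by $\phi(x):=\sum_{g\in\mathcal{G}}\phi_0(g^{-1}x)$. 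If instead $|\mathcal{G}|=\infty$, then necessarily $\Sigma=\mathbb{R}^2$, because any nontrivial rotation moves some points of $\mathbb{R}^2_+$ into $\{y<0\}$ and the only nontrivial orthogonal transformation preserving $\mathbb{R}^2_+$ is the reflection $(x,y)\mapsto(-x,y)$; in this situation $\mathcal{G}\supseteq SO(2)$, $\mathcal{G}$-invariance forces radial symmetry, and one takes $\phi$ to be any radial bump supported in an annulus. Finally, choose $\lambda_j\to 0^+$ with $\lambda_{j+1}<(R_1/R_2)\lambda_j$, so that the supports of the dilates $\phi_{\lambda_j}$ lie in pairwise disjoint annular shells escaping to infinity, while $Q(\phi_{\lambda_j})<0$ for $j$ large by the scaling identity. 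The only genuinely delicate point in the argument is the case split on $\mathcal{G}$, in particular the observation that $\Sigma=\mathbb{R}^2_+$ forces $\mathcal{G}$ to be finite; the rest is bookkeeping, and no Pohozaev identity, decay estimate, or structural information about $u$ is needed, reflecting the fact that the $\mathcal{G}$-Morse index in this linear setting has lost its dependence on $u$.
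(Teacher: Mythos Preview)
Your proof is correct and takes a genuinely different route from the paper's. The paper observes that the radial Dirichlet eigenfunctions of $-\Delta$ on the unit ball $B$ (resp.\ the half-ball $B\cap\R^2_+$) are automatically $\mathcal{G}$-invariant for any $\mathcal{G}\subset O(2)$; rescaling them to $B_R$ turns the eigenvalues into $\lambda_j^{\mathcal{G}}/R^2$, and for $R$ large the first $m$ of these drop below $1$, yielding $m$ orthogonal test functions with $Q<0$. This avoids any case split on $\mathcal{G}$. Your argument instead manufactures a single $\mathcal{G}$-invariant seed supported in an annular shell (via orbit-symmetrization when $|\mathcal{G}|<\infty$, via a radial bump when $|\mathcal{G}|=\infty$, having first noted that the half-plane forces $|\mathcal{G}|\le 2$), and then dilates it to obtain an infinite family with pairwise disjoint supports; the two-dimensional scaling identity $Q(\phi_\lambda)=\|\nabla\phi\|_2^2-\lambda^{-2}\|\phi\|_2^2$ does the rest. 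What your approach buys is that the test functions are honestly in $C^\infty_0(\Sigma)$ from the start (the paper's eigenfunctions, extended by zero outside $B_R$, are only $H^1_0$ and a density argument is tacitly needed), and the disjoint-support trick replaces $L^2$-orthogonality of eigenfunctions by something more elementary. One small imprecision: when $\mathcal{G}$ is infinite you assert $\mathcal{G}\supseteq SO(2)$, but an infinite subgroup of $O(2)$ need only be dense in $SO(2)$; this is harmless, since all you actually use is that radial bumps are $\mathcal{G}$-invariant, which holds for \emph{every} $\mathcal{G}\subset O(2)$.
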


\begin{proof}
Let us consider first the case of $\Sigma=\R^2$.
Let us denote, as usual, by  $\lambda_j$, $j\in\mathbb N$, the Dirichlet eigenvalues of $-\Delta$ in $B$, since $\mathcal{G}  $ preserves $B$, we can consider among them
the subsequence $\lambda_j^{\mathcal{G}}$ of the eigenvalues corresponding to $\mathcal G$-invariant eigenfunctions. 
\\
Let $\psi_j^\mathcal{G}$ be the $\mathcal G$-invariant eigenfunction associated to $\lambda_j^{\mathcal{G}}$, then it is easy to see that the function $\widehat \psi_j^\mathcal{G}(x):=\psi_j^\mathcal{G}\left(\frac xR\right)$, where $R>0$, solves
\begin{equation}\label{autof-riscalate}
\begin{cases}
-\Delta\widehat \psi_j^{\mathcal{G}} =\frac{\lambda_j^{\mathcal{G}}}{R^2} \widehat \psi_j^\mathcal{G}  & \text{ in }B_R\\
\widehat \psi_j^{\mathcal{G}}=0 & \text{ on }\partial B_R,
\end{cases}
\end{equation}
where $B_R$ is the ball centered at the origin with radius $R$. 
\\
Observe that for any integer $m>0$ and for any  subgroup $\mathcal{G}$ of $ O(2)$ there exists $R>0$ such that $\frac{\lambda_1^{\mathcal{G}}}{R^2}< \dots \leq \frac{\lambda_m^{\mathcal{G}}}{R^2}<1$, so that by \eqref{autof-riscalate} we get 
\[Q\left(\widehat\psi_j^{\mathcal{G}}\right)=\int_{\Sigma} \left[|\nabla \widehat\psi_j^{\mathcal{G}}   |^2-|\widehat \psi_j^{\mathcal{G}} |^2\right]dx=\left(\frac{\lambda_j^{\mathcal{G}}}{R^2}-1\right) \int_{\Sigma}| \widehat\psi_j^{\mathcal{G}} |^2dx<0, \mbox{ for }j=1,\dots,m\]
Since the functions $\widehat \psi_1^{\mathcal{G}}, \dots, \widehat \psi_m^{\mathcal{G}}\in C^{\infty}_{0,\mathcal G}(\Sigma)$ and are linearly independent (and orthogonal in $L^2(B_R)$), this means that the $\mathcal{G}$-Morse index of any nontrivial solution $u$ to \eqref{prob-limite} is greater or equal than $m$, for any $m\in\mathbb N$ showing the result in case of $\Sigma=\R^2$.

\

When $\Sigma=\R^2_{+}$ we let $\lambda_j^+$ be the sequence of Dirichlet eigenvalues of $-\Delta$ in $B\cap  \R^2_+$ and $(\lambda_j^+)^{\mathcal G}$ the subsequence of the eigenvalues invariant with respect to the action of $\mathcal{G}$ with associated $\mathcal G$-invariant eigenfunctions $\psi_j^{\mathcal{G}}$. 
Then defining as before the rescaled function $\widehat \psi_j^\mathcal{G}$, it  solves
\[
\begin{cases}
-\Delta\widehat \psi_j^{\mathcal{G}} =\frac{(\lambda_j^+)^{\mathcal{G}}}{R^2} \widehat \psi_j^\mathcal{G}  & \text{ in }B_R\cap \R^2_+ \\
\widehat \psi_j^{\mathcal{G}}=0 & \text{ on }\partial \left(B_R\cap \R^2_+\right)
\end{cases}
\]
and the thesis follows  similarly as in the previous case.
\end{proof}

\

\

We are now ready to perform the blow-up analysis in $S_k$  to  get a uniform $L^{\infty}$ bound for the solutions $u_p^k$.

\

\begin{proposition}\label{prop7.4}
Let $u_p^k$ be a least energy sign-changing solution to \eqref{problem} in the space $H^1_{0,k}(B)$ and let $\delta>0$. Then there exists $C>0$ such that 
\[\norm{u_p^k}_{\infty}^{p-1}\leq C,\quad \mbox{ for any }p\in (1,1+\delta).\]

\end{proposition}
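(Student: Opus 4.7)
The plan is to argue by contradiction via a blow-up argument in the sector $S_k$, exploiting the uniform mixed Morse index bound of Lemma \ref{lem-10-nuovo}. Assume to the contrary that there exist $p_n\to 1^+$ with $M_n^{p_n-1}\to +\infty$, where $M_n:=\|u_{p_n}^k\|_\infty$. Thanks to the $\mathcal{G}_k$-invariance, pick maximum points $x_n\in\bar S_k$ with $|u_{p_n}^k(x_n)|=M_n$. Set $\varepsilon_n:=M_n^{(1-p_n)/2}\to 0$ and consider the rescaled functions
\[
\tilde u_n(y):=\tfrac{1}{M_n}\,u_{p_n}^k(x_n+\varepsilon_n y),\qquad y\in\tilde S_n:=\tfrac{1}{\varepsilon_n}(S_k-x_n),
\]
which satisfy $-\Delta \tilde u_n=|\tilde u_n|^{p_n-1}\tilde u_n$ in $\tilde S_n$, with $\|\tilde u_n\|_\infty\leq 1$, $|\tilde u_n(0)|=1$, homogeneous Dirichlet condition on the rescaled image of $\Gamma_1$ and homogeneous Neumann condition on the rescaled images of $\Gamma_2,\Gamma_3$.

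I would then classify, up to subsequences, the shape of the limit domain $\Sigma:=\lim\tilde S_n$ according to the asymptotic location of $x_n$ and of $\mathrm{dist}(x_n,\partial S_k)/\varepsilon_n$: (i) if $\mathrm{dist}(x_n,\partial S_k)/\varepsilon_n\to\infty$, then $\Sigma=\R^2$; (ii) if $x_n$ approaches, at distance $O(\varepsilon_n)$, one of the smooth arcs $\Gamma_1,\Gamma_2,\Gamma_3$ while staying away from the corners $O,A,B$, then $\Sigma=\R^2_+$ with a Dirichlet (respectively Neumann) boundary condition when $\Gamma_1$ (respectively $\Gamma_2$ or $\Gamma_3$) is approached; (iii) if $x_n\to A$ or $x_n\to B$ at distance $O(\varepsilon_n)$, then $\Sigma$ is a right-angled sector with mixed Dirichlet-Neumann boundary; (iv) if $x_n\to O$ at distance $O(\varepsilon_n)$, I would rather extend $u_{p_n}^k$ to all of $B$ by $\mathcal{G}_k$-symmetry and blow up at the interior point $x_n\to 0\in B$, obtaining again $\Sigma=\R^2$. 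Combining standard elliptic regularity with the bound $\|\tilde u_n\|_\infty\leq 1$ and the uniform convergence of $|t|^{p_n-1}t$ to $t$ on $[-1,1]$ as $p_n\to 1$, one obtains (up to a further subsequence) $\tilde u_n\to u$ in $C^1_{\mathrm{loc}}(\overline\Sigma)$, with $u\not\equiv 0$ since $|u(0)|=1$, and $u$ solving the linear limit problem $-\Delta u=u$ in $\Sigma$ with the corresponding boundary conditions.

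The last step is to contradict Proposition \ref{prop7.3}: by iteratively reflecting $u$ across the Neumann parts of $\partial\Sigma$ one produces a nontrivial extension $\hat u$ solving $-\Delta\hat u=\hat u$ either in $\R^2$ or in $\R^2_+$ with homogeneous Dirichlet on $\partial \R^2_+$, invariant under the finite reflection subgroup $\mathcal{G}\subset O(2)$ generated by the reflections used. The crucial point is that the bound of $2$ on the mixed Morse index from Lemma \ref{lem-10-nuovo} is preserved under the rescaling (the choice $\varepsilon_n^2 M_n^{p_n-1}=1$ makes the rescaled Rayleigh quotient coincide with the original one) and passes to the limit on compactly supported test functions by dominated convergence, thereby yielding a bound of $2$ on the $\mathcal{G}$-Morse index of $\hat u$, contradicting Proposition \ref{prop7.3}. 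The main obstacle is case (iv): the Neumann sector of amplitude $\pi/k$ at $O$ does not reflect canonically into the plane for general $k$, so one must bypass the sector picture by re-extending $u_{p_n}^k$ to the whole ball via its $\mathcal{G}_k$-symmetry before performing the blow-up, and then check that the sector Morse index bound transfers into the correct $\mathcal{G}_k$-invariant Morse index bound on $\R^2$; at the vertices $A, B$ a single reflection across the Neumann side reduces the right-angled sector to a half-plane with homogeneous Dirichlet condition and the propagation of the Morse index is then routine.
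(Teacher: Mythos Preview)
Your approach is essentially the one the paper takes: blow up in the sector, classify the limiting domains, and contradict Proposition~\ref{prop7.3} via the mixed Morse index bound of Lemma~\ref{lem-10-nuovo}. However, the paper's case analysis near the vertex $O$ is finer than yours, and your treatment of case~(iv) has a genuine gap.

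In case~(iv) you extend to the ball and blow up at $x_n$, expecting a $\mathcal{G}_k$-invariant Morse index bound on the limit. This works only when $|x_n|/\varepsilon_n$ stays bounded, so that the rescaled center of $\mathcal{G}_k$-symmetry converges to a finite point. But ``$x_n\to O$ with $\mathrm{dist}(x_n,\partial S_k)=O(\varepsilon_n)$'' does \emph{not} force $|x_n|/\varepsilon_n$ to be bounded: $x_n$ may drift towards $O$ along $\Gamma_2$ with $\mathrm{dist}(x_n,\Gamma_2)/\varepsilon_n\to\alpha<\infty$ while $|x_n|/\varepsilon_n\to\infty$ (this is the paper's Step~5, Case~2). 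Then the rescaled center of symmetry escapes to infinity, the limit is invariant only under the single reflection across the image of $\Gamma_2$, and no $\mathcal{G}_k$-Morse index bound is available. The paper handles this sub-case by staying in the sector, obtaining a half-plane with Neumann boundary, and reflecting once---exactly as in your case~(ii).

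Your claim that the Neumann sector of amplitude $\pi/k$ ``does not reflect canonically into the plane'' is also incorrect: reflections across its two sides generate precisely the dihedral group $\mathcal{G}_k$ and tile $\R^2$. The paper uses this in Step~5, Case~3 (when $|x_n|/\varepsilon_n$ converges to a positive finite limit) to produce a $\mathcal{G}_k$-invariant limit on $\R^2$ with finite $\mathcal{G}_k$-Morse index directly from the sector picture. Only when $|x_n|/\varepsilon_n\to 0$ (Step~5, Case~4) does the paper switch to the whole ball, and then it blows up at the \emph{origin} rather than at $x_n$, so that $\mathcal{G}_k$-invariance is preserved exactly at each step.
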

\begin{proof}

  Assume by contradiction that there exists a sequence $p_n\to 1$ such that, letting $M_n:=\norm{u_n}_{\infty}$ with $u_n:=u_{p_n}^k$,
  $M_n^{p_n-1}\to \infty$ as $n\to \infty$.  
Let $P_n=(x_n,y_n)$ be the points at which $|u_n(P_n)|=M_n$. W.l.o.g. we can assume $u_n(P_n)=M_n$ and, by the symmetry properties of $u_n$, also that $P_n\in  S_k\cup \Gamma_2\cup \Gamma_3\cup\{O\}$. We may also assume that
\[P_n\to P_0:=(x_0,y_0)\in \bar S_k.\] 
We restrict the functions $u_n$ to the sector $S_k$ and 
define the functions
\[\widetilde{u}_n(x,y):=\frac 1{M_n}{u_n(M_n^{\frac {1-p_n}2}(x,y)+P_n)},\]
that satisfy
\[-\Delta \widetilde{u}_n=|\widetilde{u}_n|^{p_n-1}\widetilde{u}_n\]
in $\Omega_n:=M_n^{\frac {p_n-1}2}\left( S_k-P_n\right)$.

\

In the sequel we analyze the asymptotic behavior of the rescaled functions $\widetilde u_n$ and get a contradiction by mean of  Proposition \ref{prop7.3}.
We need to consider several cases depending upon the localization of the limit point $P_0$ in $\bar S_k$.
 The underlying idea of each case is that the sequence of solutions $\widetilde{u}_n$ converges to a non-trivial solution $\widetilde u$ to \eqref{prob-limite} either in $\R^2$ or in a halfplane with Dirichlet boundary conditions. Moreover the bound on the Morse index of $\widetilde{u}_n$ obtained in Lemma \ref{lem-10-nuovo} is preserved when passing to the limit problem. This last property, together with Proposition \ref{prop7.3}, implies $\widetilde u=0$ giving always a contradiction. Thus  $M_n^{p_n-1}$ is bounded and this ends the proof. 

\

Observe that by definition  $(\widetilde x, \widetilde y)\in \Omega_n$ if and only if
\[\widetilde x= M_n^{\frac {p_n-1}2}(x-x_n) \ \ \ \text{ and }\ \ \ \widetilde y= M_n^{\frac {p_n-1}2}(y-y_n)\]
for some $(x,y)\in S_k$, moreover a point $(x,y)$ belongs to $ S_k$ if and only if
\begin{equation}\label{AppS_k}x>0 \, , \ \ \ y> 0\,  ,\ \ \ \frac yx<\tan \frac \pi k \ \ \ \text{ and }\ \ \ 0<x^2+y^2<1.
\end{equation}
As a consequence we deduce that $(\widetilde x, \widetilde y)\in \Omega_n$ if and only if the following inequalities are all satisfied:
\begin{eqnarray}
&&M_n^{\frac {1-p_n}2}\widetilde x+x_n> 0 \, , \label{10-cc}\\
&&M_n^{\frac {1-p_n}2}\widetilde y+y_n> 0\, , \label{10-c}\\
&&\frac{M_n^{\frac {1-p_n}2}\widetilde y+y_n }{M_n^{\frac {1-p_n}2}\widetilde x+x_n }<\tan \frac \pi k
\label{10-a}\\
&&0<x_n^2+y_n^2+M_n^{{1-p_n}}\left(\widetilde x^2+ \widetilde y^2\right)+2M_n^{\frac {1-p_n}2}\left(\widetilde xx_n+\widetilde yy_n\right)<1\label{10-b}
\end{eqnarray}
From now on we denote by $d_n$ the distance between $P_n$ and $\partial S_k$, namely
\begin{equation}\label{dn}
d_n:=\min_{P\in\partial S_k}|P_n-P|.
\end{equation}
{\bf Step 1.} {\emph{$P_0\in S_k$}}
\\
Observe that in this case $d_nM_n^{\frac {p_n-1}2}\rightarrow +\infty$ as $n\rightarrow +\infty$. Indeed, since $P_0\in S_k$, by \eqref{AppS_k} $x_0>0$, $y_0>0$, $x_0^2+y_0^2<1$ and $\frac{y_0}{x_0}<\tan \frac \pi k$, so that, since $M_n^{p_n-1}\to \infty$ as $n\rightarrow +\infty$, any point $(\widetilde x,\widetilde y)\in B_R$  satisfies \eqref{10-cc}, \eqref{10-c}, \eqref{10-a} and \eqref{10-b}, for $n$ large enough, namely for any $R>0$ $B_R\subseteq \Omega_n$ for $n$ large enough. \\
Elliptic estimates imply that, up to a subsequence $ \widetilde{u}_n\to \widetilde u$ uniformly on compact sets of $\R^2$. By the argument in \cite{GS}
 $\widetilde u$  is defined in all of $\R^2$, it is a nontrivial weak solution to \eqref{prob-limite} in $\Sigma=\R^2$ and satisfies $\widetilde u(0)=1$. 
\\
Finally we show that the Morse index of the limit function $\widetilde u$ is less or equal than $2$, this contradicts Proposition \ref{prop7.3} and proves the thesis in the case  $P_0\in S_k$. \\
Assume, by contradiction, that the Morse index of $\widetilde u$ as a solution to \eqref{prob-limite} is greater than $2$. Then there exist at least $3$ functions $\widetilde\psi_1,\widetilde\psi_2,\widetilde\psi_3\in C^{\infty}_0(\R^2)$ such that $\widetilde\psi_i$ are linearly independent (orthogonal in $L^2(\R^2)$) and
\[Q(\widetilde\psi_i)<0\]
where $Q$ is the quadratic form as defined in \eqref{quad}. 
Since $ \widetilde\psi_i$ are supported in a ball $B_R$ then, the uniform convergence of $\widetilde{u}_n\to \widetilde u$ on compact sets of $\R^2$ implies that 
\[\int_{\R^2}|\nabla \widetilde\psi_i|^2 -  p_n|\widetilde{u}_n|^{p_n-1} \widetilde\psi_i^2 <0\]
for $n$ large enough. 
Then the functions $\widehat\psi_i(x,y):=\widetilde \psi_i\left(\frac{(x,y)-P_n}{M_n^{\frac{p_n-1}2}}\right)$ belong to $C^{\infty}_0(S_k)$ for $n$ large enough, are orthogonal in $L^2(S_k)$ and satisfy
 \[\int_{S_k}|\nabla\widehat\psi_i|^2 -  p_n|{u}_n|^{p_n-1}\widehat\psi_i^2 <0\]
for $i=1,2,3$. Then, letting $\psi_i\in C^{\infty}_0(B) $ be the $\mathcal{G}_k$-invariant extension of $\widehat\psi_i$ to the ball $B$, it holds
 \[\int_{B}|\nabla\psi_i|^2 -  p_n|{u}_n|^{p_n-1}\psi_i^2 <0\]
for $i=1,2,3$ contradicting the fact that the $k$-Morse index of ${u}_n$ is two (Lemma \ref{lem-10-1}). 

\

{\bf Step 2.}{\emph{ $P_0\in \Gamma_1$}}
\\
In this case we have to consider the two possibilities either $d_nM_n^{\frac{p_n-1}2}\to\infty$ or $d_nM_n^{\frac{p_n-1}2}\to s>0$, for $d_n$ as in \eqref{dn} (the fact that $s>0$ is a consequence of the Dirichlet boundary conditions on $\Gamma_1$ and can be deduced exactly as in the paper \cite{GS}).
Then, as in the proof in \cite{GS} the rescaled functions $ \widetilde{u}_n^k\to \widetilde u$ as $n\to \infty$ uniformly on compact sets of $\Sigma$, where $\widetilde u$ is a nontrivial solution (recall that $\widetilde{u}(0)=1$) either to \eqref{prob-limite} in $\Sigma=\R^2$ in the first case or in $\Sigma=\R^2_{+}$ in the second case (up to a rotation and a translation) satisfying \eqref{boundary-cond}.  Moreover one can prove similarly as in {\bf Step 1} that $\widetilde u$ has finite Morse index, contradicting again  Proposition \ref{prop7.3}.

\

{\bf Step 3.} {\emph{$P_0\in \Gamma_2\cup \Gamma_3$}}
\\
We give the details of the proof only in the case $P_0\in \Gamma_2$ since the case $P_0\in \Gamma_3$ can be handled in a similar way.
In this case $d_n=y_n\to 0$ ($d_n$ as in \eqref{dn}) and $x_n\to x_0$ as $n\to \infty$ with $0<x_0<1$, hence a point $(\widetilde x,\widetilde y)\in B_R$ satisfies  \eqref{10-c}, \eqref{10-a} and \eqref{10-b}  for $n$ large enough, and so it belongs to $\Omega_n$ if and only if  \eqref{10-cc} holds, namely when
\[\widetilde y>-y_nM_n^{\frac{p_n-1}2}.\] Two possibilities may hold: either $y_nM_n^{\frac{p_n-1}2}\to\infty$ or $y_nM_n^{\frac {p_n-1}2}\to s\geq 0$.

\

{\emph{Case 1: $y_nM_n^{\frac{p_n-1}2}\to\infty$.}}\\ 
In the first case it follows that any ball $B_R\subset \Omega_n$ for $n$ large enough, namely $\Omega_n\to \Sigma=\R^2$ and so, as in {\bf Step 1}, $\widetilde{u}_n\to \widetilde u$ uniformly on compact sets of $\Sigma$, where $\widetilde u$ is a nontrivial solution to \eqref{prob-limite} in $\R^2$ that satisfies $\widetilde u(0)=1$ and that has finite Morse index, getting a contradiction. 

\

{\emph{Case 2: $y_nM_n^{\frac{p_n-1}2}\to s\geq 0$.}}\\
In this case instead $\Omega_n\to  \Sigma:=\{(x,y)\in \R^2 : \ y> -s\} $ for some $s\geq 0$ and $ \widetilde{u}_n\to \widetilde u$ on compact sets of $\Sigma$ where $\widetilde u$ is a solution to \eqref{prob-limite} in $\Sigma:=\{(x,y)\in \R^2 : \ y> -s\}$ that satisfies a Neumann boundary condition on $\partial \Sigma$.\\
When $s>0$, $0\in \Omega_n$ for $n$ large enough, hence $\widetilde{u}$ is nontrivial since $\widetilde{u}(0)=1$ by the uniform convergence on compact sets. Finally by translating this limit nontrivial solution in the $y$-direction we then end-up, when $s>0$, with a nontrivial solution $\widetilde u$ to \eqref{prob-limite} in $\Sigma=\mathbb R^2_+$ with  Neumann boundary conditions on $\partial \Sigma$.\\
Next we treat the case $s=0$ and show that again the limit solution $\widetilde u$ is  non-trivial. Observe that $\widetilde y=-M_n^{\frac{p_n-1}2}y_n\in\partial\Omega_n$ and that in the case $s=0$  it belongs to a neighborhood of $0$ for $n$ large. By the elliptic regularity up to the boundary (see Lemma 6.18 in \cite{GT}) for the equation $-\Delta \widetilde u_n=f_n$ with $f_n=|\widetilde u_n|^{p_n-1}\widetilde u_n$,
we obtain a uniform bound on the gradient of $\widetilde u_n$ in $\overline \Omega_n\cap B_{\rho}$, for $\rho$ sufficiently small (indeed by definition  $|\widetilde u_n|\leq 1$ on $\partial \Omega_n$, hence $|f_n(x)|\leq 1$ and we use the fact that $u_n\in C^{2,\gamma}(\Gamma_2)$).   
This implies that
\[\widetilde u_n(F)\geq \widetilde u_n(0)-C|F-0|=1-C|F|, \quad  \forall F\in \Omega_n\cap B_{\rho}\]
where $C$ is the uniform bound on the gradient. Choosing $F$ in the set $\Sigma= \{(x,y)\in\R^2   : \ y> 0\}$ and sufficiently close to $0$ and passing to the limit in the previous inequality one then has $\widetilde u(F)>0$, 
namely $\widetilde u$ is  non-trivial.\\
Summarizing, for any $s\geq 0$, we have obtained a non-trivial solution $\widetilde u$ to  \eqref{prob-limite} in $\Sigma:=\mathbb R^2_+$ that satisfies a Neumann boundary condition on $\partial \Sigma$.
Moreover, as a consequence of Lemma \ref{lem-10-nuovo}, similarly  
 as in {\bf Step 1}, one can easily prove that the maximal number of linearly independent functions $\widetilde\psi_i$
in the space 
$C^{\infty}_0(\overline{\R^2_+})\cap \{\frac{\partial \widetilde\psi_i}{\partial y}{\big|_{y=0}}=0\}$ 
that make negative the quadratic form $Q$ 
is at most $2$. 
As a consequence, the even extension of $\widetilde u$ to the whole $\R^2$ is a nontrivial solution to \eqref{prob-limite} in $\Sigma=\R^2$ which has finite $\mathcal G$-Morse index,  where $\mathcal{G}$ here is the group generated by the reflection with respect to the $x$-axis. 
Again this is not possible by Proposition \ref{prop7.3}.

\

{\bf Step 4.} {\emph{$P_0=B$ ($P_0=A$ follows similarly)}}. 
\\
Since we are assuming that $M_n^{p_n-1}\to \infty$ and $(x_n,y_n)\to (1,0)$ it is straightforward to see that a point $(\widetilde x, \widetilde y)\in B_R$ satisfies \eqref{10-cc}, \eqref{10-a} and the first inequality in \eqref{10-b}  for large values of $n$ and so it belongs to $\Omega_n$ for large $n$ if and only if \eqref{10-c}  and the second inequality in \eqref{10-b} are satisfied, namely:
\begin{equation}\label{10-5}
\widetilde y>- y_nM_n^{\frac {p_n-1}2}
\end{equation} 
\begin{equation}\label{10-6}
M_n^{\frac {1-p_n}2}\left(\widetilde x^2+ \widetilde y^2\right)+2\left(\widetilde xx_n+\widetilde yy_n\right)<\left(1-x_n^2-y_n^2\right)M_n^{\frac {p_n-1}2}
\end{equation}
Hence we have to to distinguish several  possibilities: \begin{eqnarray}
& \mbox{ either } &  y_nM_n^{\frac {p_n-1}2}\to \infty\label{10-1}\\
& \mbox{ or } & y_nM_n^{\frac {p_n-1}2}\to \alpha\geq 0\label{10-2}
\end{eqnarray}
as $n\to \infty$ and also 
\begin{eqnarray}
& \mbox{ either } &  \left(1-x_n^2-y_n^2\right)M_n^{\frac {p_n-1}2}\to \infty\label{10-3}\\
  & \mbox{ or } & \left(1-x_n^2-y_n^2\right)M_n^{\frac {p_n-1}2}\to \beta> 0\label{10-4}
\end{eqnarray}
as $n\to \infty$, where the case $\beta=0$ is ruled-out by the Dirichlet boundary conditions on $\Gamma_1$ (as in {\bf Step 2}).\\
Observe that 
\eqref{10-1} implies \eqref{10-5}  for large $n$, while when \eqref{10-2} holds then \eqref{10-5} is satisfied  for $n$ large if and only if 
$\widetilde y>-\alpha$. Similarly if \eqref{10-3} holds then \eqref{10-6} is satisfied when $n$ is large, while if \eqref{10-4} holds then \eqref{10-6} is satisfied for $n$ large if and only if 
$\widetilde x<\frac {\beta}2 $.\\
Summarizing we have that $ \widetilde{u}_n\to \widetilde u$ uniformly on compact sets of $\Sigma$, where $\widetilde u$ is a solution to \eqref{prob-limite} in $\Sigma$, more precisely:   

\

{\emph{Case 1: \eqref{10-1} and \eqref{10-3} hold.}}\\
In this case $\Sigma =\R^2$, $\widetilde u$ is nontrivial (since $\widetilde u(0)=1$) and moreover, as in {\bf Step 1} one can prove that $\widetilde u$ has finite Morse index  contradicting Proposition  \ref{prop7.3}. 

\

{\emph{Case 2: \eqref{10-1} and \eqref{10-4} hold.}}\\
In this case $\Sigma=\{(x,y)\in \R^2: \ x<\frac {\beta}2\}$, $\widetilde u$ is  nontrivial  (again $0\in \Omega_n$ when $n$ is large enough and then $\widetilde u(0)=1$), it satisfies  Dirichlet boundary conditions on the hyperplane $x=\frac {\beta}2$ and has finite Morse index. This (up to a translation) contradicts again Proposition  \ref{prop7.3}.

\

{\emph{Case 3: \eqref{10-2} and \eqref{10-3}  hold.}}\\
Now $\Sigma=\{(x,y)\in \R^2: \ y>-\alpha\}$, $\widetilde u$  satisfies 
Neumann boundary conditions on the hyperplane $y=-\alpha$. If $\alpha>0$ then, as before, $\widetilde u(0)=1$ and so it is nontrivial. In this case we translate this solution in the $y$-direction getting a solution to \eqref{prob-limite} in $\R^2_+$ that satisfies 
Neumann boundary conditions and we obtain a contradiction as in {\bf Step 3}-{\emph{Case 2}}.
In the case $\alpha=0$ we observe that $d_n=y_n$ (where $d_n$ as usual is the distance  in \eqref{dn}). Indeed 
$P_0=B$ implies that $d_n= \min\{dist (P_n,\Gamma_2), dist (P_n,\Gamma_1)\}$, where $dist (P_n,\Gamma_2)=y_n$ and $dist (P_n,\Gamma_1)= 1-\sqrt{x_n^2+y_n^2}$,  
  moreover 
$1-\sqrt{x_n^2+y_n^2}\geq y_n$ if and only if
\begin{equation}\label{intermedia}
y_n(2-y_n)\leq 1-x_n^2-y_n^2,\end{equation}
and \eqref{intermedia}  holds for $n$ large, under the assumptions \eqref{10-2} with $\alpha=0$ and \eqref{10-3}.
Since $d_n=y_n$, then $\widetilde y=-M_n^{\frac{p_n-1}2}y_n\in\partial\Omega_n$ and moreover it belongs to a neighborhood of $0$ for $n$ large, hence we can reason as in {\bf Step 3}-{\emph{Case 2}} and use the elliptic regularity up to the boundary to obtain a uniform estimate on the gradient of $\widetilde u_n$ in a neighborhood of $0$, showing that $\widetilde u$ is nontrivial. Again we obtain a contradiction as at the end of {\bf Step 3}-{\emph{Case 2}}.

\

{\emph{Case 4: \eqref{10-2} and \eqref{10-4}  hold.}}\\
Now  $\Sigma=\{(x,y)\in \R^2: \ y>-\alpha, \ x<\frac {\beta}2\}$, $\widetilde u$  satisfies  Dirichlet boundary conditions on the hyperplane $x=\frac {\beta}2$ and  Neumann boundary conditions on the hyperplane $y=-\alpha$. As before when $\alpha>0$ we have that $0\in \Omega_n$ when $n$ is large enough and then $\widetilde u(0)=1$, namely  $\widetilde u$ is nontrivial and so we translate it ending with a nontrivial solution $\bar u$ to \eqref{prob-limite} in $\bar{\Sigma}=\{(x,y)\in \R^2: \ y>0, \ x<0\}$, with Dirichlet boundary conditions on $x=0$ and Neumann boundary conditions on $y=0$. When $\alpha=0$ one proves 
\eqref{intermedia} as in the previous case, so again $d_n=y_n$ for large $n$. Then $\widetilde y=- M_n^{\frac {p_n-1}2}y_n\in\partial\Omega_n$ and it belongs to a neighborhood of $0$ for large $n$, so we can prove that $\widetilde u$ is nontrivial using again the elliptic regularity up to the boundary as in the previous situation. Also in this case we translate $\widetilde u$ ending with a nontrivial solution $\bar u$ to \eqref{prob-limite} in $\bar{\Sigma}=\{(x,y)\in \R^2: \ y>0, \ x<0\}$, with Dirichlet boundary conditions on $x=0$ and Neumann boundary conditions on $y=0$.
\\
Finally observe that as a consequence of Lemma \ref{lem-10-nuovo}, using arguments similar to the ones in {\bf{Step 1}}, one can prove that  the maximal number of linearly independent functions 
$\widetilde\psi_i\in C^{\infty}_0(\{(x,y)\in \R^2: \ y\geq 0, \ x<0\})\cap \{\frac{\partial \widetilde\psi_i}{\partial y}{\big|_{y=0}}=0\}$ 
that make negative the quadratic form $Q$ 
is at most $2$.
Thus, by extending $\bar u$ to $\widetilde \Sigma :=\{(x,y)\in \R^2:  \ x<0\}$ in an even way,  we obtain a solution to \eqref{prob-limite} in $\widetilde \Sigma$ which has finite $\mathcal{G}$-Morse index, where $\mathcal{G}$ here is the group generated by the reflection with respect to the $x$-axis. This is again in contradiction  with Proposition  \ref{prop7.3}.

\

{\bf Step 5.} {\emph{$P_0=O$}}
\\
In this case we can assume w.l.o.g. that $d_n=y_n$, since $P_0=O$ implies that $d_n=\min \{dist(P_n,\Gamma_2),dist(P_n,\Gamma_3)\}$, $dist(P_n,\Gamma_2)=y_n$ and w.l.o.g (up to rotation) we may consider only the case $dist(P_n,\Gamma_2)\leq dist(P_n,\Gamma_3)$. 
We may also assume that $y_n\leq x_n$ and $\frac{y_n}{x_n}\leq \tan \frac{\pi}{2k}$ (if $x_n\neq 0$). 
Then a point $(\widetilde x, \widetilde y)\in B_R(0)$ for some $R>0$ belongs to $\Omega_n$ if and only if conditions \eqref{10-c} and \eqref{10-a} are satisfied.
Indeed \eqref{10-b} is easily verified. 
We have to distinguish different cases, since
\begin{eqnarray}
&\mbox{either } & y_nM_n^{\frac {p_n-1}2}\to \infty\label{10-7}\\
&\mbox{or } &y_nM_n^{\frac {p_n-1}2}\to \alpha\geq 0\label{10-8}
\end{eqnarray}
and
\begin{eqnarray}
& \mbox{either } & x_nM_n^{\frac {p_n-1}2}\to \infty\label{10-9}\\
&\mbox{or } &x_nM_n^{\frac {p_n-1}2}\to \beta\geq 0,\label{10-10}
\end{eqnarray}
where it is obvious that \eqref{10-7} implies \eqref{10-9} and that \eqref{10-10} implies \eqref{10-8} with $\alpha\leq\beta$  (since $y_n\leq x_n$).

\

{\emph{Case 1: \eqref{10-7}  holds.}}\\
In this case also  \eqref{10-9}  holds and $d_nM_n^{\frac {p_n-1}2}\to \infty$, hence \eqref{10-c} and \eqref{10-a} are satisfied for large $n$ and so $\Omega_n\to \R^2$. 
Then $ \widetilde{u}_n\to \widetilde u$ uniformly on compact sets of $\R^2$ where $\widetilde u$ is a nontrivial (since $\widetilde u(0)=1$) solution to \eqref{prob-limite} in $\R^2$ of finite Morse index, giving a contradiction to the results of Proposition \ref{prop7.3}.

\

{\emph{Case 2: \eqref{10-8} and \eqref{10-9}  hold.}}\\
\eqref{10-a} is  satisfied  for large $n$ while \eqref{10-c} is satisfied for large $n$ if and only if $\widetilde y>-\alpha$. Hence the limit domain is  $\Sigma=\{(x,y)\in \R^2: \ y>-\alpha\}$ and $\widetilde{u}_n\to \widetilde u$ uniformly on compact sets of $\Sigma$ where $\widetilde u$ is a solution to \eqref{prob-limite} in $\Sigma$ that satisfies a Neumann boundary condition on $y=-\alpha$ of finite Morse index, in the sense of {\bf Step 3}. 
Moreover when $\alpha>0$ then $0\in \Omega_n$ and this implies that $\widetilde u$ is nontrivial getting a contradiction. When $\alpha=0$ we observe that  $\widetilde y=- M_n^{\frac {p_n-1}2}y_n\in\partial\Omega_n$ and it belongs to
a neighborhood of $0$. 
We can  therefore apply the elliptic regularity up to the boundary as in {\bf Step 3} getting that $\widetilde u$ is nontrivial. Thus a contradiction arises as in the previous case.

\

{\emph{Case 3: \eqref{10-10}  holds with $\beta>0$.}}\\
In this case also condition  \eqref{10-8} holds with $0\leq\alpha\leq\beta$, which implies that \eqref{10-c} is satisfied for large $n$ if and only if  $\widetilde y>-\alpha$. Moreover
by \eqref{10-cc} and \eqref{10-10}  it follows that $\widetilde x>- \beta$. Condition \eqref{10-a} is satisfied for large $n$, instead, if and only if
\[ \frac{\widetilde y+ \alpha}{\widetilde x+ \beta}< \tan \frac{\pi}k.\]
Then the limiting domain $\Sigma$ is a positive cone in $\R^2$ with vertex in $(-\beta, -\alpha)$ and with amplitude $\frac{\pi}k$ (the same of $S_k$) 
\[
\Sigma=\left\{
(r\cos\theta -\beta,r\sin\theta-\alpha)\ : \ r\in (0,+\infty), \ \theta \in [0,\frac{\pi}{k}]
\right\}
\]
%(the angular point $-M_n^{\frac{p_n-1}{2}}P_n$ of $\Omega_n$  converges to the angular point $(-\beta, -\alpha)$ of $\Sigma$). 
Then  $\widetilde{u}_n\to \widetilde u$ uniformly on compact sets of $\Sigma$ where $\widetilde u$ is a solution to \eqref{prob-limite} in $\Sigma$ that satisfies a Neumann boundary condition on $\partial \Sigma$. When $\alpha,\beta\neq 0$ then $0\in \Sigma$ and we can infer that $\widetilde u$ is nontrivial. The same is true when $\alpha=0$, since $\beta>0$ and in this case we have that $\widetilde y=- M_n^{\frac {p_n-1}2}y_n\in\partial\Omega_n$ and belongs to a neighborhood of $0$, so we can reason as in {\bf Step 3} the and show that $\widetilde u$ is nontrivial. Moreover in both the cases $\widetilde u$ has finite Morse index, since the maximal number of linearly independent functions $\widetilde \psi_i$ in $C^{\infty}_0(\overline \Sigma)\cap\{\frac{\partial\widetilde \psi_i}{\partial \nu}|_{\partial \Sigma}=0\}$ ($\nu$ denotes the outer normal to $\partial \Sigma$) that make negative the quadratic form $Q$ is at most two due to Lemma \ref{lem-10-nuovo}. Translating  $\widetilde u$ with respect to one or both the axes we end-up with a function $\bar u$ that satisfies \eqref{prob-limite} in $\{(x,y)\in \R^2: x>0, y>0, \frac yx<\tan \frac \pi k\}$ and  Neumann boundary conditions. Finally the $\mathcal{G}_k$ extension of $ \bar u$ to the whole $\R^2$ (which is well defined  due to the Neumann boundary conditions) is a non trivial $k$-symmetric solution to \eqref{prob-limite} in $\mathbb R^2$ which has $k$-Morse index at most $2$. This contradicts the result in Proposition \ref{prop7.3}.

\

{\emph{Case 4: \eqref{10-10}  holds with $\beta=0$.}}
In this case also condition  \eqref{10-8} holds with $\alpha=0$.
We consider the solution $u_n$ in the whole ball $B$ (without restricting  it to the sector $S_k$) and we define
\[\widetilde{v}_n(x,y):=\frac 1{M_n}{u_n(M_n^{\frac {1-p_n}2}(x,y))}\]
that satisfies
\[-\Delta \widetilde{v}_n=|\widetilde{v}_n|^{p_n-1}\widetilde{v}_n\]
in $\widetilde B_n:=M_n^{\frac {p_n-1}2} B$ and also $|\widetilde{v}_n|\leq 1$. The rescaled domain $\widetilde B_n\to \R^2$ and $\widetilde{v}_n\to \widetilde v$  uniformly on compact sets of $\R^2$ where $\widetilde v$ is a solution to \eqref{prob-limite} which has $k$-Morse index at most $2$ (observe that since we are rescaling with respect to the origin the symmetries are preserved). To obtain a contradiction via Proposition \ref{prop7.3} we need to show that $\widetilde v$ is nontrivial. This easily follows since  $\widetilde{v}_n(\widetilde P_n)=1$, where $\widetilde P_n=(M_n^{\frac {p_n-1}2}x_n, M_n^{\frac {p_n-1}2}y_n)$ and by assumption $\widetilde P_n\to 0$, so that $\widetilde v(0)=1$. This end the proof.
\end{proof}

\

Now we are in the position to consider the asymptotic behavior of the nodal least energy solutions $u_p^k$  as $p\to 1$
and to conclude the proof of the radial part of Theorem \ref{prop1.4}.

\begin{proposition}\label{leastRadiale}
The least energy nodal solutions $u_p^k$ are radial for any $k\geq 3$ when $p$ is close to $1$.
\end{proposition}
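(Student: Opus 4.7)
The plan is to argue by contradiction, assuming there exists a sequence $p_n\to 1$ such that $u_{p_n}^k$ is non-radial for every $n$. Thanks to the $L^\infty$ bound of Proposition \ref{prop7.4}, the sequence $\gamma_n := \|u_{p_n}^k\|_{\infty}^{p_n-1}$ is bounded. Setting $\bar u_n := u_{p_n}^k/\|u_{p_n}^k\|_\infty$, the rescaled equation $-\Delta\bar u_n = \gamma_n |\bar u_n|^{p_n-1}\bar u_n$ has a right-hand side uniformly bounded in $L^\infty(B)$, so standard elliptic estimates yield, up to a subsequence, $\bar u_n\to\bar u$ in $C^{1,\alpha}(\bar B)$ and $\gamma_n\to\gamma$, with $\bar u\in H^1_{0,k}(B)$, $\|\bar u\|_\infty=1$, solving $-\Delta\bar u = \gamma\bar u$ in $B$ with zero Dirichlet data.

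The next step is to identify the limit $(\gamma,\bar u)$. The Nehari identities $\int|\nabla(u_{p_n}^k)^{\pm}|^2 = \int|(u_{p_n}^k)^{\pm}|^{p_n+1}$ combined with Faber--Krahn give $\lambda_1(\{\pm u_{p_n}^k>0\})\le \|u_{p_n}^k\|_\infty^{p_n-1}$, so the measures of both nodal regions are uniformly bounded below and the limit $\bar u$ is sign-changing; this rules out $\gamma=\lambda_1$. Furthermore, the $k$-Morse index bound $m_k(u_{p_n}^k)=2$ from Lemma \ref{lem-10-1}, passed to the limit via the continuity of the eigenvalues of $L_{p_n}$ (which converge to those of $-\Delta-\gamma$), forces $\gamma$ to coincide with the second $\mathcal{G}_k$-invariant Dirichlet eigenvalue of $-\Delta$ in $B$. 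Inspecting the zeros of the Bessel functions as in Lemma \ref{lemma:morseSecAutofRadialeLap}, for every $k\ge 3$ this eigenvalue is precisely $\lambda_{2,\rad}$, and it is simple in $H^1_{0,k}(B)$ with eigenspace $\text{span}\{\varphi_{2,\rad}\}$. Adjusting signs, $\bar u = \varphi_{2,\rad}$ and $\gamma = \lambda_{2,\rad}$.

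Reproducing the argument of Lemma \ref{lemma-pvicino1} for the sequence $u_{p_n}^k$ (using the identity $|\bar u_n|^{p_n-1}-1 = (p_n-1)\log|\bar u_n|\int_0^1|\bar u_n|^{t(p_n-1)}dt$ and testing the equation against $\varphi_{2,\rad}$) yields the second-order expansion
\[
\|u_{p_n}^k\|_{\infty}^{p_n-1} = \lambda_{2,\rad}\bigl(1-\widetilde c(p_n-1)\bigr) + o(p_n-1),
\]
with the \emph{same} constant $\widetilde c$ as in \eqref{c-tilde}, since the limit profile is $\varphi_{2,\rad}$. I would then set up a Lyapunov--Schmidt reduction of $F(p,\gamma,v) := -\Delta v - \gamma|v|^{p-1}v$ on $H^1_{0,k}(B)$ at the triple $(1,\lambda_{2,\rad},\varphi_{2,\rad})$. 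Splitting $v = \varphi_{2,\rad}+w$ with $w\perp_{L^2}\varphi_{2,\rad}$, the simplicity of $\lambda_{2,\rad}$ in $H^1_{0,k}(B)$ for $k\ge 3$ makes $(-\Delta-\lambda_{2,\rad})|_{\varphi_{2,\rad}^\perp}$ invertible, so the range equation is uniquely solvable by the implicit function theorem as $w(p,\gamma)$; the bifurcation equation has $\partial_\gamma$-derivative equal to $-\|\varphi_{2,\rad}\|_{L^2}^2\ne 0$ at the base point, pinning down a unique $\gamma=\gamma(p)$. After normalizing so that $\langle v,\varphi_{2,\rad}\rangle = \|\varphi_{2,\rad}\|_{L^2}^2$ (and possibly changing a sign), both $u_{p_n}$ and $u_{p_n}^k$ lie on this unique curve; hence they coincide, contradicting non-radiality of $u_{p_n}^k$.

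The main technical obstacle is the identification step: $m_k(u_p^k)=2$ is borderline, because in the limit $p\to 1$ one of the two negative eigenvalues of the linearized operator $L_p$ (the one associated with $\varphi_{2,\rad}$) vanishes; this is why the sign-changing property coming from Faber--Krahn is essential to discard $\gamma=\lambda_1$. The delicate second-order expansion of $\|u_p^k\|_\infty$ is then what guarantees that the parameter $\gamma_n$ lies within the neighborhood where the Lyapunov--Schmidt branches emanating from $u_p$ and from $u_p^k$ are both well-defined and are forced to coincide. The case $k\ge 6$ (not covered by the non-radial part of Theorem \ref{prop1.4}) requires the same argument, since what really matters is only the simplicity of $\lambda_{2,\rad}$ as a $\mathcal{G}_k$-invariant eigenvalue and that it is the second one, which holds for every $k\ge 3$.
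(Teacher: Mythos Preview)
Your identification of the limit profile follows the same lines as the paper's Step~1, though the Faber--Krahn argument alone does not quite show that $\bar u$ is sign-changing: a uniform lower bound on $|\{\bar u_n^k<0\}|$ does not prevent $\|(\bar u_n^k)^-\|_\infty\to 0$. This is easily repaired (e.g.\ by $C^1$-convergence plus Hopf to exclude $\bar u=c\varphi_1$, or, as the paper does, by observing that if $\gamma=\lambda_1$ then the two orthogonal negative $k$-symmetric eigenfunctions of $L_{p_n}$ would converge to two orthogonal first Dirichlet eigenfunctions, contradicting the simplicity of $\lambda_1$).

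Where your route genuinely differs is in the uniqueness step. The paper does \emph{not} set up a Lyapunov--Schmidt reduction at $p=1$; instead it sets $w_n:=(u_{p_n}^k-u_{p_n})/\|u_{p_n}^k-u_{p_n}\|_\infty$, which satisfies $-\Delta w_n=p_nc_n(x)w_n$ with $c_n(x)=\int_0^1|tu_{p_n}^k+(1-t)u_{p_n}|^{p_n-1}\,dt\to\lambda_{2,\rad}$ a.e., hence $w_n\to C\varphi_{2,\rad}$ with $C=\pm 1$. Testing the $w_n$-equation against $\varphi_{2,\rad}$, writing $c_n/\lambda_{2,\rad}=1+(p_n-1)g_n$ and passing to the limit using precisely the second-order expansion (with the \emph{same} constant $\widetilde c$ for both $u_{p_n}$ and $u_{p_n}^k$) yields $0=C\int_B\varphi_{2,\rad}^2$, forcing $C=0$, a contradiction.

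Your Lyapunov--Schmidt scheme encodes the same two ingredients --- simplicity of $\lambda_{2,\rad}$ in $H^1_{0,k}(B)$ for $k\ge 3$ and the matching second-order expansions --- but faces a genuine technical obstacle: the map $F(p,\gamma,v)=-\Delta v-\gamma|v|^{p-1}v$ is not obviously $C^1$ at $p=1$ when the base point $v=\varphi_{2,\rad}$ changes sign. The $v$-derivative carries the coefficient $p|v|^{p-1}$, which does \emph{not} converge to $1$ in $L^\infty$ as $p\downarrow 1$ (it vanishes on the nodal circle for every $p>1$); working in $L^q$ instead requires a uniform bound on $\|\log|v|\|_{L^q}$ over a full neighborhood of $\varphi_{2,\rad}$, which fails in $H^2$ (functions there may vanish to infinite order). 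This is exactly the difficulty that makes real-analyticity of sign-changing branches delicate (cf.\ the proof of Lemma~\ref{l5.1}). The paper's direct $w_n$-argument sidesteps the issue entirely and is the cleaner way to close the proof.
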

\begin{proof}
{\bf Step 1.} {\emph{We show that for any sequence $p_n>1$  converging to $1$ %\edz{{STEP 1 -  lo enuncio per ogni sequence che tende a 1, ma questo significa proprio fare il limite per p che tende a 1\\$\;$\\qui in realta quello che vogliamo dire e' che il limite in (10.27) vale per p che tende a 1 mentre quello in (10.26) vale solo per delle estratte $p_n$\\$\;$\\STESSA COSA anche nell'enunciato del Lemma 6.4\\$\;$\\CAMBIA ENTRAMBI?\\$\;$\\$\;$\\ NB: convergenze in $B$ o $\bar B$}}
	\begin{equation}\label{u-nkappaconv}
	\bar{u}_n^k:=\frac {u_{p_n}^k}{\norm{u_{p_n}^k}_{\infty}}\rightarrow 
	C\varphi_{2,\rad} =J_0(\nu_{02}|x|)\quad \text{ in }C(\bar B)
	\end{equation}
	up to a subsequence, where $C=\pm1$ and
	\begin{equation}\label{8.14} 
	\norm{u_{p_n}^k}_{\infty}^{p_n-1}= \lambda_{2,\rad}\left(1-\widetilde c (p_n-1)\right)+o(p_n-1) \text{ as }n\to \infty
	\end{equation}
	where $\widetilde c$ is as in \eqref{c-tilde}.
	}}	
	
	\
		
Let $M_n:=\norm{u_{p_n}^k}_\infty$, we have shown in Proposition \ref{prop7.4} that $M_n^{p_n-1}$ is bounded, we can then repeat the proof of Lemma \ref{lemma-pvicino1} proving  that
\[M_n^{p_n-1}\to \lambda \ \ \text{and} \ \  \bar u_n^k\to C\varphi  \text{ in } C(\bar B) \, \text{ up to a subsequence,  with }C=\pm 1\]
%uniformly in $B$ 
where $\lambda$ is an eigenvalue of $-\Delta$ in $B$ with Dirichlet boundary conditions, $\varphi$ is a corresponding eigenfunction with $\|\varphi\|_{\infty}=1$. Moreover $\varphi$ is invariant by the action of $\mathcal{G}_k$ (since $\bar u_{n}^k$ are for every $n$) and, following the ideas in Step 1 in the proof of Proposition \ref{risultatoMorse_pvicino1} we can show that $m_k(\varphi)\leq m_k(u_{p_n}^k)$, hence $m_k(\varphi)\leq 2$ by Lemma \ref{lem-10-1}. Since the $k$-symmetric eigenvalues of $-\Delta$ are known and since we are assuming $k\geq 3$, this means that necessarily either $\lambda=\lambda_{1,\rad}$ or  $\lambda=\lambda_{2,\rad}$.
We show that the case $\lambda=\lambda_{1,\rad}$ cannot hold. Indeed, following similar ideas as in Step 2 of the proof of Proposition \ref{risultatoMorse_pvicino1}, since $\varphi_{1,\rad}$ has Morse index $0$, one gets that the $2$ negative $k$-symmetric  eigenvalues of the linearized operator at $u_{p_n}^k$ (recall $m_k(u_{p_n}^k)=2$ by Lemma \ref{k-morse-upk})  converge both to $0$ and that the corresponding eigenfunctions (that we can take to be orthogonal in $L^2(B)$) converge to two orthogonal solutions of
\[
\left\{
\begin{array}{lr}
-\Delta v=\lambda_1 v & \text{ in }B\\
v=0 & \mbox{ on }\partial B.
\end{array}
\right.
\]
This is not possible, since $\lambda_1$ is simple, so $\lambda=\lambda_{2,\rad}$.  Reasoning exactly as in the proof of Lemma \ref{lemma-pvicino1}, we can then prove \eqref{8.14}.
Assuming w.l.o.g. that $\bar u_n^k(0)>0$ for $n$ large, we also have
\[\bar u_n^k\to\varphi_{2,\rad}=J_0(\nu_{02}|x|) \text{ as }n\to \infty \ \text{ in } C(\bar B),
\]
getting \eqref{u-nkappaconv}.

\

{\bf Step 2.} {\emph{We show that $u_p^k=u_p$ for $p$ close to $1$, where as usual $u_p$ is the least energy nodal radial solution to \eqref{problem}.}}

\

Assume by contradiction that there exists a sequence $p_n>1$, $p_n\rightarrow 1$ as $n\rightarrow +\infty$ such that $u_n^k\neq u_n$, where $u_n^k:=u_{p_n}^k$ and $u_n:=u_{p_n}$, and define
$w_n:=\frac{u_n^k- u_n}{\norm{u_{n}^k- u_n}_{\infty}}$. $w_n$  satisfies
\begin{equation}\label{8.12}
\left\{\begin{array}{lr}
-\Delta w_n= p_nc_n(x)w_n \qquad\  \mbox{ in }B\\
w_n =0\qquad\qquad\qquad\qquad\mbox{ on }\partial B\\
 \norm{w_n}_{\infty}=1
\end{array}\right.
\end{equation}
where, by the Mean value Theorem,
\begin{equation}\label{cn}
c_n(x)=\int_0^1 |tu_n^k+(1-t) u_n|^{p_n-1}\, dt\leq \norm{u_n^k}_{\infty}^{p_n-1}+\norm{u_n}_{\infty}^{p_n-1}\leq \overset{{\eqref{8.14}-\eqref{gammanp-1}}}{C}\lambda_{2,\rad}.
\end{equation}
We show that 
\begin{equation}\label{infinitob}
c_n(x)\to \lambda_{2,\rad}\ \text{ almost everywhere in } B\ \mbox{ as } \ n\rightarrow \infty.
\end{equation}

Indeed from  \eqref{gammanp-1} and \eqref{u-n} we have that
\begin{eqnarray*}
\frac{u_n}{\lambda_{2,\rad}^{\frac 1{p_n-1}}}&=&\frac{u_n}{\norm{u_n}_{\infty}}\left(\frac{\norm{u_n}_{\infty}^{p_n-1}}{\lambda_{2,\rad}}\right)^{\frac 1{p_n-1}}=\bar{u}_n\left(1-\widetilde c(p_n-1)+o(p_n-1)\right)^{\frac 1{p_n-1}}\\
&=&\varphi_{2,\rad}e^{-\widetilde c}(1+o(1))
\end{eqnarray*}  as $n\to \infty$,  where $\widetilde c$ is as in \eqref{c-tilde},
and the same holds for $u_n^k$ using  \eqref{8.14} and \eqref{u-nkappaconv}. Namely
\[
\frac{u_n}{e^{-\widetilde c}\lambda_{2,\rad}^{\frac 1{p_n-1}}}\to \varphi_{2,\rad}\ \text{ and }\ \frac{u_n^k}{e^{-\widetilde c}\lambda_{2,\rad}^{\frac 1{p_n-1}}}\to \varphi_{2,\rad}\quad  \text{ in $C(\bar B)$ as }n\to \infty.
\]
As a consequence, for any $x\in B$  we have 
\begin{equation}\label{cuoreb}
t \frac{u_n^k}{e^{-\widetilde c}\lambda_{2,\rad}^{\frac 1{p_n-1}}}   +(1-t) \frac{u_n}{e^{-\widetilde c}\lambda_{2,\rad}^{\frac 1{p_n-1}}} \to \varphi_{2,\rad}
\end{equation} 
and \eqref{infinitob} follows then from \eqref{cuoreb} observing that
\begin{equation}\label{8.20}\begin{split}
\frac {c_n(x)}{\lambda_{2,\rad}}& =\int_0^1 \Big|t \frac{u_n^k}{\lambda_{2,\rad}^{\frac 1{p_n-1}}}   +(1-t) \frac{u_n}{\lambda_{2,\rad}^{\frac 1{p_n-1}}} \Big|^{p_n-1}\, dt=\\
&=e^{-\widetilde c(p_n-1)}\int_0^1 \Big|t \frac{u_n^k}{e^{-\widetilde c}\lambda_{2,\rad}^{\frac 1{p_n-1}}}   +(1-t) \frac{u_n}{e^{-\widetilde c}\lambda_{2,\rad}^{\frac 1{p_n-1}}} \Big|^{p_n-1}\, dt.
\end{split}.
\end{equation}
Passing to the limit in \eqref{8.12} and using \eqref{infinitob} get that
$w_n$ converges,  up to a subsequence, in $C(\bar B)$ to a function $w$ which  solves
\begin{equation}\label{8.13}
\left\{\begin{array}{lr}
-\Delta w= \lambda_{2,\rad}w \qquad  \mbox{ in }B\\
w =0\qquad\qquad\qquad\mbox{ on }\partial B\\
\norm{w}_{\infty}=1 
\end{array}\right.
\end{equation}
so that 
\begin{equation}\label{defC}
w=C\varphi_{2,\rad}, \quad\mbox{ with }\ C=\pm1\ \mbox{ depending on the sign of }w(0).\end{equation} 
On the other side, multiplying \eqref{8.12} by $\varphi_{2,\rad}$ and integrating over $B$ we find
\begin{equation}\label{8.18}
\begin{split}
&\lambda_{2,\rad} \int_B w_n \varphi_{2,\rad}=\int_B \nabla  w_n\nabla \varphi_{2,\rad}=\lambda_{2,\rad} p_n\int_B \frac{c_n(x)}{\lambda_{2,\rad}}  w_n\varphi_{2,\rad}\\
&=\lambda_{2,\rad}\int_B \frac{c_n(x)}{\lambda_{2,\rad}}  w_n\varphi_{2,\rad}+\lambda_{2,\rad}(p_n-1)\int_B \frac{c_n(x)}{\lambda_{2,\rad}}  w_n\varphi_{2,\rad}.
\end{split}
\end{equation}
Using  the trivial  equality $e^x-1=x\int_0^1e^{sx}ds $ and \eqref{8.20}, we write
\begin{eqnarray*}
\frac{c_n(x)}{\lambda_{2,\rad}}&=&\int_0^1 1+(p_n-1)\log \Big| t\frac{u_n^k}{\lambda_{2,\rad}^{\frac 1{p_n-1}}} +(1-t) \frac{u_n} {\lambda_{2,\rad}^{\frac 1{p_n-1}}}\Big|
\int_0^1 \Big| t\frac{u_n^k}{\lambda_{2,\rad}^{\frac 1{p_n-1}}} +(1-t) \frac{u_n} {\lambda_{2,\rad}^{\frac 1{p_n-1}}}\Big|^{s(p_n-1)}\ ds\, dt\\
&=&1+(p_n-1)g_n(x),
\end{eqnarray*}
where 
\[g_n(x):=\int_0^1\log \Big| t\frac{u_n^k}{\lambda_{2,\rad}^{\frac 1{p_n-1}}} +(1-t) \frac{u_n} {\lambda_{2,\rad}^{\frac 1{p_n-1}}}\Big|\int_0^1 \Big| t\frac{u_n^k}{\lambda_{2,\rad}^{\frac 1{p_n-1}}} +(1-t) \frac{u_n} {\lambda_{2,\rad}^{\frac 1{p_n-1}}}\Big|^{s(p_n-1)}\ ds\, dt.\]
Equation \eqref{8.18} then becomes
\[
\lambda_{2,\rad} \int_B w_n \varphi_{2,\rad}=
\lambda_{2,\rad}\int_B \left( 1+(p_n-1)g_n(x) \right)  w_n\varphi_{2,\rad}+\lambda_{2,\rad}(p_n-1)\int_B \frac{c_n(x)}{\lambda_{2,\rad}}  w_n\varphi_{2,\rad}.
\]
so that, dividing  by $\lambda_{2,\rad}(p_n-1)$ we obtain 
\begin{equation}\label{8.18bis}
0=\int_B g_n(x)w_n\varphi_{2,\rad}+\int_B \frac{c_n(x)}{\lambda_{2,\rad}}  w_n\varphi_{2,\rad}.
\end{equation}
Observe now that, by \eqref{cuoreb}, for any $x\in B$ such that $\varphi_{2,\rad}\neq 0$ we have that 
\begin{equation}\label{quadratinob} 
g_n(x)\to \log \big|\varphi_{2,\rad}e^{-\widetilde c}\big|=\log\big|\varphi_{2,\rad}\big|  -\widetilde c\ \text{  as }n\to \infty.
\end{equation}
This implies that $g_n(x)\varphi_{2,\rad}\in L^{\infty}(B)$ and \[\norm{g_n(x)\varphi_{2,\rad}}_{\infty}\leq C.\]
%{Moreover, for $x\in B$ such that $\varphi_{2,\rad}= 0$ we have that
%\[g_n(x)\varphi_{2,\rad}=0= \left(\log{\big|\varphi_{2,\rad}\big|}  -\widetilde c\right) \varphi_{2,\rad}\] }
%showing that
%\[g_n(x)\varphi_{2,\rad}\to \left(\log\big|\varphi_{2,\rad}\big|  -\widetilde c\right) \varphi_{2,\rad}\]
%for any $x\in B\setminus\{0\}$ as $n\to \infty$.} \\
We can then pass to the limit as $n\to \infty$ into \eqref{8.18bis} and using  \eqref{quadratinob}  and \eqref{infinitob} we get
\[
0=C\int_B\left( \log\big|\varphi_{2,\rad}\big|  -\widetilde c\right) \varphi_{2,\rad}^2+C\int_B \varphi_{2,\rad}^2
\]
which implies, using the definition of $\widetilde{c}$ in \eqref{c-tilde}, that
\[
0=C\int_B \varphi_{2,\rad}^2
\]
namely that $C=0$, contradicting the definition of $C$ in \eqref{defC} and ending the proof. 
\end{proof}

\begin{remark} \label{RemarkNoQuaiRadk=2}
One could prove, reasoning as in the proof of Proposition \ref{leastRadiale}, that $\bar u_p^2\rightarrow \varphi$ in $C^1(\bar B)$ as $p\rightarrow 1$, where $\varphi$ is an eigenfunction of $-\Delta$ corresponding to the eigenvalue $\lambda_4=\lambda_5$, which is not quasi-radial.
\begin{figure}[h]
  \centering\Huge
\resizebox{100pt}{!}{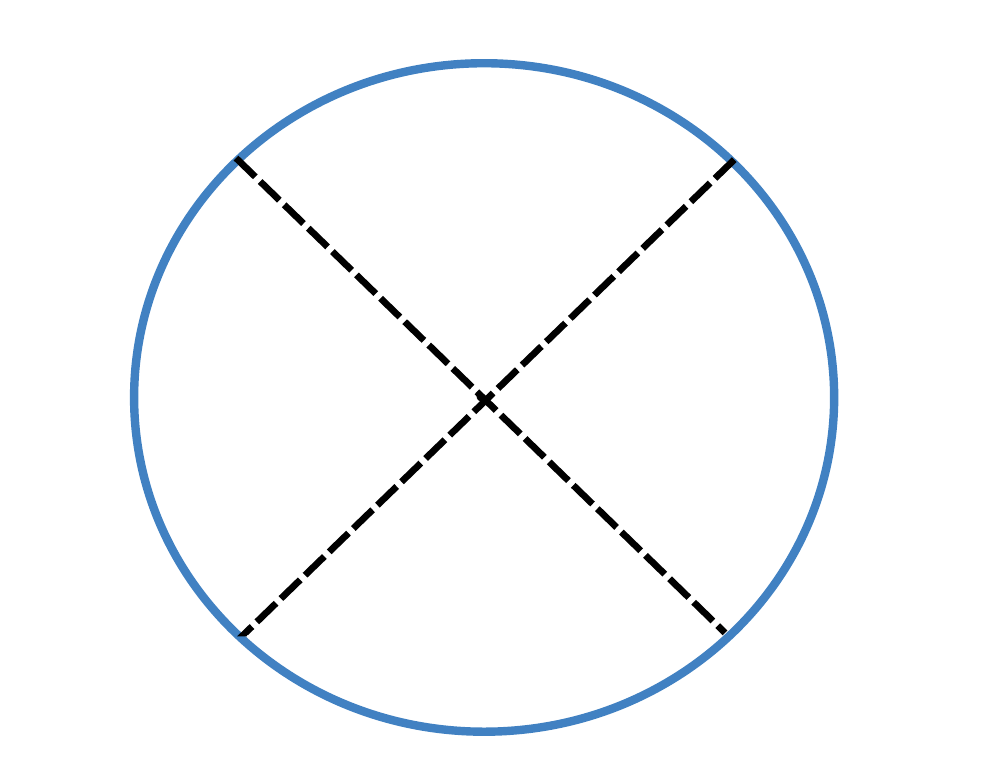}
\caption{Eigenfunction associated to $\lambda_4=\lambda_5$}
\end{figure}The convergence in $C^1(\bar B)$, by the Hopf lemma then implies that $u_p^2$ is not quasi-radial  for $p$
close to $1$.\end{remark}

\end{document}